 \newtheorem{lem}{Lemma}[section]
 \newtheorem{thm}{Theorem}[section]
 \newtheorem{prop}[thm]{Proposition}
 \newtheorem{cor}[thm]{Corollary}
\theoremstyle{definition}
\newtheorem{defn}[thm]{Definition}
\newtheorem{rem}[thm]{Remark}
\newtheorem{ex}[thm]{Example}
 \newtheorem{notation}{Notation}
 \newtheorem{quest}{Q}
\newcommand{\R}{\mathbb{R}}
\newcommand{\Z}{\mathbb{Z}}
\newcommand{\C}{\mathbb{C}}
\newcommand{\CP}{\mathbb{CP}}
\newcommand{\N}{\mathbb{N}}
\newcommand{\bbP}{\mathbb{P}}
\newcommand{\Q}{\mathbb{Q}}
\newcommand{\A}{\mathcal{A}}
\newcommand{\Gc}{{\small \mathcal{G}}}
\newcommand{\F}{\mathcal{F}}
\newcommand{\cC}{\mathcal{C}}
\newcommand{\Qc}{\mathcal{Q}} 
\newcommand{\cM}{\mathcal{M}}
\newcommand{\E}{\mathcal{E}}
\newcommand{\Rc}{\mathcal{R}}
\newcommand{\Sc}{\mathcal{S}}
\newcommand{\Dc}{\mathcal{D}}
\newcommand{\rep}{\mathrm{rep}\,\Qc}
\newcommand{\modQ}{\mathrm{mod }\, \C\Qc}
\newcommand{\id}{\textup{Id}}
\newcommand{\Gr}{\textup{Gr}}
\newcommand{\Td}{\textup{T}}
\newcommand{\SL}{\mathrm{SL}}
\newcommand{\GL}{\mathrm{GL}}
\newcommand{\PGL}{\mathrm{PGL}}
\newcommand{\mA}{\mathrm{ \bf A}}
\newcommand{\mB}{\mathrm{\bf B}}
\newcommand{\mC}{\mathrm{\bf C}}
\newcommand{\mD}{\mathrm{\bf D}}
\newcommand{\mE}{\mathrm{\bf E}}
\newcommand{\mG}{\mathrm{\bf G}}
\newcommand{\udim}{ \underline{\dim}\;}
\newcommand{\op}{\textup{op}}
\def\a{\alpha}
\def\G{\Gamma}
\def\l{\lambda}
\date{}
\begin{document}
\title{Coxeter's frieze patterns at the crossroads of algebra, geometry and combinatorics}

\author{Sophie Morier-Genoud\\
Sorbonne Universit\'es, UPMC Univ Paris 06, UMR 7586, \\
Institut de Math\'ematiques de Jussieu- Paris Rive Gauche,\\ 
Case 247, 4 place Jussieu, F-75005, Paris, France\\
sophie.morier-genoud@imj-prg.fr,
}



\maketitle

\begin{abstract}
Frieze patterns of numbers, introduced in the early 70's by Coxeter,
 are currently attracting much interest
due to connections with the recent theory of cluster algebras.
The present paper aims to review the original work of Coxeter and the new developments around the notion of frieze, focusing on the representation theoretic, geometric and combinatorial approaches.
\end{abstract}

\tableofcontents

\section*{Introduction}
\addcontentsline{toc}{section}{Introduction}

Frieze patterns were introduced by Coxeter in the early 70's, \cite{Cox}.
They are arrays of numbers where neighboring values are connected by a local arithmetic rule.
Coxeter introduces these patterns to understand Gauss' formulas for
the \textit{pentagramma mirificum} and their possible generalizations.
A remarkable property of friezes is the glide symmetry, 
implying their periodicity. 
Coxeter establishes many interesting connections between friezes and various objects:
cross-ratios, continued fractions, Farey series...  
Frieze patterns of positive integers are of special interest. 
Conway and Coxeter~\cite{CoCo} discover a one-to-one correspondence 
between friezes of positive integers and triangulations of polygons. 
This reveals rich combinatorics around frieze patterns.

Frieze patterns appear independently in the 70's in a different
and disconnected context
of quiver representations.
It turns out that the Auslander-Reiten quiver leads to a variant of the Coxeter friezes,
with the local arithmetic rule being an additive analogue of Coxeter's unimodular rule.
The generalization of Coxeter's unimodular rule on Auslander-Reiten quivers
was found recently by Caldero and Chapoton~\cite{CaCh}.

A recent revival of friezes is due to their relation to the 
Fomin-Zelevinsky theory of cluster algebras,
beautiful and unexpected connections with
Grassmannians, linear difference equations, and moduli spaces of points in projective spaces. 
These connections explain recent applications of friezes in integrable systems.

New directions of study and new developments of the notion of frieze have been recently and are currently investigated.
There are mainly three approaches for the study of friezes:

- a representation theoretical and categorical approach, in deep 
connection with the theory of cluster algebras, where entries in the friezes are rational fractions;

- a geometric approach, in connection with moduli spaces of points in projective space and 
Grassmannians, where entries in the friezes are more often real or complex numbers;

- a combinatorial approach, focusing on friezes with positive integer entries.\\

The present article aims to give an overview of the different approaches and recent results about
the notion of friezes. The article is organized in four large independent sections
and a short conclusion.

In Section \ref{FrCC} we review the original work of Coxeter on frieze patterns
\cite{Cox}, and discuss some immediate extensions of the results,
concerning algebraic properties of friezes and links to projective geometry. 

In Section \ref{FrQuiv} we present a generalization of friezes based on representations of quivers and the theory of cluster algebras, introduced in \cite{CaCh}, \cite{ARS}.

In Section \ref{SLtil} we present the variants of $\SL_{k}$-tilings and $\SL_{k}$-friezes introduced in \cite{BeRe}. We review results from \cite{MGOST} where the space of $\SL_{k}$-friezes is identified with the moduli space of projective polygons and the space of superperiodic difference equations.

Section \ref{comb} focuses on friezes of positive integers and their relations with different combinatorial models. In particular,
we give the Conway-Coxeter correspondence with triangulations of polygons \cite{CoCo}
and present some generalizations.

The final section lists the different variants of friezes appearing in the literature.

\section{Coxeter's frieze patterns}\label{FrCC}

Coxeter's frieze patterns \cite{Cox} are arrays of numbers satisfying the following properties:

(i) the array has finitely many rows, all of them being infinite on the right and left,

(ii) the first two top rows are a row of 0's followed by a row of 1's, and the last two bottom rows
are a row 1's followed by a row of 0's\footnote{When representing friezes, one often omits the bordering top and bottom rows of 0's.},

(iii) consecutive rows are displayed with a shift, and every four adjacent entries $a, b,c,d$  forming a diamond $$
 \begin{array}{ccccccc}
 &b&\\
 a&&d\\
 &c&
\end{array}$$ satisfy the \textit{unimodular rule}: $ad-bc=1$.

The number of rows strictly between the border rows of 1's is called the \textit{width} of the frieze (we will use the letter $m$ for the width).
The following array \eqref{exCoCox} is an example of a frieze pattern of width  $m=4$, containing only positive integer numbers.
\begin{equation}\label{exCoCox}
 \begin{array}{lcccccccccccccccccccccccc}
\text{\small{row 0}}&&&1&&1&& 1&&1&&1&&1&&1&& \cdots\\[4pt]
\text{\small{row 1}}&&\cdots&&4&&2&&1&&3&&2&&2&&1&&
 \\[4pt]
\text{\small{row 2}}&&&3&&7&&1&&2&&5&&3&&1&&\cdots&\
 \\[4pt]
\cdots &&\cdots& &5&&3&&1&&3&&7&&1&&2&\\[4pt]
\text{\small{row $m$}}&&& 3&&2&&2&&1&&4&&2&&1&&\cdots\\[4pt]
\text{\small{row $m+1$}}&&\cdots&&1&&1&&1&&1&&1&&1&&1&&
\end{array}
\end{equation}

The definition allows the frieze to take its values in any ring with unit.
Coxeter studies the properties of friezes with entries that are positive real numbers (apart from the border rows of 0's),
 and with a special interest in the case of positive integers. 
The combinatorics related to friezes with positive integer entries will be presented in Section \ref{comb}.
 
 The condition of positivity is quite strong but guarantees a certain genericity of the frieze. We will work with a less restrictive condition.
Throughout this section, we will consider friezes with real or complex entries, and we will assume that they satisfy the following extra condition:
 
 (iv) every adjacent $3\times 3$-submatrix in the array has determinant 0.
 
 Friezes satisfying the condition (iv) are called \textit{tame}, \cite{BeRe}.
 Coxeter's friezes with no zero entries (in particular friezes with positive numbers) are all tame.
The statements established in \cite{Cox} for the friezes with positive entries still hold for tame friezes 
(the proofs can be easily adapted). 

\subsection{Pentagramma Mirificum and frieze patterns of width 2}\label{width2}
The first example of frieze pattern given by Coxeter is the frieze of width 2 made out of the Gauss formulas for the  \textit{pentagramma mirificum}. The pentagramma mirificum is a pentagram drawn on a unit sphere with successively orthogonal great circle arcs (see Figure \ref{mirif}). 
If we denote by $\a_{1}, \ldots, \a_{5}$ the length of the side arcs of the inner pentagon, we obtain the following relations for $c_{i}=\tan^{2}(\a_{i})$:
\begin{equation}\label{pentarec1}
c_{i}c_{i+1}=1+c_{i+3},
\end{equation}
where the indices are taken modulo 5.
In other words, the quantities $c_{i}$'s related to the pentagramma mirificum form
 a frieze pattern of width 2:
\begin{equation}\label{pentafrieze}
 \begin{array}{ccccccccccccccccccc}
&&1&&1&& 1&&1&&1&&1&&\cdots \\[4pt]
&\cdots&&c_{1}&&c_{2}&&c_{3}&&c_{4}&&c_{5}&&c_{1}&&
 \\[4pt]
&&c_{3}&&c_{4}&&c_{5}&&c_{1}&&c_{2}&&c_{3}&&\cdots
 \\[4pt]
&\cdots&&1&&1&&1&&1&&1&&1&&
\end{array}
\end{equation}
Moreover, Gauss observed that the first three equations of \eqref{pentarec1}, i.e. for $i=1,2,3$, imply the last two, i.e. for $i=4,5$. 
This observation implies the 5-periodicity of any frieze pattern of width 2.
It seems that Coxeter's motivation in the study of friezes was to generalize this situation.\\

\begin{figure}
\begin{center}
\includegraphics[width=6cm]{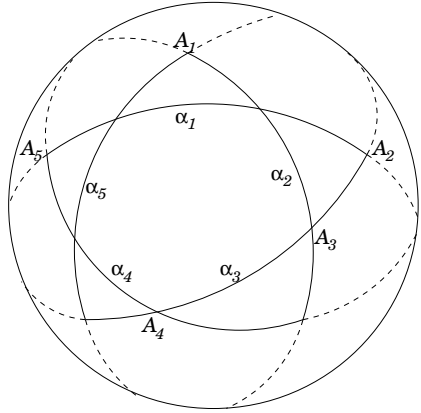}
\end{center}
\caption{Selfpolar pentagram on the sphere (sides are great circles, angles at vertices $A_i$ are right angles). The quantities $c_i:=\tan^2 \a_i$ satisfy the relations \eqref{pentarec1}.}
\label{mirif}
\end{figure}

Given 5 points $p_{1}, p_{2}, p_{3}, p_{4}, p_{5}$ on the (real or complex) projective line, with indices taken cyclically 
($p_{i+5}=p_i$), we form the 5 cross ratios 
\begin{equation*}
c_{i}=[p_{i+1},p_{i+2},p_{i+3},p_{i+4}]=
\frac{(p_{i+4}-p_{i+1})(p_{i+3}-p_{i+2})}{(p_{i+4}-p_{i+3})(p_{i+2}-p_{i+1})}.
\end{equation*}
Assume that two consecutive points, taken cyclically, are distinct
(this guarantees that none of the $c_{i}$'s are infinity).
Then, one checks that the above cross ratios satisfy the relations
\eqref{pentarec1}.

Thus, Coxeter friezes of width 2, with non-zero values, parametrize the moduli space
$$ \cM_{0,5}:=\{p_i\in \mathbb{P}^1, p_{i+5}=p_i, p_{i}\not=p_{j}\}/ \PGL_2.$$
Consider the following frieze of width 2, with a given diagonal of non-zero variables $x_{1}, x_{2}$, that we compute by applying the unimodular rule:
\begin{equation}\label{frieze2}
 \begin{array}{ccccccccccccccccccccc}
\cdots&&1&& 1&&1&&1&&1&&1&&\cdots
 \\[4pt]
&x_1&&\frac{x_2+1}{x_1}&&\frac{x_1+1}{x_2}&&x_2&&\frac{1+x_{2}+x_{1}}{x_{2}x_{1}}&&x_{1}
 \\[4pt]
\cdots&&x_2&&\frac{x_1+x_2+1}{x_1x_2}&&x_1&&\frac{1+x_{2}}{x_{1}}&&\frac{1+x_{1}}{x_{2}}&&x_{2}&&\cdots
 \\[4pt]
&1&&1&&1&&1&&1&&1&&1
\end{array}
\end{equation}
One observes that the entries are all Laurent polynomials in $x_{1}, x_{2}$.
In other words, in the pattern \eqref{pentafrieze}, every entry can be expressed as a Laurent polynomial in any two fixed entries $(c_{i}, c_{i+3})$. 
Therefore, the frieze pattern provides 5 charts
$$
(\C^{*})^{2}\;\longrightarrow\;  \cM_{0,5},
$$
with transition functions written as Laurent polynomials.
These charts cover a slightly bigger space than $\cM_{0,5}$, in which non-consecutive points may coincide,  which is denoted by:
$$\widehat \cM_{0,5}:=\{p_i\in \mathbb{P}^1, p_{i+5}=p_i, p_{i}\not=p_{i+1}\}/ \PGL_2.
$$
Indeed, since consecutive points are distinct
there is necessarily a couple of non-zero cross ratios of the form $(c_{i}, c_{i+3})$.
Conversely, given a frieze pattern \eqref{pentafrieze}, with a non-zero diagonal, say $(c_{1},c_{4})$, one can recover 5 points  of $\mathbb{P}^1$ whose associated cross ratios are 
$c_{1}, \ldots, c_{5}$  by quotienting two consecutive diagonals
$$\textstyle
\frac01,\; \frac{1}{c_{1}}, \;\frac{c_{2}}{c_{4}}, \;\frac{c_{5}}{1},\; \frac10.
$$

Frieze patterns of width $m$ will generalize this situation to configurations of $m+3$ points (see Section~\ref{M0n}).

\begin{figure}
\begin{center}
\setlength{\unitlength}{1844sp}%
\begingroup\makeatletter\ifx\SetFigFont\undefined%
\gdef\SetFigFont#1#2#3#4#5{%
  \reset@font\fontsize{#1}{#2pt}%
  \fontfamily{#3}\fontseries{#4}\fontshape{#5}%
  \selectfont}%
\fi\endgroup%
\begin{picture}(4994,6572)(879,-5708)
\put(4388,-286){\makebox(0,0)[lb]{\smash{{\SetFigFont{10}{14.4}{\rmdefault}{\mddefault}{\updefault}{\color[rgb]{0,0,0}$p_1$}%
}}}}
\thicklines
{\color[rgb]{0,0,0}\put(901,-4111){\line( 5, 6){3780.738}}
}%
{\color[rgb]{0,0,0}\put(901,-3661){\line( 5, 3){4500}}
}%
{\color[rgb]{0,0,0}\put(901,-2986){\line( 3,-1){4927.500}}
}%
{\color[rgb]{0,0,0}\put(901,-2761){\line( 5,-3){4500}}
}%
{\color[rgb]{0,0,0}\put(901,-3211){\line( 1, 0){4950}}
}%
\put(4388,614){\makebox(0,0)[lb]{\smash{{\SetFigFont{10}{14.4}{\rmdefault}{\mddefault}{\updefault}{\color[rgb]{0,0,0}$\mathbb{P}^1$}%
}}}}
\put(4388,-5236){\makebox(0,0)[lb]{\smash{{\SetFigFont{10}{14.4}{\rmdefault}{\mddefault}{\updefault}{\color[rgb]{0,0,0}$p_5$}%
}}}}
\put(4388,-4336){\makebox(0,0)[lb]{\smash{{\SetFigFont{10}{14.4}{\rmdefault}{\mddefault}{\updefault}{\color[rgb]{0,0,0}$p_4$}%
}}}}
\put(4388,-3436){\makebox(0,0)[lb]{\smash{{\SetFigFont{10}{14.4}{\rmdefault}{\mddefault}{\updefault}{\color[rgb]{0,0,0}$p_3$}%
}}}}
\put(4388,-1861){\makebox(0,0)[lb]{\smash{{\SetFigFont{10}{14.4}{\rmdefault}{\mddefault}{\updefault}{\color[rgb]{0,0,0}$p_2$}%
}}}}
{\color[rgb]{0,.56,.56}\put(4276,614){\line( 0,-1){6300}}
}%
\end{picture}%
\end{center}
\end{figure}

\begin{rem}
Renaming the variables
as $x_{1}=c_{1}$, $x_{2}=c_{4}$, $x_{3}=c_{2}$, $x_{4}=c_{5}$, $x_{5}=c_{3}$ lead to the 
 the famous \textit{pentagon recurrence}:
\begin{equation}\label{pentarec2}
x_{i-1}x_{i+1}=1+x_{i}.
\end{equation}
It is well known, and easy to establish, that every sequence $(x_{i})_{i\in \Z}$ with no zero values  satisfying the recurrence \eqref{pentarec2},
is 5-periodic, for details see e.g. \cite{FoRe}.
\end{rem}

\subsection{Periodicity and glide symmetry}\label{sym}

The frieze pattern of width 2 given in \eqref{frieze2} reveals two important properties of friezes:
periodicity and invariance under glide reflection. This is a general fact.

\begin{thm}[\cite{Cox}]\label{thmPerio}
Rows in a frieze of width $m$ are periodic with period dividing $m+3$.
\end{thm}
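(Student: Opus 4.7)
The plan is to reduce the periodicity statement to a closure identity in $\SL_2$ whose coefficients come from the top non-border row of the frieze (the \emph{quiddity row}) and then extract periodicity of every row from this closure.

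First I would use the top border $f_{0,j} \equiv 1$ and the unimodular rule to express every entry of the frieze as an Euler continuant of consecutive quiddity entries. Writing $a_k := f_{1,k}$ and $K_i$ for the $i$-th Euler continuant --- defined by the recursion $K_i(x_1, \ldots, x_i) = x_i K_{i-1}(x_1, \ldots, x_{i-1}) - K_{i-2}(x_1, \ldots, x_{i-2})$ with $K_0 = 1$ and $K_{-1} = 0$ --- a direct induction on $i$ yields
\[
f_{i, j} = K_i(a_{j-i+1}, \ldots, a_j),
\]
with the tameness condition (iv) ensuring that the induction is consistent across all diamonds of the frieze.

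Next I would translate the bottom border --- $f_{m+1, \cdot} \equiv 1$ together with the implicit $f_{m+2, \cdot} \equiv 0$ --- into a single closure identity in $\SL_2(\mathbb{R})$. These rows give the continuant identities $K_{m+1}(a_{j-m}, \ldots, a_j) = 1$ and $K_{m+2}(a_{j-m-1}, \ldots, a_j) = 0$ for every $j$. Applying the classical matrix-continuant identity
\[
\begin{pmatrix} x_1 & -1 \\ 1 & 0 \end{pmatrix} \cdots \begin{pmatrix} x_n & -1 \\ 1 & 0 \end{pmatrix} = \begin{pmatrix} K_n(x_1, \ldots, x_n) & -K_{n-1}(x_1, \ldots, x_{n-1}) \\ K_{n-1}(x_2, \ldots, x_n) & -K_{n-2}(x_2, \ldots, x_{n-1}) \end{pmatrix}
\]
with $n = m+3$, the two boundary identities together with $\det = 1$ determine all four entries of the product, producing the closure relation
\[
M(a_{j-m-1})\, M(a_{j-m})\, \cdots\, M(a_{j+1}) = \varepsilon_j\, \mathrm{Id},\qquad M(a) := \begin{pmatrix} a & -1 \\ 1 & 0 \end{pmatrix},\quad \varepsilon_j \in \{\pm 1\},
\]
valid for every $j$.

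Finally, to extract the periodicity, I would compare the closure relation at $j$ and at $j+1$: factoring out the common middle product $M(a_{j-m}) \cdots M(a_{j+1})$ gives $\varepsilon_j M(a_{j-m-1})^{-1} = \varepsilon_{j+1} M(a_{j+2})^{-1}$. Since $M(a) = -M(b)$ is impossible (the off-diagonal $\pm 1$ entries would have to flip), one concludes $\varepsilon_j = \varepsilon_{j+1}$ and $a_{j+2} = a_{j-m-1}$, i.e. $a_{k+m+3} = a_k$ for every $k$. The quiddity is therefore $(m+3)$-periodic, and by the continuant formula of the first step every other row is also periodic with period dividing $m+3$. The main obstacle I foresee is the second step: matching the staggered indexing of the frieze with the order of matrix products, and extracting the four entries of the closure matrix from the two boundary continuant identities. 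Once this bookkeeping is settled, the continuant formalism makes the remainder essentially automatic.
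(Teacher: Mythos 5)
Your proof is correct, but it takes a genuinely different route from the paper's. The paper (following Coxeter) obtains Theorem \ref{thmPerio} as a corollary of the stronger glide-reflection symmetry of Theorem \ref{thmGlide}, whose proof rests on the diagonal recurrence of Proposition \ref{propRec}: the square of the glide is the translation by $m+3$. You use the same recurrence as your starting point (your continuant formula is exactly Theorem \ref{thmPoly}), but then encode the two bottom boundary rows as an $\SL_2$ closure identity --- the product of $m+3$ consecutive matrices $M(a_i)$ equals $-\id$, which is essentially the content of Theorem \ref{eqfri} and of the superperiodicity viewpoint of \S\ref{superper} --- and compare consecutive closure relations to force $a_{j+m+3}=a_j$, whence periodicity of every row. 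Your comparison step is clean and correct; in fact the sign ambiguity $\varepsilon_j$ never arises, since the two off-diagonal continuants vanish, the bottom-right entry is $-K_{m+1}=-1$, and the determinant is $1$, so the closure matrix is exactly $-\id$ from the outset. What the paper's route buys is the glide symmetry itself, a strictly stronger conclusion; what your route buys is a direct monodromy-style argument that adapts with little change to the $\SL_{k+1}$-friezes of Section \ref{SLtil}. Both arguments need tameness (or positivity) precisely where you flag it, namely to justify the three-term recurrence along the diagonals.
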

The period $n=m+3$ is called the order of the frieze in \cite{Cox}.
This periodicity is actually implied by a stronger symmetry.
Recall that a glide reflection is the composition of a reflection about a line and a translation along that line. 

\begin{thm}[\cite{Cox}]\label{thmGlide}
Friezes are invariant under a glide reflection with respect to the horizontal median line of the pattern.
\end{thm}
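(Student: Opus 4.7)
The strategy is to express each entry of the frieze as a continuant (tridiagonal determinant) in the entries of the first non-trivial row, and then read off the glide symmetry from the bottom boundary condition together with the reversal invariance of continuants.

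First I would fix coordinates: label the rows $k=0,1,\dots,m+1$ (with $k=0$ and $k=m+1$ being the border rows of $1$'s) and choose a column index $i\in\Z$ compatible with the staggering, so that the diamond containing $f_{i,k}$ has $f_{i,k-1}$ above and $f_{i,k+1}$ below. Setting $a_i:=f_{i,1}$, a straightforward induction on $k$ using the unimodular diamond rule and the standard three-term recursion $K_k(x_1,\dots,x_k) = x_k\,K_{k-1}(x_1,\dots,x_{k-1}) - K_{k-2}(x_1,\dots,x_{k-2})$ yields
\begin{equation*}
f_{i,k}\;=\;K_k(a_i,a_{i+1},\dots,a_{i+k-1}),\qquad 1\le k\le m+1.
\end{equation*}
The tameness hypothesis (iv) is exactly what is needed to propagate this induction past potentially degenerate entries where Coxeter's original positivity assumption is unavailable.

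Next I would translate the bottom border into continuant identities. The fact that row $m+1$ is constantly $1$ and the virtual row $m+2$ is $0$ gives
\begin{equation*}
K_{m+1}(a_i,\dots,a_{i+m})=1, \qquad K_{m+2}(a_i,\dots,a_{i+m+1})=0, \qquad \forall\, i\in\Z,
\end{equation*}
which, via the three-term recursion, reduce to the single key relation $a_{i+m+1}=K_m(a_i,\dots,a_{i+m-1})=f_{i,m}$. This already exhibits the glide at the level $k=1$. To promote it to arbitrary $k$, I would combine this with the reversal invariance $K_k(a_1,\dots,a_k)=K_k(a_k,\dots,a_1)$ (immediate from invariance of $\det$ under simultaneous reversal of rows and columns) and an inductive manipulation of $K_{m+1}=1$ using the three-term recursion, to produce an identity of the shape
\begin{equation*}
K_k(a_i,\dots,a_{i+k-1})\;=\;K_{m+1-k}(a_{i+s},\dots,a_{i+s+m-k}),
\end{equation*}
with a uniform shift $s$ chosen so that the two argument ranges are complementary (up to reversal) within a window of length $m+1$. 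Read back in frieze coordinates, this says $f_{i,k}=f_{i+s,\,m+1-k}$, which is exactly the glide reflection asserted by the theorem: the involution $k\mapsto m+1-k$ is the reflection across the horizontal median, and translation by $s$ columns is the accompanying glide.

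The main obstacle is bookkeeping: one has to organize the staggered column indexing so that the shift $s$ in the continuant identity is genuinely independent of $k$ and matches a constant geometric horizontal translation, producing a true glide rather than a $k$-dependent displacement. The continuant algebra itself is elementary once this notation is fixed, and Theorem \ref{thmPerio} is then recovered by iterating the glide.
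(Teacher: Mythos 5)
Your proposal is correct and takes essentially the same route as the paper, which attributes the result to Coxeter and points to Proposition \ref{propRec} and the continuant formula of Theorem \ref{thmPoly} as the key ingredients. For the record, the identity you leave as ``bookkeeping'' does close up: setting $A_k:=K_k(a_i,\dots,a_{i+k-1})$ and $B_k:=K_{m+1-k}(a_{i+k+1},\dots,a_{i+m+1})$, both sequences satisfy $X_k=a_{i+k-1}X_{k-1}-X_{k-2}$ with $X_{-1}=0$, $X_0=1$ (using the right- and left-expansions of the tridiagonal determinant together with the bottom rows of $1$'s and $0$'s), whence $A_k=B_k$, i.e.\ $e_{i,\,i+k-1}=e_{i+k+1,\,i+m+1}$ --- exactly the glide, whose translation part is constant only in the true horizontal coordinate, as you anticipated.
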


In other words, friezes consist of a fundamental domain, for instance triangular, that is reflected and translated horizontally.
Extending the pattern \eqref{exCoCox} one observes this property.
\\

\setlength{\unitlength}{1644sp}%
\begingroup\makeatletter\ifx\SetFigFont\undefined%
\gdef\SetFigFont#1#2#3#4#5{%
  \reset@font\fontsize{#1}{#2pt}%
  \fontfamily{#3}\fontseries{#4}\fontshape{#5}%
  \selectfont}%
\fi\endgroup%
\begin{picture}(13787,3173)(2476,-1310)
\put(10351,1289){\makebox(0,0)[lb]{\smash{{\SetFigFont{10}{20.4}{\rmdefault}{\mddefault}{\updefault}{\color[rgb]{0,0,0}1}%
}}}}
\put(15751,1289){\makebox(0,0)[lb]{\smash{{\SetFigFont{10}{20.4}{\rmdefault}{\mddefault}{\updefault}{\color[rgb]{0,0,0}. . .}%
}}}}
\put(15751,389){\makebox(0,0)[lb]{\smash{{\SetFigFont{10}{20.4}{\rmdefault}{\mddefault}{\updefault}{\color[rgb]{0,0,0}. . .}%
}}}}
\put(15751,-511){\makebox(0,0)[lb]{\smash{{\SetFigFont{10}{20.4}{\rmdefault}{\mddefault}{\updefault}{\color[rgb]{0,0,0}. . .}%
}}}}
\put(2476,-961){\makebox(0,0)[lb]{\smash{{\SetFigFont{10}{20.4}{\rmdefault}{\mddefault}{\updefault}{\color[rgb]{0,0,0}. . .}%
}}}}
\put(2476,-61){\makebox(0,0)[lb]{\smash{{\SetFigFont{10}{20.4}{\rmdefault}{\mddefault}{\updefault}{\color[rgb]{0,0,0}. . .}%
}}}}
\put(2476,839){\makebox(0,0)[lb]{\smash{{\SetFigFont{10}{20.4}{\rmdefault}{\mddefault}{\updefault}{\color[rgb]{0,0,0}. . .}%
}}}}
\put(15301,-961){\makebox(0,0)[lb]{\smash{{\SetFigFont{10}{20.4}{\rmdefault}{\mddefault}{\updefault}{\color[rgb]{0,0,0}1}%
}}}}
\put(14401,-961){\makebox(0,0)[lb]{\smash{{\SetFigFont{10}{20.4}{\rmdefault}{\mddefault}{\updefault}{\color[rgb]{0,0,0}1}%
}}}}
\put(13501,-961){\makebox(0,0)[lb]{\smash{{\SetFigFont{10}{20.4}{\rmdefault}{\mddefault}{\updefault}{\color[rgb]{0,0,0}1}%
}}}}
\put(12601,-961){\makebox(0,0)[lb]{\smash{{\SetFigFont{10}{20.4}{\rmdefault}{\mddefault}{\updefault}{\color[rgb]{0,0,0}1}%
}}}}
\put(11701,-961){\makebox(0,0)[lb]{\smash{{\SetFigFont{10}{20.4}{\rmdefault}{\mddefault}{\updefault}{\color[rgb]{0,0,0}1}%
}}}}
\put(10801,-961){\makebox(0,0)[lb]{\smash{{\SetFigFont{10}{20.4}{\rmdefault}{\mddefault}{\updefault}{\color[rgb]{0,0,0}1}%
}}}}
\put(9901,-961){\makebox(0,0)[lb]{\smash{{\SetFigFont{10}{20.4}{\rmdefault}{\mddefault}{\updefault}{\color[rgb]{0,0,0}1}%
}}}}
\put(9001,-961){\makebox(0,0)[lb]{\smash{{\SetFigFont{10}{20.4}{\rmdefault}{\mddefault}{\updefault}{\color[rgb]{0,0,0}1}%
}}}}
\put(8101,-961){\makebox(0,0)[lb]{\smash{{\SetFigFont{10}{20.4}{\rmdefault}{\mddefault}{\updefault}{\color[rgb]{0,0,0}1}%
}}}}
\put(7201,-961){\makebox(0,0)[lb]{\smash{{\SetFigFont{10}{20.4}{\rmdefault}{\mddefault}{\updefault}{\color[rgb]{0,0,0}1}%
}}}}
\put(6301,-961){\makebox(0,0)[lb]{\smash{{\SetFigFont{10}{20.4}{\rmdefault}{\mddefault}{\updefault}{\color[rgb]{0,0,0}1}%
}}}}
\put(5401,-961){\makebox(0,0)[lb]{\smash{{\SetFigFont{10}{20.4}{\rmdefault}{\mddefault}{\updefault}{\color[rgb]{0,0,0}1}%
}}}}
\put(4501,-961){\makebox(0,0)[lb]{\smash{{\SetFigFont{10}{20.4}{\rmdefault}{\mddefault}{\updefault}{\color[rgb]{0,0,0}1}%
}}}}
\put(3601,-961){\makebox(0,0)[lb]{\smash{{\SetFigFont{10}{20.4}{\rmdefault}{\mddefault}{\updefault}{\color[rgb]{0,0,0}1}%
}}}}
\put(14851,-511){\makebox(0,0)[lb]{\smash{{\SetFigFont{10}{20.4}{\rmdefault}{\mddefault}{\updefault}{\color[rgb]{0,0,0}1}%
}}}}
\put(13951,-511){\makebox(0,0)[lb]{\smash{{\SetFigFont{10}{20.4}{\rmdefault}{\mddefault}{\updefault}{\color[rgb]{0,0,0}2}%
}}}}
\put(13051,-511){\makebox(0,0)[lb]{\smash{{\SetFigFont{10}{20.4}{\rmdefault}{\mddefault}{\updefault}{\color[rgb]{0,0,0}4}%
}}}}
\put(12151,-511){\makebox(0,0)[lb]{\smash{{\SetFigFont{10}{20.4}{\rmdefault}{\mddefault}{\updefault}{\color[rgb]{0,0,0}1}%
}}}}
\put(11251,-511){\makebox(0,0)[lb]{\smash{{\SetFigFont{10}{20.4}{\rmdefault}{\mddefault}{\updefault}{\color[rgb]{0,0,0}2}%
}}}}
\put(10351,-511){\makebox(0,0)[lb]{\smash{{\SetFigFont{10}{20.4}{\rmdefault}{\mddefault}{\updefault}{\color[rgb]{0,0,0}2}%
}}}}
\put(9451,-511){\makebox(0,0)[lb]{\smash{{\SetFigFont{10}{20.4}{\rmdefault}{\mddefault}{\updefault}{\color[rgb]{0,0,0}3}%
}}}}
\put(8551,-511){\makebox(0,0)[lb]{\smash{{\SetFigFont{10}{20.4}{\rmdefault}{\mddefault}{\updefault}{\color[rgb]{0,0,0}1}%
}}}}
\put(7651,-511){\makebox(0,0)[lb]{\smash{{\SetFigFont{10}{20.4}{\rmdefault}{\mddefault}{\updefault}{\color[rgb]{0,0,0}2}%
}}}}
\put(6751,-511){\makebox(0,0)[lb]{\smash{{\SetFigFont{10}{20.4}{\rmdefault}{\mddefault}{\updefault}{\color[rgb]{0,0,0}4}%
}}}}
\put(5851,-511){\makebox(0,0)[lb]{\smash{{\SetFigFont{10}{20.4}{\rmdefault}{\mddefault}{\updefault}{\color[rgb]{0,0,0}1}%
}}}}
\put(4951,-511){\makebox(0,0)[lb]{\smash{{\SetFigFont{10}{20.4}{\rmdefault}{\mddefault}{\updefault}{\color[rgb]{0,0,0}2}%
}}}}
\put(4051,-511){\makebox(0,0)[lb]{\smash{{\SetFigFont{10}{20.4}{\rmdefault}{\mddefault}{\updefault}{\color[rgb]{0,0,0}2}%
}}}}
\put(3151,-511){\makebox(0,0)[lb]{\smash{{\SetFigFont{10}{20.4}{\rmdefault}{\mddefault}{\updefault}{\color[rgb]{0,0,0}3}%
}}}}
\put(15301,-61){\makebox(0,0)[lb]{\smash{{\SetFigFont{10}{20.4}{\rmdefault}{\mddefault}{\updefault}{\color[rgb]{0,0,0}2}%
}}}}
\put(14401,-61){\makebox(0,0)[lb]{\smash{{\SetFigFont{10}{20.4}{\rmdefault}{\mddefault}{\updefault}{\color[rgb]{0,0,0}1}%
}}}}
\put(13501,-61){\makebox(0,0)[lb]{\smash{{\SetFigFont{10}{20.4}{\rmdefault}{\mddefault}{\updefault}{\color[rgb]{0,0,0}7}%
}}}}
\put(12601,-61){\makebox(0,0)[lb]{\smash{{\SetFigFont{10}{20.4}{\rmdefault}{\mddefault}{\updefault}{\color[rgb]{0,0,0}3}%
}}}}
\put(11701,-61){\makebox(0,0)[lb]{\smash{{\SetFigFont{10}{20.4}{\rmdefault}{\mddefault}{\updefault}{\color[rgb]{0,0,0}1}%
}}}}
\put(10801,-61){\makebox(0,0)[lb]{\smash{{\SetFigFont{10}{20.4}{\rmdefault}{\mddefault}{\updefault}{\color[rgb]{0,0,0}3}%
}}}}
\put(9901,-61){\makebox(0,0)[lb]{\smash{{\SetFigFont{10}{20.4}{\rmdefault}{\mddefault}{\updefault}{\color[rgb]{0,0,0}5}%
}}}}
\put(9001,-61){\makebox(0,0)[lb]{\smash{{\SetFigFont{10}{20.4}{\rmdefault}{\mddefault}{\updefault}{\color[rgb]{0,0,0}2}%
}}}}
\put(8101,-61){\makebox(0,0)[lb]{\smash{{\SetFigFont{10}{20.4}{\rmdefault}{\mddefault}{\updefault}{\color[rgb]{0,0,0}1}%
}}}}
\put(7201,-61){\makebox(0,0)[lb]{\smash{{\SetFigFont{10}{20.4}{\rmdefault}{\mddefault}{\updefault}{\color[rgb]{0,0,0}7}%
}}}}
\put(6301,-61){\makebox(0,0)[lb]{\smash{{\SetFigFont{10}{20.4}{\rmdefault}{\mddefault}{\updefault}{\color[rgb]{0,0,0}3}%
}}}}
\put(5401,-61){\makebox(0,0)[lb]{\smash{{\SetFigFont{10}{20.4}{\rmdefault}{\mddefault}{\updefault}{\color[rgb]{0,0,0}1}%
}}}}
\put(4501,-61){\makebox(0,0)[lb]{\smash{{\SetFigFont{10}{20.4}{\rmdefault}{\mddefault}{\updefault}{\color[rgb]{0,0,0}3}%
}}}}
\put(3601,-61){\makebox(0,0)[lb]{\smash{{\SetFigFont{10}{20.4}{\rmdefault}{\mddefault}{\updefault}{\color[rgb]{0,0,0}5}%
}}}}
\put(14851,389){\makebox(0,0)[lb]{\smash{{\SetFigFont{10}{20.4}{\rmdefault}{\mddefault}{\updefault}{\color[rgb]{0,0,0}1}%
}}}}
\put(13951,389){\makebox(0,0)[lb]{\smash{{\SetFigFont{10}{20.4}{\rmdefault}{\mddefault}{\updefault}{\color[rgb]{0,0,0}3}%
}}}}
\put(13051,389){\makebox(0,0)[lb]{\smash{{\SetFigFont{10}{20.4}{\rmdefault}{\mddefault}{\updefault}{\color[rgb]{0,0,0}5}%
}}}}
\put(12151,389){\makebox(0,0)[lb]{\smash{{\SetFigFont{10}{20.4}{\rmdefault}{\mddefault}{\updefault}{\color[rgb]{0,0,0}2}%
}}}}
\put(11251,389){\makebox(0,0)[lb]{\smash{{\SetFigFont{10}{20.4}{\rmdefault}{\mddefault}{\updefault}{\color[rgb]{0,0,0}1}%
}}}}
\put(10351,389){\makebox(0,0)[lb]{\smash{{\SetFigFont{10}{20.4}{\rmdefault}{\mddefault}{\updefault}{\color[rgb]{0,0,0}7}%
}}}}
\put(9451,389){\makebox(0,0)[lb]{\smash{{\SetFigFont{10}{20.4}{\rmdefault}{\mddefault}{\updefault}{\color[rgb]{0,0,0}3}%
}}}}
\put(8551,389){\makebox(0,0)[lb]{\smash{{\SetFigFont{10}{20.4}{\rmdefault}{\mddefault}{\updefault}{\color[rgb]{0,0,0}1}%
}}}}
\put(7651,389){\makebox(0,0)[lb]{\smash{{\SetFigFont{10}{20.4}{\rmdefault}{\mddefault}{\updefault}{\color[rgb]{0,0,0}3}%
}}}}
\put(6751,389){\makebox(0,0)[lb]{\smash{{\SetFigFont{10}{20.4}{\rmdefault}{\mddefault}{\updefault}{\color[rgb]{0,0,0}5}%
}}}}
\put(5851,389){\makebox(0,0)[lb]{\smash{{\SetFigFont{10}{20.4}{\rmdefault}{\mddefault}{\updefault}{\color[rgb]{0,0,0}2}%
}}}}
\put(4951,389){\makebox(0,0)[lb]{\smash{{\SetFigFont{10}{20.4}{\rmdefault}{\mddefault}{\updefault}{\color[rgb]{0,0,0}1}%
}}}}
\put(4051,389){\makebox(0,0)[lb]{\smash{{\SetFigFont{10}{20.4}{\rmdefault}{\mddefault}{\updefault}{\color[rgb]{0,0,0}7}%
}}}}
\put(3151,389){\makebox(0,0)[lb]{\smash{{\SetFigFont{10}{20.4}{\rmdefault}{\mddefault}{\updefault}{\color[rgb]{0,0,0}3}%
}}}}
\put(14851,1289){\makebox(0,0)[lb]{\smash{{\SetFigFont{10}{20.4}{\rmdefault}{\mddefault}{\updefault}{\color[rgb]{0,0,0}1}%
}}}}
\put(13951,1289){\makebox(0,0)[lb]{\smash{{\SetFigFont{10}{20.4}{\rmdefault}{\mddefault}{\updefault}{\color[rgb]{0,0,0}1}%
}}}}
\put(15301,839){\makebox(0,0)[lb]{\smash{{\SetFigFont{10}{20.4}{\rmdefault}{\mddefault}{\updefault}{\color[rgb]{0,0,0}1}%
}}}}
\put(14401,839){\makebox(0,0)[lb]{\smash{{\SetFigFont{10}{20.4}{\rmdefault}{\mddefault}{\updefault}{\color[rgb]{0,0,0}2}%
}}}}
\put(13501,839){\makebox(0,0)[lb]{\smash{{\SetFigFont{10}{20.4}{\rmdefault}{\mddefault}{\updefault}{\color[rgb]{0,0,0}2}%
}}}}
\put(12601,839){\makebox(0,0)[lb]{\smash{{\SetFigFont{10}{20.4}{\rmdefault}{\mddefault}{\updefault}{\color[rgb]{0,0,0}3}%
}}}}
\put(11701,839){\makebox(0,0)[lb]{\smash{{\SetFigFont{10}{20.4}{\rmdefault}{\mddefault}{\updefault}{\color[rgb]{0,0,0}1}%
}}}}
\put(10801,839){\makebox(0,0)[lb]{\smash{{\SetFigFont{10}{20.4}{\rmdefault}{\mddefault}{\updefault}{\color[rgb]{0,0,0}2}%
}}}}
\put(9901,839){\makebox(0,0)[lb]{\smash{{\SetFigFont{10}{20.4}{\rmdefault}{\mddefault}{\updefault}{\color[rgb]{0,0,0}4}%
}}}}
\put(9001,839){\makebox(0,0)[lb]{\smash{{\SetFigFont{10}{20.4}{\rmdefault}{\mddefault}{\updefault}{\color[rgb]{0,0,0}1}%
}}}}
\put(8101,839){\makebox(0,0)[lb]{\smash{{\SetFigFont{10}{20.4}{\rmdefault}{\mddefault}{\updefault}{\color[rgb]{0,0,0}2}%
}}}}
\put(7201,839){\makebox(0,0)[lb]{\smash{{\SetFigFont{10}{20.4}{\rmdefault}{\mddefault}{\updefault}{\color[rgb]{0,0,0}2}%
}}}}
\put(6301,839){\makebox(0,0)[lb]{\smash{{\SetFigFont{10}{20.4}{\rmdefault}{\mddefault}{\updefault}{\color[rgb]{0,0,0}3}%
}}}}
\put(5401,839){\makebox(0,0)[lb]{\smash{{\SetFigFont{10}{20.4}{\rmdefault}{\mddefault}{\updefault}{\color[rgb]{0,0,0}1}%
}}}}
\put(4501,839){\makebox(0,0)[lb]{\smash{{\SetFigFont{10}{20.4}{\rmdefault}{\mddefault}{\updefault}{\color[rgb]{0,0,0}2}%
}}}}
\put(3601,839){\makebox(0,0)[lb]{\smash{{\SetFigFont{10}{20.4}{\rmdefault}{\mddefault}{\updefault}{\color[rgb]{0,0,0}4}%
}}}}
\put(13051,1289){\makebox(0,0)[lb]{\smash{{\SetFigFont{10}{20.4}{\rmdefault}{\mddefault}{\updefault}{\color[rgb]{0,0,0}1}%
}}}}
\put(12151,1289){\makebox(0,0)[lb]{\smash{{\SetFigFont{10}{20.4}{\rmdefault}{\mddefault}{\updefault}{\color[rgb]{0,0,0}1}%
}}}}
\put(11251,1289){\makebox(0,0)[lb]{\smash{{\SetFigFont{10}{20.4}{\rmdefault}{\mddefault}{\updefault}{\color[rgb]{0,0,0}1}%
}}}}
\put(3151,1289){\makebox(0,0)[lb]{\smash{{\SetFigFont{10}{20.4}{\rmdefault}{\mddefault}{\updefault}{\color[rgb]{0,0,0}1}%
}}}}
\put(5851,1289){\makebox(0,0)[lb]{\smash{{\SetFigFont{10}{20.4}{\rmdefault}{\mddefault}{\updefault}{\color[rgb]{0,0,0}1}%
}}}}
\put(4951,1289){\makebox(0,0)[lb]{\smash{{\SetFigFont{10}{20.4}{\rmdefault}{\mddefault}{\updefault}{\color[rgb]{0,0,0}1}%
}}}}
\put(4051,1289){\makebox(0,0)[lb]{\smash{{\SetFigFont{10}{20.4}{\rmdefault}{\mddefault}{\updefault}{\color[rgb]{0,0,0}1}%
}}}}
\put(6751,1289){\makebox(0,0)[lb]{\smash{{\SetFigFont{10}{20.4}{\rmdefault}{\mddefault}{\updefault}{\color[rgb]{0,0,0}1}%
}}}}
\put(7651,1289){\makebox(0,0)[lb]{\smash{{\SetFigFont{10}{20.4}{\rmdefault}{\mddefault}{\updefault}{\color[rgb]{0,0,0}1}%
}}}}
\put(8551,1289){\makebox(0,0)[lb]{\smash{{\SetFigFont{10}{20.4}{\rmdefault}{\mddefault}{\updefault}{\color[rgb]{0,0,0}1}%
}}}}
\put(9451,1289){\makebox(0,0)[lb]{\smash{{\SetFigFont{10}{20.4}{\rmdefault}{\mddefault}{\updefault}{\color[rgb]{0,0,0}1}%
}}}}
\thinlines
{\color[rgb]{0,0,0}\put(2926,1289){\line( 1,-1){2587}}
\put(5513,-1298){\line( 1, 1){3149.500}}
\put(8663,1851){\line( 1,-1){3149.500}}
\put(11813,-1298){\line( 1, 1){3149.500}}
\put(14963,1851){\line( 1,-1){1012.500}}
}%
\end{picture}%

\begin{notation}
We label the elements in a frieze by using couples of indices $(i,j)\in\Z\times\Z$ such that $i\leq j \leq i+m-1$, where $m$ is the width. By periodicity the indices are often considered modulo $n=m+3$. 
We denote by $I\subset \Z^{2}$ the set of indices.
When representing a frieze in the plane, the first index $i$ remains constant on a diagonal directed South-East, and the second index $j$ constant on a  North-East diagonal.
\begin{equation}\label{label}
 \begin{array}{lcccccccccccccccccccccccc}
\cdots&&1&& 1&&1&&1&&1&&1&& \cdots\\[4pt]
&e_{1,1}&&e_{2,2}&&\cdots&&e_{i,i}&&\cdots&&e_{n,n}&&e_{1,1}&&
 \\[4pt]
\cdots&&e_{1,2}&&e_{2,3}&&\cdots&&e_{i,i+1}&&\cdots&&e_{n,1}&&\cdots&\
 \\[4pt]
 &e_{n,2}&&e_{1,3}&&e_{2,4}&&\cdots&&e_{i,i+2}&&\cdots&&e_{n,2}&&\\[4pt]
\cdots&&\cdots&&\cdots&&\cdots&&\cdots&&\cdots&&\cdots&&\cdots\\[4pt]
&1&&1&&1&&1&&1&&1&&1&&
\end{array}
\end{equation}
By extension we set $e_{i,i-1}=e_{i,i+m}=1$ and $e_{i,i-2}=e_{i,i+m+1}=0$, for all $i$.

From now on, friezes are considered as evaluations $e: I \to \A$, where $\A$ is a commutative ring with unit.
Let us stress on that the frieze $(e_{i,j})$ and the frieze $(e'_{i,j})$ related by $e'_{i,j};=e_{i+1,j+1}$ 
have the same representations in the plane but are considered as two different friezes since the mappings $ I \to \A$ are different.
\end{notation}

\subsection{Linear recurrence relations}\label{recRelcox}

A key feature of Coxeter frieze patterns is that the diagonals satisfy linear recurrence relations with coefficients given by the entries of the first row of the pattern. 
For a frieze of width $m=n-3$, we denote by $a_{1}, a_{2}, \ldots, a_{n}$ the cycle of $n$ consecutive entries on the first row, so that $e_{i,i}=a_{i}$.

\begin{prop}[\cite{Cox}]\label{propRec}
For any fixed $j$, the sequence  of numbers $V_{i}:=e_{j,i}$ along the $j$-th South-East diagonal satisfies,
\begin{equation}\label{recRel}
V_{i}=a_{i}V_{i-1}-V_{i-2}, \; 
\end{equation}
 for all $i$.
\end{prop}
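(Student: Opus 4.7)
The plan is to prove the recurrence by induction on the width index $k = i - j$ of the entry $V_i = e_{j,i}$, using the unimodular rule to propagate the identity from the border of the frieze inward.

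The base cases follow directly from the boundary conventions $e_{j,j-1} = 1$ and $e_{j,j-2} = 0$. For $i = j$ the formula reads $a_j \cdot 1 - 0 = a_j = V_j$, which is immediate. For $i = j+1$, the topmost diamond (whose upper vertex is $1$) gives
$$e_{j,j}\,e_{j+1,j+1} - 1 \cdot e_{j,j+1} = 1,$$
so $V_{j+1} = e_{j,j+1} = a_j a_{j+1} - 1 = a_{j+1} V_j - V_{j-1}$, as required.

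For the inductive step at an entry $e_{j,i}$ of width index $k \geq 2$, I would combine the two adjacent unimodular relations
$$e_{j,i-1}\,e_{j+1,i} - e_{j+1,i-1}\,e_{j,i} = 1 \qquad\text{and}\qquad e_{j,i-2}\,e_{j+1,i-1} - e_{j+1,i-2}\,e_{j,i-1} = 1$$
with the induction hypothesis applied to the SE diagonal based one step to the right (whose width index at column $i$ equals $k-1 < k$), namely
$$e_{j+1,i} \;=\; a_i\,e_{j+1,i-1} - e_{j+1,i-2}.$$
Substituting this expression into the first unimodular relation and then eliminating the quantity $e_{j,i-1}\,e_{j+1,i-2}$ via the second, the entries of the $(j+1)$-th SE diagonal telescope out: one obtains $e_{j+1,i-1}\,e_{j,i} = e_{j+1,i-1}\bigl(a_i\,e_{j,i-1} - e_{j,i-2}\bigr)$, and after dividing by $e_{j+1,i-1}$ one is left precisely with $e_{j,i} = a_i\,e_{j,i-1} - e_{j,i-2}$.

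The main obstacle will be that the key algebraic step requires $e_{j+1,i-1}\neq 0$. For Coxeter's original positive friezes this is automatic, since all interior entries are strictly positive; for a general tame frieze, one instead clears denominators and observes that the identity to be established is a polynomial relation among the entries. The tameness condition (iv) then provides the additional linear dependence among three consecutive SE diagonals needed to validate the same cancellation even at the degenerate configurations where some entries vanish, so that the recurrence extends to all tame friezes as claimed.
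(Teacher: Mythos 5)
Your induction and the two-diamond computation are exactly the argument the paper has in mind (it gives no proof, saying only that the statement "is easy to establish using the unimodular rule"), and the algebra is right: substituting the induction hypothesis $e_{j+1,i}=a_ie_{j+1,i-1}-e_{j+1,i-2}$ into $e_{j,i-1}e_{j+1,i}-e_{j+1,i-1}e_{j,i}=1$ and eliminating $e_{j,i-1}e_{j+1,i-2}$ via the neighbouring diamond leaves $e_{j+1,i-1}\bigl(a_ie_{j,i-1}-e_{j,i-2}-e_{j,i}\bigr)=0$. For Coxeter's friezes with positive entries this finishes the proof, since $e_{j+1,i-1}$ ranges over interior entries and border $1$'s and never vanishes.

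The genuine gap is the last paragraph, where you extend to tame friezes. The claim that one can "clear denominators and observe that the identity is a polynomial relation among the entries" presupposes that the frieze is a specialization of the generic frieze $F(A_i)$ of \S\ref{inffri}, which is not part of the definition of a tame frieze (indeed the equivalence of the two notions is itself a statement the paper records separately); and the assertion that tameness "validates the same cancellation" is not an argument. The clean way to handle the tame case bypasses your induction entirely: the vanishing of every adjacent $3\times 3$ minor says that any three consecutive South-East diagonals, restricted to a window of three rows, are linearly dependent, while the unimodular rule says the relevant $2\times 2$ minors equal $1\neq 0$; hence each diagonal is, locally and therefore globally by overlapping windows, a \emph{fixed} linear combination $V_i=\lambda_iV_{i-1}+\mu_iV_{i-2}$ of the two preceding ones, with coefficients independent of $j$. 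Evaluating this relation on the two border diagonals coming from the top rows of $0$'s and $1$'s (equivalently, using $e_{i,i}=a_i$, $e_{i,i-1}=1$, $e_{i,i-2}=0$) pins down $\lambda_i=a_i$ and $\mu_i=-1$. You should either replace your last paragraph by this argument or restrict the proof to friezes with nonvanishing entries.
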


This statement is easy to establish using the unimodular rule, and is a key point in the proof of Theorem \ref{thmGlide}, 
see also \cite{CoRi}.
This also makes connections between friezes and continued fractions \cite{Cox}.

\subsection{Polynomial entries as continuants}\label{laurent}

Consider a frieze of width $m=n-3$ with first row consisting in the cyclic sequence $a_{1}, a_{2}, \ldots, a_{n}$ (so that $e_{i,i}=a_{i}$).
$$
 \begin{array}{lcccccccccccccccccccccccc}
1&&1&& 1&&1&&1&&1&& \\[4pt]
&a_{1}&&a_{2}&&\cdots&&a_{n}&&a_{1}&&
 \\[4pt]
\cdots&&a_{1}a_{2}-1&&a_{2}a_{3}-1&&\cdots&&a_{n}a_{1}-1&&\cdots&\
 \\[6pt]
&\cdots&&\cdots&&\cdots&&\cdots&&\cdots&&\\[4pt]
1&&1&&1&&1&&1&&1
\end{array}
$$
If one uses the unimodular rule in the frieze to compute the values rows after rows,
one would expect to obtain rational functions in $a_{i}$'s. 
However, the entries are actually polynomials in $a_{i}$'s (this is a consequence of Proposition \ref{propRec}). 

\begin{thm}[\cite{Cox}]\label{thmPoly}
All entries in the frieze are polynomials in the entries $a_{i}$'s of the first row; explicit expressions are given by the following determinants:
\begin{equation} \label{matdet}
e_{i,j}=\left|\begin{array}{cccccccc}
a_{i}&1&&&\\\
1&a_{i+1}&1&&\\
&\ddots&\ddots&\ddots&\\
&&1&a_{j-1}&1\\
&&&1&a_{j}
\end{array}\right|.
\end{equation}
\end{thm}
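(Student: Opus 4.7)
The strategy is to prove the explicit determinantal formula directly; the polynomiality statement then follows as an immediate corollary, since a tridiagonal determinant is obviously a polynomial in its diagonal entries. The only substantive tool needed is Proposition~\ref{propRec}, together with the boundary conventions $e_{i,i-1}=1$ and $e_{i,i-2}=0$ fixed in the Notation paragraph.

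The first step is to record that the right-hand side of \eqref{matdet} satisfies a recurrence in $j$. Let
\[
D_{i,j}:=\left|\begin{array}{cccccc}
a_{i}&1&&&\\
1&a_{i+1}&1&&\\
&\ddots&\ddots&\ddots&\\
&&1&a_{j-1}&1\\
&&&1&a_{j}
\end{array}\right|.
\]
Expanding along the last row (or column) gives
\[
D_{i,j}\;=\;a_{j}\,D_{i,j-1}\;-\;D_{i,j-2},
\]
with the usual conventions $D_{i,i-1}=1$ (empty determinant) and $D_{i,i-2}=0$. This is a completely routine cofactor expansion and is really the only ``calculation'' in the proof.

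The second step is to observe that the frieze entries satisfy exactly the same recurrence with the same initial data. Fix the first index $i$ and set $V_{k}:=e_{i,k}$. Proposition~\ref{propRec} asserts
\[
V_{k}\;=\;a_{k}V_{k-1}-V_{k-2}\qquad\text{for all }k,
\]
and the boundary extensions from the Notation paragraph give $V_{i-1}=e_{i,i-1}=1$ and $V_{i-2}=e_{i,i-2}=0$. Hence $V_{k}$ and $D_{i,k}$ satisfy the same linear two-term recurrence in $k$ with identical initial values. A straightforward induction on $k-i$ (starting from $k=i-2$ and $k=i-1$) then yields $e_{i,j}=D_{i,j}$, which is the claimed formula.

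No step is really the main obstacle here: the heart of the argument has been externalized to Proposition~\ref{propRec}. The only small care needed is to make sure the boundary conventions on both sides agree, so that the inductive base cases match; once they do, the matching recurrences force equality of the two sequences, and the polynomial nature of the $e_{i,j}$ in the variables $a_{1},\ldots,a_{n}$ is visible on the determinantal side.
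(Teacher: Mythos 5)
Your proof is correct and follows essentially the same route the paper indicates: the paper derives Theorem~\ref{thmPoly} as a consequence of Proposition~\ref{propRec}, exactly as you do, by matching the three-term recurrence of the continuant determinants in \eqref{matdet} (via cofactor expansion) with the diagonal recurrence $e_{i,k}=a_k e_{i,k-1}-e_{i,k-2}$ and the boundary conventions $e_{i,i-1}=1$, $e_{i,i-2}=0$. Nothing further is needed.
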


An arbitrary $n$-periodic sequence $(a_i)$ does not define a frieze pattern with $e_{i,i}:=a_{i}$, 
for all $i$. 
There is no guarantee that
the bottom boundary condition, $e_{i,i+n-3}=1$ will be satisfied.
Three polynomial equations in $a_{i}$'s have to be satisfied in order to define a frieze. 
These equations can be written in terms of the determinants \eqref{matdet}, cf Theorem \ref{eqfri}.

\begin{rem}
The determinants \eqref{matdet} appear in the theory of \textit{continuants}, see \cite{Mui}. 
They are also a first example of  \textit{Andr\'e's determinants}
used to solve linear finite difference equations, \cite{And}, \cite{Jor}.
Note also that in the case of constant coefficients $a_{i}=2x$, the determinant \eqref{matdet} of order  $k$ defines
 the $k$-th Chebyshev polynomial of 2nd kind $U_{k}(x)$.
\end{rem}

\subsection{Laurent phenomenon}
Given a frieze of width $m$,  denote by $x_{0},x_{1}, \ldots,x_{m+1}$ the entry on the $0$-th South-East diagonal (note that $x_{0}=x_{m+1}=1$). 
$$
 \begin{array}{lcccccccccccccccccccccccc}
&1&&1&& 1&&1&&1&&1&&\cdots\\[4pt]
\cdots&&x_{1}&&e_{1,1}&&e_{2,2}&&\cdots&&e_{m,m}&&x_{m}&&
 \\[4pt]
&&&x_{2}&&e_{1,2}&&\cdots&&e_{m-1,m}&&x_{m-1}&&\
 \\[4pt]
&& &&\ddots&&\cdots&&\cdots&&\iddots&&\\[4pt]
& &&&&x_{m}&&e_{1,m}&&x_{1}&&\\[4pt]
\cdots&&1&&1&&1&&1&&1&&\cdots&&
\end{array}
$$
The unimodular rule allows us to compute the rest of the frieze diagonal after diagonal.
One expects to express the entries as rational functions in $x_{i}$'s. Surprisingly all the entries simplify to Laurent polynomials.

\begin{thm}[\cite{CoRi}]\label{thmLaurent}
Entries in a frieze are Laurent polynomials, with positive integer coefficients, in the entries $x_{i}$, $1\leq i \leq m$, placed on a diagonal. Furthermore, one has the explicit formula\begin{equation}
e_{i,j}=x_{i-1}x_{j+1}\left(\frac{1}{x_{i-1}x_{i}}+\frac{1}{x_{i}x_{i+1}}+\cdots+\frac{1}{x_{j}x_{j+1}}\right).\end{equation}
\end{thm}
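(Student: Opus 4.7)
I would introduce the proposed right-hand side as an independent expression $f_{i,j}$ and show that it satisfies both the unimodular diamond rule and enough initial data to coincide with $e_{i,j}$; uniqueness of the frieze extension then forces $e=f$. Concretely, setting
\[
f_{i,j} := x_{i-1}\,x_{j+1}\,S_{i-1}^{j}, \qquad S_{a}^{b} := \sum_{k=a}^{b}\frac{1}{x_{k}x_{k+1}},
\]
with the convention $x_{0}=x_{m+1}=1$, the goal is to prove $e_{i,j}=f_{i,j}$ throughout the fundamental triangle $\{(i,j):i\geq 1,\ i-1\leq j\leq m\}$; by the glide symmetry (Theorem \ref{thmGlide}) and periodicity this covers every entry of the frieze.

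The core step will be to check the diamond identity $f_{i,j}f_{i+1,j+1}-f_{i,j+1}f_{i+1,j}=1$. A direct expansion gives
\[
f_{i,j}f_{i+1,j+1}-f_{i,j+1}f_{i+1,j} \;=\; x_{i-1}x_{i}\,x_{j+1}x_{j+2}\,\bigl(S_{i-1}^{j}\,S_{i}^{j+1}-S_{i-1}^{j+1}\,S_{i}^{j}\bigr),
\]
so the problem reduces to evaluating the bracket. Using the one-step splittings $S_{i-1}^{j}=\tfrac{1}{x_{i-1}x_{i}}+S_{i}^{j}$ and $S_{i}^{j+1}=S_{i}^{j}+\tfrac{1}{x_{j+1}x_{j+2}}$, the cross terms cancel and the bracket telescopes to $\tfrac{1}{x_{i-1}x_{i}}\cdot\tfrac{1}{x_{j+1}x_{j+2}}$, yielding exactly $1$. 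This cancellation is the substance of the proof; I expect it to be the only non-routine step.

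The remaining verifications are matching boundary data and appealing to uniqueness. The top border is immediate: $f_{i,i-1}=x_{i-1}x_{i}\cdot\tfrac{1}{x_{i-1}x_{i}}=1$, in agreement with $e_{i,i-1}=1$. Parametrising the prescribed diagonal by $e_{0,k-1}=x_{k}$, the diamond at $(0,j-1)$ gives the recursion $e_{1,j}=(1+x_{j+1}\,e_{1,j-1})/x_{j}$ with $e_{1,0}=1$; a one-line check shows that $f_{1,j}=x_{j+1}S_{0}^{j}$ obeys the same recursion, so $e_{1,j}=f_{1,j}$ for $0\leq j\leq m$. Finally, the diamond rule solved as $e_{i,j}=(1+e_{i-1,j}\,e_{i,j-1})/e_{i-1,j-1}$ recursively determines every entry of the fundamental triangle from its top border and its row $i=1$; since $f$ satisfies the same recursion, induction on $i$ yields $e=f$ throughout. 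Positivity of the Laurent coefficients is then manifest from the formula: each summand $x_{i-1}x_{j+1}/(x_{k}x_{k+1})$ is a Laurent monomial in $x_{1},\dots,x_{m}$ with coefficient $+1$, and distinct values of $k$ produce distinct monomials, so no cancellation can occur.
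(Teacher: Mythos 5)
Your proposal is correct: the diamond identity for $f_{i,j}=x_{i-1}x_{j+1}S_{i-1}^{j}$ does reduce, after the one-step splittings of $S$, to the single surviving cross term $\tfrac{1}{x_{i-1}x_{i}}\cdot\tfrac{1}{x_{j+1}x_{j+2}}$, the boundary checks $f_{i,i-1}=1$ and $f_{1,j}=x_{j+1}S_{0}^{j}$ are right, and the induction via $e_{i,j}=(1+e_{i-1,j}e_{i,j-1})/e_{i-1,j-1}$ (legitimate over $\Q(x_1,\dots,x_m)$ since $f_{i-1,j-1}$ is a nonzero rational function) pins down the fundamental triangle; glide symmetry and periodicity then propagate the conclusion to every entry.

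Note, however, that the paper does not actually prove this theorem: it is quoted from the reference [CoRi], and the only argument the paper offers is the sketch in Remark \ref{remLaurent}, which goes through a genuinely different route. There, one first recovers the first-row entries as $a_{i}=\frac{x_{i-1}+x_{i+1}}{x_{i}}$ from the linear recurrence of Proposition \ref{propRec}, and then substitutes into the continuant determinant of Theorem \ref{thmPoly} to conclude that every entry is a Laurent polynomial in the $x_{i}$ --- but, as the paper explicitly remarks, this does not yield positivity of the coefficients, which in the paper's narrative is instead borrowed from the general Laurent-positivity phenomenon of cluster algebras. Your direct verification of the closed formula is more self-contained and stronger in one stroke: by exhibiting the entry as an explicit sum of distinct Laurent monomials each with coefficient $+1$, you obtain the Laurent property, the positivity, and the explicit formula simultaneously, at the modest cost of having to ``guess'' the formula in advance rather than derive it. The continuant route, conversely, generalizes more readily (it is the mechanism behind Theorem \ref{thmLaurentGen} for arbitrary zig-zags) but needs the external positivity input.
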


\begin{rem}\label{remLaurent}
The statement that entries are Laurent polynomials is not formulated in \cite{Cox}, but is easy to deduce from the results of \textit{cite loc}.
Indeed, from Proposition \ref{propRec} one gets $a_{i}=\frac{x_{i-1}+x_{i+1}}{x_{i}}$, $1\leq i\leq m$ (formula given in \S6 and \S7 of \cite{Cox}), then using Theorem ~\ref{thmPoly} one can express all the entries as Laurent polynomials, but this does not ensure the positivity of the coefficients. 
This phenomenon of simplification of the rational expressions is known as \textit{Laurent phenomenon} and occurs in a more general framework
  \cite{FZ1}, \cite{FZLaurent}. 
Using this general framework one can improve the statement of Theorem~ \ref{thmLaurent}:
 \begin{thm}\label{thmLaurentGen}
Entries in a frieze are Laurent polynomials, with positive integer coefficients, in the entries $x_{i}$, $1\leq i \leq m$, placed in any zig-zag shape in the frieze.\end{thm}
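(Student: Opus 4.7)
The plan is to derive Theorem \ref{thmLaurentGen} from the theory of cluster algebras \cite{FZ1}, \cite{FZLaurent}. First, I would recall that to a frieze of width $m$ one can associate the cluster algebra of type $A_{m}$: the entries $e_{i,j}$ are in bijection with the diagonals of a convex $(m+3)$-gon (labelling vertices cyclically, $e_{i,j}$ corresponds to the diagonal joining vertices $i$ and $j+2$), and the unimodular rule $e_{i,j}e_{i+1,j+1}=e_{i+1,j}e_{i,j+1}+1$ translates into the Ptolemy relation, which coincides with the exchange relation for cluster mutations in type $A$. Under this bijection, every triangulation of the $(m+3)$-gon selects $m$ diagonals, which form a cluster of the cluster algebra, and conversely every cluster arises this way.

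Next, I would argue that a zig-zag of $m$ entries in the frieze corresponds precisely to such a triangulation. A zig-zag path in the planar display of the frieze picks up $m$ entries whose associated diagonals are pairwise non-crossing and triangulate the polygon: one checks this inductively on the two elementary zig-zag moves (replacing one extremal entry of the current zig-zag by its neighbour to the North-East or to the South-East), each of which corresponds to a diagonal flip inside a single triangle of the current triangulation. Starting from a South-East diagonal of entries, for which the statement is already Theorem \ref{thmLaurent}, every zig-zag is reached from it by a finite sequence of such moves, hence corresponds to a valid triangulation and thereby to a seed of the cluster algebra.

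Having identified the $x_i$'s as a cluster, I would invoke the Laurent phenomenon of Fomin--Zelevinsky: any cluster variable, and in particular any $e_{i,j}$, is expressed as a Laurent polynomial in the variables of any fixed initial seed. This yields the Laurent property claimed in the theorem. For the positivity of the coefficients, I would appeal to the positivity theorem for cluster algebras of finite type (type $A_m$ suffices here), which is a well-known special case of the general positivity theorem for skew-symmetric cluster algebras; the positivity can also be read off directly, in the type $A$ case, from combinatorial formulas that express cluster variables as sums over certain subgraphs attached to the triangulation.

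The main obstacle is really the combinatorial dictionary of the second paragraph: making precise what is meant by a zig-zag shape of $m$ entries in the frieze, and verifying that every admissible such shape yields a triangulation of the $(m+3)$-gon, so that it can legitimately be regarded as a seed. Once this identification is set up carefully, no further computation is needed: the Laurent property and the positivity of the coefficients then follow from general cluster-algebra results applied to type $A_m$, and the explicit structure of the Laurent expansions in specific zig-zag cases can be recovered afterwards from Theorem \ref{thmLaurent} by performing mutations step by step.
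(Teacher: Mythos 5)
Your proposal is correct and takes essentially the same route the paper intends: the paper states Theorem \ref{thmLaurentGen} only as a consequence of the general Laurent phenomenon and positivity for the cluster algebra of type $\mA_{m}$ (citing \cite{FZ1}, \cite{FZLaurent} without writing out the dictionary between zig-zags, triangulations of the $(m+3)$-gon, and clusters that you describe). One small point to tighten: flips at the two extremities of the zig-zag alone do not connect all zig-zag shapes (interior corner flips are needed as well), but this is immaterial since each such move is a mutation and, independently, one can verify directly that the diagonals attached to any zig-zag are pairwise non-crossing and hence form a seed.
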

Here ``zig-zag shape'' means piecewise linear path from top to bottom where $x_{i+1}$ is placed immediately at the right or at the left under $x_i$
(without necessary alternating right and left).\end{rem}

\begin{ex}\label{friseA3}
Laurent polynomials obtained in a frieze of width 3:
$$
 \begin{array}{ccccccccccccccccccc}
&&1&&1&& 1&&1&&\cdots \\[4pt]
&\cdots&&x_{1}&&\frac{1+x_{2}+x_{1}x_{3}}{x_{1}x_{2}}&&\frac{1+x_{2}}{x_{3}}&&x_{3}&&
 \\[4pt]
&&x_{2}&&\frac{1+x_{1}x_{3}}{x_{2}}&&\frac{(1+x_{2})^{2}+x_{1}x_{3}}{x_{1}x_{2}x_{3}}&&x_{2}&&\cdots
 \\[4pt]
 &\cdots&&x_{3}&&\frac{1+x_{2}+x_{1}x_{3}}{x_{2}x_{3}}&&\frac{1+x_{2}}{x_{1}}&&x_{1}&&
 \\[4pt]
&&1&&1&&1&&1&&\cdots
\end{array}
$$
\end{ex}

\subsection{From infinite friezes to the variety of tame friezes}\label{inffri}

In this section, we explain the structure of algebraic variety on the set of Coxeter's friezes.

The following idea is used in \cite{MGOTaif}.
We consider the formal infinite frieze pattern $F(A_i)_{i\in \Z}$, where $(A_i)_{i\in \Z}$ is a sequence of indeterminates, placed on the first row:
$$
 \begin{array}{lcccccccccccccccccccccccc}
&1&&1&& 1&&1&&1&&1&& \\[4pt]
\cdots&&A_{1}&&A_{2}&&\cdots&&A_{n}&&A_{n+1}&&\cdots
 \\[4pt]
&\cdots&&A_{1}A_{2}-1&&A_{2}A_{3}-1&&\cdots&&A_{n}A_{n+1}-1&&\cdots&\
 \\[6pt]
&&\cdots&&\cdots&&\cdots&&\cdots&&\cdots&&\\[4pt]
\end{array}
$$
The entries in the frieze are computed row by row using the unimodular rule. 
The computations are \textit{a priori} made in the fractions field $\Q(A_i, i\in \Z)$, but similarly to Theorem~\ref{thmPoly}, one shows the entries are
actually in the polynomial ring $\Z[A_i, i\in \Z]$. In particular the frieze $F(A_i)$ is well defined from its first row of indeterminates.
One can show that the entries in the frieze $F(A_i)$ can be computed diagonal by diagonal using recurrence  relations of type \eqref{recRel}, or by direct computation of determinants of 
type~\eqref{matdet}.

For a sequence of numbers $(a_i)_{i\in \Z}$ in any unital commutative ring, we define the infinite frieze $F(a_i)$ from the formal frieze $F(A_i)$ by evaluating all the entries at $A_i=a_i$, $i\in \Z$.

\begin{defn}
We say that the frieze $F(a_i)$ is \textit{closed of width} $m$ if the $(m+1)$-th row is a row of 1's and the $(m+2)$-th row is a row of 0's. 
\end{defn}

\begin{ex}\label{wrong_per}The following frieze is a closed frieze of width 2 if and only if $x=-1$.
$$
 \begin{array}{ccccccccccccccccccccc}
\cdots&&1&& 1&&1&&1&&1&&1&&1&&1&&
 \\[4pt]
&-1&&-1&&-1&&-1-x&&0&&x&&-1&&-1&& \cdots
 \\[4pt]
\cdots&&0&&0&&x&&-1&&-1&&-1-x&&0&&0
 \\[4pt]
&1&&1&&1&&1&&1&&1&&1&&1&& \cdots
\end{array}
$$
Indeed, in the 4th row of the frieze $F(A_i)$ one has the entry $e_{1,4}=A_1A_2A_3A_4-A_1A_2-A_1A_4-A_3A_4+1$. 
If one evaluates $F(A_i)$ with $A_1=A_2=A_3=-1$ and $A_4=-1-x$ one obtains on the fourth row $e_{1,4}=-1-x$.  Hence $x=-1$ is a necessary condition for the above frieze to be closed of width 2. Then one checks that it is also sufficient. 
\end{ex}

\begin{rem}

Friezes coming from an evaluation of $F(A_i)$ are generic in a wide sense.
The evaluation allows us to have a well-defined frieze from its first row even if the rows contain 0 entries.
Such friezes are all tame (see the discussion in introduction to \S\ref{FrCC} or Definition \ref{tame} below).
\end{rem}

\begin{thm}[\cite{Cox}, \cite{MGOST}]\label{eqfri}
The frieze $F(a_i)$ is closed of width $m$ if and only if the sequence $(a_i)_i$ is $(m+3)$-periodic and satisfies
$$
0
=\left|\begin{array}{cccccccc}
a_{1}&1&&\\\
1&a_{2}&1&\\
&\ddots&\ddots&\ddots\\
&&1&a_{m+2}\\
\end{array}\right|
=\left|\begin{array}{cccccccc}
a_{2}&1&&\\\
1&a_{3}&1&\\
&\ddots&\ddots&\ddots\\
&&1&a_{m+3}\\
\end{array}\right|,\;\;
1=\left|\begin{array}{cccccccc}
a_{2}&1&&\\\
1&a_{3}&1&\\
&\ddots&\ddots&\ddots\\
&&1&a_{m+2}\\
\end{array}\right|.
$$\end{thm}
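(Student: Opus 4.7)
If $F(a_i)$ is closed of width $m$ then it is a tame Coxeter frieze (by the remark after Example~\ref{wrong_per}), so Theorem~\ref{thmPerio} forces the first row $(a_i)$ to be $(m+3)$-periodic. The three determinantal identities in the statement are then exactly the values of $e_{1,m+2}$, $e_{2,m+3}$ (on the row of $0$'s) and $e_{2,m+2}$ (on the row of $1$'s) read off from the continuant formula of Theorem~\ref{thmPoly}.

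\textbf{Backward direction, matrix setup.} For the converse, the plan is to encode the diagonal recurrence of Proposition~\ref{propRec} into a product of unimodular $2\times 2$ matrices. Set $M_i:=\left(\begin{smallmatrix} a_i & -1\\ 1 & 0\end{smallmatrix}\right)$; a short induction on $j-i$, using $V_k=a_kV_{k-1}-V_{k-2}$ together with the boundary conventions $e_{k,k-1}=1$, $e_{k,k-2}=0$, gives
\[
M_j M_{j-1}\cdots M_i \;=\; \begin{pmatrix} e_{i,j} & -e_{i+1,j} \\ e_{i,j-1} & -e_{i+1,j-1} \end{pmatrix}.
\]
Specialising to $(i,j)=(1,m+3)$ and plugging in the three hypothesised values collapses the right-hand side to $\left(\begin{smallmatrix} e_{1,m+3} & 0\\ 0 & -1\end{smallmatrix}\right)$; since each $M_k$ has determinant $1$, comparing determinants forces $e_{1,m+3}=-1$ and hence $M_{m+3}\cdots M_1=-\id$.

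\textbf{Propagation and conclusion.} Set $n=m+3$ and $A_i:=M_{i+n-1}\cdots M_i$. The periodicity $M_{i+n}=M_i$ gives $A_{i+1}=M_{i+n}A_iM_i^{-1}=M_iA_iM_i^{-1}$, so once $A_1=-\id$ is known, $A_i=-\id$ for every $i\in\Z$ by induction. Applying the matrix formula at arbitrary $i$ now reads off $e_{i,i+m+1}=0$ in the bottom-left entry and $e_{i+1,i+m+1}=1$ in the bottom-right entry; as $i$ varies these are precisely the missing boundary conditions, namely row $m+2$ consists of $0$'s and row $m+1$ of $1$'s. The main technical obstacle is purely bookkeeping --- verifying the matrix formula with the correct signs and boundary conventions --- after which both the determinant trick and the cyclic propagation are essentially automatic.
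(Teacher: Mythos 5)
Your proof is correct. The paper itself does not actually prove Theorem~\ref{eqfri} (it only cites \cite{Cox} and \cite{MGOST}), but your transfer-matrix argument --- encoding the recurrence of Proposition~\ref{propRec} in the product $M_j\cdots M_i$, reading the three continuant conditions off $M_{m+3}\cdots M_1$, using $\det M_k=1$ to force $M_{m+3}\cdots M_1=-\id$, and propagating by conjugation via the periodicity of the $M_i$ --- is precisely the monodromy/superperiodicity viewpoint of \S\ref{superper} that the cited reference develops for general $k$: the identity $M_{m+3}\cdots M_1=-\id$ is exactly the statement that equation \eqref{eqOrd2} is $(m+3)$-superperiodic, consistent with Theorem~\ref{IsoThm1}.
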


This result was obtained in \cite[p307]{Cox} when $a_{i}$ are positive real numbers. The case of arbitrary coefficients is deduced from \cite[\S3]{MGOST} (particular case $k=1$).
In the sequel, we will mainly consider friezes over real or complex numbers. 

When the frieze is closed of width $m$, the infinite array $F(a_{i})$ has $(m+3)$-antiperiodic diagonals (it is a consequence of the periodicity of the coefficients and the recurrence relations along the diagonals), therefore only the first $m$ rows are relevant.  Closed friezes are equivalent to (tame) Coxeter's friezes as defined in the introduction of \S \ref{FrCC}.

In conclusion, the set of real or complex (tame) Coxeter's friezes of width $m=n-3$ is an algebraic subvariety of $\R^n$ or $\C^n$ defined 
by the three polynomial equations of Theorem \ref{eqfri}. 

\subsection{Superperiodic difference equations of order 2}\label{superper}

We consider linear difference equation of order 2, of the form
\begin{equation}\label{eqOrd2}
V_i=a_iV_{i-1}-V_{i-2}
\end{equation}
where the $a_i$, ${i\in \Z}$ are coefficients and $V_i$, ${i\in \Z}$ the unknowns.
This equation is sometimes mentioned in the literature as ``discrete Hill equation'' or ``discrete Sturm-Liouville equation'' or 
``discrete 1-dimensional Shr\"odinger equation''.

Following \cite{Kri} and \cite{MGOST}, such an equation is called \textit{$n$-superperiodic} if all its solutions $(V_i)$ satisfy
$$
V_{i+n}=-V_i
$$
for all ${i\in \Z}$, cf Definition \ref{superdef}.

One shows that the set of superperiodic equations is defined by the same polynomial equations in the coefficients $a_i$'s as the 
closed friezes. In other words, one has the following identification.

\begin{thm}[\cite{MGOTaif}]\label{IsoThm1}
The space of tame Coxeter's friezes of width $m$ is isomorphic, as algebraic variety, to the space of $(m+3)$-superperiodic equations of type \eqref{eqOrd2}.
\end{thm}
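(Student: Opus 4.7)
The strategy is to realize both spaces as algebraic subvarieties of the affine space of $n$-periodic sequences $(a_i)_{i\in\Z/n\Z}$, with $n=m+3$, and to show that their defining polynomial equations coincide. The candidate isomorphism is the tautological one: a tame frieze of width $m$, identified with a closed frieze as in \S\ref{inffri}, is determined by its first row $a_i:=e_{i,i}$, and this same sequence furnishes the coefficients of a difference equation of the form \eqref{eqOrd2}. By Theorem~\ref{eqfri} the frieze side is cut out by three explicit continuant equations, so the problem reduces to checking that these same equations force the corresponding difference equation to be $(m+3)$-superperiodic.

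The bridge between the two structures is Proposition~\ref{propRec}: every South-East diagonal $V_i:=e_{j,i}$ of a tame frieze is already a solution of the recurrence $V_i=a_iV_{i-1}-V_{i-2}$ whose coefficients come from the top row. With the boundary conventions $e_{j,j-2}=0$ and $e_{j,j-1}=1$, the $j$-th diagonal is the fundamental solution with initial data $(V_{j-2},V_{j-1})=(0,1)$, and by Theorem~\ref{thmPoly} its entries are precisely the continuants of \eqref{matdet}.

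Next I would rewrite the recurrence in matrix form, setting
\[
T_i:=\begin{pmatrix} 0 & 1 \\ -1 & a_i\end{pmatrix}\in\SL_2,\qquad M:=T_n T_{n-1}\cdots T_1\in\SL_2,
\]
so that $M$ plays the role of the monodromy over one period. Propagating the basis initial conditions $(V_{-1},V_0)=(1,0)$ and $(V_{-1},V_0)=(0,1)$ by the recurrence and expressing the resulting values $V_{n-1}, V_n$ as continuants yields the closed form
\[
M=\begin{pmatrix}
-K(a_{2},\dots,a_{m+2}) & \phantom{-}K(a_{1},\dots,a_{m+2})\\
-K(a_{2},\dots,a_{m+3}) & \phantom{-}K(a_{1},\dots,a_{m+3})
\end{pmatrix},
\]
where $K$ denotes the determinant in \eqref{matdet}. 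By the definition of $n$-superperiodicity, the condition on \eqref{eqOrd2} is exactly $M=-\id$. Since $\det M=1$ is automatic, the matrix equation $M+\id=0$ reduces to three independent scalar equations, and these match term by term the three continuant equations of Theorem~\ref{eqfri} (the fourth identity $K(a_1,\dots,a_{m+3})=-1$ being forced by $\det M=1$).

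With the defining ideals matched, the tautological map $(a_i)\mapsto(a_i)$ is a polynomial isomorphism of the two algebraic varieties, proving the theorem. The main technical obstacle is the precise bookkeeping: aligning the labeling of the frieze $e_{i,j}$ with the indexing of fundamental solutions, handling consistently the boundary conventions ($e_{i,i-2}=0$, $e_{i,i-1}=1$, $e_{i,i+m}=1$, $e_{i,i+m+1}=0$), and verifying that the three nontrivial entries of $M+\id$ produce exactly, not merely up to an ambient ideal, the three polynomial equations of Theorem~\ref{eqfri}.
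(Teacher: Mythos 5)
Your proposal is correct and follows essentially the same route as the paper: the paper's own justification is precisely that the two spaces are cut out, inside the affine space of $(m+3)$-periodic coefficient sequences $(a_i)$, by the same polynomial equations (those of Theorem~\ref{eqfri}), with the first row of the frieze serving as the coefficients of the equation and the diagonals as its fundamental solutions. Your monodromy computation $M=T_n\cdots T_1=-\id$, with entries expressed as continuants and the fourth entry handled by $\det M=1$, is exactly the verification the paper leaves implicit, and your sign and index bookkeeping checks out against \eqref{matdet} and the boundary conventions $e_{i,i-2}=0$, $e_{i,i-1}=1$.
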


This result was also implicitly obtained in \cite{CuHe}.

Note that  superperiodic equations necessarily have periodic coefficients, since the coefficients can be recovered from the solutions.
In the correspondence between friezes and equations of order 2, the entries in the first row of the frieze coincide with the coefficients of the equation, and pairs of consecutive diagonals in the frieze with solutions of the equation from different initial values. We will give more details in the next section.

\subsection{Moduli space of points on the projective line}\label{M0n}

Cross ratios of points on the circle and frieze patterns were already linked in \cite{Cox}. 
Here, we give a different version of such a link.
We explain how the results of \S\ref{width2} (case $n=5$) generalize to any odd $n=m+3$. 
We will show that frieze patterns provide natural coordinate systems
on the (real or complex) moduli space $\cM_{0,n}$ and also on the bigger space
$$\widehat \cM_{0,n}:=\{p_i\in \mathbb{P}^1, i\in \Z, \,p_{i+n}=p_i,\, p_{i}\not=p_{i+1}\}/ \PGL_2.$$


\begin{thm}[\cite{MGOTaif}]
\label{IsoThm2}
If $n$ is odd then the space $\widehat \cM_{0,n}$
is isomorphic to the space of tame Coxeter's frieze patterns of width $n-3$.
\end{thm}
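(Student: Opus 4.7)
The plan: by Theorem~\ref{IsoThm1} the space of tame friezes of width $m = n-3$ is isomorphic to the space of $n$-superperiodic equations~\eqref{eqOrd2}, so it suffices to construct an algebraic isomorphism between the latter and $\widehat{\mathcal{M}}_{0,n}$. The parity hypothesis enters at a single step: lifting a projective configuration to an antiperiodic sequence of vectors in $\mathbb{C}^2$. In the easy direction, given $V_i = a_i V_{i-1} - V_{i-2}$ with two-dimensional solution space $S$, the evaluation functional $\mathrm{ev}_i\colon S\to\mathbb{C}$, $V\mapsto V_i$, is nonzero (initial values can be prescribed freely) and defines $p_i := [\mathrm{ev}_i]\in\mathbb{P}(S^*)$. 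Superperiodicity gives $\mathrm{ev}_{i+n} = -\mathrm{ev}_i$, so $p_{i+n} = p_i$; and since no nontrivial solution vanishes at two consecutive indices, $\mathrm{ev}_i$ and $\mathrm{ev}_{i+1}$ are linearly independent, whence $p_i\neq p_{i+1}$. The only ambiguity in identifying $\mathbb{P}(S^*)\simeq\mathbb{P}^1$ is $\mathrm{PGL}_2$, so we obtain a well-defined class in $\widehat{\mathcal{M}}_{0,n}$.

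For the reverse direction, pick a representative $(p_i)$ and nonzero lifts $v_i\in\mathbb{C}^2$. Since $p_{i-1}\neq p_{i-2}$ the vectors $v_{i-1},v_{i-2}$ form a basis, and $p_i\neq p_{i-1}$ gives a unique decomposition $v_i = \alpha_i v_{i-1} + \beta_i v_{i-2}$ with $\beta_i\neq 0$. A two-step rescaling $v_i\mapsto c_i v_i$ with $c_i/c_{i-2} = -1/\beta_i$, solvable from any nonzero $c_0,c_1$, normalizes every $\beta_i$ to $-1$ and yields $v_i = a_i v_{i-1} - v_{i-2}$. This recurrence preserves the wedge $v_i\wedge v_{i+1}$, so $v_n\wedge v_{n+1} = v_0\wedge v_1$; combined with $p_n = p_0$ and $p_{n+1} = p_1$, one has $v_n = \alpha v_0$ and $v_{n+1} = \beta v_1$ with $\alpha\beta = 1$. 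It remains to adjust the initial lifts so that $\alpha = \beta = -1$.

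This is where odd $n$ is used. Replacing $(v_0,v_1)$ by $(\lambda v_0,\mu v_1)$ and rerunning the normalization, induction shows the new sequence is $v_i' = \lambda v_i$ for $i$ even and $v_i' = \mu v_i$ for $i$ odd, while $a_i$ acquires a factor $\lambda/\mu$ or $\mu/\lambda$ depending on the parity of $i$. Since $n$ is odd, $v_n' = \mu v_n = (\mu\alpha/\lambda) v_0'$ and $v_{n+1}' = \lambda v_{n+1} = (\lambda\beta/\mu) v_1'$, so superperiodicity at indices $0$ and $1$ reduces to the single equation $\mu/\lambda = -1/\alpha$. Its one-parameter family of solutions differs only by a global scaling that leaves the new coefficients $a_i'$ fixed, so one obtains a uniquely defined superperiodic equation, and the recurrence propagates $v_{i+n}' = -v_i'$ to all $i$. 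For $n$ even the same analysis would force $\alpha = -1$, a nontrivial constraint on $(p_i)$; this is exactly where the parity hypothesis is used.

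Finally, the equation depends only on the $\mathrm{PGL}_2$-class of $(p_i)$: lifting $g\in\mathrm{PGL}_2$ to $\tilde g\in\mathrm{GL}_2$ transports the lifts to $\tilde g v_i$, which satisfy the same recurrence. Mutual inversion of the two constructions is a direct check, using the canonical lifts $v_i := \mathrm{ev}_i$ in the reverse-then-forward composition. Both maps are algebraic in view of the determinantal formulas~\eqref{matdet} and the Laurent expressions of Theorem~\ref{thmLaurent}, so the two spaces are isomorphic as algebraic varieties; combined with Theorem~\ref{IsoThm1} this gives the statement. The technical heart is the parity argument of the previous paragraph; all other steps are linear algebra and bookkeeping.
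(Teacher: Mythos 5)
Your proof is correct and follows essentially the same route as the paper: the heart in both cases is the lemma that a configuration with $p_i\neq p_{i+1}$ lifts to an antiperiodic sequence in $\C^2$ satisfying $V_i=a_iV_{i-1}-V_{i-2}$, with the odd-$n$ hypothesis entering through the closure of a cyclic rescaling system (the paper solves $\lambda_{i+1}\lambda_i=1/\det(\widetilde V_{i+1},\widetilde V_i)$ directly, noting it has a unique solution up to sign exactly when $n$ is odd, while your two-stage normalization followed by the single condition $\mu/\lambda=-1/\alpha$ is the same computation packaged differently). Your inverse map via evaluation functionals on the solution space is a clean dual phrasing of the paper's recovery of the points by quotienting two consecutive diagonals, and your reduction through Theorem~\ref{IsoThm1} matches the paper's simultaneous construction of the equation $V(p)$ and the frieze $f(p)$.
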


We assume that the spaces are considered over the field of real or complex numbers. In the sequel we will work over $\C$ (the case over $\R$ is similar but requires more care regarding the orientation).
The above theorem is stated in \cite{MGOTaif}, and was proved in a more general form in \cite{MGOST}.
We explain below in details the explicit construction of the isomorphism.
The construction also uses an idea of \cite{OST1}, and general ideas of projective geometry \cite{OvTa}.
\\

\textbf{Construction of the isomorphisms of Theorem \ref{IsoThm1} and \ref{IsoThm2}}.
We fix an odd integer  $n$.
Given an element $p$ of $\widehat \cM_{0,n}$, we explain here how we construct 
a closed frieze $f(p)$ of width $n-3$ and an $n$-superperiodic equation $V(p)$.

First, we choose an $n$-periodic sequence $(p_{i})$ of points in $\bbP^{1}$ representing $p$ modulo $\PGL_{2}$.

\begin{lem}
There exists a unique, up to a sign, lift of the sequence $(p_{i})$
to a sequence $(V_{i})$ of vectors  in $\C^2$, such that $V_{i+n}=-V_{i}$ and
$
\det(V_{i+1}, V_{i})=1,
$
for all $i$.
\end{lem}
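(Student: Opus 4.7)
The plan is to parameterize candidate lifts by a single scalar and reduce the two conditions to one closing equation whose solvability forces the oddness of $n$. First, I would pick any $n$-periodic lift $(\tilde V_i)_{i\in\Z}$ of $(p_i)$ to $\C^2\setminus\{0\}$; this exists since $p_{i+n}=p_i$. Because consecutive points are distinct, the numbers $d_i:=\det(\tilde V_{i+1},\tilde V_i)$ are nonzero and $n$-periodic. Any lift of $p_i$ has the form $V_i=\lambda_i\tilde V_i$ with $\lambda_i\in\C\setminus\{0\}$, and the normalization $\det(V_{i+1},V_i)=1$ becomes the two-step multiplicative recurrence
$$
\lambda_{i+1}\lambda_i=d_i^{-1}.
$$

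Once $\lambda_0$ is chosen, this recurrence determines every $\lambda_i$. A direct induction shows that $\lambda_i$ equals a monomial in the $d_j^{\pm 1}$ multiplied by $\lambda_0^{(-1)^i}$. The antiperiodicity condition $V_{i+n}=-V_i$ translates, via $\tilde V_{i+n}=\tilde V_i$, into $\lambda_{i+n}=-\lambda_i$. Since both $(\lambda_{i+n})_{i\in\Z}$ and $(-\lambda_i)_{i\in\Z}$ satisfy the same recurrence (the first because the $d_i$ are $n$-periodic, the second trivially), it is enough by induction on $i$ to impose the single equation $\lambda_n=-\lambda_0$. Here is where $n$ odd enters: the exponent of $\lambda_0$ in the monomial expression for $\lambda_n$ is $(-1)^n=-1$, so
$$
\lambda_n=C/\lambda_0
$$
for some explicit nonzero constant $C$ depending only on the $d_j$ over one period. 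The closing equation becomes $\lambda_0^2=-C$, which has exactly two solutions, differing by a sign.

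The two solutions $\pm\lambda_0^*$ yield two sequences $(V_i)$ and $(-V_i)$, giving existence and uniqueness up to sign. To finish, I would verify that the construction is independent of the auxiliary choice of periodic lift: any other such lift has the form $\tilde V'_i=\mu_i\tilde V_i$ with $(\mu_i)$ nonzero and $n$-periodic, and the corresponding $\lambda'_i=\lambda_i/\mu_i$ satisfies the same closing equation and reproduces the same vectors $V_i=\lambda'_i\tilde V'_i$. The main point to watch is the use of $n$ odd: for $n$ even, the exponent of $\lambda_0$ in $\lambda_n$ would be $+1$, and $\lambda_n/\lambda_0$ would be a fixed scalar independent of $\lambda_0$, so $\lambda_n=-\lambda_0$ would be a nontrivial obstruction on $(p_i)$ rather than an equation one can always solve; the oddness of $n$ is precisely what makes the closing equation quadratic with two opposite roots and the sign $-1$ in $V_{i+n}=-V_i$ geometrically forced.
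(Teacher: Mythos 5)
Your proof is correct and follows essentially the same route as the paper: parameterize all lifts by rescaling an arbitrary one, reduce the normalization and antiperiodicity conditions to a cyclic two-term multiplicative system in the $\lambda_i$, and use the oddness of $n$ to show that system has exactly two solutions differing by a sign. The only (harmless) difference is that you extend the auxiliary lift periodically and carry the sign into the closing equation $\lambda_n=-\lambda_0$, whereas the paper extends it antiperiodically; you also spell out the alternating-exponent argument that the paper leaves implicit.
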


\noindent
\begin{proof}
Consider an arbitrary lift of the points $(p_{0},\ldots, p_{n-1})$ to vectors $(\widetilde V_{0},\ldots ,\widetilde V_{n-1})$, and extend the sequence by antiperiodicity.
Since $p_{i}\not=p_{i+1}$, we have: $\det(\widetilde V_{i+1},\widetilde V_{i})\not=0$ for all $i$.
We wish to rescale: $V_{i}=\l_{i}\widetilde V_{i}$, so that $\det(V_{i+1}, V_{i})=1$ for all $i$.
This leads to the following system of $n$ equations:
$$
\begin{array}{rcl}
\l_{i+1}\l_{i}&=&1/\det(\widetilde V_{i+1},\widetilde V_{i}),
\qquad 
i=0,\ldots,n-2,\\[4pt]
\l_{0}\l_{n-1}&=&1/\det(\widetilde V_{n-1},\widetilde V_{0}).
\end{array}
$$
This system admits a unique solution (up to a sign) if and only if $n$ is odd.
Hence the lemma.
\end{proof}

The vectors of the sequence $(V_{i})$ defined in the above lemma satisfy relations of the form 
$V_i=a_iV_{i-1}-V_{i-2}$ with periodic coefficients $a_{i+n}=a_i$.
Moreover the coefficients $(a_{i})$ are determined by~$p$, i.e. independent of the choice of the representative $(p_{i})$. 
We denote by $V(p)$ the corresponding equation \eqref{eqOrd2}. 
The equation $V(p)$ is superperiodic since the two components of the vectors of the sequence $(V_i)$ provide two independent antiperiodic solutions.

In addition, modulo the action of $\GL_2$, we can normalize the lifted sequence of points so that $V_{0}=(0,1)$ and $V_{n-1}=(1,0)$.

The frieze $f(p)$ is defined using the coefficients $(a_i)$ on the first row.
Moreover, the normalized sequence of lifted points  $(V_{i})$ appears in the frieze (and also determines the frieze) as a pair of consecutive diagonals.
In the frieze $f(p)$, one has 
$$
e_{1,i}=V_{i}^{(2)},\;\;e_{2,i}=V_{i}^{(1)}.\;
$$
where $(V_{i}^{(1)},V_{i}^{(2)})$ are the components of $V_{i}$.
One obtains the following picture for $f(p)$:
$$
 \begin{array}{lccccccccccccccccccccccccccc}
&1&&1&& 1&&1&&1&&1&&\\[4pt]
\cdots&&V_{1}^{(2)}&&V_{2}^{(1)}&&a_{3}&&a_{4}&&a_{5}&&\cdots&&
 \\[4pt]
&&&V_{2}^{(2)}&&\ddots&&c_{4}&&c_{5}&&c_{6}&&
 \\[7pt]
&& &&\ddots&&V_{n-3}^{(1)}&&\cdots&&\cdots&&\cdots\\[4pt]
& &&&&V_{n-3}^{(2)}&&V_{n-2}^{(1)}&&\cdots&&\cdots\\[4pt]
&&\cdots&&1&&1&&1&&1&&1&&&&
\end{array}
$$
The entries $e_{i,j}$ in the frieze can be computed directly using the sequence of vectors $(V_{i})$:
$$
e_{i,j}=\det(V_{j}, V_{i-2}).
$$

Let us mention that the second row of the frieze has an important geometric interpretation: it gives cross ratios associated to $p$. More precisely, one has the following proposition.
\begin{prop}
If $p$ is an element of $\widehat \cM_{0,n}$ represented by a $n$-tuple $(p_{0},\ldots, p_{n-1})$ of points in~$\bbP^{1}$, then
the entries in the 2nd row of the frieze $f(p)$ are
$$
e_{i-1,i}=[p_{i-3},p_{i-2},p_{i-1},p_{i}]=
\frac{(p_{i}-p_{i-3})(p_{i-1}-p_{i-2})}{(p_{i}-p_{i-1})(p_{i-2}-p_{i-3})}.
$$
\end{prop}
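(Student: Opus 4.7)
My approach is to express both sides of the claimed identity in terms of $2\times 2$ determinants of the lifted vectors $(V_i)$ constructed just before the statement, and then to use the unimodular normalization $\det(V_{i+1},V_i)=1$ and the formula $e_{i,j}=\det(V_j,V_{i-2})$ (both already established in the preceding paragraphs) to match them.

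First, I would unravel the left-hand side. Setting $i':=i-1$ in the formula $e_{i,j}=\det(V_j,V_{i-2})$, one immediately gets
$$
e_{i-1,i}=\det(V_i,V_{i-3}),
$$
so the proposition amounts to verifying
$$
[p_{i-3},p_{i-2},p_{i-1},p_i]=\det(V_i,V_{i-3}).
$$

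Next, I would compute the right-hand cross ratio using the lifts. Since the cross ratio is invariant under $\mathrm{PGL}_2$, I may work in an affine chart where the lifted vector $V_k=(x_k,y_k)$ represents the point $p_k=x_k/y_k$. A direct computation gives
$$
p_k-p_j=\frac{x_ky_j-x_jy_k}{y_ky_j}=\frac{\det(V_k,V_j)}{y_ky_j}.
$$
Applying this identity to each of the four differences appearing in the cross ratio, one sees that each factor $y_{i-3},y_{i-2},y_{i-1},y_i$ appears exactly once in the numerator and once in the denominator, so the second components cancel and
$$
[p_{i-3},p_{i-2},p_{i-1},p_i]
=\frac{\det(V_i,V_{i-3})\,\det(V_{i-1},V_{i-2})}{\det(V_i,V_{i-1})\,\det(V_{i-2},V_{i-3})}.
$$
By the normalization of the lift, each of the three determinants of consecutive vectors equals $1$, and only $\det(V_i,V_{i-3})$ survives, matching $e_{i-1,i}$.

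The argument uses no genuinely hard step: it is essentially the classical fact that the cross ratio of four points on $\mathbb{P}^1$ equals the ratio of determinants of any lifts, combined with the unimodular normalization built into the construction of $f(p)$. The only mild point to be careful about is that the formula $p_k-p_j=\det(V_k,V_j)/(y_ky_j)$ is derived in an affine chart, so one should note that the $\mathrm{PGL}_2$-invariance of the cross ratio legitimates choosing such a chart (or, equivalently, remark that the cancellation of the $y_k$-factors shows the resulting expression is intrinsic). The antiperiodicity $V_{i+n}=-V_i$ does not cause trouble either, because each index appears once in the numerator and once in the denominator, so any sign ambiguity cancels. The main conceptual content is therefore already encoded in the earlier lemma and in the formula $e_{i,j}=\det(V_j,V_{i-2})$; this proposition is their direct geometric translation.
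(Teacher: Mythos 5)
Your proof is correct, and it takes a somewhat different route from the paper's. The paper invokes Proposition \ref{propRec} to say that, for any $j$, the pair of consecutive South-East diagonals $(e_{j-1,\bullet},e_{j,\bullet})$ gives homogeneous coordinates for the points $p_\bullet$; it then specializes to $j=i$, where the boundary conventions $e_{i,i-2}=0$ and $e_{i,i-1}=1$ turn the four points into $\frac{-1}{0},\,\frac{0}{1},\,\frac{1}{e_{i-1,i-1}},\,\frac{e_{i,i}}{e_{i-1,i}}$, and evaluates the cross ratio with one point at infinity to get $e_{i-1,i}$ directly. You instead start from the displayed identity $e_{i,j}=\det(V_j,V_{i-2})$ and the classical determinantal expression of the cross ratio of lifted points, and let the normalization $\det(V_{k+1},V_k)=1$ kill three of the four determinants. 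Your version is more symmetric and avoids the slightly delicate manipulation of a point at infinity (though, as you note, your chart computation has its own mild degeneracy issue when some $y_k=0$, resolved by the intrinsic nature of the final formula); the paper's version has the advantage of staying entirely inside the frieze, using only its entries and boundary conventions, and of making clear that the same computation could be run on any pair of consecutive diagonals. Both arguments rest on the same two facts established just before the proposition, so the difference is one of packaging rather than substance.
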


\begin{proof}
By Proposition \ref{propRec}, pairs of consecutive diagonals in the frieze represent the same sequence of points modulo $\PGL_{2}$, up to cyclic permutations. 
For every $j$, one has
$$
[p_{i-3},p_{i-2},p_{i-1},p_{i}]=
\left[\frac{e_{j,i-3}}{e_{j-1,i-3}},\,
\frac{e_{j,i-2}}{e_{j-1,i-2}},\,
\frac{e_{j,i-1}}{e_{j-1,i-1}},\,
\frac{e_{j,i}}{e_{j-1,i}}
\right].
$$
Choosing $j=i$, one easily computes
$$
[p_{i-3},p_{i-2},p_{i-1},p_{i}]=\left[\frac{-1}{0}, \frac01, \frac{1}{e_{i-1,i-1}}, \frac{e_{i,i}}{e_{i-1,i}}\right]=
\frac{
\frac{1}{e_{i-1,i-1}}
}{
\frac{e_{i,i}}{e_{i-1,i}}-\frac{1}{e_{i-1,i-1}}
}
=e_{i-1,i}.
$$
\end{proof}

\begin{rem}
When $n$ is odd, a point $p\in \widehat \cM_{0,n}$ is characterized by the sequence of the $n$ cross ratios
$$
c_{i}=[p_{i-3},p_{i-2},p_{i-1},p_{i}].
$$
One can recover the first row of the corresponding frieze $f(p)$ directly from this data by solving for $a_{i}$ in the system of equations
$$
1+c_{i}=a_{i}a_{i+1}, \;1\leq i \leq n
$$
where $i$ is considered modulo $n$. 
When $n$ is odd, the system of equations has two sequences of solutions $(a_{i})$ 
with opposite signs. Exactly one of these sequences defines a frieze. 
\end{rem}


\section{Friezes and quivers}\label{FrQuiv}

A first direction to generalize Coxeter's notion of frieze pattern is to define frieze as functions on a repetition quiver. Repetition quivers are classical objects in the theory of representations of quivers. 

Two main alternative conditions may be imposed to the functions on the repetition quivers in order to
 define a frieze.
 One condition, that we call \textit{multiplicative rule}, is a natural generalization of Coxeter's unimodular rule.  
 The other condition is an additive analogue, that we call \textit{additive rule}, which naturally appears in the theory of representations of quivers. 
 
 Multiplicative friezes on repetition quivers were introduced in connection with the recent theory of cluster algebras, \cite{CaCh}, \cite{ARS}, and many results are obtained within this framework \cite{BaMa}, \cite{BaMa2}, \cite{AsDu}, \cite{ADSS}, \cite{KeSc}, \cite{BuDu}, \cite{Ess}.
 
We define the main notions and give the main results that we need from quiver representations and from the theory of cluster algebras, details can be found in classical textbooks or surveys on the subjects, see e.g. \cite{Gab}, \cite{AuReS}, \cite{ASS}, \cite{Schi}, and \cite{Kel}, \cite{Rei}, \cite{GSV}, \cite{Marsh}.

\subsection{Repetition quiver}
Let $\Qc$ be a quiver, i.e. an oriented graph. The set of vertices $\Qc_0$ and the set of arrows $\Qc_1$
are assumed to be finite. We denote by $n$ the cardinality of $\Qc_0$ and often identify this set with the elements $\{1,2,\ldots, n\}$.

The quiver is said to be \textit{acyclic} if it has no oriented cycle.

We denote by $\Qc^{\op}$ the quiver with opposite orientation, i.e. all arrows of $\Qc$ are reversed.

From an acyclic quiver $\Qc$ one constructs the \textit{repetition quiver}
$\Z\Qc$ \cite{Rie}. 
The vertices of $\Z\Qc$ are the couples $(m,i)$, $m\in \Z$,  $i\in \Qc_0$,  and 
for every arrow $i\longrightarrow j$ in $\Qc_1$ one draws the arrows
$$
(m,i) \longrightarrow (m,j)\quad \text{ and } \quad (m,j) \longrightarrow (m+1,i),
$$
for all $m\in \Z$. All the arrows of $\Z\Qc$ are obtained this way.

Note that if $\Qc$ and $\Qc'$ have same underlying unoriented graph, then they have same repetition quivers but with different labels on the vertices.
In particular, one has $\Z\Qc\simeq\Z\Qc^{\op}$.

We denote by $\tau$ the translation on the vertices of $\Z\Qc$ defined by 
$$\tau: (m,i)\mapsto (m-1,i).$$

Similarly, one can define the repetition quiver $\N\Qc$, which is identified with the full subquiver of $\Z\Qc$ with vertices $(m,i)$, 
$i \in \Qc_0$, $m\in \N$.

A copy of $\Qc$ in $\Z\Qc$, with vertices $(m,i)$, $i \in \Qc_0$, for a fixed $m$, is called a \textit{slice} of  $\Z\Qc$.

\begin{ex}\label{exADE}
The Dynkin quivers of type $\mA, \mD, \mE$, i.e. those for which the underlying unoriented graph is a Dynkin diagram of one of these types, 
play an important role in the theory of friezes. Below we fix the labels of the vertices of the Dynkin diagram that we will use throughout the paper.
We choose an orientation so that an edge  $\xymatrix {i\ar@{-}[r]&j }$ is oriented from the smaller index to the larger one.
This notation agrees with the one of \cite{Gab}.\\
\indent
1) Case $\Qc=\mA_{n}$:\\
 \xymatrix  @!0 @R=1.8em @C=1.8pc 
 {
 &
&&
&&
\overset{n}{\bullet}&&
\overset{}{}\ar@{.}[rd]&&
\overset{}{\bullet}\ar[rd]&&
\overset{}{\bullet}\ar[rd]&&
\overset{0n}{\bullet}\ar[rd]&&
\overset{1n}{\bullet}\ar[rd]&&
\overset{}{\bullet}\ar[rd]&&
\overset{}{\bullet}\ar@{.}[rd]&& 
\\
& &
&&
\overset{}{\bullet}\ar[ru]&&
\overset{}{}\ar@{.}[rd]&&
\overset{}{\bullet}\ar[rd]\ar[ru]&&
\overset{}{\bullet}\ar[rd]\ar[ru]&&
\overset{}{\bullet}\ar[rd]\ar[ru]&&
\overset{}{\bullet}\ar[rd]\ar[ru]&&
\overset{}{\bullet}\ar[rd]\ar[ru]&&
\overset{}{\bullet}\ar@{.}[rd]\ar[ru]&&
&&
\\
&
&&
\overset{}{\bullet}\ar@{.}[ru]&&
\overset{}{}\ar@{.}[rd]&&
\overset{}{\bullet}\ar[rd]\ar@{.}[ru]&&
\overset{}{\bullet}\ar[rd]\ar@{.}[ru]&&
\overset{}{\bullet}\ar[rd]\ar@{.}[ru]&&
\overset{}{\bullet}\ar[rd]\ar@{.}[ru]&&
\overset{}{\bullet}\ar[rd]\ar@{.}[ru]&&
\overset{}{\bullet}\ar@{.}[rd]\ar@{.}[ru]&&
&&
\\
&&
\overset{2}{\bullet}\ar[ru]&&
\overset{}{}\ar@{.}[rd]&&
\overset{}{\bullet}\ar[rd]\ar[ru]&&
\overset{}{\bullet}\ar[rd]\ar[ru]&&
\overset{02}{\bullet}\ar[rd]\ar[ru]&&
\overset{12}{\bullet}\ar[rd]\ar[ru]&&
\overset{}{\bullet}\ar[rd]\ar[ru]&&
\overset{}{\bullet}\ar@{.}[rd]\ar[ru]&&
&&
&&
\\
\Qc:& 
\overset{1}{\bullet}\ar[ru]&&
\Z\Qc:&&
\overset{}{\bullet}\ar[ru]&&
\overset{}{\bullet}\ar[ru]&&
\overset{01}{\bullet}\ar[ru]&&
\overset{11}{\bullet}\ar[ru]&&
\overset{}{\bullet}\ar[ru]&&
\overset{}{\bullet}\ar[ru]&&
&&
&& 
\\
&&&&&&&&&&&&&&&&&&&\\
}

2) Case $\Qc=\mD_{n}$:\\
 \xymatrix  @!0 @R=1.8em @C=1.8pc 
 {
 &
&&
&&
\overset{n-1}{\bullet}&&
\overset{}{}\ar@{.}[rd]&&
\overset{}{\bullet}\ar[rd]&&
\overset{}{\bullet}\ar[rd]&&
\overset{0n-1}{\bullet}\ar[rd]&&
\overset{1n-1}{\bullet}\ar[rd]&&
\overset{}{\bullet}\ar[rd]&&
\overset{}{\bullet}\ar@{.}[rd]&& 
\\
& &
&&
\overset{}{\bullet}\ar[ru]\ar[r]&\overset{n}{\bullet}&
\overset{}{}\ar@{.}[rd]\ar@{.}[r]&\ar@{.}[r]&
\overset{}{\bullet}\ar[rd]\ar[ru]\ar[r]&\overset{}{\bullet}\ar[r]&
\overset{}{\bullet}\ar[rd]\ar[ru]\ar[r]&\overset{}{\bullet}\ar[r]&
\overset{}{\bullet}\ar[rd]\ar[ru]\ar[r]&\overset{0n}{\bullet}\ar[r]&
\overset{}{\bullet}\ar[rd]\ar[ru]\ar[r]&\overset{1n}{\bullet}\ar[r]&
\overset{}{\bullet}\ar[rd]\ar[ru]\ar[r]&\overset{}{\bullet}\ar[r]&
\overset{}{\bullet}\ar@{.}[rd]\ar[ru]\ar@{.}[r]&&
&&
\\
&
&&
\overset{}{\bullet}\ar@{.}[ru]&&
\overset{}{}\ar@{.}[rd]&&
\overset{}{\bullet}\ar[rd]\ar@{.}[ru]&&
\overset{}{\bullet}\ar[rd]\ar@{.}[ru]&&
\overset{}{\bullet}\ar[rd]\ar@{.}[ru]&&
\overset{}{\bullet}\ar[rd]\ar@{.}[ru]&&
\overset{}{\bullet}\ar[rd]\ar@{.}[ru]&&
\overset{}{\bullet}\ar@{.}[rd]\ar@{.}[ru]&&
&&
\\
&&
\overset{2}{\bullet}\ar[ru]&&
\overset{}{}\ar@{.}[rd]&&
\overset{}{\bullet}\ar[rd]\ar[ru]&&
\overset{}{\bullet}\ar[rd]\ar[ru]&&
\overset{02}{\bullet}\ar[rd]\ar[ru]&&
\overset{12}{\bullet}\ar[rd]\ar[ru]&&
\overset{}{\bullet}\ar[rd]\ar[ru]&&
\overset{}{\bullet}\ar@{.}[rd]\ar[ru]&&
&&
&&
\\
\Qc:& 
\overset{1}{\bullet}\ar[ru]&&
\Z\Qc:&&
\overset{}{\bullet}\ar[ru]&&
\overset{}{\bullet}\ar[ru]&&
\overset{01}{\bullet}\ar[ru]&&
\overset{11}{\bullet}\ar[ru]&&
\overset{}{\bullet}\ar[ru]&&
\overset{}{\bullet}\ar[ru]&&
&&
&& 
\\
&&&&&&&&&&&&&&&&&&&\\
}

3) Case $\Qc=\mE_{n}$, $n=6,7,8$.\\

 \xymatrix  @!0 @R=1.8em @C=1.8pc 
 {
 &
&&
&&
\overset{n-1}{\bullet}&&
\overset{}{}\ar@{.}[rd]&&
\overset{}{\bullet}\ar[rd]&&
\overset{}{\bullet}\ar[rd]&&
\overset{0n-1}{\bullet}\ar[rd]&&
\overset{1n-1}{\bullet}\ar[rd]&&
\overset{}{\bullet}\ar[rd]&&
\overset{}{\bullet}\ar@{.}[rd]&& 
\\
&&
&&
\overset{}{\bullet}\ar[ru]&&
\overset{}{}\ar@{.}[rd]&&
\overset{}{\bullet}\ar[rd]\ar[ru]&&
\overset{}{\bullet}\ar[rd]\ar[ru]&&
\overset{}{\bullet}\ar[rd]\ar[ru]&&
\overset{}{\bullet}\ar[rd]\ar[ru]&&
\overset{}{\bullet}\ar[rd]\ar[ru]&&
\overset{}{\bullet}\ar@{.}[rd]\ar[ru]&&
&&
\\
& 
&&
\overset{}{\bullet}\ar[ru]\ar[r]&\overset{n}{\bullet}&
\overset{}{}\ar@{.}[rd]\ar@{.}[r]&\ar@{.}[r]&
\overset{}{\bullet}\ar[rd]\ar[ru]\ar[r]&\overset{}{\bullet}\ar[r]&
\overset{}{\bullet}\ar[rd]\ar[ru]\ar[r]&\overset{}{\bullet}\ar[r]&
\overset{}{\bullet}\ar[rd]\ar[ru]\ar[r]&\overset{0n}{\bullet}\ar[r]&
\overset{}{\bullet}\ar[rd]\ar[ru]\ar[r]&\overset{1n}{\bullet}\ar[r]&
\overset{}{\bullet}\ar[rd]\ar[ru]\ar[r]&\overset{}{\bullet}\ar[r]&
\overset{}{\bullet}\ar@{.}[rd]\ar[ru]\ar@{.}[r]&&
&&
\\
&&
\overset{2}{\bullet}\ar@{.}[ru]&&
\overset{}{}\ar@{.}[rd]&&
\overset{}{\bullet}\ar[rd]\ar@{.}[ru]&&
\overset{}{\bullet}\ar[rd]\ar@{.}[ru]&&
\overset{02}{\bullet}\ar[rd]\ar@{.}[ru]&&
\overset{12}{\bullet}\ar[rd]\ar@{.}[ru]&&
\overset{}{\bullet}\ar[rd]\ar@{.}[ru]&&
\overset{}{\bullet}\ar@{.}[rd]\ar@{.}[ru]&&
&&
&&
\\
\Qc:& 
\overset{1}{\bullet}\ar[ru]&&
\Z\Qc:&&
\overset{}{\bullet}\ar[ru]&&
\overset{}{\bullet}\ar[ru]&&
\overset{01}{\bullet}\ar[ru]&&
\overset{11}{\bullet}\ar[ru]&&
\overset{}{\bullet}\ar[ru]&&
\overset{}{\bullet}\ar[ru]&&
&&
&& 
\\
}

\end{ex}

\subsection{Friezes on repetition quivers}
A generalized frieze of type $\Qc$ is a function on the repetition quiver
 $$f: \Z \Qc_{} \rightarrow \A,$$
assigning at each vertex of $\Z\Qc$ an element in a fixed commutative ring with unit  $\A$, so that the assigned values satisfy some ``mesh relations'' read out of the oriented graph $\Z\Qc$.

The function $f$ will be called an \textit{additive frieze} if it satisfies for all  $v\in \Z \Qc_0$,
$$
f(\tau v)+f(v)=\sum_{\substack{\alpha\in\Z\Qc_1:\\[2pt] w\overset{\alpha}{\longrightarrow} v}}\;f(w).
$$
The function $f$ will be called a \textit{multiplicative frieze} if it satisfies for all  $v\in \Z \Qc_0$,
$$
f(\tau v)f(v)=1+\prod_{\substack{\alpha\in\Z\Qc_1:\\[2pt] w\overset{\alpha}{\longrightarrow} v}}\;f(w).
$$
Additive friezes are classical objects in Auslander-Reiten theory, more often called ``additive functions'', see e.g. \cite{Gab} and references therein. Multiplicative friezes naturally appear in \cite{CaCh} and are precisely defined in \cite{ARS}.

\begin{rem}
It is possible to define friezes in a more general way using Cartan matrices or valued quivers, \cite{ARS}.
\end{rem}

\begin{rem}
Other rules for friezes naturally appear in the context of cluster algebras. For instance, 
 \textit{cluster-additive friezes} and \textit{tropical friezes} with  recurrence rules
\begin{equation*}
f(\tau v)+f(v)=\sum_{w\overset{\alpha}{\longrightarrow} v}\;
\max(f(w),0),
\qquad
f(\tau v)+f(v)=\max(\sum_{w\overset{\alpha}{\longrightarrow} v}\;f(w), 0),
\end{equation*}
respectively,
are introduced and studied in \cite{Rin}, and \cite{Guo}.
\end{rem}

\begin{ex}
For $\Qc$ a Dynkin quiver of type $\mA_{m}$, multiplicative friezes coincide with the Coxeter friezes of width $m$, and additive friezes coincide with the patterns
studied in \cite{She}, \cite{Marc}. See also \cite{Gab} where many additive friezes are represented.
We give below examples of friezes over integers, for the type $\mD_5$ and for the Kronecker quiver $ \xymatrix  { \bullet\ar@<-1pt>[r]\ar@<2pt>[r]& \bullet}$.

(1) A multiplicative frieze of type $\mD_{5}$ (computed in \cite{BaMa}):
\begin{center}
\begin{small}
 \xymatrix  @!0 @R=2.1em @C=2.1pc 
 {
&&
\overset{}{}\ar@{.}[rd]&&
\overset{}{2}\ar[rd]&&
\overset{}{1}\ar[rd]&&
\overset{}{4}\ar[rd]&&
\overset{}{5}\ar[rd]&&
\overset{}{6}\ar[rd]&&
\overset{}{1}\ar@{.}[rd]&& 
\\
& 
\overset{}{}\ar@{.}[rd]\ar@{.}[r]&\ar@{.}[r]&
\overset{}{5}\ar[rd]\ar[ru]\ar[r]&\overset{}{1}\ar[r]&
\overset{}{1}\ar[rd]\ar[ru]\ar[r]&\overset{}{2}\ar[r]&
\overset{}{3}\ar[rd]\ar[ru]\ar[r]&\overset{}{2}\ar[r]&
\overset{}{19}\ar[rd]\ar[ru]\ar[r]&\overset{}{10}\ar[r]&
\overset{}{29}\ar[rd]\ar[ru]\ar[r]&\overset{}{3}\ar[r]&
\overset{}{5}\ar@{.}[rd]\ar[ru]\ar[r]&2\ar@{.}[r]&
&&
\\
\overset{}{}\ar@{.}[rd]&&
\overset{}{8}\ar[rd]\ar[ru]&&
\overset{}{2}\ar[rd]\ar[ru]&&
\overset{}{1}\ar[rd]\ar[ru]&&
\overset{}{7}\ar[rd]\ar[ru]&&
\overset{}{11}\ar[rd]\ar[ru]&&
\overset{}{8}\ar@{.}[rd]\ar[ru]&&
&&
&&
\\
& 
\overset{}{3}\ar[ru]&&
\overset{}{3}\ar[ru]&&
\overset{}{1}\ar[ru]&&
\overset{}{2}\ar[ru]&&
\overset{}{4}\ar[ru]&&
\overset{}{3}\ar[ru]&&
&&
&& 
\\
}
\end{small}
\end{center}

(2) An additive frieze of type $\mD_{5}$:
\begin{center}
\begin{small}
 \xymatrix  @!0 @R=2.1em @C=2.1pc 
 {
&&
\overset{}{}\ar@{.}[rd]&&
\overset{}{2}\ar[rd]&&
\overset{}{-1}\ar[rd]&&
\overset{}{1}\ar[rd]&&
\overset{}{-2}\ar[rd]&&
\overset{}{1}\ar[rd]&&
\overset{}{-2}\ar@{.}[rd]&& 
\\
& 
\overset{}{}\ar@{.}[rd]\ar@{.}[r]&\ar@{.}[r]&
\overset{}{1}\ar[rd]\ar[ru]\ar[r]&\overset{}{-1}\ar[r]&
\overset{}{1}\ar[rd]\ar[ru]\ar[r]&\overset{}{2}\ar[r]&
\overset{}{0}\ar[rd]\ar[ru]\ar[r]&\overset{}{-2}\ar[r]&
\overset{}{-1}\ar[rd]\ar[ru]\ar[r]&\overset{}{1}\ar[r]&
\overset{}{-1}\ar[rd]\ar[ru]\ar[r]&\overset{}{-2}\ar[r]&
\overset{}{-1}\ar@{.}[rd]\ar[ru]\ar[r]&1\ar@{.}[r]&
&&
\\
\overset{}{}\ar@{.}[rd]&&
\overset{}{1}\ar[rd]\ar[ru]&&
\overset{}{1}\ar[rd]\ar[ru]&&
\overset{}{0}\ar[rd]\ar[ru]&&
\overset{}{0}\ar[rd]\ar[ru]&&
\overset{}{-1}\ar[rd]\ar[ru]&&
\overset{}{-1}\ar@{.}[rd]\ar[ru]&&
&&
&&
\\
& 
\overset{}{0}\ar[ru]&&
\overset{}{1}\ar[ru]&&
\overset{}{0}\ar[ru]&&
\overset{}{0}\ar[ru]&&
\overset{}{0}\ar[ru]&&
\overset{}{-1}\ar[ru]&&
&&
&& 
}
\end{small}
\end{center}

(3) A multiplicative frieze over the Kronecker quiver:
\begin{center}
\begin{small}
 \xymatrix  @!0 @R=2.1em @C=2.1pc 
 {
 &
 1\ar@<2pt>[rd]\ar@<-2pt>[rd]&&
  2\ar@<2pt>[rd]\ar@<-2pt>[rd]&&
   13\ar@<2pt>[rd]\ar@<-2pt>[rd]&&
    89\ar@<2pt>[rd]\ar@<-2pt>[rd]&&
     610\ar@<2pt>@{.}[rd]\ar@<-2pt>@{.}[rd]&&
 \\
\ar@<2pt>@{.}[ru]\ar@<-2pt>@{.}[ru]&&
  1\ar@<2pt>[ru]\ar@<-2pt>[ru]&&
   5\ar@<2pt>[ru]\ar@<-2pt>[ru]&&
    34\ar@<2pt>[ru]\ar@<-2pt>[ru]&&
     233\ar@<2pt>[ru]\ar@<-2pt>[ru]&&
 }
 \end{small}
 \end{center}
 
(4) An additive frieze over the Kronecker quiver:
\begin{center}
\begin{small}
 \xymatrix  @!0 @R=2.1em @C=2.1pc 
 {
 &
 1\ar@<2pt>[rd]\ar@<-2pt>[rd]&&
3\ar@<2pt>[rd]\ar@<-2pt>[rd]&&
5\ar@<2pt>[rd]\ar@<-2pt>[rd]&&
7\ar@<2pt>[rd]\ar@<-2pt>[rd]&&
9\ar@<2pt>@{.}[rd]\ar@<-2pt>@{.}[rd]&&
 \\
\ar@<2pt>@{.}[ru]\ar@<-2pt>@{.}[ru]&&
2\ar@<2pt>[ru]\ar@<-2pt>[ru]&&
4\ar@<2pt>[ru]\ar@<-2pt>[ru]&&
6\ar@<2pt>[ru]\ar@<-2pt>[ru]&&
8\ar@<2pt>[ru]\ar@<-2pt>[ru]&&
 }
 \end{small}
 \end{center}
\end{ex}

\begin{defn}
Let us fix a set of indeterminates $\{x_1,\ldots, x_n\}$. 
The \textit{generic} additive and multiplicative friezes, denoted by $f_{ad}$ and $f_{mu}$ respectively, are defined
by assigning the value $x_i$ to the vertex $(0,i)$ for all $1\leq i \leq n$. One gets
$$
f_{ad}: \Z \Qc_{} \rightarrow\Z[x_1,\ldots, x_n], \qquad f_{mu}: \Z \Qc_{} \rightarrow \Q(x_1,\ldots, x_n).
$$
We will refer to $x_i$'s as the \textit{initial values} of the friezes.
\end{defn}
Note that these functions are well defined, see e.g. Lemma 3.1 of \cite{AsDu}. 
One can note also that $f_{mu}$ takes values in $ \Q_{sf}(x_1,\ldots, x_n)$ the set of subtraction-free rational fractions, and using the theory of cluster algebras this can be even reduced to $\Z_{\geq 0}[x_1^{\pm 1},\ldots, x_n^{\pm 1}]$ the set of Laurent polynomials with positive integer coefficients.

\begin{rem}\label{fri_nonper}
If $x_1,\ldots, x_n$ are not indeterminates but some given values in a ring $\A$, one may find different multiplicative friezes with same initial values $x_i$'s. 
Indeed, it may happen that $f(\tau v)=0$ for some $v$, and thus the multiplicative rule does not allow us to define uniquely $f(v)$.
Below, we give an example of two different multiplicative friezes on the repetition quiver of $\mA_3$ with same initial values $(0,-1,0)$.

\begin{small}
  \xymatrix  @!0 @R=2.1em @C=2.1pc 
 {
 &
&&
0\ar[rd]&&
0\ar[rd]&&
0\ar[rd]&&
0\ar[rd]&&
0\ar[rd]&& \cdots
\\
\cdots& &
-1\ar[rd]\ar[ru]&&
-1\ar[rd]\ar[ru]&&
-1\ar[rd]\ar[ru]&&
-1\ar[rd]\ar[ru]&&
-1\ar[rd]\ar[ru]&&
-1&&
\\
& 
0\ar[ru]&&
0\ar[ru]&&
0\ar[ru]&&
0\ar[ru]&&
0\ar[ru]&&
0\ar[ru]&& \cdots
\\
 &
&&
0\ar[rd]&&
2\ar[rd]&&
0\ar[rd]&&
4\ar[rd]&&
0\ar[rd]&& \cdots
\\
\cdots& &
-1\ar[rd]\ar[ru]&&
-1\ar[rd]\ar[ru]&&
-1\ar[rd]\ar[ru]&&
-1\ar[rd]\ar[ru]&&
-1\ar[rd]\ar[ru]&&
-1&&
\\
& 
0\ar[ru]&&
1\ar[ru]&&
0\ar[ru]&&
3\ar[ru]&&
0\ar[ru]&&
5\ar[ru]&& \cdots
}
\end{small}
\end{rem}

\subsection{Symmetry of friezes}

A frieze  $f: \Z \Qc_{} \rightarrow \A$ is \textit{periodic},
if there exists an integer $N\geq 1$ such that $f\tau^{-N}=f$. 
The following theorem is a consequence in terms of friezes of classical results from the theory of quiver representations and the theory of cluster algebras. We suggest a proof below.

\begin{thm}\label{frADE}
The friezes  $f_{ad}$ and $f_{mu}$ over a quiver $\Qc$ are periodic if and only if 
$\Qc$ is a Dynkin quiver of type $\mA_{n}$, $\mD_{n}$ or $\mE_{6,7,8}$; 
in these cases the periods\footnote{Note that the period of $f_{ad}$ coincides with the Coxeter number associated to the corresponding Dynkin diagram, and the period of $f_{mu}$ is that number plus two.} are
$$
\begin{array}{l|c|c}
\textup{periods}&f_{ad}&f_{mu}\\[2pt]
\hline
&&\\
\mA_{n}&n+1&n+3\\[2pt]
\hline
&&\\
\mD_{n}&2(n-1)&2n\\[2pt]
\hline
&&\\
\mE_{6,7,8}&12, 18, 30& 14, 20 ,32
\end{array}.
$$
\end{thm}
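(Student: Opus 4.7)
The plan is to treat the additive and multiplicative cases in parallel, reducing the additive case to the theory of Coxeter transformations and Gabriel's theorem, and the multiplicative case to the Fomin-Zelevinsky classification of finite-type cluster algebras together with Zamolodchikov periodicity.

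For $f_{ad}$ the mesh rule $f_{ad}(\tau v) + f_{ad}(v) = \sum_{w\to v} f_{ad}(w)$ shows that $\tau^{-1}$ acts linearly on the polynomial-valued functions on a slice, and after ordering vertices compatibly with the orientation a direct computation identifies this action with the Coxeter transformation $c$ of the Weyl group of the underlying graph of $\Qc$. Periodicity of $f_{ad}$ is thus equivalent to $c$ having finite order, which holds precisely for Dynkin ADE graphs; for affine types $c$ has eigenvalue $1$ with a nontrivial Jordan block and for wild types $c$ has spectral radius $>1$, so in both non-Dynkin situations $c^N \neq \mathrm{id}$ for every $N\geq 1$. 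When $\Qc$ is Dynkin, classical Coxeter theory gives $c^h=\mathrm{id}$ with $h$ the Coxeter number, matching the announced periods $n+1$, $2(n-1)$, $12, 18, 30$. Minimality follows from Gabriel's theorem: a fundamental domain for $\tau^{h}$ in $\Z\Qc$ has $|\Phi^+|$ vertices in bijection with the positive roots, whose dimension vectors are linearly independent in the Grothendieck group, so no proper divisor of $h$ can serve as a period of the generic frieze.

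For $f_{mu}$ I would invoke the cluster-algebraic interpretation of \cite{CaCh}: the values of $f_{mu}$ on the vertices of $\Z\Qc$ are exactly the cluster variables of the cluster algebra $\A(\Qc)$, each slice is a cluster, and $\tau^{-1}$ performs a canonical sequence of mutations. The Fomin-Zelevinsky classification then says that $\A(\Qc)$ has finitely many cluster variables iff $\Qc$ has Dynkin underlying graph; in that case the cluster variables are in bijection with the almost positive roots $\Phi_{\geq -1}$, and Zamolodchikov periodicity for the associated $T$-system yields $\tau^{-(h+2)} f_{mu} = f_{mu}$, producing the stated periods $n+3$, $2n$, $14, 20, 32$. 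Minimality follows from the same bijection with $\Phi_{\geq -1}$, since a smaller period would force the identification of two distinct cluster variables, which are distinct Laurent polynomials in the $x_i$ by the Laurent phenomenon. In the non-Dynkin case $\A(\Qc)$ has infinitely many cluster variables and $f_{mu}$ cannot be periodic.

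The main obstacle, and the step I expect to require the most care, is verifying that $h$ (respectively $h+2$) is the \emph{actual} period of the generic frieze and not merely a multiple of it. In the additive case this reduces to checking that the orbits of $c$ on $\Phi^+$ together with the generic choice of initial values prevent any accidental identification, a root-system argument that is uniform but needs case-by-case bookkeeping for $\mE_{6,7,8}$. In the multiplicative case the difficulty lies in the Zamolodchikov periodicity statement itself, whose proof in the simply laced Dynkin cases relies on the Laurent phenomenon and a careful tracking of the exchange graph; granting that theorem, the minimality of $h+2$ reduces to the count $n+|\Phi^+|$ of cluster variables in finite type.
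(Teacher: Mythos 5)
Your overall strategy matches the paper's quite closely: for sufficiency and the exact periods you use the Coxeter transformation in the additive case and the Caldero--Chapoton/cluster-variable interpretation plus Zamolodchikov periodicity in the multiplicative case, which is precisely what the paper does (its proof is distributed over the proof sketch of the theorem and Remarks \ref{endPerad}, \ref{SymAd} and \ref{endPermu}). The one genuinely different choice is your necessity argument in the additive case: you argue via the spectral behaviour of the Coxeter matrix (unipotent Jordan block in affine type, spectral radius $>1$ in wild type), whereas the paper invokes Auslander's theorem that non-Dynkin acyclic quivers admit indecomposables of unbounded dimension, so some $d^i$ is unbounded. Both work; yours is more linear-algebraic and self-contained, the paper's is more representation-theoretic and also shows that even non-generic additive friezes of a certain kind fail to be periodic.

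Two steps of your write-up do not hold as stated. First, your minimality argument for $f_{ad}$: a fundamental domain for $\tau^{h}$ contains $nh$ vertices and the positive roots number $nh/2$, both larger than $n$, so these dimension vectors cannot be linearly independent in the rank-$n$ Grothendieck group. The argument is also unnecessary: since the initial values are indeterminates, the period of $f_{ad}$ is exactly the multiplicative order of $\Phi_\Qc$, and the order of a Coxeter element is exactly $h$ (not merely a divisor of it) -- this is the classical fact the paper uses, and it disposes of the case-by-case bookkeeping you anticipate. Second, in the multiplicative non-Dynkin case you assert that the values of $f_{mu}$ are \emph{exactly} the cluster variables of $\A_\Qc$ and conclude non-periodicity from their infinitude. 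Outside Dynkin type this identification is false (regular components of the cluster category contribute cluster variables that never appear in the frieze), and in any case the infinitude of the whole set of cluster variables does not by itself show that infinitely many \emph{distinct} ones occur in the frieze. You need the paper's extra step: $f_{mu}$ is the cluster character evaluated on the transjective component, which is isomorphic to $\Z\Qc$ and contains infinitely many pairwise non-isomorphic rigid indecomposables on which the character is injective; only then does non-periodicity follow.
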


\begin{proof}
If $\Qc$ is of type $\mA_{}$, $\mD_{}$, $\mE_{}$, the periodicity of the friezes can be established case by case. 

For the frieze $f_{ad}$ the values on a given copy of $\Qc$ are expressed linearly in terms of the values of the previous copy.
If $d=(d_1, \ldots, d_n)$ are on the $m$-th slice of $\Z\Qc$, then the values 
$d'=(d'_1, \ldots, d'_n)$
of the next slice are obtained by applying a linear transformation $\Psi$ to the column vector $d$.
The transformations in type $\mA_{n}$, and $\mD_{n}$, oriented as in Example \ref{exADE}, are respectively
\begin{equation}\label{PhiAD}
\Psi_\mA=\begin{pmatrix}
-1&1&& &\\
-1&0&1&&\\
\vdots&&\ddots&\ddots\\[-4pt]
-1&0&&\ddots&1\\
-1&0&&&0
\end{pmatrix},
\qquad
\Psi_\mD=\begin{pmatrix}
-1&1&& &\\
-1&0&1&&\\
\vdots&&\ddots&\ddots&1\\[-4pt]
-1&0&&\ddots&1\\
-1&0&&&0
\end{pmatrix},
\end{equation}
and similarly one can write down the matrices for each of the types $\mE_{6,7,8}$.
The periodicty of the additive friezes then follows from the fact that each of the transformations $\Psi$ has finite order, which can be easily established, and determined, using 
Cayley-Hamilton's theorem.

For the frieze $f_{mu}$, the periodicty in type $\mA$ is established by Coxeter, cf Theorem \ref{thmPerio}. One can find similar arguments in type $\mD$ and the friezes in the type  $\mE_{6,7,8}$ can be computed by hand. 
One can also interpret the periodicity of $f_{mu}$ as the Zamolodchikov periodicity in the cluster algebra of same type. 
This periodicity has been proved in \cite{FZYsys}.

The difficult part of the Theorem is the necessary condition. We will give arguments using quiver representations and cluster algebras in the next section (Remarks \ref{endPerad} and \ref{endPermu}).
\end{proof}

For the multiplicative friezes of type $\mA_{n}$ (i.e. Coxeter friezes of width $n$) one already knows that the friezes are $\tau^{n+3}$-invariant. 
Moreover, one knows that there is an extra symmetry: the invariant translation factorizes as the square of an invariant glide reflection. 
There is an analogous extra symmetry in each other Dynkin type (that implies the periodicity)  
that can be expressed using the 
``Nakayama permutation'' $\nu$.
Following \cite{Gab} we define  $\nu: \Z\Qc_{0}\to \Z\Qc_{0}$ in each Dynkin case by
\begin{itemize}
\item $\nu(m,i)=(m+i-1,n+1-i)$ in type $\mA_{n}$, ($\nu$ is a glide reflection),
\item $\nu(m,i)=(m+n-2,i)$ in type $\mD_{n}$, with $n$ even,
\item  $\nu(m,i)=(m+n-2,i)$, for $1\leq i \leq n-2$, and  $\nu(m,n-1)=(m+n-2,n)$,  $\nu(m,n)=(m+n-2,n-1)$ in type $\mD_{n}$, with $n$ odd,
\item
 $\nu(m,i)=(m+5,6-i)$, for $1\leq i \leq 5$, and  $\nu(m,6)=(m+5,6)$ in type $\mE_{6}$, 
\item
 $\nu(m,i)=\tau^{-8}(m,i)=(m+8,i)$ in type $\mE_{7}$, 
 \item
 $\nu(m,i)=\tau^{-14}(m,i)=(m+14,i) $ in type $\mE_{8}$.
 \end{itemize}
 
 Note that $\nu$ commutes with $\tau$, and that $\nu^2
=\tau^{-N}$ with $N=n-1, 2(n-2), 10, 16, 28$, in the cases $\mA_{n}$, $\mD_{n}$ and $\mE_{6,7,8}$, respectively.

We also introduce the following other two transformations
$$
\Sigma:=\tau^{-1}\nu, \quad F:=\tau^{-1}\Sigma.
$$

\begin{rem}
The transformations $\tau$, $\nu$, $\Sigma$ and $F$ are the combinatorial equivalent of the 
Auslander-Reiten, Nakayama, Serre and Frobenius functors, respectively, used for the quiver representations.
\end{rem}
\begin{thm}\label{perADE}
Let $\Qc$ be a Dynkin quiver of type $\mA_{n}$, $\mD_{n}$ or $\mE_{6,7,8}$.
\begin{enumerate}
\item
The frieze $f_{ad}$ satisfies
$$
f_{ad}\Sigma = -f_{ad}.
$$
\item
The frieze $f_{mu}$ satisfies
$$
f_{mu}F= f_{mu}.
$$
\end{enumerate}
\end{thm}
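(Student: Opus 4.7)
The strategy is to reduce each identity to a finite check on a single slice of $\Z\Qc$. First, one verifies case by case that in every Dynkin type the Nakayama permutation $\nu$ is a quiver automorphism of $\Z\Qc$ commuting with the translation $\tau$: in types $\mA_{n}$, $\mD_{n}$ with $n$ odd, and $\mE_{6}$ it is the lift of the nontrivial diagram involution composed with a power of $\tau^{-1}$, while in the remaining Dynkin cases it is a pure power of $\tau^{-1}$. Consequently both $\Sigma=\tau^{-1}\nu$ and $F=\tau^{-2}\nu$ are quiver automorphisms of $\Z\Qc$ commuting with $\tau$, so that $f_{ad}\circ\Sigma$ is again an additive frieze and $f_{mu}\circ F$ is again a multiplicative frieze (the mesh relation at a vertex $v$ for $f\circ\sigma$ is the mesh relation for $f$ at $\sigma v$, since $\sigma$ bijects the arrows into $v$ with the arrows into $\sigma v$ and commutes with $\tau$). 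Since a frieze of either kind is uniquely determined by its restriction to any section of $\Z\Qc$, the two identities reduce to checking
$$
f_{ad}(\Sigma(0,i))=-x_{i}\qquad\text{and}\qquad f_{mu}(F(0,i))=x_{i},\qquad\text{for every } i\in\Qc_{0}.
$$

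In type $\mA_{n}$ one has $\Sigma(0,i)=(i,n+1-i)$ and $F(0,i)=(i+1,n+1-i)$. The multiplicative identity is then precisely Coxeter's glide-reflection symmetry (Theorem~\ref{thmGlide}) applied to the generic frieze $f_{mu}$ of width $n$ with initial values $x_{1},\ldots,x_{n}$ placed on the $0$th South-East diagonal. The additive identity is proved by induction on $i$: the base case $f_{ad}(1,n)=-x_{1}$ follows from a telescoping computation along the top row of the frieze (a short recursion gives $f_{ad}(1,j)=x_{j+1}-x_{1}$ for $1\leq j<n$, whence $f_{ad}(1,n)=-x_{1}$ by the top boundary mesh relation), and the remaining cases $i=2,\dots,n$ follow by applying the same strategy on successive slices via iterates of the matrix $\Psi_{\mA}$ of equation \eqref{PhiAD}. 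For types $\mD_{n}$ and $\mE_{6,7,8}$, the images $\Sigma(0,i)$ and $F(0,i)$ are equally explicit from the formulas for $\nu$ recalled above the theorem, and the identities can be checked by iterating the corresponding recurrence matrix ($\Psi_{\mD}$ of \eqref{PhiAD} and its $\mE$-analogues) over at most $h-1$ slices, where $h$ is the Coxeter number appearing in Theorem~\ref{frADE}.

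The main obstacle is that the direct slice-by-slice verification in the exceptional types $\mE_{6,7,8}$ is finite but computationally tedious, and the argument as presented does not display a uniform reason for the sign in the additive case. A conceptually cleaner route passes through Auslander--Reiten theory: on the bounded derived category $D^{b}(\modQ)$, the Nakayama functor satisfies $\tau\simeq\nu\circ[-1]$, so that $\Sigma$ corresponds to the shift $[1]$. The sign in $f_{ad}\Sigma=-f_{ad}$ then reflects the action of $[1]$ as $-\mathrm{id}$ on the Grothendieck group (i.e.\ on dimension vectors), and the equality $f_{mu}F=f_{mu}$ expresses the compatibility between the Nakayama functor and the Caldero--Chapoton cluster character, in line with the remark that $\tau$, $\nu$, $\Sigma$, $F$ are the combinatorial shadows of the Auslander--Reiten, Nakayama, Serre, and Frobenius functors. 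To keep the argument self-contained, however, one may stay with the direct slice verification outlined above.
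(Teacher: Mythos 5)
Your proof is essentially correct, but it follows a genuinely different route from the paper. The paper proves part (1) representation-theoretically: it decomposes $f_{ad}=\sum_i a_i d^i$ over the dimension-vector friezes $d^i$ (Proposition \ref{fadBase}) and derives $d^i\tau^{-1}\nu=-d^i$ from the identity $\Phi_\Qc\,\udim P_i=-\udim I_i$ relating projectives and injectives under the Coxeter transformation; and it proves part (2) by identifying every entry of $f_{mu}$ with a Caldero--Chapoton cluster variable $x_M$ via a denominator computation, so that $f_{mu}(m-2,i)=x_{I_i^{\op}}=f_{mu}(\nu(m,i))$ (Remarks \ref{SymAd} and \ref{endPermu}). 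You instead observe that $\Sigma$ and $F$ are automorphisms of $\Z\Qc$ commuting with $\tau$, so that $f_{ad}\Sigma$ and $f_{mu}F$ are again friezes of the respective kinds, and you reduce both identities to a finite check on one slice; this is a clean and more elementary reduction, and your type-$\mA$ verifications (the telescoping $f_{ad}(1,j)=x_{j+1}-x_1$, and Coxeter's glide symmetry for $f_{mu}$) are correct. What the paper's route buys is uniformity across all Dynkin types and an explanation of the sign and of the role of $\nu$; what yours buys is self-containedness at the price of case-by-case computation in types $\mD$ and $\mE$. Two small points deserve attention in your write-up. First, your reduction for the multiplicative frieze needs the remark that $f_{mu}$ is nowhere zero (its entries are cluster variables in $\Q(x_1,\dots,x_n)$): by Remark \ref{fri_nonper} a multiplicative frieze is \emph{not} in general determined by its values on a section, so the nonvanishing is what licenses the propagation. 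Second, the phrase ``the identities can be checked by iterating the recurrence matrix $\Psi_\mD$ and its $\mE$-analogues'' only applies to the additive frieze; the multiplicative check in types $\mD$ and $\mE$ has no linear recurrence matrix and must be done symbolically with the multiplicative mesh rule, which is finite but heavy (period $32$ on $8$ rows in $\mE_8$) -- this is exactly the tedium your derived-category aside, which mirrors the paper's actual mechanism, is designed to avoid. Finally, your care in describing $\nu$ as ``the lift of the diagram involution'' (with its vertex-dependent shifts) composed with a power of $\tau^{-1}$ is warranted: only with those shifts is $\nu$ an automorphism of $\Z\Qc$ for the orientations fixed in Example \ref{exADE}, which your argument requires.
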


This result twill be explained in Remarks \ref{SymAd} and \ref{endPermu} using a certain symmetry in the Auslander-Reiten quiver associated with $\Qc^{\op}$.

Since all additive friezes can be obtained as an evaluation of the frieze $f_{ad}$, one immediately gets the following corollary.
\begin{cor}
 All additive friezes on a repetition quiver of type  $\mA_{}$, $\mD_{}$, $\mE_{}$ are periodic.
\end{cor}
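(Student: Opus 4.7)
The plan is to deduce the corollary directly from Theorem \ref{frADE} by showing that every additive frieze arises as a specialization of the generic additive frieze $f_{ad}$, so that the periodicity of $f_{ad}$ transfers to every additive frieze. The key point, which makes the additive setting easier than the multiplicative one in Remark \ref{fri_nonper}, is that the additive mesh relation never requires dividing by the value at a vertex.

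First I would observe that the additive rule can be solved for $f(\tau v)$ explicitly as
\[
f(\tau v) = -f(v) + \sum_{w\overset{\alpha}{\longrightarrow} v} f(w),
\]
with no denominators. Proceeding slice by slice (in both directions using $\tau$ and $\tau^{-1}$), any additive frieze $f:\Z\Qc\to\A$ is uniquely determined by its restriction to the slice at $m=0$, i.e.\ by the $n$-tuple $(f(0,1),\dots,f(0,n))\in\A^n$.

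Next I would construct the evaluation map that realizes $f$ as a specialization of $f_{ad}$. Set $a_i:=f(0,i)$ and let $\mathrm{ev}:\Z[x_1,\dots,x_n]\to\A$ be the ring homomorphism sending $x_i\mapsto a_i$. Because the mesh relations are polynomial (in fact linear) identities over $\Z$, the composition $\mathrm{ev}\circ f_{ad}:\Z\Qc\to\A$ is again an additive frieze, and it agrees with $f$ on the slice at $m=0$ by construction. By the uniqueness of the previous step, $f=\mathrm{ev}\circ f_{ad}$.

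Finally I would conclude: by Theorem \ref{frADE}, the generic additive frieze satisfies $f_{ad}\circ\tau^{-N}=f_{ad}$ for the period $N$ corresponding to the Dynkin type of $\Qc$. Applying $\mathrm{ev}$ termwise yields $f\circ\tau^{-N}=f$, hence $f$ is periodic with the same period. There is no serious obstacle in this argument; the only subtlety one has to pay attention to is confirming that the specialization really does commute with the frieze construction, which is immediate precisely because the additive rule is polynomial with $\Z$-coefficients and involves no inversion.
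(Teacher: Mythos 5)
Your argument is correct and is precisely the one the paper uses: the corollary is stated immediately after the observation that ``all additive friezes can be obtained as an evaluation of the frieze $f_{ad}$,'' so periodicity transfers from Theorem \ref{frADE}. You have simply made explicit the two points the paper leaves implicit --- that the division-free additive rule determines a frieze from one slice, and that evaluation commutes with the $\Z$-linear mesh relations.
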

\noindent
There exist ''singular'' multiplicative friezes, which are not evaluations of $f_{mu}$, and may be non-periodic, cf Remark \ref{fri_nonper} where a 
non-periodic
 multiplicative frieze of type $\mA_3$ appears.

\subsection{Quiver representations}
Friezes arise naturally in the theory of quiver representations.
In this context, vertices of the repetition quiver are identified with finite dimensional modules of the path algebra defined over the initial quiver.
The structure becomes more rich. 
Additive or multiplicative friezes of integers can be obtained by taking the dimensions or the Euler characteristics of the Grassmannian of the modules attached to the vertices. This will be developed in the next sections.

In this section we collect briefly some basic facts and Theorems of quiver representations. We refer to
\cite{Gab}, \cite{AuReS}, \cite{ASS}, \cite{Schi}, for details and complete expositions of the subject.\\

Let $\Qc$ be a finite acyclic connected quiver. We work over the field of complex numbers.
A \textit{representation} (or module) of $\Qc$ is a collection of spaces and maps
\begin{itemize}
\item $(M_i)_{i\in \Qc_0}$, where $M_i$ is a $\C$-vector space attached to the vertex $i$,
\item $(f_{ij}:M_i\to M_j)_{i \to j \in \Qc_1}$, where $f_{ij}$ is a $\C$-linear map attached to an arrow $i \to j $.
\end{itemize}
Let $M=(M_i, f_{ij})$ and $M'=(M'_i, f'_{ij})$ be two representations of $\Qc$.
One defines naturally their direct sum as $M\oplus M'=(M_i\oplus M'_i,  (f_{ij}, f'_{ij}))$. 
A morphism from $M$ to $M'$, is a collection of linear maps $(g_i, i\in\Qc_0)$ such that 
all diagrams of the following form commute
$$
 \xymatrix{
 M_i \ar[r]^{f_{ij}} \ar[d]_{g_i}  & M_j \ar[d]^{g_j} \\
  M'_i \ar[r]_{f'_{ij}} & M'_j
  }.
$$ 
The module $M'$ is a  \textit{subrepresentation} of $M$ if there exists a injective morphism from $M'$ to $M$.
A representation is called \textit{indecomposable} if it is not isomorphic to the direct sum of two non-trivial subrepresentations.

We denote by $\rep$ the category of representations of $\Qc$, with objects and morphisms defined as above. 
This category is equivalent to the category of modules $\modQ$, where $\C\Qc$ is the finite dimensional algebra called the \textit{path algebra}.
This algebra is defined as the $k$-vector space with basis set all the paths in $\Qc$ and multiplication given by composition of paths.

In these categories, projective modules play an important role. We define the family of \textit{standard projective modules} $P_i$, indexed by vertices $i\in \Qc_0$. The module $P_i$ has attached to each vertex $j$ the $\C$-vector space $(P_i)_j$ with basis the set of all paths in $\Qc$ from $i$ to $j$.
For each arrow $j\rightarrow \ell$ the linear map $f_{j\ell}: (P_i)_j\to (P_i)_\ell$ is defined on the basis elements by composing the paths from $i$ to $j$ with the arrow $j\rightarrow \ell$. 

Similarly, one can  define the family of \textit{standard injective modules} $I_i$, indexed by vertices $i\in \Qc_0$. The module $I_i$ has attached to each vertex $j$ the $\C$-vector space $(I_i)_j$ with basis the set of all paths in $\Qc$ from $j$ to $i$.
For each arrow $j\rightarrow \ell$ the linear map $f_{j\ell}: (I_i)_j\to (I_i)_\ell$ is defined on the basis elements by sending the paths from $j$ to $i$ starting with the arrow $j\rightarrow \ell$ to the paths obtained by deleting the arrow $j\rightarrow \ell$, and sending the other paths from $j$ to $i$ to 0.

Let us recall classical theorems and definitions in the theory of quiver representations.

\begin{thm}[Gabriel]
There exist only finitely many indecomposable representations of $\Qc$, up to isomorphism, if and only if $\Qc$ is a Dynkin quiver of type $\mA, \mD, \mE$. 
Moreover, if $\Qc$ is of type $\mA, \mD, \mE$ the following map realises a bijection
$$
\begin{array}{rcl}
\{\text{classes of indecomposables of } \rep \}&\longrightarrow& \{\text{positive roots of the root system of } \Qc\}\\[4pt]
[M]&\mapsto & \sum_{i\in \Qc_0} (\dim M_i)\alpha_i
\end{array}
$$
where $\{\alpha_i\}$ is the basis of simple roots in the root system associated to the Dynkin diagram.
\end{thm}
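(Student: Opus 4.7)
The strategy is standard: introduce the Euler/Tits quadratic form of $\Qc$, use reflection functors to climb through the Auslander-Reiten picture in the Dynkin case, and exhibit continuous families of indecomposables in the non-Dynkin case.

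\smallskip

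\emph{Step 1: the quadratic form.} On $\Z^{\Qc_0}$ define the Euler form
$$
\langle \alpha,\beta\rangle = \sum_{i\in\Qc_0}\alpha_i\beta_i - \sum_{i\to j\in\Qc_1}\alpha_i\beta_j,
$$
and the Tits form $q(\alpha)=\langle\alpha,\alpha\rangle$. A classical computation (depending only on the underlying graph) shows that $q$ is positive definite precisely when the underlying graph is a disjoint union of Dynkin diagrams of type $\mA,\mD,\mE$; it is positive semidefinite with one-dimensional radical in the extended Dynkin cases, and indefinite otherwise. Crucially, whenever $M$ is an indecomposable representation with dimension vector $\alpha$, a standard homological argument using $\End(M)$ and $\Ext^1(M,M)$ yields $q(\alpha)=\dim\End(M)-\dim\Ext^1(M,M)$, so $q(\alpha)\leq\dim\End(M)$. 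For indecomposable $M$, Fitting's lemma gives $\End(M)$ local, hence $q(\alpha)\geq 1$ when $M$ is a brick, and in fact $\alpha$ is always a positive root in the Dynkin case.

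\smallskip

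\emph{Step 2: Dynkin implies finite type, with bijection to positive roots.} For each sink $i\in\Qc_0$ introduce the BGP reflection functor $S_i^+:\rep(\Qc)\to\rep(\sigma_i\Qc)$, together with its partner $S_i^-$ at a source. The key properties are (a) $S_i^\pm$ restricts to a bijection between isomorphism classes of indecomposables not isomorphic to the simple $S_i$, and (b) on dimension vectors it realizes the simple Weyl reflection $s_i$. Now pick a Coxeter element $c=s_{i_n}\cdots s_{i_1}$ by ordering $\Qc_0$ compatibly with the orientation; iterating the corresponding sequence of reflection functors exhausts every indecomposable (this uses that in an ADE root system, every positive root is sent after finitely many Coxeter steps outside the positive cone). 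Consequently each indecomposable is obtained from a simple $S_i$ by applying a finite sequence of $S_j^-$'s, so there are only finitely many. The assignment $[M]\mapsto\underline{\dim}\,M=\sum(\dim M_i)\alpha_i$ lands in positive roots by Step 1; surjectivity follows from reversing the reflection-functor procedure starting from each positive root, and injectivity follows because for a positive root $\alpha$ one has $q(\alpha)=1$, which via the homological identity above forces $\dim\End(M)=1$ and $\Ext^1(M,M)=0$, so the $\GL(\alpha)$-orbit of $M$ in the representation variety $\prod_{i\to j}\Hom(\C^{\alpha_i},\C^{\alpha_j})$ is open dense, and any two such indecomposables lie in the same orbit.

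\smallskip

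\emph{Step 3: non-Dynkin implies infinite type.} If $\Qc$ is not Dynkin, its underlying graph contains an extended Dynkin subdiagram $\widetilde{\mA},\widetilde{\mD}$ or $\widetilde{\mE}$. By a full-embedding argument, indecomposables of that subquiver pull back to pairwise non-isomorphic indecomposables of $\Qc$, so it suffices to treat the affine case. There, the minimal positive imaginary root $\delta$ satisfies $q(\delta)=0$, and one constructs an explicit $\bbP^1$-family of non-isomorphic indecomposable representations of dimension vector $\delta$: for the Kronecker quiver this is the family $\bigl(\C\xrightarrow{1}\C,\ \C\xrightarrow{\lambda}\C\bigr)$, $\lambda\in\C$, plus one extra at $\infty$, and analogous tubular families exist for every extended Dynkin type. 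This gives uncountably many isomorphism classes, proving the converse.

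\smallskip

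\emph{Main obstacle.} The deepest point is the injectivity in Step 2: that the dimension vector $\alpha$ determines the indecomposable up to isomorphism. The geometric argument requires knowing that $q(\alpha)=1$ forces the representation variety to contain an open dense orbit, which is where the positive-definiteness of the Tits form (equivalently, the ADE hypothesis) is truly used. Every other part of the proof is either combinatorial (root systems, Weyl group action) or a routine homological computation, but this orbit-dimension argument is what makes the whole classification rigid.
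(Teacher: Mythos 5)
The paper does not prove this statement: Gabriel's theorem is recalled as a classical result in the survey, with the reader referred to the standard textbooks cited there, so there is no in-paper argument to compare yours against. Your outline is the standard Bernstein--Gelfand--Ponomarev proof (Tits form plus reflection functors) and is essentially sound, including the affine-family argument for the converse. One point needs repair in Step 2: you assert that $q(\alpha)=1$ \emph{forces} $\dim\End(M)=1$ and $\mathrm{Ext}^1(M,M)=0$ via the identity $q(\alpha)=\dim\End(M)-\dim\mathrm{Ext}^1(M,M)$, but the identity alone also allows, say, $\dim\End(M)=2$ with $\dim\mathrm{Ext}^1(M,M)=1$. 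To close this you need either the Happel--Ringel-type lemma that over a hereditary algebra with positive definite Tits form every indecomposable is a brick (an indecomposable that is not a brick contains an indecomposable submodule with nontrivial self-extensions, whose dimension vector would then violate positive definiteness), or---more economically---you can extract injectivity directly from the machinery you already built: two indecomposables with the same dimension vector are carried by the same sequence of reflection functors to simple modules with the same dimension vector, hence to isomorphic simples, hence are themselves isomorphic; the open-orbit argument then becomes an optional second proof rather than the load-bearing step. With that adjustment the argument is complete.
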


\noindent
\begin{defn}(AR quiver) 
The \textit{Auslander-Reiten quiver} of $\rep$ is the quiver $\G_\Qc$  defined by:
\begin{itemize}
\item vertices: isomorphism classes of indecomposable objects $[M]$,
\item arrows:  ${ [M] \overset{\ell}{\longrightarrow}  [N]}$, if the space of irreducible morphisms from $M$ to $N$ is of dimension $\ell$.
\end{itemize}
\end{defn}
The irreducible morphisms are those that are not compositions, or combinations of compositions, of other non-trivial morphisms.
In other words the AR quiver gives the elementary bricks (modules and morphisms) to construct $\rep$.

The following classical theorem relates the AR quiver $\G_{\Qc}$ (or part of it)  to the repetition quiver over $\Qc^{\op}$ (the quiver with opposite orientation).

\begin{thm}\label{ARfri}
Let $\Qc$ be a finite acyclic connected quiver.
\begin{enumerate}
\item The projective modules all belong to the same connected component $\Pi_\Qc$ of $\G_\Qc$. 
\item In the case when $\Qc$ is a Dynkin quiver of type $\mA, \mD, \mE$, the quiver $\G_\Qc$ is connected and can be embedded in the repetition quiver:
\begin{equation}\label{inj}
\Pi_\Qc \simeq \G_\Qc \hookrightarrow \N\Qc^{\op}.
\end{equation}
The image of $\G_\Qc$ under this injection corresponds to the full subquiver of $ \N\Qc^{\op}$ lying between the vertices $(0,i)$ and $\nu(0,i)$, $i\in \Qc^{\op}_0$. 
For all $i\in \Qc_0$, the projective module $P_i$ in $\rep$ identifies with the vertices $(0,i)$, and the injective module $I_i$ with $\nu(0,i)$.
\item In all other cases, $\G_\Qc$ is not connected. The component $\Pi_\Qc$ is
isomorphic to the repetition quiver:
\begin{equation}\label{bij}
\Pi_\Qc \overset{\sim}{\rightarrow} \N\Qc^{\op}.
\end{equation}
For all $i\in \Qc_0$, the projective module $P_i$ in $\rep$ identifies with the vertices $(0,i)$, in $\N\Qc^{\op}$.\end{enumerate}
\end{thm}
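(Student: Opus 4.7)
The strategy is to use the \emph{knitting procedure} (i.e., the inductive construction of the AR quiver from almost-split sequences) starting at the standard projective modules, and then to read off the combinatorics by comparing the resulting mesh relations with those defining $\N\Qc^{\op}$.

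\medskip

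\noindent\textbf{Part (1).} I would start by observing that for any arrow $i\to j$ in $\Qc_1$, the standard projective $P_j$ appears as a direct summand of the radical $\mathrm{rad}\, P_i$ (it is the summand indexed by the path of length one $i\to j$). Hence the inclusion $P_j\hookrightarrow P_i$, composed with the canonical projection, provides a non-zero irreducible morphism $P_j\to P_i$, and therefore an arrow $[P_j]\to[P_i]$ in $\G_\Qc$. Since $\Qc$ is assumed connected, iterating this argument over all edges shows that all vertices $[P_i]$ lie in a single connected component $\Pi_\Qc$.

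\medskip

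\noindent\textbf{Parts (2) and (3).} The main tool is the Auslander-Reiten translate $\tau$, together with the existence of almost-split sequences
$$
0\;\longrightarrow\; \tau M \;\longrightarrow\; \bigoplus_{\alpha\colon N\to M}N \;\longrightarrow\; M \;\longrightarrow\; 0
$$
for every non-projective indecomposable $M$. This sequence translates, on the level of $\G_\Qc$, into a \emph{mesh} centered at $[M]$ with $\tau$-predecessor $[\tau M]$ and direct predecessors the irreducible neighbors of $[M]$, and this is exactly the mesh relation in $\N\Qc^{\op}$ at the corresponding vertex. The plan is then:

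\medskip

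(a) Place each projective $P_i$ at the vertex $(0,i)\in\N\Qc^{\op}$. Using the radical description and the fact that for $\Qc$ acyclic the projectives are the $\tau$-sinks (there are no non-split epimorphisms onto them beyond those coming from the radical inclusions), verify that the irreducible maps between the $P_i$'s match the arrows of the slice $\{0\}\times\Qc^{\op}_0$.

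\medskip

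(b) Inductively knit the rest of $\Pi_\Qc$ by applying $\tau^{-1}$. The almost-split sequence above shows that each newly created vertex $[\tau^{-1}M]$ and its incoming arrows are determined by the already constructed portion, and the combinatorics of this construction is identical to building $\N\Qc^{\op}$ diagonal by diagonal from its leftmost slice. This yields a well-defined injective quiver morphism $\Pi_\Qc\hookrightarrow\N\Qc^{\op}$.

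\medskip

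(c) In the Dynkin case, invoke Gabriel's theorem: there are only finitely many indecomposables, so the process must terminate. Termination at a vertex occurs precisely when the would-be $\tau^{-1}$ lands on $0$, i.e. when the module is injective. Using that the Nakayama functor $\nu$ sends $P_i$ to $I_i$, and that the combinatorial Nakayama permutation from the previous subsection is designed to encode this (indeed it is built so that $\nu(0,i)$ records where $\tau^{-k}P_i$ becomes $I_{\sigma(i)}$), one sees that the image is exactly the full sub-quiver between $(0,i)$ and $\nu(0,i)$. Connectedness of $\G_\Qc$ in the Dynkin case follows because every indecomposable is reached from some $P_i$ by finitely many $\tau^{-1}$'s and by irreducible-morphism arrows.

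\medskip

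(d) In the non-Dynkin case, infiniteness of the set of indecomposables together with the fact that a preprojective module $\tau^{-k}P_i$ is never injective (otherwise the knitting would halt and only finitely many indecomposables would be produced in $\Pi_\Qc$, contradicting Gabriel) shows that the injection of step~(b) is surjective onto $\N\Qc^{\op}$.

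\medskip

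\noindent\textbf{Main obstacle.} The delicate point is the identification in part~(c): one must show not only that the knitting stops on injectives, but that the \emph{precise} vertex at which $\tau^{-k}P_i$ becomes an injective coincides with the combinatorial vertex $\nu(0,i)$ coming from the Nakayama permutation defined earlier on $\Z\Qc^{\op}$. This requires a case-by-case verification for $\mA$, $\mD$, $\mE_{6,7,8}$ (or equivalently, a uniform argument via the Coxeter transformation and its order on the root lattice), and it is the place where the particular shape of $\nu$ in each Dynkin type is forced. Everything else (parts (1), (2a), (2b), (3)) is formal and follows from the existence of almost-split sequences and the radical description of the projectives.
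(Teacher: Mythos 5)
The paper itself does not prove Theorem \ref{ARfri}: it is quoted as a classical result of Auslander--Reiten theory, with the reader referred to the textbooks \cite{Gab}, \cite{AuReS}, \cite{ASS}, \cite{Schi}. So there is no in-paper argument to compare against; what you have written is the standard textbook route (knitting from the projectives), and as an outline it is essentially the right proof. Part (1) is correct: for a path algebra $\mathrm{rad}\,P_i=\bigoplus_{i\to j}P_j$, the inclusion of a radical summand is irreducible, and connectedness of $\Qc$ does the rest; the orientation reversal you observe is exactly why the target is $\N\Qc^{\op}$. Steps (a)--(c) are the standard argument, and you are right to single out the identification of the terminal vertices with $\nu(0,i)$ as the point requiring either a type-by-type computation or the Coxeter-transformation identity $\Phi_\Qc\,\udim P_i=-\udim I_i$ (which the paper uses later, in Remark \ref{SymAd}).

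There is one genuine shortcut in step (d) that you should repair. You argue that in the non-Dynkin case no $\tau^{-k}P_i$ can be injective because ``otherwise the knitting would halt and only finitely many indecomposables would be produced in $\Pi_\Qc$, contradicting Gabriel.'' Gabriel's theorem only says the total number of indecomposables is infinite; it does not by itself forbid the component $\Pi_\Qc$ from being finite while infinitely many indecomposables live in other components. To close this you need the further classical fact (due to Auslander) that a finite connected component of the AR quiver of a connected finite-dimensional algebra is the whole AR quiver, so that finiteness of $\Pi_\Qc$ forces representation-finiteness, and only then does Gabriel give the contradiction. The same point also yields the non-connectedness claim of part (3), which your outline leaves implicit: since $\Pi_\Qc\simeq\N\Qc^{\op}$ contains no injectives, the injective modules lie in other components, so $\G_\Qc\neq\Pi_\Qc$.
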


The structure of the graph $\N\Qc^{\op}$ reflects properties between the modules of $\rep$.
Let $M$ and $N$ be indecomposable modules.
An exact sequence $ 0\to N\to E \overset{g}{\rightarrow} M\to 0$ is called \textit{almost split}, if it is not split, if every non-invertible map $X \overset{}{\rightarrow} M$, with $X$ indecomposable, factors through~$g$.

\begin{thm}[Auslander-Reiten]
In $\rep$, for every indecomposable nonprojective module $M$, there exists a unique, up to isomorphisms, almost split sequence 
 $ 0\to N\to E \overset{}{\rightarrow} M\to 0$.
\end{thm}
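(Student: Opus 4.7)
The plan is to construct the required sequence explicitly via the Auslander--Reiten translate $\tau$ and to derive both existence and uniqueness from the Auslander--Reiten duality formula. Since $\Qc$ is finite and acyclic, the path algebra $\C\Qc$ is finite dimensional, and $\rep$ is equivalent to $\modQ$; I would work in the latter.

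\textbf{Step 1: construction of $\tau M$.} Choose a minimal projective presentation
\[
P_{1}\xrightarrow{\;p\;}P_{0}\longrightarrow M\longrightarrow 0,
\]
apply the functor $(-)^{*}:=\Hom_{\C\Qc}(-,\C\Qc)$, and define the transpose $\mathrm{Tr}\,M$ as the cokernel of $p^{*}:P_{0}^{*}\to P_{1}^{*}$, viewed as a right $\C\Qc$-module. Set $\tau M:=D\,\mathrm{Tr}\,M$, with $D=\Hom_{\C}(-,\C)$ the standard duality. Minimality of the presentation, together with $M$ being indecomposable and non-projective, implies that $\tau M$ is a nonzero indecomposable non-injective left module.

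\textbf{Step 2: AR duality and the distinguished extension.} The technical heart of the proof is the Auslander--Reiten formula
\[
\mathrm{Ext}^{1}(M,N)\;\cong\;D\,\overline{\Hom}(N,\tau M),
\]
natural in $N$, where $\overline{\Hom}$ denotes morphisms modulo those factoring through injectives. I would obtain this by applying the Nakayama functor $\nu=D(-)^{*}$ to the minimal projective presentation of $M$ and tracking how $\nu$ transports projective resolutions to injective copresentations, combined with Auslander's defect formula. Specialising to $N=\tau M$ gives $\mathrm{Ext}^{1}(M,\tau M)\cong D\,\overline{\End}(\tau M)$. Because $\tau M$ is indecomposable and non-injective, the class of $\mathrm{id}_{\tau M}$ in $\overline{\End}(\tau M)$ is nonzero and spans the canonical quotient line on top of the local ring $\End(\tau M)$; the dual functional defines a distinguished extension
\[
\xi:\;0\longrightarrow\tau M\longrightarrow E\longrightarrow M\longrightarrow 0.
\]

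\textbf{Step 3: almost split property and uniqueness.} To see that $\xi$ is almost split, let $X$ be indecomposable and $g:X\to M$ be a non-isomorphism; it suffices to show that the pullback $g^{*}\xi\in\mathrm{Ext}^{1}(X,\tau M)$ splits, so that $g$ lifts through $E\to M$. Under the AR duality, $g^{*}\xi$ corresponds to the functional $h\mapsto\langle\mathrm{id}_{\tau M},h\circ g\rangle$ on $\overline{\Hom}(X,\tau M)$; this functional vanishes precisely because $g$ is not a split epimorphism, exploiting that $\End(M)$ is local. For uniqueness, any two almost split sequences ending in $M$ produce nonzero extension classes lying in the same one-dimensional $\End(M)$-socle of $\mathrm{Ext}^{1}(M,\tau M)$ singled out by the duality, so they differ by an automorphism of $M$ and of $\tau M$ and are isomorphic as short exact sequences. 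The hard part is the Auslander--Reiten duality formula itself, which requires careful bookkeeping with the Nakayama functor, projective covers, and Auslander's defect formula; once this is granted the remaining steps are essentially formal, relying only on the locality of endomorphism rings of indecomposable modules in $\modQ$.
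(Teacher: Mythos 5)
The paper does not prove this statement: it is recalled as a classical theorem of Auslander and Reiten, with the reader sent to the textbook accounts in \cite{AuReS}, \cite{ASS}, \cite{Schi}. So there is no internal argument to compare yours against; what you have written is, in outline, the standard textbook proof, and it is the right one. Constructing $\tau M=D\,\mathrm{Tr}\,M$ from a minimal projective presentation, invoking the Auslander--Reiten duality $\mathrm{Ext}^1(M,N)\cong D\,\overline{\Hom}(N,\tau M)$, and extracting the almost split sequence as the class dual to a linear form on the local ring $\overline{\End}(\tau M)$ killing the radical is exactly how those references proceed, and you correctly identify the duality formula as the place where all the real work sits: everything after it is formal, resting on locality of endomorphism rings of indecomposables.

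Two places where the sketch is looser than it should be. First, in Step 3 the functional you write for the pulled-back class $g^{*}\xi$ does not typecheck as stated; the clean version uses the other form of the duality, $\mathrm{Ext}^1(M,\tau M)\cong D\,\underline{\End}(M)$, under which $\xi$ becomes a form vanishing on $\mathrm{rad}\,\End(M)$ and $g^{*}\xi$ becomes $h\mapsto\xi(g\circ h)$ for $h\colon M\to X$; since $g$ is not a split epimorphism, $g\circ h$ always lies in the radical, so the class vanishes and $g$ lifts. The idea is the one you intend, but the pairing as written is garbled. Second, your uniqueness argument via the one-dimensional socle of $\mathrm{Ext}^1(M,\tau M)$ presupposes that an arbitrary almost split sequence $0\to N\to E\to M\to 0$, in the sense defined in the paper, already has $N\cong\tau M$; that is true but requires an argument (most simply, the direct comparison of two such sequences by lifting the identity of $M$ through each of the two right-hand maps and using locality of $\End(N)$ to see the induced endomorphisms are isomorphisms). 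Neither point is a fatal gap, but both would need tightening for this to stand as a proof rather than as a faithful pointer to one.
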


The \textit{AR translation} $\tau$  is defined on the nonprojective vertices of $\G_{\Qc}$ by $\tau M:=N$  for $M$ and $N$ related by the almost-split sequence $ 0\to N\to E \overset{}{\rightarrow} M\to 0$.
\begin{thm}[Auslander-Reiten]
Under the maps  \eqref{inj} and \eqref{bij} the AR translation and the translation $\tau$ of the repetition quiver coincide.
\end{thm}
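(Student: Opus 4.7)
The strategy is to compare the mesh structure around each vertex in both quivers and show that the bijection of Theorem \ref{ARfri} intertwines the two translations. Recall the defining property of the AR translation: for each nonprojective indecomposable $M$, the unique almost split sequence $0 \to \tau M \to E \to M \to 0$ has middle term $E=\bigoplus_j N_j^{a_j}$, where the $N_j$ are precisely the indecomposables admitting an irreducible morphism to $M$, with $a_j$ equal to the multiplicity of the arrow $[N_j]\to[M]$ in $\G_\Qc$. A symmetric statement holds on the $\tau M$ side: each $N_j$ also receives exactly $a_j$ irreducible morphisms from $\tau M$. Hence the immediate neighborhood of $[M]$ in $\G_\Qc$ forms a ``mesh'' whose top is $[M]$, whose bottom is $[\tau M]$, and whose middle consists of the $[N_j]$.

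The construction of $\N\Qc^{\op}$ produces exactly the same mesh shape: by the very definition of the repetition quiver, for any vertex $(m,j)$ the arrows into $(m,j)$ come from vertices $(m,k)$ (one per arrow $k\to j$ in $\Qc^{\op}$) and from $(m-1,\ell)$ (one per arrow $j\to\ell$ in $\Qc^{\op}$), and the same intermediate vertices are the targets of arrows emanating from $(m-1,j)$. Thus the combinatorial translation $\tau(m,j)=(m-1,j)$ is characterized by this mesh condition in exactly the same way as the AR translation in $\G_\Qc$.

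With these two mesh structures in hand, I would proceed by induction using the knitting algorithm. The base case is provided by Theorem \ref{ARfri}: the projective modules $P_i$ are identified with the vertices $(0,i)$. For the inductive step, assume the indecomposable $M$ sits at position $(m,j)\in(\N\Qc^{\op})_0$ and is nonprojective; by the first paragraph, $\tau M$ must sit at the unique vertex $v^\ast$ whose mesh with $(m,j)$ has intermediate vertices matching, via the identification of Theorem \ref{ARfri}, the summands of $E$. By the second paragraph, $v^\ast=(m-1,j)=\tau(m,j)$, so the AR translation coincides with the combinatorial translation at $M$. The main obstacle is the careful bookkeeping of arrows at the induction step, namely checking that every arrow $[N_j]\to[M]$ in $\G_\Qc$ appears on the list predicted by the combinatorial mesh and that no extra arrows occur; this is essentially the content of the classical knitting argument. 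The Dynkin case further requires verifying that the iterative procedure terminates precisely at the injective modules identified with $\nu(0,i)$, which is ensured by the fact that injectives have no almost split sequences starting at them and by the explicit form of the Nakayama permutation recorded before Theorem \ref{perADE}.
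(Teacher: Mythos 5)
The paper does not prove this statement: it is recorded as a classical theorem of Auslander--Reiten theory and cited without argument (it appears in the list of ``classical theorems and definitions'' recalled before \S\ref{adSec}), so there is no in-paper proof to compare against. Judged on its own, your sketch is the standard mesh/knitting argument and its outline is correct: you correctly identify the mesh attached to an almost split sequence in $\G_\Qc$, you correctly compute that the arrows into $(m,j)$ and out of $(m-1,j)$ in $\Z\Qc^{\op}$ share the same set of intermediate vertices (with the paper's orientation conventions), and the induction from the projectives at $(0,i)$ is the right engine.

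Two caveats. First, the substantive content --- that the embedding of Theorem \ref{ARfri} carries each almost split mesh of $\G_\Qc$ onto a combinatorial mesh of $\N\Qc^{\op}$, with matching arrow multiplicities and no extra arrows --- is exactly the step you defer to ``the classical knitting argument,'' so as written the proposal is a correct plan rather than a complete proof. Second, there is a mild logical inversion: in the usual treatment the injection \eqref{inj} (resp.\ the isomorphism \eqref{bij}) is \emph{constructed} by knitting, i.e.\ by placing $\tau^{-1}N$ at $(m,j)$ once $N$ has been placed at $(m-1,j)$; the compatibility of the two translations is then built into the construction rather than verified afterwards. Your phrasing (``$\tau M$ must sit at the unique vertex $v^{\ast}$ whose mesh with $(m,j)$ matches'') presupposes that the ambient identification is already fixed independently of $\tau$, which is fine if you take Theorem \ref{ARfri} as a black box, but you should then say explicitly that the uniqueness of $v^{\ast}$ follows from the shape of $\Z\Qc^{\op}$ (the only vertex whose outgoing arrows hit precisely the predecessors of $(m,j)$ is $(m-1,j)$). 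With that remark added, the induction closes and the argument is sound, including the terminal check at the injectives $\nu(0,i)$ in the Dynkin case.
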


Every almost split sequence  $ 0\to \tau M\to E \overset{}{\rightarrow} M\to 0$ leads to the following subquiver of the AR quiver~$\G_\Qc$:
\begin{equation}\label{subAR}
  \xymatrix  @!0 @R=1.8em @C=3.6pc { &E_1 \ar[rdd] &\\& E_2 \ar[rd] & \\ \tau M \ar[ru]\ar[ruu] \ar@{.>}[rd] \ar[rdd] &\vdots&M&\\&\ar@{.>}[ru]&\\& E_\ell\ar[ruu]  &}
\end{equation}
where the $E_i$'s are the indecomposable factors of $E$. There are no other arrows arriving at vertex $M$ or exiting from vertex $\tau M$.

\subsection{Additive friezes and dimension vectors}\label{adSec}

Let $f:  \Z\Qc\to  \Z$ be an additive frieze of type $\Qc$. As usual $n$ stands for the cardinality of $\Qc_0$.
We denote by $f_{m,j}$ the value of $f$ at the vertex $(m,j)$ of $\Z\Qc$, and we denote by $f_m$ the vector of $\Z^n$ of the values of $f$ on the $m$-th slice of $\Z\Qc$, i.e.
$$
f_m=
\begin{pmatrix}
f_{m,1}\\
\vdots\\
f_{m,n}
\end{pmatrix}.
$$
The frieze rule implies that the components of $f_m$ are $\Z$-linear expressions in $f_{m-1}$. 
In other words, there exists a matrix $\Psi_\Qc$, depending only on $\Qc$, satisfying for all $m\in \Z$
$$
f_m=\Psi_\Qc f_{m-1}.
$$
We want to give an expression for $\Psi_\Qc$ (for examples in type $\mA$ and $\mD$ cf \eqref{PhiAD}).
We will use some known results related to the representations of $\Qc$ and $\Qc^{\op}$.\\

The dimension vector of a module $M=(M_i, i\in \Qc_0; f_\a, \a\in \Qc_1)$ is a vector of $\N^{n}$ defined by
$$\underline{\dim}\, M=(\dim M_i)_{i\in \Qc_0}.$$

The alternate sum of dimensions of the spaces in the exact sequence $ 0\to \tau M\to \oplus E_i \overset{}{\rightarrow} M\to 0$ vanishes and leads to the relation
$$
\underline{\dim}\, \tau M + \underline{\dim}\, M=\textstyle \sum_i
 \underline{\dim}\, E_i.$$
 
 This relation allows to compute recursively the indecomposable modules from the projective ones,
 the process is known as ``knitting algorithm''.

The mapping $\underline{\dim}: \Pi_\Qc\to \Z^{n}$ is interpreted as an additive frieze on $\Pi_\Qc$.
Using the map \eqref{inj} or \eqref{bij} this induces an additive frieze from $\Z\Qc^{\op}$ to $\Z^{n}$.
Since we consider friezes from $\Z\Qc$ to $\Z^{n}$, we will use the representations of $\Qc^{\op}$.
By Theorem \ref{ARfri} the standard projective modules $P_i^{\op}$ of $\rep^{\op}$ are attached to the vertices $(0,i)$ of $\Z\Qc$, 
and in type  $\mA$, $\mD$, $\mE$,  the injective modules $I_i^{\op}$ are attached to to the vertex $\nu(0,i)$ of $\Z\Qc$.

Define the additive frieze of dimensions
$$
\underline d: \Z\Qc\to  \Z^{n}
$$
by assigning the initial values 
$ \underline d (0,i) = \udim P_i^{\op}$, for all $i\in \Qc_0$. 
Using the projection $pr_i$ on the $i$-th component of the vectors in $ \Z^{n}$, we define 
a family of additive friezes, indexed by  $i \in \Qc_0$,
$$
d^i:=\, pr_i\circ \underline d\,:  \Z\Qc\to  \Z \;.
$$

\begin{ex}\label{exFrdim}
Let us illustrate the above notions for the following quiver.
$$
\begin{small}
  \xymatrix  
  @!0 @R=1cm @C=1.7cm
  {
  &&2\ar[d]&&&&2\ar[ld]
  \\
\Qc:&1\ar[ru]\ar[r]&3&
&\Qc^{\op}:&1&3\ar[u]\ar[l]
\\
P_1: {\begin{array}{r}
1\\[-4pt]
12
\end{array}}
&P_2: {\begin{array}{r}
1\\[-4pt]
01
\end{array}}
&P_3: {\begin{array}{r}
0\\[-4pt]
01
\end{array}}
&
&P_1^{\op}: {\begin{array}{r}
0\\[-4pt]
10
\end{array}}
&P_2^{\op}: {\begin{array}{r}
1\\[-4pt]
10
\end{array}}
&P_3^{\op}: {\begin{array}{r}
1\\[-4pt]
21
\end{array}}
  }
  \end{small}
$$
where we write the dimension vectors of the modules under the form 
$ {\begin{array}{r}
d_2\\[-4pt]
d_1d_3
\end{array}}
$.

We obtain the following friezes of type $\Qc$:
$$
\begin{small}
  \xymatrix    
  @!0 @R=1.2cm @C=1.8cm
  {
 \underline d:\qquad  \ar@{.>}[rd]&&
{\begin{array}{r}
1\\[-4pt]
10
\end{array}}
  \ar[d]\ar[rd]
  &&
{\begin{array}{r}
2\\[-4pt]
32
\end{array}}
  \ar[d]\ar[rd]
  &&
  {\begin{array}{r}
4\\[-4pt]
43
\end{array}}
\ar[d]  \ar@{.>}[rd]
  &
  \\
  \ar@{.>}[r] \ar@{.>}[rru]& 
 {\begin{array}{r}
0\\[-4pt]
10
\end{array}}
 \ar[r]\ar[ru]&
  {\begin{array}{r}
1\\[-4pt]
21
\end{array}}
\ar[r]\ar[rru]&
{\begin{array}{r}
2\\[-4pt]
21
\end{array}}
  \ar[r]\ar[ru]&
{\begin{array}{r}
3\\[-4pt]
32
\end{array}}
\ar[r]\ar[rru]&
  {\begin{array}{r}
3\\[-4pt]
43
\end{array}}
\ar[r]\ar[ru]&
{\begin{array}{r}
4\\[-4pt]
54
\end{array}}
  \ar@{.>}[r]&
  \\
d^1:\qquad     \ar@{.>}[rd]&&
 1\ar[d]\ar[rd]
  &&
3
  \ar[d]\ar[rd]
  &&
4\ar[d]  \ar@{.>}[rd]
  &
  \\
\ar@{.>}[r] \ar@{.>}[rru]&
1\ar[r]\ar[ru]&
2\ar[r]\ar[rru]&
2\ar[r]\ar[ru]&
3\ar[r]\ar[rru]&
4\ar[r]\ar[ru]&
5
  \ar@{.>}[r]&
   }
  \end{small}
$$
One can see that in the frieze $d^1: \Z\Qc\to \Z$ the first slice
 $\begin{array}{r}1\\[-4pt] 12\end{array}$  coincides with the dimension vector of $P_1$, the next slices give the dimension vectors of the translated of $P_{1}$ through $\tau^{-1}$.
 \end{ex}

The vector dimensions, and thus the friezes $d^i$, can be computed by the mean of the so-called Coxeter transformation.
Let us recall some known results, see e.g. \cite{ASS}, \cite{Schi}.

The \textit{Cartan matrix} $C_\Qc=(c_{ij})_{i,j \in \Qc_0}$ associated with $\Qc$ is given by
$$
c_{ij}= \text{number of paths in } \Qc \text{ from } j \text{ to } i.
$$
The matrix $C_\Qc$ is invertible; its inverse $C_\Qc^{-1}=(b_{ij})$ is given by $b_{ii}=1$ and for $i\not=j$
$$
b_{ij}=-( \text{number of arrows in } \Qc \text{ from } j \text{ to } i).
$$
The \textit{Coxeter transformation} $\Phi_\Qc$ is defined as
$$
\Phi_\Qc=-\; ^tC_\Qc C_\Qc^{-1},
$$
where the superscript $t$ denotes the transpose operation on the matrix $C_\Qc$.

Note that $C_{\Qc^{\op}}=\,^tC_\Qc$ and $\Phi_{\Qc^{\op}}=\Phi_\Qc^{-1}$.
Also it is immediate from the definitions that
$$
\Phi_\Qc\,\udim  P_i= - \udim I_i,
$$
for all standard projective and injective modules. 
In general, one has the following classical theorem,
see e.g. \cite[\S IV 2.9, p116]{ASS}, \cite[Cor 7.16 p190]{Schi}.

\begin{thm}\label{propMN}
Let $M$ and $N$ be an indecomposable modules of $\rep$.
If $M$ is non-projective, and $N$ non-injective, one has
$$
\Phi_\Qc\;\udim  M=\udim \tau M, \qquad   \Phi_\Qc^{-1}\, \udim  N= \udim \tau^{-1} N.
$$
\end{thm}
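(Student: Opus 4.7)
The plan is to use the hereditarity of the path algebra $A := \C\Qc$ (which follows from $\Qc$ being acyclic and finite) together with the standard identification $\tau M = D\,\mathrm{Ext}^{1}_{A}(M,A)$ for non-projective indecomposables. The whole argument consists in applying the Nakayama functor $\nu := D\Hom_{A}(-,A)$ to a minimal projective resolution of $M$ and reading off dimension vectors.

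First, since $A$ is hereditary, $M$ has a minimal projective resolution of length at most one:
$$0 \longrightarrow P^{1} \longrightarrow P^{0} \longrightarrow M \longrightarrow 0.$$
Writing $P^{k} = \bigoplus_{i} P_{i}^{\,a_{i}^{k}}$ with multiplicity vectors $a^{k} = (a_{i}^{k}) \in \N^{n}$, and using the observation that $\underline{\dim}\,P_{i}$ is precisely the $i$-th column of $C_{\Qc}$ (since $\dim(P_{i})_{j}$ counts paths from $i$ to $j$, which equals $c_{ji}$), additivity of dimension vectors on exact sequences gives
$$\underline{\dim}\,M \;=\; C_{\Qc}\,(a^{0} - a^{1}).$$

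Next, applying $\nu$ to the resolution and using the duality $\nu P_{i} = I_{i}$ (whose dimension vector is the $i$-th column of ${}^{t}C_{\Qc}$), one obtains the four-term exact sequence
$$0 \longrightarrow \tau M \longrightarrow \nu P^{1} \longrightarrow \nu P^{0} \longrightarrow \nu M \longrightarrow 0,$$
with kernel $\tau M = D\,\mathrm{Ext}^{1}_{A}(M,A)$ and cokernel $\nu M = D\,\Hom_{A}(M,A)$. The key step I expect to require the most care is showing that $\nu M = 0$: any non-zero morphism $f\colon M \to A$ would have image a submodule of the projective module $A$, hence itself projective by hereditarity; the short exact sequence $0 \to \ker f \to M \to \mathrm{Im}\,f \to 0$ would therefore split, giving $M$ a non-zero projective summand — contradicting that $M$ is indecomposable and non-projective. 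Hence $\Hom_{A}(M,A)=0$ and the four-term sequence collapses to the short exact sequence $0 \to \tau M \to \nu P^{1} \to \nu P^{0} \to 0$.

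Reading off dimension vectors from this short exact sequence yields
$$\underline{\dim}\,\tau M \;=\; {}^{t}C_{\Qc}\,a^{1} - {}^{t}C_{\Qc}\,a^{0} \;=\; -\,{}^{t}C_{\Qc}\,(a^{0}-a^{1}).$$
Substituting $a^{0}-a^{1} = C_{\Qc}^{-1}\,\underline{\dim}\,M$ from the first step gives $\underline{\dim}\,\tau M = -\,{}^{t}C_{\Qc}\,C_{\Qc}^{-1}\,\underline{\dim}\,M = \Phi_{\Qc}\,\underline{\dim}\,M$, which is the first identity. For the second identity I would invoke the standard fact from AR theory that $\tau$ restricts to a bijection between indecomposable non-projectives and indecomposable non-injectives, with inverse $\tau^{-1}$: applying the first identity to $M := \tau^{-1}N$ (which is indecomposable non-projective) gives $\Phi_{\Qc}\,\underline{\dim}\,\tau^{-1}N = \underline{\dim}\,N$, and left-multiplying by $\Phi_{\Qc}^{-1}$ yields $\underline{\dim}\,\tau^{-1}N = \Phi_{\Qc}^{-1}\,\underline{\dim}\,N$, as required.
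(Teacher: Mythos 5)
Your proof is correct. The paper itself gives no proof of this statement — it is presented as a classical theorem with citations to the textbooks of Assem--Simson--Skowro\'nski and Schiffler — and your argument (hereditarity, the minimal projective resolution, the Nakayama functor applied to it to produce the four-term sequence with $\tau M=D\,\mathrm{Ext}^1_A(M,A)$, the vanishing of $\Hom_A(M,A)$ for $M$ indecomposable non-projective, and the deduction of the second identity from the bijectivity of $\tau$) is precisely the standard proof found in those references.
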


The above relations interpreted in terms of friezes lead to the following result.
\begin{lem}\label{lemPsi}
For the friezes $d^i$, one has $\Psi_\Qc=\Phi_\Qc^{-1}$, i.e. $d^i_m=\Phi_\Qc^{-1}d^i_{m-1}$, for all $m\in \Z$.
\end{lem}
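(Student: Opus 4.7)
The plan is to derive a matrix form of the additive-frieze rule and then directly identify the transition matrix $\Psi_\Qc$ with $\Phi_\Qc^{-1}$ via the Cartan matrix.

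Let $A$ be the adjacency matrix of $\Qc$, with $A_{ij}$ the number of arrows $i\to j$ in $\Qc_1$. From the construction of $\Z\Qc$, the arrows entering a vertex $(m,j)$ are $(m,i)\to(m,j)$ for each $i\to j\in\Qc_1$ and $(m-1,k)\to(m,j)$ for each $j\to k\in\Qc_1$. Hence, for any scalar additive frieze $f$ on $\Z\Qc$, the rule $f(\tau v)+f(v)=\sum_{w\to v}f(w)$ at $v=(m,j)$ reads
\[
f(m-1,j)+f(m,j)=\sum_{i\to j\in\Qc_1}f(m,i)+\sum_{j\to k\in\Qc_1}f(m-1,k),
\]
and stacking over $j$ gives the single matrix identity $(I-A^t)\,f_m=(A-I)\,f_{m-1}$. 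Since $\Qc$ is acyclic $A$ is nilpotent, so $I-A^t$ is invertible and
\[
\Psi_\Qc=-(I-A^t)^{-1}(I-A).
\]
The formula depends only on $\Qc$ and therefore applies to every scalar additive frieze, in particular to each $d^i$.

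It now suffices to identify this expression with $\Phi_\Qc^{-1}$. The entry $c_{ij}$ of $C_\Qc$ counts paths from $j$ to $i$ in $\Qc$, which is the $(j,i)$-entry of $\sum_{n\geq 0}A^n=(I-A)^{-1}$; hence $C_\Qc=(I-A^t)^{-1}$ and ${}^tC_\Qc=(I-A)^{-1}$. Substituting gives $\Psi_\Qc=-C_\Qc({}^tC_\Qc)^{-1}$, which is exactly $\Phi_\Qc^{-1}$, read off by inverting the defining identity $\Phi_\Qc=-{}^tC_\Qc\,C_\Qc^{-1}$.

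The whole argument is essentially bookkeeping once the matrix translation of the frieze rule is in hand; the one subtle point is keeping the transposes straight, because the two families of arrows in $\Z\Qc$ contribute an $A^t$ and an $A$ respectively. A more representation-theoretic variant, closer to the spirit of the section, would assemble the dimension vectors $\udim(\tau^{-m}P_j^{\op})$ of the indecomposables of the category of representations of $\Qc^{\op}$ into the matrix $D_m=(d^i(m,j))_{i,j}$, apply Theorem~\ref{propMN} together with $\Phi_{\Qc^{\op}}=\Phi_\Qc^{-1}$ to obtain the column-wise relation $D_m=\Phi_\Qc D_{m-1}$, and then transpose and use the initial datum $D_0={}^tC_\Qc$ to recover the same identity $\Psi_\Qc=\Phi_\Qc^{-1}$ for the row vectors $d^i_m$.
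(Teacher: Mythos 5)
Your proof is correct, and it takes a genuinely different route from the paper's. You compute the transition matrix directly: writing the mesh relation at every vertex of a slice as $(I-A^{t})f_m=(A-I)f_{m-1}$ and then identifying $C_\Qc=(I-A^{t})^{-1}$ via the path-counting description of the Cartan matrix, so that $\Psi_\Qc=-C_\Qc\,({}^tC_\Qc)^{-1}=\Phi_\Qc^{-1}$ drops out by pure matrix algebra. (The identification is consistent with the paper's data: it reproduces both $C_\Qc^{-1}=I-A^{t}$ and the matrices $\Psi_\mA$, $\Psi_\mD$ of \eqref{PhiAD}.) The paper instead argues representation-theoretically: it observes that the columns of $C_{\Qc^{\op}}$ are the dimension vectors $\udim P_j^{\op}$, i.e.\ the initial slices of the friezes $d^i$, applies Theorem~\ref{propMN} (the Coxeter transformation realizes the AR translation on dimension vectors) to get $\Phi_{\Qc^{\op}}^{-1}C_{\Qc^{\op}}=(\udim\tau^{-1}P_j^{\op})_j$, and transposes. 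Your argument is more elementary and self-contained: it needs no input from AR theory, and it establishes $f_m=\Phi_\Qc^{-1}f_{m-1}$ for \emph{every} scalar additive frieze in one stroke rather than first for the basis $(d^i)$; it also sidesteps the implicit hypothesis in Theorem~\ref{propMN} that the modules involved be non-injective. What you lose is the conceptual link that the paper is after — namely that $\Psi_\Qc=\Phi_\Qc^{-1}$ \emph{because} $\tau^{-1}$ on the repetition quiver is the AR translation and $\Phi_\Qc$ tracks it on dimension vectors — but your closing paragraph correctly sketches that variant, which is essentially the paper's own proof.
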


\begin{proof}
x gives the vector $\udim P_j^{\op}$, and the $i$-th row of $C_{\Qc^{\op}}$ gives $\udim P_i$
 which is also the vector $d^i_0=(d^i_{0,j})$ of the values of the frieze $d^i$ on the copy of $0\times \Qc$ in $\Z\Qc$.
 By Theorem \ref{propMN} one has
$$
\Phi_{\Qc^{\op}}^{-1}\,C_{\Qc^{\op}}=(\udim \tau^{-1} P_1^{\op}, \ldots, \udim \tau^{-1} P_n^{\op})=:B_{\Qc^{\op}}.
$$
The $i$-th row of $B_{\Qc^{\op}}$ gives  the vector $d^i_1=(d^i_{1,j})$ of the values of the frieze $d^i$ on the next copy of $1\times \Qc$ in $\Z\Qc$. 
By transposing the matrices in the above equation one gets
$$
(d^1_1, \ldots, d^n_1)=\,^tB_{\Qc^{\op}}
=\,^tC_{\Qc^{\op}}\, ^t\Phi_{\Qc^{\op}}^{-1}
=\,C_{\Qc^{}}\, ^t\Phi_{\Qc^{}}^{}
=\Phi_\Qc^{-1}C_\Qc
=\Phi_\Qc^{-1}(d^1_0, \ldots, d^n_0).
$$
Hence, the result.
\end{proof}

\begin{prop}\label{fadBase}
The family $(d^i)_{i\in \Qc_0}$ forms a $\Z$-basis of the space of additive friezes from $ \Z\Qc$ to~$\Z$.
The additive frieze $f_{ad}: \Z \Qc_{0} \rightarrow\Z[x_1,\ldots, x_n]$ decomposes as a formal combination
$$
f_{ad}=\sum_{i\in \Qc_0} a_i d^i,
$$
where the coefficients are given by $(a_i)_i=C_\Qc^{-1}(x_i)_i$.
\end{prop}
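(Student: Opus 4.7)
First I would show that an additive frieze $f:\Z\Qc\to\Z$ is uniquely determined by its restriction $f_0=(f(0,i))_{i\in\Qc_0}$ to the $0$-th slice, and that every vector in $\Z^n$ arises this way. Rewriting the mesh relation at $(1,i)$ as
$$f(1,i)-\sum_{k\to i\,\text{in}\,\Qc}f(1,k)\;=\;-f(0,i)+\sum_{i\to k\,\text{in}\,\Qc}f(0,k),$$
and picking a topological ordering of $\Qc_0$ (which exists since $\Qc$ is acyclic), one can compute the values $f(1,i)$ in increasing order, since each such equation involves only $f(0,\cdot)$ and the $f(1,k)$ with $k\to i$ in $\Qc$, i.e.\ for $k$ smaller than $i$. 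Iterating upwards and downwards extends slice-$0$ data uniquely to the whole of $\Z\Qc$. So the evaluation $f\mapsto f_0$ is a $\Z$-linear isomorphism between additive friezes and $\Z^n$.

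Next I would identify the initial vectors $d^i_0\in\Z^n$. From the definition $\underline d(0,j)=\udim P_j^{\op}$ and the fact that $(P_j^{\op})_i$ has a basis indexed by the paths in $\Qc^{\op}$ from $j$ to $i$, i.e.\ by the paths in $\Qc$ from $i$ to $j$, we obtain
$$d^i(0,j)\;=\;(\udim P_j^{\op})_i\;=\;c_{ji}.$$
Hence $d^i_0=(c_{ji})_{j}$ is precisely the $i$-th column of the Cartan matrix $C_\Qc$, and the matrix whose columns are $d^1_0,\dots,d^n_0$ is $C_\Qc$ itself.

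Since $\Qc$ is acyclic, with respect to any topological ordering of its vertices the matrix $C_\Qc$ is (lower) triangular with $c_{ii}=1$ (the only path from $i$ to itself being the trivial one), so $\det C_\Qc=1$ and $C_\Qc\in\GL_n(\Z)$. Therefore $(d^i_0)_{i\in\Qc_0}$ is a $\Z$-basis of $\Z^n$, which by the first step transports to give the first assertion of the proposition.

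Finally, for the decomposition of the generic frieze $f_{ad}$: by construction $(f_{ad})_0=(x_i)_i$. If we write $f_{ad}=\sum_i a_i d^i$, then evaluating on slice $0$ gives $(x_j)_j=\sum_i a_i d^i_0=C_\Qc(a_i)_i$, whence $(a_i)_i=C_\Qc^{-1}(x_i)_i$, as claimed. The only genuine content is the identification $d^i_0=(c_{ji})_j$ from the projective modules of $\Qc^{\op}$; once that is in place the rest is linear algebra over $\Z$, so there is no significant obstacle beyond checking the conventions carefully.
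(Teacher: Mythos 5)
Your proof is correct and follows essentially the same route as the paper's: identify the vectors $d^i_0$ with the columns of the unipotent (hence $\Z$-invertible) Cartan matrix $C_\Qc$, use the fact that an additive frieze is determined linearly by its values on a single slice, and solve for the coefficients by evaluating at slice $0$. The only difference is cosmetic: you justify the slice-determination step directly via a topological ordering of the acyclic quiver, where the paper instead invokes the transition matrix $\Psi_\Qc=\Phi_\Qc^{-1}$ of Lemma \ref{lemPsi}.
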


\begin{proof}
Let $f: \Z \Qc_{0} \rightarrow\Z$ be an additive frieze with initial values given by the column vector $f_0=(f_{0,j})_j$.
The vectors $d^i_0$ are the columns of the invertible matrix $C_\Qc$, in particular they form a $\Z$-basis of $\Z^{\Qc_0}$.
One writes $f_0$ in this basis:
$$
f_0=\sum_i a_i d^i_0=C_\Qc(a_i)_i.
$$
Using Lemma \ref{lemPsi} one obtains the values of $f$ on any slice as
$
f_m=\Phi_\Qc^{-m}\sum_i a_i d^i_0=\sum_i a_i d^i_m.
$
One deduces $
f=\sum_i a_i d^i,
$
with  $(a_i)_i=C_\Qc^{-1}f_0$.
\end{proof}

\begin{ex}
Going back to Example \ref{exFrdim}, one computes the Cartan matrix, its inverse, and the Coxeter transformation
$$
C_\Qc=\begin{pmatrix}
1&0&0\\
1&1&0\\
2&1&1
\end{pmatrix}, 
\qquad
C_\Qc^{-1}=\begin{pmatrix}
1&0&0\\
-1&1&0\\
-1&-1&1
\end{pmatrix}, 
\qquad
\Phi_\Qc^{-1}=-C_\Qc\, ^tC_\Qc^{-1}=\begin{pmatrix}
-1&1&1\\
-1&0&2\\
-2&1&2
\end{pmatrix}.
$$
The formal frieze $f_{ad}$ is
$$
\begin{small}
  \xymatrix    
  @!0 @R=1.5cm @C=2.15cm
  {
   f_{ad}:\quad \ar@{.>}[rd]&&
    x_2\ar[d]\ar[rd]
  &&
  2x_3-x_1
  \ar[d]\ar[rd]
  &&
  {\begin{array}{c}
x_2+3x_3\\[-4pt]
-3x_1
\end{array}}\ar[d]  \ar@{.>}[rd]
  &
  \\
\ar@{.>}[r] \ar@{.>}[rru]&
x_1\ar[r]\ar[ru]&
x_3\ar[r]\ar[rru]&
  {\begin{array}{c}
x_2+x_3\\[-4pt]
-x_1
\end{array}}\ar[r]\ar[ru]&
  {\begin{array}{c}
x_2+2x_3\\[-4pt]
-2x_1
\end{array}}\ar[r]\ar[rru]&
3x_3-2x_1\ar[r]\ar[ru]&
4x_3-3x_1
  \ar@{.>}[r]&
   }
  \end{small}
$$
and decomposes
as
$$
f_{ad}=a_1 d^1+a_2d^2+a_3d^3, \quad \text{ where } 
\begin{pmatrix}
a_1\\a_2\\a_3
\end{pmatrix}=C_\Qc^{-1}
\begin{pmatrix}
x_1\\x_2\\x_3
\end{pmatrix}=
\begin{pmatrix}
x_1\\
x_2-x_1\\
x_3-x_2-x_1
\end{pmatrix}.
$$
For instance on can check this formula for the vertex $(1,1)$:
\begin{eqnarray*}
f_{ad}(1,1)&=&x_1d^1(1,1)+(x_2-x_1)d^2(1,1)+(x_3-x_2-x_1)d^3(1,1)\\
&=&2x_1+2(x_2-x_1)+(x_3-x_2-x_1)\\
&=&-x_1+x_2+x_3.
\end{eqnarray*}
The values of $f_{ad}$ on consecutive copies of $\Qc$ in $\Z\Qc$ are obtained by applying $\Phi_\Qc^{-1}$:
$$
\cdots \overset{\Phi_\Qc^{-1}}{\longrightarrow}
\begin{pmatrix}
x_1\\x_2\\x_3
\end{pmatrix}
\overset{\Phi_\Qc^{-1}}{\longrightarrow}
\begin{pmatrix}
-x_1+x_2+x_3\\
-x_1+2x_3\\
-2x_1+x_2+2x_3
\end{pmatrix}
\overset{\Phi_\Qc^{-1}}{\longrightarrow}
\begin{pmatrix}
-2x_1+x_3\\
-3x_1+x_2+3x_3\\
-3x_1+4x_3
\end{pmatrix}
\overset{\Phi_\Qc^{-1}}{\longrightarrow}
\cdots
$$
\end{ex}

\begin{rem}\label{endPerad}
In the case when $\Qc$ is an acyclic quiver not of Dynkin type $\mA, \mD, \mE$, by a theorem of Auslander \cite{Aus}
one knows that
there exist indecomposable modules in $\rep^{\op}$ of arbitrarily large dimensions.
 Hence, at least one of the functions $d^i$ is not bounded, and the frieze $f_{ad}$ can not be periodic. This is the final argument for the proof of Theorem \ref{frADE} in the additive case.
\end{rem}

\begin{rem}[Proof of Theorem \ref{perADE}, additive case] \label{SymAd}
In the case when $\Qc$ is a Dynkin quiver of type $\mA, \mD, \mE$,
the identifications of the vertices $(0,j)$ and $\nu(0,j)$ of $\Z\Qc$ with the modules $P_j^{\op}$ and $I_j^{\op}$
induce the symmetry of $f_{ad}$ given in Theorem \ref{perADE}. Indeed, the property
$$
d^i (-1,j))_j=\Phi_\Qc(d^i (0,j))_j=-(d^i \nu(0,j))_j.
$$ 
is a reformulation of the property $\Phi_\Qc \udim P_i=-\udim I_i$.
 One therefore deduces the symmetry $d^i\tau^{-1}\nu=-d^i$,  for all $i$.
Applying twice this property one deduces the periodicty.
One can explain that the periods are exactly the Coxeter number of the Dynkin graph $\Qc$ by interpreting the
 Coxeter transformation $\Phi_\Qc$ as the action of a Coxeter element of the Weyl group in the corresponding root system. The order of such elements is precisely the Coxeter number of the graph.
\end{rem}

\subsection{Multiplicative friezes and cluster character}\label{muSec}
The theory of multiplicative friezes is closely related to the theory of cluster algebras. 
One can immediately recognize the entries of the frieze $f_{mu}$ as cluster variables.
We start by collecting some basic definitions and notions from the theory of cluster algebras.

Cluster algebra is a recent theory developed by Fomin and Zelevinsky, \cite{FZ1}-\cite{FZ2}.
 Let us mention the following surveys, notes and books, on the subject \cite{Kel}, \cite{Rei}, \cite{GSV}, \cite{MarshBook}, 
 where one can find more details on what will be exposed below.

Cluster algebras are commutative associative algebras defined by
generators and relations. 
The generators and relations are not given from the beginning.
They are obtained recursively using a combinatorial procedure encoded in a quiver.

We fix here the algebraically closed field $k=\C$.
Our initial data is a quiver  $\Qc$ with no loops and no $2$-cycles, and a set of indeterminates 
$\{x_1,\ldots, x_n\}$. 
As before $n$ stands for the 
cardinality of $\Qc_0$,
and the vertices are labeled with an integer in $\{1, \ldots, n\}$.
The cluster algebra $\A_\Qc(x_1,\ldots, x_n)$ will be defined as a subalgebra of the field of fractions $\C(x_1,\ldots, x_n)$.
The generators and relations of $\A_\Qc$ are given using the recursive procedure called
\textit{seed mutations} that we describe below.

A \textit{seed} is a couple 
$
\Sigma=\left((u_1, \ldots, u_n) , \;\Rc\right),
$
where $\Rc$ is a quiver, without loop and 2-cycle, with $n$ vertices,
and where $u_1, \ldots, u_n$ are free generators of $\C(x_1,\ldots, x_n)$ labeled by the vertices of the graph $\Rc$. 
The \textit{mutation at  vertex}  $k$ of the seed 
$\Sigma$ is a new seed $\mu_k(\Sigma )$ defined by
\begin{enumerate}
\item[\textbullet]
$\mu_k(u_1, \ldots, u_n)=(u_1, \ldots, u_{k-1},u'_k, u_{k+1},\ldots, u_n)$ where
$$
\displaystyle
u'_ku_{k}=\prod\limits_{\substack{\text{arrows in }\Rc\\ i\rightarrow k }}\; u_i
 \quad+\quad 
\prod\limits_{\substack{\text{arrows in }\Rc\\ i\leftarrow k }}\;u_i.
$$
\item[\textbullet] 
$\mu_k(\Rc)$ is the graph obtained from $\Rc$ by applying the following transformations
\begin{enumerate}
\item for each possible path $i\rightarrow k \rightarrow j$ in $\Rc$, add an arrow $i\rightarrow j$,
\item reverse all the arrows leaving or arriving at $k$,
\item remove a maximal collection of 2-cycles.
\end{enumerate}
\end{enumerate}
Mutations are involutions.
\begin{ex}\label{exmut} Example of mutation\\
 \xymatrix  
  @!0 @R=1cm @C=1.5cm
  {
  &&2\ar[d] &&&&&&2\ar[ld]&&& \\
(x_1,x_2,x_3), &1\ar[ru]&3\ar[l]&\ar@{<->}[r]^{\mu_{1}}&&&(\frac{x_2+x_3}{x_1},x_2,x_3), &1\ar[r]&3
}
\end{ex}

\begin{defn}
Starting from the initial seed $\Sigma_0=((x_1,\ldots, x_n), \Qc)$, one produces  $n$ new seeds 
$\mu_k(\Sigma_0)$, $k=1,\ldots, n$. 
One applies all the possible mutations.
The set of rational functions appearing in any of the seeds produced in the mutation process
is called a \textit{cluster}. 
The rational functions in a cluster are called \textit{cluster variables}.
The cluster algebra $\A_\Qc(x_1,\ldots, x_n)$ is defined as the subalgebra of  $\C(x_1,\ldots, x_n)$ generated by all the cluster variables.
\end{defn}

\noindent
Note that if $\left((u_1, \ldots, u_n) , \;\Rc\right)$ is a seed obtained by sequences of mutations from $((x_1,\ldots, x_n), \Qc)$
then the algebras  $\A_\Rc(u_1,\ldots, u_n)$ and $\A_\Qc(x_1,\ldots, x_n)$ are isomorphic.

One of the first surprising result is the so-called \textit{Laurent phenomenon}.
\begin{thm}[\cite{FZ1}]\label{LaurentFZ}
In the cluster algebra $\A_\Qc$ every cluster variable can be written as a Laurent polynomial with integer coefficients in the variable of any given cluster.
\end{thm}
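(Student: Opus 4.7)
The plan is to fix the initial cluster $\Sigma_{0} = ((x_{1}, \ldots, x_{n}), \Qc)$ and prove, by induction on the length $\ell$ of a mutation sequence starting at $\Sigma_{0}$, that every cluster variable produced after $\ell$ steps lies in $\Z[x_{1}^{\pm 1}, \ldots, x_{n}^{\pm 1}]$. Since mutation is an involution and any cluster can be taken as the initial one of some iterated mutation process, this will imply the statement for every cluster of $\A_{\Qc}$.

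I would begin with the base cases. For $\ell=0$ there is nothing to prove. For $\ell=1$ the exchange relation
$$
u_{k}' \;=\; \frac{1}{x_{k}}\left( \prod_{i\to k} x_{i} \;+\; \prod_{i\leftarrow k} x_{i}\right)
$$
immediately displays any singly-mutated variable as a Laurent polynomial in the $x_{i}$'s. For the inductive step, given a cluster variable $u$ appearing in a seed obtained by mutation $\mu_{j}$ from a seed $\Sigma$ at distance $\ell$, the exchange relation in $\Sigma$ can be written $u\cdot v = M_{+} + M_{-}$, where $v$ and the monomials $M_{\pm}$ are built from cluster variables of $\Sigma$; by the inductive hypothesis these all belong to $\Z[x_{1}^{\pm 1}, \ldots, x_{n}^{\pm 1}]$.

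The main obstacle appears here: a priori this only places $u = (M_{+}+M_{-})/v$ in the localization of $\Z[x_{1}^{\pm 1}, \ldots, x_{n}^{\pm 1}]$ at $v$, and one still has to show that $v$ actually divides $M_{+}+M_{-}$ inside the Laurent polynomial ring. The standard resolution, due to Fomin--Zelevinsky, is the \emph{Caterpillar Lemma}: one strengthens the inductive hypothesis so as to control Laurent-ness simultaneously across four neighbouring seeds forming a ``caterpillar'' pattern of alternating mutations $\mu_{k}, \mu_{\ell}$. The crucial ingredient is then a coprimality assertion: a cluster variable $v$ and its mutation partner $v'$ share no non-trivial common factor in $\Z[x_{1}^{\pm 1}, \ldots, x_{n}^{\pm 1}]$. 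Together with the caterpillar compatibility, this forces the required divisibility and lets the induction advance.

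The genuinely delicate step is this coprimality claim. I would prove it by tracking how denominators propagate across the four caterpillar seeds, and by exploiting the strictly binomial shape of the exchange relation $v\, v' = M_{+} + M_{-}$, whose monomial structure prevents non-trivial common factors from arising between $v$ and $v'$. Once coprimality is in hand, the induction on $\ell$ closes and every cluster variable is simultaneously a Laurent polynomial with integer coefficients in the fixed cluster.
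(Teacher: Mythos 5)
The paper offers no proof of this statement: it is imported verbatim from Fomin--Zelevinsky \cite{FZ1}, so there is nothing internal to compare you against. Your outline is, in fact, the canonical argument of \emph{loc.\ cit.}: induction on the distance from the initial seed in the exchange graph, reduction of the divisibility problem to the Caterpillar Lemma, and a coprimality statement for the elements created by a single exchange. As a road map it is the right one, and the reduction of ``Laurent in any cluster'' to ``Laurent in the initial cluster'' via the involutivity of mutation is correctly handled.

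The gap is that the two load-bearing steps are named rather than proved, and the one-line justification you offer for the harder of them does not work as stated. From $v\,v' = M_{+}+M_{-}$ alone, the ``strictly binomial shape'' of the right-hand side does not prevent $v$ and $v'$ from sharing an irreducible factor $f$ in the Laurent ring: that would only force $f^{2}\mid M_{+}+M_{-}$, which is not absurd on its face. The actual content of the Fomin--Zelevinsky argument is (i) a verification that the exchange binomials are not divisible by any cluster variable and that $\gcd(M_{+},M_{-})=1$, (ii) a compatibility condition between the exchange polynomials attached to two consecutive spine edges of the caterpillar (one must divide a specialization of the other up to a unit), and (iii) a direct check of Laurentness for seeds at distance at most three along the spine, after which the induction closes because the Laurent ring is a UFD and the candidate denominators have been shown coprime to the numerators. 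Points (i)--(iii) are exactly where the theorem stops being formal, and your proposal defers all three. To make this a proof you would need to state the Caterpillar Lemma precisely and verify its hypotheses for the cluster-algebra exchange relations, or else restrict to a setting (e.g.\ type $\mA$, as in Theorem~\ref{thmLaurent}) where an explicit closed formula makes the Laurentness visible directly.
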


Moreover, the coefficients of the above Laurent polynomials have been conjectured to be positive integers. 
This is proved in the situation we are considering (and in other more general situations), see \cite{KiQi}, \cite{LeSc}.

The following result characterizes the Dynkin quivers in the theory of cluster algebras.

\begin{thm}[\cite{FZ2}]\label{classFZ}
The cluster algebra $\A_\Qc$ has finitely many cluster variables if and only if the initial graph $\Qc$ is mutation-equivalent to a Dynkin quiver of type $\mA,\mD,\mE$.
\end{thm}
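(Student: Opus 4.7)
The plan is to prove the two implications separately, relying heavily on the representation-theoretic interpretation developed in Sections \ref{adSec} and \ref{muSec} and on the mutation-invariance of the property.

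First I would check the easy reduction: if $\Qc$ and $\Qc'$ are mutation-equivalent, then $\A_\Qc(x_1,\ldots,x_n)$ and $\A_{\Qc'}(x_1,\ldots,x_n)$ coincide as subalgebras of $\C(x_1,\ldots,x_n)$ (after relabelling the initial seed), and in particular the set of cluster variables has the same cardinality. Thus finiteness of the cluster variable set is a mutation-invariant property and we may freely replace $\Qc$ by any quiver in its mutation class.

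For the direction \emph{Dynkin $\Rightarrow$ finite type}, suppose $\Qc$ is of type $\mA_n$, $\mD_n$ or $\mE_{6,7,8}$. By Theorem \ref{frADE} the generic multiplicative frieze $f_{mu}:\Z\Qc\to \Q(x_1,\ldots,x_n)$ is periodic, so the image of $f_{mu}$ is finite: it consists of the $n$ initial variables together with a bounded number of new rational functions. One then identifies these values with cluster variables: writing down $f_{mu}$ along the repetition quiver corresponds, via the knitting procedure, to iterating mutations at sinks/sources of successive slices (see \cite{CaCh}, \cite{ARS}), so each value of $f_{mu}$ is a cluster variable of $\A_\Qc$. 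Finally, using Theorem \ref{ARfri} together with the Caldero-Chapoton map, every cluster variable is parametrised by an indecomposable representation of $\Qc^{\op}$ (or is an initial variable), and by Gabriel's theorem there are only finitely many such indecomposables. This produces exactly the count ``(number of positive roots) $+ n$''.

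For the converse \emph{finite type $\Rightarrow$ Dynkin}, I would argue by contrapositive. Assume $\Qc$ is not mutation-equivalent to any Dynkin quiver. Using known reductions in the mutation class (Caldero-Keller or Buan-Marsh-Reineke-Reiten-Todorov theory of cluster categories), one may replace $\Qc$ by an acyclic, non-Dynkin representative of its mutation class. For such a $\Qc$, Auslander's theorem (already invoked in Remark \ref{endPerad}) provides indecomposable representations of $\Qc^{\op}$ of arbitrarily large dimension vectors. Via the Caldero-Chapoton character, these yield pairwise distinct cluster variables with unbounded degree in $x_1,\ldots,x_n$, so $\A_\Qc$ has infinitely many cluster variables.

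The main obstacle is the ``only if'' direction, and specifically two delicate points: the reduction to an acyclic representative (it is a nontrivial theorem that every finite mutation class containing an acyclic quiver is ``Dynkin or not'', and that the finite/infinite type dichotomy can be tested on an acyclic representative), and the passage from infinitely many non-isomorphic indecomposable modules to infinitely many \emph{distinct} cluster variables (ensuring that the Caldero-Chapoton map is injective on indecomposables and produces genuinely new Laurent polynomials rather than repetitions). The original proof of Fomin-Zelevinsky bypasses representation theory entirely by classifying finite type cluster algebras via positive-definite symmetrisable generalised Cartan matrices, which is an alternative route but replaces the representation-theoretic obstacle by an equally substantial combinatorial one.
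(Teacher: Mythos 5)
This theorem is not proved in the paper at all: it is imported verbatim from Fomin--Zelevinsky \cite{FZ2}, whose own argument is combinatorial (the ``$2$-finiteness'' criterion and the classification of mutation classes via positive quasi-Cartan companions), not representation-theoretic. So there is nothing in the paper to compare your proof against line by line; what I can do is assess your sketch on its own terms.

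Your forward direction (Dynkin $\Rightarrow$ finitely many cluster variables) is essentially sound and matches the machinery the paper assembles in \S\ref{muSec}: mutation-invariance of the cluster variable set, identification of frieze entries with cluster variables via source mutations, and the Caldero--Chapoton parametrisation by indecomposables combined with Gabriel's theorem. One caution: you must not invoke the bijection \eqref{bijxM} as stated in the paper, since its proof there rests on the FZ2 denominator theorem, which presupposes the finite-type classification; you need the independent Caldero--Keller/BMRRT result that the cluster character surjects onto the cluster variables. With that substitution the direction goes through.

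The converse contains a genuine gap. You propose to ``replace $\Qc$ by an acyclic, non-Dynkin representative of its mutation class'', but a general mutation class need not contain \emph{any} acyclic quiver (most quivers coming from triangulated surfaces of positive genus, for instance, have mutation classes with no acyclic member). Since the theorem is stated for an arbitrary loop-free, $2$-cycle-free $\Qc$, your argument only covers the mutation-acyclic case and says nothing about the rest; this is precisely the case that forces Fomin--Zelevinsky into their $2$-finiteness analysis, and no amount of cluster-category theory for path algebras will reach it directly. A second, smaller issue: Auslander's theorem gives indecomposables of unbounded dimension, but only \emph{rigid} indecomposables correspond to cluster variables under the cluster character, so you should work with the preprojective (transjective) component, as the paper itself does in Remark \ref{endPermu}, rather than with arbitrary large indecomposables. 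The first issue is the fatal one; the second is repairable.
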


Moreover, in type $\mA,\mD,\mE$, the cluster variables are uniquely determined by their monomial denominator (we define uniquely the denominator by  writing the variables as irreducible rational fractions).

\begin{thm}[\cite{FZ2}]
 If $\Qc$ is a Dynkin quiver of type $\mA,\mD,\mE$, then one has a bijection between the set of
 non-initial cluster variables of the algebra $\A_\Qc(u_1, \ldots, u_n)$ and the positive roots of the root system associated to $\Qc$.
 Under this bijection, there is a unique cluster variable with denominator 
 $u_1^{d_1}u_2^{d_2}\ldots u_n^{d_n}$, for each 
 positive root $\sum_{i\in \Qc_0}d_i\alpha_i$. \end{thm}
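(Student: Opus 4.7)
The plan is to establish a \emph{denominator map} from non-initial cluster variables to $\N^n$ using the Laurent phenomenon, to identify its image with the set of positive roots of the root system of $\Qc$ via Gabriel's theorem, and then to prove that this map is a bijection. The categorification of cluster algebras via quiver representations, discussed earlier in the paper, is the natural framework.

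First, by the Laurent phenomenon (Theorem~\ref{LaurentFZ}), every cluster variable $y$ of $\A_\Qc(u_1,\ldots,u_n)$ can be written uniquely as an irreducible Laurent polynomial
$$
y=\frac{P(u_1,\ldots,u_n)}{u_1^{d_1}\cdots u_n^{d_n}},
$$
with $d_i\geq 0$ and $P$ a polynomial not divisible by any $u_i$. This defines a denominator vector $\underline{d}(y):=(d_1,\ldots,d_n)\in \N^n$. The initial variables $u_i$ correspond to $\underline{d}(u_i)=0$; for any non-initial variable at least one $d_i$ is positive, so $\underline{d}(y)\neq 0$.

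Second, I would show that for $\Qc$ of Dynkin type $\mA,\mD,\mE$, the image of the denominator map lies in the set of positive roots of the associated root system, under the identification $\underline{d}=(d_1,\ldots,d_n)\leftrightarrow \sum_i d_i \alpha_i$. This is the key step, and it is done by induction on the length of a mutation sequence producing $y$ from the initial seed $\Sigma_0$. At the base step, mutation $\mu_k$ of $\Sigma_0$ produces a variable $u_k'$ with denominator $u_k$, corresponding to the simple root $\alpha_k$. The inductive step analyzes the exchange relation $u_k'u_k=M_1+M_2$ and tracks how the denominator vector transforms under a mutation. In the Dynkin case this piecewise-linear transformation coincides with the action of a simple reflection on the \emph{almost positive roots} $\Phi_{\geq -1}=\Phi_{>0}\cup\{-\alpha_i\}$; since the simple reflections act transitively on $\Phi_{\geq -1}$ (by the combinatorics of $\Phi_{\geq -1}$ developed in \cite{FZ2}), the denominator map is surjective onto the positive roots.

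Third, for injectivity: I would invoke the categorification provided by the Caldero-Chapoton cluster character $X_?$, which attaches to every indecomposable representation $M$ of $\Qc^{\op}$ a Laurent polynomial whose denominator is exactly $\prod_i u_i^{\dim M_i}$. By Gabriel's theorem, indecomposables of $\rep\Qc^{\op}$ are in bijection with positive roots via their dimension vectors, giving as many candidate cluster variables as positive roots. Combined with surjectivity from Step~2 and the fact that in any given cluster the variables are algebraically independent (so two distinct variables within one cluster cannot have the same denominator), one deduces that the denominator vector uniquely determines the cluster variable, and that the total number of non-initial cluster variables equals $|\Phi_{>0}|$.

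The main obstacle will be Step~2: identifying the denominator transformation rule under mutation with the combinatorial reflection action on almost positive roots requires careful analysis of the exchange relation and a nontrivial piecewise-linear computation. Historically, this is the content of the Fomin-Zelevinsky classification \cite{FZ2}, which can nowadays be bypassed using the Caldero-Chapoton formula to compute denominators directly from dimension vectors of indecomposables, reducing the problem to Gabriel's theorem.
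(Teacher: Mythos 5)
The paper does not actually prove this statement: it is quoted verbatim from Fomin--Zelevinsky \cite{FZ2} as a black box, so there is no in-paper argument to compare against. Judged on its own terms, your outline follows the standard route (denominator vectors via the Laurent phenomenon, the piecewise-linear action on almost positive roots for surjectivity, the Caldero--Chapoton character for the link with Gabriel's theorem), and Steps~1 and~2 are essentially the FZ2 strategy, modulo the imprecision that it is not the simple reflections themselves but their truncated versions $\sigma_i$ (equivalently the composites $\tau_\pm$ built from a bipartite splitting of $\Qc_0$) that preserve $\Phi_{\geq -1}$ and act with the required transitivity.

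The genuine gap is in Step~3, the injectivity. Your argument that ``two distinct variables within one cluster cannot have the same denominator because they are algebraically independent'' is false as stated (e.g.\ $\frac{1+u_2}{u_1}$ and $\frac{1+u_2^2}{u_1}$ are algebraically independent with equal denominators), and in any case a within-cluster statement cannot rule out two variables in \emph{different} clusters sharing a denominator vector. The counting you then invoke --- CC gives $|\Phi_{>0}|$ candidate variables, surjectivity gives at least $|\Phi_{>0}|$ realized denominators --- only shows every fiber of the denominator map is non-empty; to conclude the fibers are singletons you would need an a priori upper bound on the number of non-initial cluster variables, which Theorem~\ref{classFZ} (finiteness) does not supply. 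To close this you must either reproduce the FZ2 parametrization of all cluster variables by almost positive roots via the compatibility-degree machinery, or pass to the cluster category of \cite{BMRRT}, where non-initial cluster variables correspond bijectively to indecomposable rigid objects and the CC map $M\mapsto x_M$ is injective with prescribed denominator $\prod_i u_i^{\dim M_i}$; either way this is a substantive piece of the theorem, not a formal consequence of Steps~1 and~2.
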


When combining the above theorem with Gabriel's theorem one obtains
a bijection
\begin{equation}\label{bijxM}
\begin{array}{rcl}
\{\text{indecomposables of } \rep \}/\simeq &\longrightarrow& \{\text{non-initial cluster var. of } \A_\Qc(u_1, \ldots, u_n)\}\\[4pt]
M&\mapsto & x_M
\end{array}
\end{equation}
that gives $x_M$ as the unique cluster variable with denominator $u_1^{d_1}u_2^{d_2}\ldots u_n^{d_n}$, where $(d_i)=\udim M$.

Caldero and Chapoton gave an explicit formula for the variables $x_M$.
Their formula uses the \textit{quiver Grassmannian} $\Gr_e(M)$ defined for a quiver representation $M$ by
$$\Gr_e(M)=\{N \text{ subrepresentation of } M, \;\udim N =e\},$$
for all $e\in \N^{\Qc_0}$. The quiver Grassmannian is a projective subvariety of a product of ordinary Grassmannians.
\begin{thm}[\cite{CaCh}]
Let $\Qc$ be a Dynkin quiver of type $\mA,\mD,\mE$, and $M$ an indecomposable module of $\rep$ with $\udim M=(d_i)$.
One has 
\begin{equation}\label{CCformula}
x_M=\dfrac{1}{u_1^{d_1}u_2^{d_2}\ldots u_n^{d_n}}\sum_{e\in \N^{\Qc_0}}\chi(\Gr_e(M))\prod_{i\in \Qc_0}x_i^{\sum_{j\to i}e_j+\sum_{i\to j}(d_j-e_j)},
\end{equation}
where $\chi$ is the Euler characteristic.
\end{thm}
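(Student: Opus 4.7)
The plan is to show that the rational function $X_M$ defined by the right-hand side of~\eqref{CCformula} coincides with the cluster variable $x_M$ attached to $M$ via the bijection~\eqref{bijxM}. I would proceed by induction along the Auslander-Reiten quiver of $\rep$, using the fact that in Dynkin type this quiver is connected and every indecomposable is reached from the projectives by iterated applications of $\tau^{-1}$.

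For the base case, I would verify that $X_{P_i}=u_i$ for each standard projective. The subrepresentations of $P_i$ are combinatorially controlled (they correspond to certain subsets of paths starting at $i$), and a direct computation of the quiver Grassmannians $\Gr_e(P_i)$ together with the exponent combinatorics $\sum_{j\to i}e_j+\sum_{i\to j}(d_j-e_j)$ collapses the sum in~\eqref{CCformula} to the single monomial $u_i$, matching the initial cluster variable.

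The crucial step is to prove the multiplication identity
$$X_{\tau M}\, X_M \;=\; \prod_k X_{E_k} \;+\; 1$$
for every non-projective indecomposable $M$, where $0\to \tau M\to \bigoplus_k E_k\to M\to 0$ is the almost split sequence of~\eqref{subAR}. The idea is to expand both sides as Laurent polynomials in $u_1,\ldots,u_n$: subrepresentations of the middle term $E$ with a fixed dimension vector fiber over pairs consisting of a subrepresentation of $M$ and a subrepresentation of $\tau M$ (via the connecting maps of the exact sequence), and comparing Euler characteristics of these fibrations lets one match the coefficient of each Laurent monomial on the two sides. The ``+1'' on the right-hand side reflects the fact that among all extensions classified by $\operatorname{Ext}^1(M,\tau M)$, exactly the almost split sequence itself fails to split and contributes an extra term; this is where the almost split hypothesis is used in an essential way. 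Once this identity is in hand, it coincides exactly with the multiplicative frieze rule on $\Z\Qc^{\op}$, and a straightforward induction starting from $X_{P_i}=u_i=x_{P_i}$ yields $X_M=x_M$ for all indecomposables, since cluster variables are uniquely determined by their denominator $u_1^{d_1}\cdots u_n^{d_n}$ via Gabriel's theorem.

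The main obstacle is the multiplication identity. Controlling Euler characteristics of quiver Grassmannians through a short exact sequence is delicate in general, and the bookkeeping of the exponents $\sum_{j\to i}e_j+\sum_{i\to j}(d_j-e_j)$ must be carried out with care so that the exponents match on both sides of the identity. In Dynkin type, however, one can exploit the fact that quiver Grassmannians of indecomposable representations admit cell decompositions, so that $\chi(\Gr_e(M))$ equals a count of cells and the geometric identity reduces to a purely combinatorial one. This, combined with a careful analysis of how a subrepresentation of $E$ projects to $M$ and restricts from $\tau M$, completes the argument.
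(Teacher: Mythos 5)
Your overall architecture --- define $X_M$ as the right-hand side of \eqref{CCformula}, establish the multiplication identity $X_{\tau M}X_M=1+\prod_k X_{E_k}$ for almost split sequences by comparing Euler characteristics of quiver Grassmannians through the exact sequence, and induct along the Auslander--Reiten quiver --- is exactly the strategy of Caldero and Chapoton, to whom the survey attributes this theorem without reproducing a proof. The genuine gap is your base case. The claim $X_{P_i}=u_i$ is false: under the bijection \eqref{bijxM} the projective $P_i$ corresponds to a \emph{non-initial} cluster variable with denominator $u_1^{d_1}\cdots u_n^{d_n}$, $(d_j)=\udim P_i$, and the sum in \eqref{CCformula} for $M=P_i$ contains at least the two terms $e=0$ and $e=\udim P_i$, so it cannot collapse to a single monomial. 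Concretely, for $\Qc=\mA_2$ with arrow $1\to 2$, the simple projective $P_2$ has $\udim P_2=(0,1)$ and the formula yields $X_{P_2}=(u_1+1)/u_2$, the variable obtained by mutating the initial seed at vertex $2$, not the initial variable $u_2$; likewise $X_{P_1}=(u_1+u_2+1)/(u_1u_2)$. This is consistent with Remark \ref{endPermu} of the paper, where the initial variables $u_i$ sit on the slice $m-1$ of $\Z\Qc$ and the $x_{P_i^{\op}}$ sit on the \emph{next} slice, related by the frieze rule rather than equal. As written, your induction therefore never gets started, and the final sentence ``starting from $X_{P_i}=u_i=x_{P_i}$'' rests on a false identification on both the geometric and the cluster-algebraic side.

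The correct base step is a separate exchange-type relation, verified directly from \eqref{CCformula}, of the shape $u_i\,X_{P_i}=\prod_{j}X_{P_j}+\prod_{j'}u_{j'}$, where the first product runs over the arrows of $\Qc$ contributing to $\mathrm{rad}\,P_i$ and the second over the arrows in the other direction at $i$ (in the $\mA_2$ example: $u_1X_{P_1}=X_{P_2}+1$ and $u_2X_{P_2}=1+u_1$). This identifies the $X_{P_i}$ with the cluster variables produced by the first round of mutations $\mu_n\cdots\mu_1$ at sources/sinks applied to the initial seed; it is a computation of the same flavour as the almost-split identity but does not follow from it and cannot be omitted. A secondary, less serious point: your account of the ``$+1$'' (``exactly the almost split sequence itself fails to split and contributes an extra term'') misdescribes the mechanism. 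What one actually shows is that the natural map sending a subrepresentation $U\subseteq E$ to the pair $\bigl(U\cap\tau M,\;U/(U\cap\tau M)\bigr)$ has affine fibres and fails to be surjective onto exactly one extreme stratum of pairs; the resulting defect between $\chi(\Gr_g(E))$ and $\sum_{e+f=g}\chi(\Gr_e(\tau M))\chi(\Gr_f(M))$, once divided by the denominators, is the constant $1$. This step uses $\dim\mathrm{Ext}^1(M,\tau M)=1$, which holds in Dynkin type because indecomposables have trivial endomorphism rings. With the base case repaired and this comparison carried out, the rest of your plan (tracking denominators to pin down which cluster variable $X_M$ is) goes through.
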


The function $CC: M \mapsto CC(M)=x_M$ defined by the formula \eqref{CCformula} is known as the Caldero-Chapoton formula and 
often called a ``cluster characacter''.

\begin{prop}[\cite{CaCh}]\label{CaChfri} 
In $\rep$ each exact sequence $0\to \tau M\to \sum E_i \to M\to 0$, where $M$ and $E_i$ are indecomposables leads to a relation
$$
x_{\tau M}x_M=1+\prod x_{E_i}.
$$
\end{prop}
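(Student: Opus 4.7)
The plan is to establish the identity $x_{\tau M}\, x_M = 1 + \prod_i x_{E_i}$ by expanding both sides using the Caldero-Chapoton formula \eqref{CCformula} and verifying the resulting Laurent polynomial identity in $\Z[u_1^{\pm 1}, \ldots, u_n^{\pm 1}]$. Set $d := \udim M$, $d' := \udim \tau M$, and $e^{(i)} := \udim E_i$. Additivity of dimension along $0 \to \tau M \to \bigoplus_i E_i \to M \to 0$ yields $d + d' = \sum_i e^{(i)}$, so the denominator monomials on the two sides agree.

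A key preliminary is the multiplicativity of the cluster character over direct sums, $\prod_i x_{E_i} = x_E$ where $E := \bigoplus_i E_i$. This reduces to the Euler-characteristic identity
\begin{equation*}
\chi(\Gr_e(N_1 \oplus N_2)) = \sum_{e_1 + e_2 = e} \chi(\Gr_{e_1} N_1)\,\chi(\Gr_{e_2} N_2),
\end{equation*}
which I would prove via the $\C^{*}$-action on $\Gr_e(N_1 \oplus N_2)$ scaling the $N_2$-summand: the fixed locus is precisely $\bigsqcup_{e_1+e_2=e} \Gr_{e_1} N_1 \times \Gr_{e_2} N_2$, and the Euler characteristic of a complex variety with a $\C^*$-action coincides with that of the fixed locus. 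The problem then reduces to verifying $x_M\, x_{\tau M} = 1 + x_E$.

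The exponent function $\phi(e, d)_i := \sum_{j \to i} e_j + \sum_{i \to j}(d_j - e_j)$ appearing in \eqref{CCformula} is additive in the sense that $\phi(e, d) + \phi(e', d') = \phi(e + e', d + d')$, so after clearing the common denominator and grouping terms by $f := e + e'$, the identity collapses to a family of scalar equalities indexed by monomials. These take the form
\begin{equation*}
\sum_{e + e' = f} \chi(\Gr_e M)\,\chi(\Gr_{e'}\tau M) = \chi(\Gr_f E) + \varepsilon_f,
\end{equation*}
where $\varepsilon_f$ is $1$ at one distinguished dimension vector and $0$ otherwise; this $\varepsilon$-term accounts precisely for the $+1$ on the right-hand side of the claim.

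The geometric content --- and the main obstacle --- is this Grassmannian identity. The approach is to stratify $\Gr_f E$: for $L \subset E$ of dimension $f$, the AR sequence yields $L \cap \tau M \subset \tau M$ of some dimension $e'$ and $\pi(L) \subset M$ of dimension $e$ with $e+e'=f$ (where $\pi : E \twoheadrightarrow M$), defining a morphism
\begin{equation*}
\Gr_f E \;\longrightarrow\; \coprod_{e+e'=f} \Gr_e M \times \Gr_{e'}\tau M
\end{equation*}
whose fibers parameterize extensions of the chosen subrepresentations that embed compatibly into the ambient AR sequence. The almost split property --- every non-retraction from an indecomposable into $M$ factors through $\pi$ --- controls which extensions are realized and forces each fiber to be affine (hence Euler characteristic $1$), except at one distinguished pair $(L_M, L_{\tau M})$ where the fiber is empty, producing $\varepsilon_{f^{*}} = 1$. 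Carrying out this fiberwise Euler characteristic computation carefully, using indecomposability of $M$ and the hereditariness of $\C\Qc$ in the Dynkin setting, is the technical heart of \cite{CaCh}; once established, summing the resulting identities assembles the desired equality.
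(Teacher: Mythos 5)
The paper does not prove this proposition: it is quoted directly from \cite{CaCh}, and the surrounding text only remarks that the relation turns the Caldero--Chapoton character evaluated on the AR quiver into a multiplicative frieze. Your outline is a faithful reconstruction of the original Caldero--Chapoton argument: the multiplicativity of $CC$ over direct sums via the $\C^{*}$-action on $\Gr_e(N_1\oplus N_2)$, the linearity of the exponent $\phi(e,d)$ so that denominators and monomials match after using $\udim\tau M+\udim M=\sum_i\udim E_i$, and the reduction to the Grassmannian identity $\sum_{e+e'=f}\chi(\Gr_e M)\chi(\Gr_{e'}\tau M)=\chi(\Gr_f E)+\varepsilon_f$, with the correction supported at the single pair $(M,0)$ corresponding to a (nonexistent) splitting of the almost split sequence. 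The one caveat is that you explicitly defer the technical heart --- the proof that the fibers of $L\mapsto(\pi(L),\,L\cap\tau M)$ are affine spaces away from that distinguished point --- so as written this is a correct and well-structured proof \emph{outline} rather than a complete proof; but since the paper itself supplies no argument at all, there is no discrepancy with the source to report.
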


Recall that the exact sequences as above are represented by the picture \eqref{subAR} in the AR-quiver of $\rep$. In other words the evaluation 
of the cluster character $CC$ on the AR quiver gives rise to a piece of multiplicative frieze. This property certainly motivated the definition of generalized multiplicative friezes.\\

\begin{rem}[Proof of Theorems \ref{frADE} and \ref{perADE}, multiplicative case]\label{endPermu}
Let $\Qc$ be a Dynkin quiver of type $\mA,\mD,\mE$.
By Theorem \ref{ARfri}, we identify the AR quiver $\G_{\Qc^{\op}}$ with the full subquiver of $\Z\Qc$
 containing all the vertices between $(m,i)$ and $\nu(m,i)$, $i\in\Qc_0$ 
so that $P^{\op}_i=(m,i)$ and $I^{\op}_i=\nu(m,i)$.
 
Denote by $u_1, \ldots, u_n$ the entries $f_{mu}(m-1,1), \ldots, f_{mu}(m-1,n)$ in the multiplicative frieze $f_{mu}$.
 It is clear from the definition of the frieze rule that all the entries of $f_{mu}$
 are cluster variables of $\A_\Qc(u_1, \ldots, u_n)$.  
One computes by induction the denominators of the entries $f_{mu}(m,i)$ using the frieze rule (this can be done case by case for instance on the quivers given in Example~\ref{exADE}). The
 denominator of $f_{mu}(m-1,i)$ is of the form $u_1^{d_1}u_2^{d_2}\ldots u_n^{d_n}$ with $d_j$ equals to the number of paths from $j$ to $i$
 in $\Qc$.
In other words, $(d_j)_j=\udim P^{\op}_i$. 
By \eqref{bijxM} one gets $f_{mu}(m,i)=x_{P^{\op}_i}$, by  Proposition \ref{CaChfri}, 
one deduces $f_{mu}(M)=x_M$ for all $M$ of $\G_{\Qc^{\op}}$  identified with a vertex of $\Z\Qc$. 
In particular, one has $f_{mu}(\nu(m,i))=f_{mu}(I^{\op}_i)=x_{I^{\op}_i}$.
Similarly, one can compute the  denominators of the entries $f_{mu}(m-2,i)$ and checks that 
the denominator of $f_{mu}(m-2,i)$ is $u_1^{d'_1}u_2^{d'_2}\ldots u_n^{d'_n}$ with $(d'_j)_j=\udim I^{\op}_i$.
 Hence, $f_{mu}(m-2,i)=x_{I^{\op}_i}=f_{mu}(\nu(m,i))$.
This establishes the symmetry $f_{mu}=f_{mu}\nu\tau^{-2}$ stated in Theorem \ref{perADE}. By applying twice this property one obtains the periodicity.

When $\Qc$ is an acyclic quiver, not of type $\mA,\mD,\mE$, one may use similar arguments.
It is more convenient to use the cluster category $\cC_\Qc$, see \cite{BMRRT}, \cite{Rei}. 
To each rigid indecomposable module $M$ of $\cC_\Qc$ one assigns injectively a cluster variable $x_M$. 
The formula \eqref{CCformula} and Proposition \ref{CaChfri} can be generalized to the acyclic case.
The connected component of the Auslander-Reiten quiver of $\cC_\Qc$ containing the projective modules, called \textit{tranjective component}, is isomorphic to $\Z\Qc$. The frieze $f_{mu}$ can be viewed as the evaluation of $CC$ on the transjective component. Therefore the frieze contains infinitely many different entries and can not be periodic. More details can be found in \cite{AsDu}.
\end{rem}

\begin{rem}
In the frieze $f_{mu}$, from the values $u_i$'s on the slice $m\times \Qc_0$, one computes the values on the next copy $(m+1)\times \Qc_0$
by induction using the frieze rule. This induction corresponds to performing a suitable (not unique) sequence of mutations, in which every mutation $\mu_i$ for $i\in \Qc_0$ appears exactly once. At each step the mutation will be performed at a vertex that is a source (i.e. which has only outgoing arrows). In type 
$\mA, \mD, \mE$, with initial oriented quiver given as in Example~\ref{exADE}, a possible sequence is $\mu=\mu_n\cdots\mu_{2}\mu_1$. 
The periodicity of the multiplicative frieze corresponds to the fact that $\mu$ has finite order $h+2$, where $h$ is the Coxeter number associated to the Dynkin quiver. This property is known as \textit{Zamolodochikov periodicity}. It was proved in the Dynkin case in \cite{FZYsys} and in a more general case in \cite{Kel2}, see also \cite{Kel}.
\end{rem}

\section{$\SL_{k+1}$-friezes}\label{SLtil}

 Coxeter's frieze patterns naturally generalize to $\SL_{k+1}$-tilings and $\SL_{k+1}$-friezes.

An $\SL_{2}$-tiling \cite{ARS} is an
 infinite array $(e_{i,j})_{i,j\in\Z}$ satisfying Coxeter's unimodular rule:
all $2\times 2$ minors over adjacent rows and adjacent columns in the array are equal to 1
(in comparison with frieze patterns, the boundary condition of rows of 1's and 0's is removed).
Generalizing this rule to $(k+1)\times (k+1)$ minors of adjacent rows and columns,
we arrive at the notion of $\SL_{k+1}$-tilings~\cite{BeRe}. 
An $\SL_{k+1}$-tiling bounded (from top and bottom) by a row of 1's and
$k$ rows of 0's, is called an $\SL_{k+1}$-frieze
\cite{CoRo},
\cite{BeRe}, \cite{MGOST}.

$\SL_{k+1}$-tilings and friezes are closely related to $T$-systems,
recurrence relations appearing in mathematic physics 
as relations satisfied by a family of transfer matrices in solvable lattice models, \cite{BaRe}, \cite{KNS}.
These systems are also related to the discrete Hirota equation or octahedral recurrence.
$T$-systems were recently studied in connection with the combinatorics of cluster algebras, 
see e.g. \cite{DiKe}, \cite{DiF}, \cite{KeVi}.

A geometric interpretation of $\SL_{k+1}$-friezes leads to the classical
moduli spaces of configurations of points in projective spaces.
The latter spaces are, in turn, closely related to the geometry of Grassmannians. 
$\SL_{k+1}$-friezes can also be interpreted as difference equations 
with special monodromy conditions. 
The three realizations of the same space: that of $\SL_{k+1}$-friezes,
moduli spaces, spaces of difference equations is referred as ``triality'' in \cite{MGOST}. 

\subsection{ $\SL_{k+1}$-tilings and projective duality}

Let $M=(m_{i,j})_{i,j\in \Z}$ be an bi-infinite matrix with coefficients in a arbitrary field of characteristic~$0$.
Define the adjacent minors of $A$ of order $r+1$ based on $(i,j)$
as 
\begin{equation}\label{AdjMin}
M^{(r+1)}_{i,j}=\det\left(
\begin{array}{llll}
m_{i,j}&m_{i,j+1}&\ldots&m_{i,j+r}\\[4pt]
m_{i+1,j}&m_{i+1,j+1}&\ldots&m_{i+1,j+r}\\[4pt]
\ldots& \ldots&& \ldots\\
m_{i+r,j}&m_{i+r,j+1}&\ldots&m_{i+r,j+r}
\end{array}
\right).
\end{equation}

\begin{defn}\cite{BeRe}\label{tame}
(1) An $\SL_{k+1}$-tiling is an infinite matrix $M=(m_{i,j})_{i,j\in \Z}$ for which all adjacent minors of order $k+1$ equals 1, i.e. $M^{(k+1)}_{i,j}=1$ for all $i,j\in \Z$.

(2) An $\SL_{k+1}$-tiling is called \textit{tame} if in addition all adjacent minors of order $k+2$ vanish,
i.e. $M^{(k+2)}_{i,j}=0$ for all $i,j\in \Z$.
\end{defn}

The tameness condition is understood as a condition of genericity on the tiling.
 For instance, every $\SL_{k+1}$-tiling with non-zero adjacent minors of order $k$ is tame, due to the Desnanot-Jacobi, or Sylvester, identity
$$
M^{(r+1)}_{i,j}M^{(r-1)}_{i+1,j+1}=M^{(r)}_{i,j}M^{(r)}_{i+1,j+1}-M^{(r)}_{i,j+1}M^{(r)}_{i+1,j}.
$$

\begin{ex}
\label{sl2CoCox}
Every Coxeter frieze uniquely extends to a tame $\SL_{2}$-tiling, \cite{BeRe}.
For instance the frieze \eqref{exCoCox} extends to 
\begin{center}
\input{CoCoxSL2}
\end{center}
\end{ex}

\begin{defn}\cite{BeRe}\label{defDerived}
(1) The $r$-\textit{derived array} of the $\SL_{k+1}$-tiling $M=(m_{i,j})_{i,j\in \Z}$ is the bi-infinite matrix defined by
$$
\partial_{r}M:=(M^{(r)}_{ij})_{i,j\in \Z}.
$$

(2) The  $k$-derived array is called the \textit{projective dual} of $M$ and denoted by~$M^{*}$.
\end{defn}

The link to classical projective duality will be explained in \S \ref{modsp}.

\begin{prop}[\cite{BeRe}] Let $M$ be a tame $\SL_{k+1}$-tiling.

(i) The projective dual of $M$ is also a tame $\SL_{k+1}$-tiling. 

(ii) One has the following correspondence between the derived arrays of $M$ and $M^{*}$:
$$
(\partial_{r} M)_{i,j}=(\partial_{k+1-r} M^{*})_{i+r-1, j+r-1}.
$$
In particular $(M^{*})^{*}$ and $M$ coincide up to a shift of indices.
\end{prop}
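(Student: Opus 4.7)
The plan is to establish both statements through iterated application of the Desnanot--Jacobi (Sylvester) determinantal identity, exploiting the $\SL_{k+1}$-condition $M^{(k+1)}_{i,j}=1$ and the tameness $M^{(k+2)}_{i,j}=0$ of $M$. Recall that Desnanot--Jacobi asserts, for any square matrix $A$ of size $n+1$,
$$
\det A\cdot\det A^{\mathrm{c}}\;=\;\det A^{\mathrm{NW}}\det A^{\mathrm{SE}}-\det A^{\mathrm{NE}}\det A^{\mathrm{SW}},
$$
where the superscripts denote the $(n-1)\times(n-1)$ central submatrix and the four $n\times n$ corner submatrices.

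First I would apply this identity to the $(k+1)\times(k+1)$ adjacent submatrix of $M$ based at $(i,j)$, whose full determinant is $M^{(k+1)}_{i,j}=1$. Its four corner minors are exactly the four entries $M^{(k)}_{i,j}$, $M^{(k)}_{i,j+1}$, $M^{(k)}_{i+1,j}$, $M^{(k)}_{i+1,j+1}$ of a $2\times 2$ block of $M^*$, so
$$
(M^*)^{(2)}_{i,j}\;=\;M^{(k)}_{i,j}M^{(k)}_{i+1,j+1}-M^{(k)}_{i,j+1}M^{(k)}_{i+1,j}\;=\;M^{(k-1)}_{i+1,j+1}.
$$
This is the $s=2$ instance of the general duality formula
$$
(M^*)^{(s)}_{i,j}\;=\;M^{(k+1-s)}_{i+s-1,j+s-1},\qquad 1\le s\le k+1,
$$
which is the substance of (ii) (up to an indexing convention). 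The general case is then proved by induction on $s$: one applies Desnanot--Jacobi to nested $(k+s-1)\times(k+s-1)$ submatrices of $M$, peeling off one layer per step, with the prefactor $M^{(k+1)}=1$ absorbing cleanly and the shift $(+1,+1)$ accumulating at each stage.

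Part (i) follows directly from the duality formula. Setting $s=k+1$ yields $(M^*)^{(k+1)}_{i,j}=M^{(0)}_{i+k,j+k}=1$ (with the convention that $0\times 0$ minors are $1$), establishing the $\SL_{k+1}$-condition for $M^*$. For the tameness of $M^*$, computing the $(k+2)\times(k+2)$ minor of $M^*$ via Desnanot--Jacobi applied to a $(k+3)\times(k+3)$ submatrix of $M$ produces a relation in which the surviving terms all carry the vanishing factor $M^{(k+2)}$ of $M$, forcing $(M^*)^{(k+2)}_{i,j}=0$. Finally, the case $s=k$ gives $(M^*)^*_{i,j}=(\partial_k M^*)_{i,j}=M^{(1)}_{i+k-1,j+k-1}=M_{i+k-1,j+k-1}$, showing that $(M^*)^*$ is $M$ shifted by $(k-1,k-1)$.

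The main obstacle is the bookkeeping of the iterated Desnanot--Jacobi reductions and ensuring that the shifts combine correctly through all $s-1$ steps. A cleaner uniform treatment via Sylvester's generalized identity (with a fixed ``central'' $(s-1)\times(s-1)$ submatrix of $M$) is available, but it requires all auxiliary $(s-1)$-minors of $M$ to be nonzero, a condition that fails at degenerate tilings; this genericity issue is handled either by first establishing the identities on the formal generic tame tiling (where all relevant minors are nonzero rational functions in formal parameters) and then specializing, or by a Zariski-density argument, using that both sides of the duality formula are polynomials in the entries of $M$.
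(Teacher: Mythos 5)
The paper does not actually prove this proposition --- it is quoted from \cite{BeRe} --- so there is no internal proof to compare against; I can only assess your argument on its own terms. Your overall strategy (iterated Desnanot--Jacobi on adjacent minors, using $M^{(k+1)}=1$ and $M^{(k+2)}=0$) is the right one, and your base case is correct: applying Desnanot--Jacobi to the $(k+1)\times(k+1)$ block at $(i,j)$ gives exactly $(M^*)^{(2)}_{i,j}=M^{(k-1)}_{i+1,j+1}$. Moreover, your index shift $(M^*)^{(s)}_{i,j}=M^{(k+1-s)}_{i+s-1,j+s-1}$ is the one that is actually consistent with the definition \eqref{AdjMin}; note that the displayed formula in the proposition has $M$ and $M^*$ interchanged relative to this (as stated it would force, e.g., $M^{(k)}_{i,j}=M^{(k)}_{i+k-1,j+k-1}$ for $r=k$, which fails for $k\geq 2$), so your parenthetical ``up to an indexing convention'' is doing real work and you should state your version as the correct one.

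Two steps need repair. First, the inductive step as you describe it --- Desnanot--Jacobi applied to ``nested $(k+s-1)\times(k+s-1)$ submatrices of $M$'' --- does not close: for $s\geq 3$ those submatrices have order $\geq k+2$, so their determinants vanish by tameness and the identity degenerates. The working induction applies Desnanot--Jacobi to $M^*$, namely
$(M^*)^{(s+1)}_{i,j}(M^*)^{(s-1)}_{i+1,j+1}=(M^*)^{(s)}_{i,j}(M^*)^{(s)}_{i+1,j+1}-(M^*)^{(s)}_{i,j+1}(M^*)^{(s)}_{i+1,j}$,
rewrites the right-hand side via the inductive hypothesis, and then folds it back using Desnanot--Jacobi applied to the $(k+2-s)$-block of $M$ at $(i+s-1,j+s-1)$; after cancelling the common factor $M^{(k+2-s)}_{i+s-1,j+s-1}=(M^*)^{(s-1)}_{i+1,j+1}$ one gets the claim for $s+1$. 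Similarly, your tameness step (Desnanot--Jacobi on a $(k+3)\times(k+3)$ submatrix of $M$) only yields $M^{(k+3)}=0$, not $(M^*)^{(k+2)}=0$; the correct one-line argument is Desnanot--Jacobi on $M^*$ itself: $(M^*)^{(k+2)}_{i,j}(M^*)^{(k)}_{i+1,j+1}=1\cdot 1-1\cdot 1=0$ with $(M^*)^{(k)}_{i+1,j+1}=m_{i+k,j+k}$ generically nonzero. Second, the genericity argument needs more care than ``both sides are polynomials in the entries of $M$'': the identity is only claimed on the subvariety of tame $\SL_{k+1}$-tilings, so Zariski density in the space of all matrices is irrelevant. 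What is needed is a parametrization of tame tilings by free data (a generic window of entries, or the coefficients of the associated difference equation, in the spirit of the formal frieze $F(A_i)$ of \S\ref{inffri}) under which every entry is polynomial in the parameters and the intermediate minors are not identically zero; one then proves the identities in that formal setting and specializes. With these two repairs the proof is complete.
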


\subsection{$T$-systems}

A $T$-system of type $\mA_k$ is the following recurrence on the variables  $\{T_{\a,u,v}\}_{\a,u,v}$:
\begin{equation}\label{Tsys}
T_{\a,u,v+1}T_{\a,u,v-1}-T_{\a,u+1,v}T_{\a,u-1,v}=T_{\a+1,u,v}T_{\a-1,u,v}
\end{equation}
with $\a\in\{0,1,\ldots, k,k+1\}$, $u,v\in \Z$, and boundary conditions 
\begin{equation}\label{TsysBound}
T_{0,u,v}=T_{k+1,u,v}=1,
\end{equation}
for all $u,v\in \Z$.
It is nothing but the octahedral recurrence subject to boundary conditions.

\setlength{\unitlength}{1844sp}%
\begingroup\makeatletter\ifx\SetFigFont\undefined%
\gdef\SetFigFont#1#2#3#4#5{%
  \reset@font\fontsize{#1}{#2pt}%
  \fontfamily{#3}\fontseries{#4}\fontshape{#5}%
  \selectfont}%
\fi\endgroup%
\begin{picture}(5810,4412)(-3326,-6698)
\put(2301,-6598){\makebox(0,0)[lb]{\smash{{\SetFigFont{8}{14.4}{\rmdefault}{\mddefault}{\updefault}{\color[rgb]{0,0,0}$\a-1,u,v$}%
}}}}
\thicklines
{\color[rgb]{0,0,0}\put(3601,-5011){\line(-1, 0){2700}}
}%
{\color[rgb]{0,0,0}\put(5401,-4111){\line(-2,-1){1800}}
}%
{\color[rgb]{0,0,0}\put(3151,-6361){\line( 1, 1){2250}}
}%
{\color[rgb]{0,0,0}\put(3151,-6361){\line( 1, 3){450}}
}%
{\color[rgb]{0,0,0}\put(3151,-6361){\line(-5, 3){2250}}
}%
\thinlines
{\color{gray}\put(3151,-2761){\line(-1,-3){450}}
}%
\thicklines
{\color[rgb]{0,0,0}\put(3151,-2761){\line( 1,-5){450}}
}%
{\color[rgb]{0,0,0}\put(3151,-2761){\line( 5,-3){2250}}
}%
{\color[rgb]{0,0,0}\put(3151,-2761){\line(-1,-1){2250}}
}%
\thinlines
{\color{gray}
\put(5401,-4111){\line(-1, 0){2700}}
\put(2701,-4111){\line(-2,-1){1800}}
}%
\put(5626,-4111){\makebox(0,0)[lb]{\smash{{\SetFigFont{8}{14.4}{\rmdefault}{\mddefault}{\updefault}{\color[rgb]{0,0,0}$\a,u,v+1$}%
}}}}
\put(-526,-5036){\makebox(0,0)[lb]{\smash{{\SetFigFont{8}{14.4}{\rmdefault}{\mddefault}{\updefault}{\color[rgb]{0,0,0}$\a,u,v-1$}%
}}}}
\put(2026,-4111){\makebox(0,0)[lb]{\smash{{\SetFigFont{8}{14.4}{\rmdefault}{\mddefault}{\updefault}{\color{gray}$\a,u-1,v$}%
}}}}
\put(3726,-5136){\makebox(0,0)[lb]{\smash{{\SetFigFont{8}{14.4}{\rmdefault}{\mddefault}{\updefault}{\color[rgb]{0,0,0}$\a,u+1,v$}%
}}}}
\put(2301,-2636){\makebox(0,0)[lb]{\smash{{\SetFigFont{8}{14.4}{\rmdefault}{\mddefault}{\updefault}{\color[rgb]{0,0,0}$\a+1,u,v$}%
}}}}
{\color{gray}\put(2701,-4111){\line( 1,-5){450}}
}%
\end{picture}%

A $T$-system splits into two independent subsystems 
$$\{T_{\a,u,v}\}=\{T_{\a,u,v}\,:\, \a+u+v\text{ even }\}\sqcup\{T_{\a,u,v}\,:\, \a+u+v \text{ odd }\},$$
each subsystem satisfying the recurrence \eqref{Tsys}. 
In the sequel we will consider $\a+u+v$ even.

\begin{thm}[\cite{BaRe},\cite{KNS}] If $\{T_{\a,u,v}\}_{\a,u,v}$ satisfy \eqref{Tsys} then for all $0\leq \a \leq k$,
$u,v\in \Z$:
$$
T_{\a+1\,,\,u\,,\,v}=\det
\left(
\begin{array}{llll}
T_{1\, ,\,u\, ,\,v-\a}&T_{1\, ,\,u+1\, ,\,v+1-\a}&\cdots&T_{1\, ,\,u+\a\, ,\,v}\\
T_{1\, ,\,u-1\, ,\,v+1-\a}&T_{1\, ,\,u\, ,\,v+2-\a}&\cdots&T_{1\, ,\,u+\a-1+\a\, ,\,v+1}\\
\vdots&&&\vdots\\
T_{1\, ,\,u-\a\, ,\,v}&T_{1\, ,\,u+1-\a\, ,\,v+1}&\cdots&T_{1\, ,\,u ,\,v+\a}
\end{array}
\right).
$$
\end{thm}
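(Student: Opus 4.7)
The plan is an induction on $\alpha$. The base case $\alpha=0$ is trivial, since the right-hand side is a $1\times 1$ determinant equal to $T_{1,u,v}$. For $\alpha=1$, the $2\times 2$ determinant expands to $T_{1,u,v-1}T_{1,u,v+1}-T_{1,u+1,v}T_{1,u-1,v}$, which by the $T$-system relation \eqref{Tsys} with $\alpha=1$ and the boundary condition $T_{0,u,v}=1$ equals $T_{2,u,v}T_{0,u,v}=T_{2,u,v}$.

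For the inductive step, denote by $D_\beta(u,v)$ the $\beta\times\beta$ determinant appearing on the right-hand side of the theorem. A look at the matrix shows that its $(i,j)$-entry (rows and columns indexed from $0$ to $\beta-1$) is $T_{1,\,u+j-i,\,v-\beta+1+i+j}$. Assume $T_{\beta,u,v}=D_\beta(u,v)$ for every $\beta\leq\alpha$ and all $u,v$. The key tool is the Desnanot--Jacobi identity (Dodgson condensation): for an $n\times n$ matrix $M$,
\[
\det M\,\cdot\,\det M^{1,n}_{1,n}=\det M^{1}_{1}\cdot\det M^{n}_{n}-\det M^{1}_{n}\cdot\det M^{n}_{1},
\]
where $M^{I}_{J}$ denotes $M$ with the rows in $I$ and the columns in $J$ deleted.

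I would apply this to the $(\alpha+1)\times(\alpha+1)$ matrix $M$ representing $D_{\alpha+1}(u,v)$. A direct shift-of-index computation identifies the five corner minors as
\[
M^{1}_{1}=D_\alpha(u,v+1),\qquad M^{\alpha+1}_{\alpha+1}=D_\alpha(u,v-1),
\]
\[
M^{1}_{\alpha+1}=D_\alpha(u-1,v),\qquad M^{\alpha+1}_{1}=D_\alpha(u+1,v),\qquad M^{1,\alpha+1}_{1,\alpha+1}=D_{\alpha-1}(u,v).
\]
Combining Desnanot--Jacobi with the induction hypothesis and the $T$-system recurrence \eqref{Tsys} then yields
\[
D_{\alpha+1}(u,v)\,T_{\alpha-1,u,v}=T_{\alpha,u,v+1}T_{\alpha,u,v-1}-T_{\alpha,u-1,v}T_{\alpha,u+1,v}=T_{\alpha+1,u,v}\,T_{\alpha-1,u,v}.
\]
Dividing out $T_{\alpha-1,u,v}$, which is legitimate because one can view the whole statement as a polynomial identity in $\Z[T_{1,u,v}:u,v\in\Z]$ where $T_{\alpha-1,u,v}=D_{\alpha-1}(u,v)$ is a nonzero polynomial, gives $D_{\alpha+1}(u,v)=T_{\alpha+1,u,v}$ and closes the induction.

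The main obstacle is purely computational: carefully verifying the four shift-of-index identifications of the corner minors of $M$ with $D_\alpha(u\pm 1,v)$ and $D_\alpha(u,v\pm 1)$, together with the central minor $D_{\alpha-1}(u,v)$. Once that bookkeeping is in hand, the proof reduces to a direct combination of the Desnanot--Jacobi identity and the defining relation of the $T$-system.
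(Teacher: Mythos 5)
The paper itself gives no proof of this theorem --- it is quoted from \cite{BaRe} and \cite{KNS} --- so there is nothing internal to compare against; your argument is the standard one for this statement (Dodgson condensation on the layers of the octahedron recurrence), and it is essentially correct. I checked your bookkeeping: with rows and columns indexed from $0$ to $\beta-1$ the entry of $D_\beta(u,v)$ is indeed $T_{1,\,u+j-i,\,v-\beta+1+i+j}$, and the five minors of the $(\alpha+1)\times(\alpha+1)$ matrix are exactly $D_\alpha(u,v+1)$, $D_\alpha(u,v-1)$, $D_\alpha(u-1,v)$, $D_\alpha(u+1,v)$ and $D_{\alpha-1}(u,v)$ as you claim, so Desnanot--Jacobi combined with the induction hypothesis and the relation \eqref{Tsys} gives $\bigl(D_{\alpha+1}(u,v)-T_{\alpha+1,u,v}\bigr)\,T_{\alpha-1,u,v}=0$.

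The one point to be more careful about is the final division. Your appeal to a ``polynomial identity in $\Z[T_{1,u,v}]$'' proves the statement for the \emph{generic} solution, i.e.\ when the higher layers are the rational functions produced by running the recurrence from a formal first layer; it then transfers to any concrete solution in which the $T_{\alpha,u,v}$ do not vanish. But for an arbitrary solution of \eqref{Tsys}--\eqref{TsysBound} the conclusion can genuinely fail: for $k=3$, taking $T_1\equiv T_2\equiv 0$ and $T_3\equiv 1$ satisfies all the relations and the boundary conditions, yet the $3\times 3$ determinant of zeros is $0\neq 1$. So the cancellation of $T_{\alpha-1,u,v}$ is not a cosmetic step --- a nonvanishing (or genericity/tameness) hypothesis is implicitly needed, consistent with the way the paper uses this theorem only for tame tilings. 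State that hypothesis explicitly and your proof is complete.
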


Applying the above result with $\a=k$ one deduces that the first layer
 of a $T$-system $\{T_{1,u,v}\}_{u,v}$ forms an $\SL_{k+1}$ tiling, and the next layers are obtained as 
 derived arrays of this tiling. More precisely, as noticed in \cite{BeRe}, one has the following result.
 
\begin{cor}\label{corTsys}
Let $\{T_{\a,u,v}\}_{\a,u,v}$ be a solution of  \eqref{Tsys},\eqref{TsysBound}.
Set
$
M=(m_{i,j})_{i,j}
$
with $m_{i,j}=T_{1,j-i,i+j}$.
Then, $M$ is an  $\SL_{k+1}$-tiling and
$$
\partial_{\a}M_{i,j}=T_{\a,u,v}
$$
for all $\a=1, \ldots, k$ and $v=i+j+\a, u=j-i$.
Conversely, every  $\SL_{k+1}$-tiling gives rise to a solution of  \eqref{Tsys}. 
\end{cor}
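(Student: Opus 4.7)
The corollary is really just a dictionary translating the quoted Bazhanov--Reshetikhin/KNS theorem into the language of tame tilings. The first step is to set up this dictionary. Introduce the change of coordinates $(i,j)\mapsto (u,v)=(j-i,i+j)$, which bijects $\Z^2$ with the sublattice $\{(u,v):u+v\text{ even}\}$. Under this change, the defining formula $m_{i,j}=T_{1,j-i,i+j}$ identifies the entries of $M$ with the first layer $\{T_{1,u,v}\}_{u+v\text{ odd}}$ (after the relevant parity shift). A direct inspection shows that the $(r,c)$-entry of the $(\alpha+1)\times (\alpha+1)$ submatrix of $M$ based at $(i,j)$ is $T_{1,u+c-r,v-\alpha+r+c}$, where $u=j-i$ and $v=i+j+\alpha$ (up to an irrelevant shift depending on how one normalises $\partial_\alpha$). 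In particular, this submatrix is exactly the matrix appearing in the quoted determinantal formula.

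\textbf{The forward direction.} Applying the quoted theorem with $\alpha$ replaced by $\alpha-1$ then gives
$$
\partial_{\alpha}M_{i,j}=T_{\alpha,u,v},\qquad u=j-i,\ v=i+j+\alpha,
$$
for all $\alpha\in\{1,\dots,k+1\}$. Specialising to $\alpha=k+1$ and invoking the boundary condition $T_{k+1,u,v}=1$ shows that every adjacent minor of order $k+1$ in $M$ equals $1$; similarly the vanishing of $T_{k+2,u,v}$ (obtained by running the recurrence one more step past the boundary, or equivalently by expanding the $(k+2)\times (k+2)$ determinant) yields the tameness of $M$. This proves both (1) and (2).

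\textbf{The converse.} Conversely, given a tame $\SL_{k+1}$-tiling $M$, define $T_{\alpha,u,v}:=\partial_{\alpha}M_{i,j}$ via the inverse substitution, with $T_{0,u,v}=1$ by convention and $T_{k+1,u,v}=1$ by the $\SL_{k+1}$-condition. To check that these numbers satisfy the $T$-system \eqref{Tsys}, one reformulates the relation, using the dictionary, as
$$
\partial_\alpha M_{i,j+1}\,\partial_\alpha M_{i+1,j}-\partial_\alpha M_{i,j}\,\partial_\alpha M_{i+1,j+1}=\partial_{\alpha+1}M_{i,j}\,\partial_{\alpha-1}M_{i+1,j+1},
$$
which is precisely the Desnanot--Jacobi (Sylvester) identity applied to the adjacent $(\alpha+1)\times (\alpha+1)$ submatrix of $M$ based at $(i,j)$. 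The tameness hypothesis allows this identity to be used unambiguously even when some of the involved minors of order $\leq k$ happen to vanish, since the only role tameness plays is to force the order-$(k+2)$ minors to be zero, ensuring compatibility with the boundary condition at $\alpha=k+1$.

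\textbf{Main obstacle.} The substance of the statement sits entirely inside the already-cited theorem of \cite{BaRe,KNS} and Sylvester's identity; the only real work is the bookkeeping of the change of variables $(i,j)\leftrightarrow (u,v,\alpha)$ together with the parity constraint $u+v+\alpha$ even, and the careful matching of the indices of the adjacent minor in $M$ with the ``diamond''-shaped matrix appearing in the determinant formula. Once that dictionary is written out explicitly, both directions are formal.
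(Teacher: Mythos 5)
Your argument matches the paper's own (which is only sketched there): the forward direction is exactly the quoted determinantal theorem of \cite{BaRe},\cite{KNS} specialised at $\a=k$ together with the boundary condition $T_{k+1,u,v}=1$, and the converse is the Desnanot--Jacobi identity rewritten in the coordinates $(u,v)=(j-i,\,i+j+\a)$. Two cosmetic caveats: the displayed identity in your converse has the two products on the left-hand side swapped relative to Desnanot--Jacobi as stated in the paper (a spurious sign), and your tameness assertions are neither required by the corollary as stated nor fully justified --- ``running the recurrence one step past the boundary'' only yields $T_{k+2,u,v}\,T_{k,u,v}=0$, which forces the order-$(k+2)$ minors to vanish only where the order-$k$ minors do not.
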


\subsection{Periodicity of $\SL_{k+1}$-friezes}

An $\SL_{k+1}$-tiling $F=(f_{i,j})_{i,j}$ is called an \textit{$\SL_{k+1}$-frieze of width $w$}  if, 
in addition to the condition $M^{(k+1)}_{i,j}=1$, it satisfies
the following ``boundary conditions''
$$
\left\{
\begin{array}{rccccl}
f_{i,i-1}&=&f_{i,i+w}&=&1& \hbox{for all}\; i,\\[4pt]
f_{i,i-1-\ell}&=&f_{i,i+w+\ell}&=&0&\hbox{for}\;1\leq \ell \leq k.
\end{array}
\right.
$$

\begin{rem}
$\SL_{k+1}$-friezes of positive numbers satisfying the extra condition that all minors $A^{(r)}_{1,j}=1$ for $2\leq r \leq k$ and $1\leq j \leq w$,
were first considered in \cite{CoRo}.
Such arrays were shown to be $(k+w+2)$-periodic
(this generalizes Coxeter's Theorem \ref{thmPerio}). 
This periodicity holds true for all tame $\SL_{k+1}$-friezes.
\end{rem}

\begin{thm}[\cite{MGOST}]\label{PeriThmGen}
Every tame $\SL_{k+1}$-friezes $(f_{i,j})_{i,j}$  satisfies, for all $i,j$,
$$
f_{i,j}=(-1)^{k}f_{i+k+w+2,j}, \quad \text{and } \quad f_{i,j}=(-1)^{k}f_{i,j+k+w+2}
$$
In particular, for all $i,j$, one has
$$
f_{i,j}=f_{i+k+w+2,j+k+w+2}.
$$
\end{thm}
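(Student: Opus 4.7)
The plan is to extract from the tame $\SL_{k+1}$-frieze $F$ a linear difference equation of order $k+1$, to show that its solutions are antiperiodic with antiperiod $n = k+w+2$, and to deduce the two claimed relations $f_{i,j+n}=(-1)^kf_{i,j}$ and $f_{i+n,j}=(-1)^kf_{i,j}$ separately; the diagonal statement then follows by composition. This directly generalises the $k=1$ case developed in \S\ref{inffri} and \S\ref{superper}. First I would show that the columns $V_j:=(f_{i,j})_{i\in\Z}$ satisfy a relation of the form
$$
V_{j+1}=a_{j,1}V_j+a_{j,2}V_{j-1}+\cdots+a_{j,k+1}V_{j-k}.
$$
Indeed, the vanishing of every adjacent $(k+2)\times(k+2)$ minor makes any $k+2$ consecutive columns linearly dependent on any window of $k+2$ consecutive rows, while the non-vanishing (equal to $1$) of the adjacent $(k+1)\times(k+1)$ minors makes those dependence coefficients unique and, by uniqueness, independent of the row window; hence the relation lifts to the infinite columns.

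Next I would translate the boundary conditions into explicit monodromy data. The identities $f_{i,i-1}=1$ and $f_{i,i-1-\ell}=0$ for $1\le\ell\le k$ say that in every column $V_j$ the consecutive block $(f_{j+1,j},\dots,f_{j+k+1,j})$ equals $e_1=(1,0,\dots,0)$; symmetrically the other boundary gives $(f_{j-w,j},\dots,f_{j-w-k,j})=(1,0,\dots,0)$. Form the $(k+1)\times(k+1)$ matrix
$$
M_j=\bigl(f_{j+\alpha,\,j-\beta+1}\bigr)_{1\le\alpha,\beta\le k+1}\;\in\;\SL_{k+1},
$$
whose first column is $e_1$. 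Iterating the recurrence for $n=k+w+2$ steps and then plugging into the second boundary slab of $k$ zeros (with a $1$ immediately after) forces $M_{j+n}=(-1)^kM_j$, and since $\det M_j=1$ the $k+1$ columns $V_{j-k},\dots,V_j$ are a basis of the solution space of the recurrence; thus $V_{j+n}=(-1)^kV_j$, which is precisely $f_{i,j+n}=(-1)^kf_{i,j}$.

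The vertical antiperiodicity $f_{i+n,j}=(-1)^kf_{i,j}$ follows by running the same argument on the transposed tiling, which is again a tame $\SL_{k+1}$-frieze of width $w$ with the roles of rows and columns exchanged; composing the two antiperiodicities gives $f_{i+n,j+n}=(-1)^{2k}f_{i,j}=f_{i,j}$. The main obstacle is the sign computation in Step~2: tracking the precise factor $(-1)^k$ produced by $n$ iterations of the $(k+1)$-term recurrence as the solution passes through the two separated boundary slabs of $k$ zeros. For $k=1$ this is Coxeter's Theorem~\ref{thmPerio}; for general $k$ the cleanest execution is via Laplace expansion along the two slabs of zeros, or, equivalently, by invoking the isomorphism established in~\cite{MGOST} between tame $\SL_{k+1}$-friezes of width $w$ and $n$-superperiodic difference equations of order $k+1$, where $n$-superperiodicity is by definition the condition $V_{i+n}=(-1)^kV_i$.
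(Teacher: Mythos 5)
The survey does not actually prove Theorem \ref{PeriThmGen}: it is stated with a citation to \cite{MGOST} (with alternative proofs noted in \cite{BeRe}, \cite{KeVi}, \cite{Vol}), so there is no in-paper argument to compare against line by line. Your route --- extract an order-$(k+1)$ linear recurrence from tameness plus unimodularity, read the boundary rows of $1$'s and $0$'s as initial data, and show the monodromy over $n=k+w+2$ steps is $(-1)^k\,\id$ --- is exactly the difference-equation/superperiodicity mechanism the paper sets up around the theorem (Theorem \ref{REqThmGen} and Definition \ref{superdef}), and it is the proof strategy of \cite{MGOST}. Your Step 1 (uniqueness of the dependence coefficients from the nonvanishing adjacent $(k{+}1)$-minors on overlapping row windows, hence a single recurrence valid for all rows) is correct, and the sign you defer to a Laplace expansion does come out right: expanding the adjacent $(k{+}1)\times(k{+}1)$ minor with rows $i,\dots,i+k$ and columns $i+w+1,\dots,i+w+k+1$ along its first row $(0,\dots,0,f_{i,i+w+k+1})$ against the unitriangular block below gives $(-1)^k f_{i,i+w+k+1}=1$.

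Two points need repair. First, as literally written, $M_{j+n}=(-1)^kM_j$ with your $M_j=(f_{j+\alpha,j-\beta+1})$ shifts \emph{both} indices by $n$ and therefore asserts $f_{i+n,j+n}=(-1)^kf_{i,j}$, which contradicts the diagonal identity $f_{i+n,j+n}=f_{i,j}$ you are trying to prove; you mean to compare the solution data over a \emph{fixed} row window at columns $j$ and $j+n$. Second, and more substantively, agreement of $V_{j+n}$ and $(-1)^kV_j$ on a window of $k+1$ consecutive positions only forces agreement everywhere if both sequences solve the \emph{same} recurrence, and the shift of a solution solves the recurrence with shifted coefficients --- so your "thus $V_{j+n}=(-1)^kV_j$" tacitly assumes the $n$-periodicity of the coefficients, which is not yet available at that stage. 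The clean fix is to compare instead the two actual rows $R^{(i+n)}$ and $(-1)^kR^{(i)}$ of the frieze (both automatically solve the common row recurrence in $j$): the boundary data plus the sign computation above show they agree on the $k+1$ positions $j\in[i+w+1,\,i+w+k+1]$, hence coincide, giving $f_{i+n,j}=(-1)^kf_{i,j}$; the identity $f_{i,j+n}=(-1)^kf_{i,j}$ then follows from the symmetric argument on columns (which also spares you the transposition step, where reversing the column order to recover a genuine $\SL_{k+1}$-frieze perturbs the minors by signs $(-1)^{k(k+1)/2}$ that you would otherwise have to track).
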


The above periodicity of tame $\SL_{k+1}$-friezes has been announced in \cite{BeRe};
another proof is also given in \cite{KeVi} in the context of $T$-systems.
It turns out that this periodicity can be interpreted as Zamolodchikov's periodicity for systems of type $\mA_{k}\times \mA_w$, 
established in this case in \cite{Vol}.

\begin{rem}
When the frieze is not tame one may observe different phenomena. 
For instance, the array in Example \ref{wrong_per} can be extended to a 6-periodic $\SL_{2}$-frieze of width 2. The second array in Remark \ref{fri_nonper} leads to a non-periodic $\SL_{2}$-frieze of width 3. For interesting properties of non-tame friezes see \cite{Cun2}.
\end{rem}

We will display the $\SL_{k+1}$-friezes as follows, and often omit the $k$ bordering rows of 0's
(note a slight change in the notation  by a horizontal flip compare to the notation for Coxeter's friezes):

$$
\begin{array}{ccccccccccccc}
&&&& \vdots&&&& \vdots&&&\\
&0&&0&&0&&0&&0&&\ldots\\[6pt]
\ldots&&1&&1&&1&&1&&1&\\[6pt]
&\ldots&&\;f_{0,w-1}&&\;f_{1,w}&&\;f_{2,w+1}&&\ldots&&\ldots\\
&&&\! \iddots&& \iddots&& \iddots&&&&\\
\ldots&& f_{0,1}&&f_{1,2}&&f_{2,3}&&f_{3,4}&&f_{4,5}&\\[6pt]
& f_{0,0}&&f_{1,1}&&f_{2,2}&&f_{3,3}&&f_{4,4}&&\ldots\\[6pt]
\ldots&&1&&1&&1&&1&&1&\\[6pt]
&0&&0&&0&&0&&0&&\ldots\\
&&&& \vdots&&&& \vdots&&&\\
\end{array}
$$
We denote by $\F_{k+1,n}$ the set of tame $\SL_{k+1}$-friezes of width $w=n-k-2$. 

The set $\F_{k+1,n}$ has a natural structure of algebraic variety (for the Coxeter's case $k=1$, see \S\ref{inffri}, and for the general case, see \S \ref{supeq}).

\subsection{Friezes, superperiodic equations and Grassmannians}\label{supeq}

The results of \S\ref{recRelcox} and \S\ref{superper} generalize to $\SL_{k+1}$-friezes.
Consider the following general linear difference equation

\begin{equation}
\label{REq}
V_{i}=a_{i}^1V_{i-1}-a_{i}^{2}V_{i-2}+ \cdots+(-1)^{k-1}a_{i}^{k}V_{i-k}+(-1)^{k}V_{i-k-1},
\end{equation}
with coefficients $a_{i}^{j}\in \R$, where $i\in\Z$ and $1\leq j\leq k$ 
(note that the superscript $j$ is an index, not a power),
and where the sequence $(V_i)$ is the unknown, or solution.
The entries in a tame $\SL_{k+1}$-tiling turn out to be solutions to such equations, \cite{BeRe}, \cite{DiKe}.
For the $\SL_{k+1}$-friezes one has precisely the following.

\begin{thm}[\cite{MGOST}]\label{REqThmGen} 
Given a tame $\SL_{k+1}$-frieze $F=(f_{i,j})_{i,j}$ of width $w$
and let $n=k+w+2$,
for every fixed $i_0$, the sequence $(V_i)_i$ defined by
$$
V_i:=f_{i_0,i}
$$ 
satisfies the equation \eqref{REq} with $n$-periodic coefficients
\begin{equation*}
\label{coeff}
a_{i}^{j}=
\left| 
\begin{array}{rrc}
f_{i-j+1, i-j+1}&\ldots&f_{i-j+1, i}\\[4pt]
1\qquad\ddots&& \vdots\\[2pt]
\qquad\ddots&\quad \ddots& \vdots\\[4pt]
&1&f_{i,i}
\end{array}
\right|.
\end{equation*}
\end{thm}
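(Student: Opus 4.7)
The plan is to extract the recurrence from the tameness of the $(k+2)\times(k+2)$ adjacent submatrices, identify the coefficients via Cramer's rule combined with the frieze boundary conditions, and invoke Theorem~\ref{PeriThmGen} to obtain periodicity.

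First, I fix the column index $i$ and consider the $(k+2)\times(k+2)$ adjacent submatrix
\[
U_{r}:=\bigl(f_{r+a,\,i-k-1+b}\bigr)_{0\le a,\,b\le k+1}.
\]
Tameness yields $\det U_{r}=0$, so its columns satisfy a non-trivial linear dependence. Deleting the first row and the last column of $U_{r}$ produces an adjacent $(k+1)\times(k+1)$ submatrix of the tiling, of determinant $1$; hence $U_{r}$ has rank exactly $k+1$ and its column-kernel is one-dimensional.

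Next, I check that the kernel direction does not depend on $r$. The blocks $U_{r}$ and $U_{r+1}$ share $k+1$ consecutive rows, which form a $(k+1)\times(k+2)$ submatrix of rank $k+1$ (same $(k+1)\times(k+1)$ adjacent minor argument), whose column-kernel is one-dimensional. Both kernel vectors of $U_{r}$ and $U_{r+1}$ annihilate this shared block, so they coincide after a common normalisation. Iterating the sliding, a single relation
\[
\sum_{b=0}^{k+1}\lambda_{b}\,f_{r',\,i-k-1+b}=0
\]
holds for every $r'\in\Z$; taking $r'=i_{0}$ yields a recurrence of the announced shape whose coefficients depend only on~$i$.

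To compute the $\lambda_{b}$ explicitly, I specialise $r=i-k-1$. The boundary conditions $f_{j,j-1}=1$ and $f_{j,j-s}=0$ for $2\le s\le k+1$ make $U_{i-k-1}$ lower-Hessenberg with $1$'s on the subdiagonal and $0$'s strictly below. The kernel is spanned by the first column of $\mathrm{adj}(U_{i-k-1})$, whose $b$-th entry is $(-1)^{b}M_{0,b}$, where $M_{0,b}$ is the $(k+1)\times(k+1)$ minor obtained by deleting row~$0$ and column~$b$. Normalising so that the coefficient of $f_{i_{0},i}$ equals $1$ (noting $M_{0,k+1}=1$, being upper-triangular with $1$'s on the diagonal), the coefficient of $f_{i_{0},i-\ell}$ becomes $(-1)^{\ell-1}M_{0,k+1-\ell}$ for $1\le\ell\le k+1$. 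A direct inspection, tracking the frieze boundary values on the surviving rows and columns, shows that $M_{0,k+1-\ell}$ is block upper-triangular with a $(k+1-\ell)\times(k+1-\ell)$ upper-triangular top-left block of $1$'s on the diagonal, a vanishing bottom-left block, and a bottom-right $\ell\times\ell$ block which is precisely the continuant-type matrix in the statement. Hence $M_{0,k+1-\ell}=a_{i}^{\ell}$; the endpoint case $\ell=k+1$ gives $M_{0,0}=1$ (an adjacent $(k+1)\times(k+1)$ minor), recovering the coefficient $(-1)^{k}$ in front of $V_{i-k-1}$.

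Finally, the $n$-periodicity of the $a_{i}^{\ell}$ is immediate from Theorem~\ref{PeriThmGen}: the identity $f_{r,c}=f_{r+n,c+n}$ leaves each determinant defining $a_{i}^{\ell}$ invariant under $i\mapsto i+n$, since its entries all have row and column indices in $[i-\ell+1,i]$. The main technical step is the block-triangular reduction of $M_{0,k+1-\ell}$, which amounts to careful bookkeeping of the boundary values across the removed column, with minor adjustments when $w$ is close to $k$, but requires no new idea beyond the uniqueness of the kernel direction established in the sliding argument.
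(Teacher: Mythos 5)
Your proof is correct. The paper itself does not prove this theorem — it only cites \cite{MGOST} — but your argument is the standard one and, as far as the structure goes, matches the proof in that reference: tameness forces each adjacent $(k+2)\times(k+2)$ window to have rank exactly $k+1$ (the adjacent $\SL_{k+1}$-minor guarantees the lower bound), sliding the window over shared rows shows the one-dimensional column kernel is independent of the row position, and specialising to the window $U_{i-k-1}$ lets the boundary rows of $1$'s and $0$'s reduce each cofactor $M_{0,k+1-\ell}$ by block-triangularity to the adjacent $\ell\times\ell$ minor $F^{(\ell)}_{i-\ell+1,i-\ell+1}$, which is exactly the stated $a_i^{\ell}$; the signs and the endpoint coefficients $M_{0,k+1}=M_{0,0}=1$ all check out, and $n$-periodicity follows from Theorem \ref{PeriThmGen} as you say. (Two cosmetic remarks: the matrix $U_{i-k-1}$ is upper Hessenberg in the usual terminology, and no ``adjustment when $w$ is close to $k$'' is actually needed, since the block reduction uses only the lower boundary zeros $f_{p,q}=0$ for $2\le p-q\le k+1$, which hold for every $w$.)
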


\begin{rem}
The above coefficients $a_{i}^{j}$ 
are adjacent minors of order $j$ in the array $F$,  denoted by $F_{i-j+1, i-j+1}^{(j)}$, cf. \eqref{AdjMin}.
One also has $a_{i}^{j}=F_{i+2, i+1+w}^{(k-j+1)}$.
 \end{rem}

\begin{defn}[\cite{Kri},\cite{MGOST}] \label{superdef}
An equation of the form \eqref{REq} is called $n$-\textit{superperiodic} if it satisfies the two conditions:
\begin{itemize}
\item  all coefficients are $n$-periodic, 
i.e. $a_{i+n}^{j}=a_{i}^{j}$ for all $i,j$, and 
\item all solutions are $n$-antiperiodic, i.e. satisfy $V_{i+n}=(-1)^kV_i$, for all $i$.
\end{itemize}
 \end{defn}
 We denote by $\E_{k+1,n}$ the set of linear difference equations of order $k+1$ that are $n$-superperiodic. 
 The second condition in Definition \ref{superdef} is actually a condition on the $nk$ coefficients of the equation. It gives rise to $k(k+2)$ independent polynomials relations, see e.g. \cite{MGOST}.  
 The set $\E_{k+1,n}$ is an algebraic subvariety of $\R^{kn}$, or $\C^{kn}$, of codimension $k(k+2)$.
 
 By combining Theorems \ref{PeriThmGen} and \ref{REqThmGen} one deduces that friezes give rise to superperiodic equations. 
 The converse is also true, more precisely one has the following.
 
 \begin{thm}[\cite{MGOST}]
 The spaces $\F_{k+1,n}$ and $\E_{k+1,n}$ are isomorphic algebraic varieties, for all integers $k$ and $n$.
 \end{thm}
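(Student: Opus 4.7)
The plan is to construct mutually inverse polynomial morphisms between $\F_{k+1,n}$ and $\E_{k+1,n}$, generalising the $k=1$ construction sketched in \S\ref{M0n}. The forward direction is essentially given by what precedes. Associate to a tame frieze $F=(f_{i,j})$ the difference equation with coefficients $a_i^j:=F^{(j)}_{i-j+1,i-j+1}$ of Theorem~\ref{REqThmGen}; these are polynomial in the entries of $F$. The antiperiodicity of Theorem~\ref{PeriThmGen} makes the $a_i^j$ genuinely $n$-periodic, and, since any $k+1$ adjacent SE diagonals of $F$ form a fundamental system of solutions which is itself $n$-antiperiodic with sign $(-1)^k$, \emph{every} solution is $n$-antiperiodic. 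Hence $\Phi(F):=(a_i^j)$ is a morphism $\Phi:\F_{k+1,n}\to\E_{k+1,n}$.

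For the inverse $\Psi:\E_{k+1,n}\to\F_{k+1,n}$, given coefficients $(a_i^j)$ define, for each $i_0\in\Z$, the $i_0$-th SE diagonal of the candidate frieze $\Psi(a)$ as the unique solution $(V_\ell)_{\ell\in\Z}$ of \eqref{REq} with initial values
\[
V_{i_0-1}=1,\qquad V_{i_0-2}=V_{i_0-3}=\cdots=V_{i_0-k-1}=0,
\]
and set $f_{i_0,\ell}:=V_\ell$; this is polynomial in the $a_i^j$'s. Three properties must be checked in order that $\Psi(a)$ land in $\F_{k+1,n}$: the top boundary conditions $f_{i,i+w}=1$ and $f_{i,i+w+\ell}=0$ for $1\le\ell\le k$; the $\SL_{k+1}$ unimodular condition $F^{(k+1)}_{i,j}=1$; and the tameness condition $F^{(k+2)}_{i,j}=0$.

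For the top boundary, extending the recurrence one step backward from the initial data gives $V_{i_0-k-2}=(-1)^k$; then $n$-superperiodicity yields $V_{i_0+w}=V_{i_0-k-2+n}=(-1)^k V_{i_0-k-2}=1$, and for $1\le\ell\le k$ the index $i_0-k-2+\ell$ lies in $\{i_0-k-1,\ldots,i_0-2\}$ where all initial values vanish, so $V_{i_0+w+\ell}=(-1)^k V_{i_0-k-2+\ell}=0$. For unimodularity, the minor $F^{(k+1)}_{i,j}$ is a Casoratian-type determinant of $k+1$ consecutive SE diagonals at $k+1$ consecutive positions; the companion matrix of \eqref{REq} has determinant $1$ (the bottom coefficient $(-1)^k$ multiplied by its cofactor $(-1)^k$), so this Casoratian is invariant under shifts in~$j$, and specialising at the corner $j=i-1$ yields an identity block by the initial conditions, hence the value~$1$. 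Tameness is immediate: any $k+2$ SE diagonals lie in the $(k+1)$-dimensional solution space of \eqref{REq} and are linearly dependent.

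The compositions $\Phi\circ\Psi$ and $\Psi\circ\Phi$ reduce respectively to Theorem~\ref{REqThmGen} (reading off the coefficients from the first rows of minors returns the input $a_i^j$'s) and to the uniqueness of propagation of \eqref{REq} from the $k+1$ corner initial values encoded in the bordering rows of $1$'s and $0$'s of a tame frieze. Both maps being polynomial, this produces the claimed isomorphism of affine varieties. I expect the main technical obstacle to be the top-boundary verification, where the global superperiodicity condition -- a statement about \emph{all} solutions simultaneously -- has to be converted into the specific $1/0$ boundary pattern of a single diagonal; this is precisely the point at which the defining polynomial relations of $\E_{k+1,n}$ and $\F_{k+1,n}$ are forced to match.
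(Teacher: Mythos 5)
Your proof is correct and follows essentially the route the paper itself indicates: the forward map is exactly the combination of Theorems \ref{PeriThmGen} and \ref{REqThmGen}, and your inverse, rebuilding each diagonal from the $1,0,\dots,0$ corner data and checking the boundary, the unimodular condition via the unit-determinant companion matrix, and tameness via the dimension of the solution space, matches the construction sketched around \eqref{grass} and in \S\ref{M0n}. The paper only cites \cite{MGOST} for the details, but your verification of the top boundary through $V_{i_0-k-2}=(-1)^k$ and superperiodicity is precisely the missing computation.
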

 
Let $\Gr_{k+1,n}$ be the Grassmannian,
i.e., the variety of $k+1$ dimensional subspaces in the vector space of dimension $\C^n$,
and $\Gr^o_{k+1,n}\hookrightarrow\Gr_{k+1,n}$ the open subset that can be represented by 
 $(k+1)\times n $ matrices whose adjacent minors of order $k+1$ do not vanish.
A natural embedding of the space of friezes into the Grassmannian:
 $$
 \F_{k+1,n}\, \hookrightarrow \,\Gr^o_{k+1,n}\hookrightarrow \Gr_{k+1,n},
 $$
 is given by ``cutting'' the following $(k+1)\times n $ matrix
\begin{equation}\label{grass}
\left(
\begin{array}{cccccccccccccc}
 1 &   f_{1,1}& \ldots   &\ldots&f_{1,w}&1& \\[4pt]
& \ddots &\ddots &  &&\ddots&\ddots&\\[12pt]
&    & 1 & f_{k+1,k+1} &\ldots& \ldots&f_{k+1,n-1} &1\\[4pt]
\end{array}
\right)
 \end{equation}
in the frieze.
 
\subsection{Moduli space of polygons in the projective space}\label{modsp}
A {\it non-degenerate} $n$-gon is a map 
$$
v:\Z\to\CP^{k}
$$ 
such that $v_{i+n}=v_i$, for all $i$, and no $k+1$ consecutive vertices belong to
the same hyperplane.
We denote by ${\mathcal C}_{k+1,n}$ the space of equivalence classes of 
non-degenerate $n$-gons in~$\CP^{k}$, modulo projective transformations
(i.e., modulo $\PGL_{k+1}$-action).

The Gelfand-McPherson correspondence \cite{GeMc} gives the following identification
$$
{\mathcal C}_{k+1,n}\simeq \Gr^o_{k+1,n}/ (\C^*)^{n-1},
$$
that can be easily understood via choosing a representative for $v$ in
${\mathcal C}_{k+1,n}$, and lifting $(v_1,\ldots,v_n)$ to~$\C^{k+1}$.
Such a lifting  is defined on the class of $v$ up to non-zero multiples, 
i.e. up to the action of the torus $(\C^*)^{n-1}$.

 \begin{thm}[\cite{MGOST}]\label{ThmTriality}
If $k+1$ and $n$ are coprime, then there is an isomorphism of algebraic varieties:
$$
\E_{k+1,n}\simeq \F_{k+1,n}\simeq {\mathcal C}_{k+1,n}.
$$
\end{thm}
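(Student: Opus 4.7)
Since the isomorphism $\F_{k+1,n}\simeq\E_{k+1,n}$ has already been established, I focus on constructing mutually inverse algebraic morphisms $\Phi\colon \cC_{k+1,n}\to\F_{k+1,n}$ and $\Psi\colon\F_{k+1,n}\to\cC_{k+1,n}$, generalizing to arbitrary $k$ the construction given for $k=1$ in \S\ref{M0n}. The general strategy is: lift projective polygons to canonical sequences of vectors in $\C^{k+1}$, and read the frieze off as a matrix whose $n$ columns are these vectors, as in \eqref{grass}.

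\emph{Construction of $\Phi$.} Given $[v]\in\cC_{k+1,n}$, choose an $n$-periodic representative $(v_i)_{i\in\Z}$ of non-degenerate $n$-gons and an arbitrary lift to vectors $\widetilde V_i\in\C^{k+1}$ extended by $\widetilde V_{i+n}=(-1)^k\widetilde V_i$. Non-degeneracy ensures that the scalars $d_i:=\det(\widetilde V_i,\ldots,\widetilde V_{i+k})$ are all nonzero. I then seek an $n$-periodic rescaling $V_i:=\lambda_i\widetilde V_i$ making the lift unimodular, i.e. solving
$$
\lambda_i\lambda_{i+1}\cdots\lambda_{i+k}=d_i^{-1},\qquad i\in\Z/n\Z.
$$
This is a multiplicative system on $(\C^*)^n$ governed by the $n\times n$ circulant matrix whose generating polynomial is $1+x+\cdots+x^k$. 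Its eigenvalues are $1+\omega+\cdots+\omega^k=(\omega^{k+1}-1)/(\omega-1)$ as $\omega$ ranges over $n$-th roots of unity, and these are all nonzero precisely when no $\omega\neq 1$ satisfies $\omega^{k+1}=1$, i.e. precisely when $\gcd(k+1,n)=1$. Under this hypothesis the system has a solution, unique up to multiplication of $(\lambda_i)$ by a global $(k+1)$-th root of unity; this residual ambiguity is absorbed by the $\PGL_{k+1}$ equivalence we have already modded out. The frieze $\Phi([v])$ is then defined by assembling the normalized $V_1,\ldots,V_n$ as the columns of the matrix~\eqref{grass} and reading off the entries $f_{i,j}$ via suitable Plücker-type minors. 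The three defining properties check directly: the boundary conditions on rows of $1$'s and $0$'s follow from $\det(V_i,\ldots,V_{i+k})=1$ and the vanishing of determinants with repeated columns, the $\SL_{k+1}$-minor condition follows by a Laplace expansion reducing to the unimodularity identity, and tameness follows because any $k+2$ vectors in $\C^{k+1}$ are linearly dependent.

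\emph{Inverse map and verification.} Conversely, from $F\in\F_{k+1,n}$ read off the $n$ columns $V_1,\ldots,V_n$ of~\eqref{grass}; by unimodularity they satisfy $\det(V_i,\ldots,V_{i+k})=1$, and the $(-1)^k$-periodicity provided by Theorem~\ref{PeriThmGen} matches the antiperiodic extension $V_{i+n}=(-1)^kV_i$. Projectivizing yields a non-degenerate $n$-gon whose class in $\cC_{k+1,n}$ defines $\Psi(F)$. Both $\Phi$ and $\Psi$ are given by polynomial/rational formulas in the coordinates, hence are morphisms of algebraic varieties. The equality $\Psi\circ\Phi=\mathrm{id}$ is tautological from the construction of $\Phi$, while $\Phi\circ\Psi=\mathrm{id}$ is exactly the uniqueness of the normalized lift established above.

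\emph{Main obstacle.} The crux of the argument is the existence and uniqueness of the canonical lift, which reduces to the invertibility of the circulant matrix with generating polynomial $1+x+\cdots+x^k$ over $\Z/n\Z$. This is the single point at which the coprimality hypothesis $\gcd(k+1,n)=1$ is used, and it explains why the theorem cannot be weakened: when $\gcd(k+1,n)=d>1$ the circulant acquires a nontrivial kernel so the lift is no longer unique (generically fails to exist), and $\Phi$ degenerates to a correspondence rather than a morphism.
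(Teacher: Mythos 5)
Your proposal is correct and follows essentially the same route as the paper: lift the polygon to an antiperiodic sequence of vectors in $\C^{k+1}$ normalized by $\det(V_i,\ldots,V_{i+k})=1$, use coprimality of $k+1$ and $n$ for existence and uniqueness of that lift, and read the frieze off the matrix \eqref{grass}, with the inverse map given by projectivizing the columns. The only substantive addition is that you spell out the circulant-matrix/roots-of-unity argument for the rescaling system in general $k$, where the paper only records the analogous computation for $k=1$ (the lemma in \S\ref{M0n}) and otherwise refers to \cite{MGOST}.
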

 
 The above isomorphism is obtained by the composition of maps:
$\F_{k+1,n}\, \hookrightarrow \,\Gr^o_{k+1,n}\, \twoheadrightarrow \, {\mathcal C}_{k+1,n}$.
If $k+1$ and $n$ are not coprime, then this map is a projection with a non-trivial kernel.
 
 More explicitly,
 starting from an $n$-gon $v$,  there is a unique lift of $v$ to $V=(V_i)_i$ with $V_i\in \C^{k+1}$ such that $V_{i+n}=(-1)^{k}V_{i}$ and
 $$
 \det (V_{i}, V_{i+1}, \ldots, V_{i+k})=1
 $$
 for all $i$, provided $k+1$ and $n$ are coprime (cf. the case $k=1$ explained in \S\ref{M0n}). 
 Moreover the sequence satisfies relations of the form $\eqref{REq}$, with coefficients that are independent of the choice of $v$ modulo $\PGL_{k+1}$.
 Thus the class of $v$ defines a unique superperiodic equation, i.e. an element of $\E_{k+1,n}$.
 Modulo the action of $\SL_{k+1}$ the sequence $V$ can be normalized so that
 $(V_0, V_1, \ldots, V_{n-1})
  \in (\C^{k+1})^n
$
is of the matrix form \eqref{grass}.
This matrix extends to a unique element of $\F_{k+1,n}$, independent of the choice of $v$ modulo $\PGL_{k+1}$.
Conversely, given a frieze $F=(f_{i,j})$ in $\F_{k+1,n}$, every subarray $(f_{i,j})_{r\leq i\leq r+k, 0\leq j \leq n-1}$ of size $(k+1)\times n$
defines the same sequence $(V_0, \ldots, V_{n-1})$ modulo $\SL_{k+1}$ and by projection $\C^{k+1}\to\CP^{k}$ defines a unique element of ${\mathcal C}_{k+1,n}$.

\begin{defn}
Let  $v=(v_i)$  be a non-degenerate $n$-gon  in $\CP^{k}$. For each hyperplane containing the $k$ points $(v_i,   \ldots, v_{i+k-1})$ 
we denote by $v_{i+k}^*$ the corresponding element in $\bbP((\C^{k+1})^*)=\CP^{k}$. 
The sequence $v^*=(v_i^*)$ is called the \textit{projective dual} $n$-gon of $v$. 
\end{defn}

The duality commutes with the action of $\PGL_{k+1}$, so that 
the map $*: {\mathcal C}_{k+1,n}\to {\mathcal C}_{k+1,n}$ is well-defined.
Moreover this duality coincides with the projective duality on the friezes of Definition \ref{defDerived}, i.e. 
the following diagram commutes
$$
\xymatrix{
F\in \F_{k+1,n}\ar_{*}[d]\ar^\simeq[r]&v\in \cC_{k+1,n}\ar^{*}[d]\\
F^*\in \F_{k+1,n}\ar^\simeq[r]&v^*\in \cC_{k+1,n}.}
$$

\begin{rem}
In terms of equations the projective duality gives the following equation:
$$
V^*_{i}=a_{i+k-1}^kV^*_{i-1}-a_{i+k-2}^{k-1}V^*_{i-2}+ 
\cdots+(-1)^{k-1}a_{i}^1V^*_{i-k}+(-1)^{k}V^*_{i-k-1},
$$
dual, or adjoint to equation \eqref{REq}. 
This implies that in terms of friezes the dual array $F^*$ is obtained from $F$ by performing an horizontal reflection and a horizontal shift.
\end{rem}

\subsection{Gale duality on friezes and on difference operators}\label{Gale}

The classical Gale transform is a map 
$$
G: {\mathcal C}_{k+1,n}\to {\mathcal C}_{w+1,n},
$$ 
where $k+w+2=n$, see \cite{Gal}, \cite{EiPo}.
This map is nothing but the duality of the Grassmannians
$\Gr_{k+1,n}\simeq\Gr_{w+1,n}$ combined with the Gelfand-McPherson correspondence.

We define the map $\Gc: {\mathcal F}_{k+1,n}\to {\mathcal F}_{w+1,n}$ 
that we call ``combinatorial Gale transform''.
One has the following commutative diagram:
$$
\xymatrix{
\F_{k+1,n}\ar_{\Gc}[d]\ar@{^{(}->}[r]& \Gr_{k+1,n}\ar@{<->}^{\wr}[d]\ar@{->>}[r]&\cC_{k+1,n}\ar_{G}[d]\\
\F_{w+1,n}\ar@{^{(}->}[r]& \Gr_{w+1,n}\ar@{->>}[r]&\cC_{w+1,n}
.}
$$
If $k+1$ and $n$ are coprime, then the map $\Gc$ tautologically coincides with $G$,
otherwise, this is a non-trivial generalization.

The explicit construction of the combinatorial Gale transform is as follows.
Let $F=(f_{i,j})_{i,j}$ be a tame $\SL_{k+1}$-frieze of width $w$, and let $n=k+w+2$.
Consider the coefficients $a_i^j$ defined in \eqref{coeff}, and form $F^\Gc=(f^\Gc_{i,j})_{i,j}$ the following array
$$
\begin{array}{ccccccccccccccccccccccccccc}
&\ldots&1&&1&&1&&1&&1&&1\\[4pt]
&&&\ldots&& a^1_n&&a^1_1&&a^1_2&& \ldots&& a^1_n&\\[4pt]
&&&& a^2_n&&a^2_1&& a^2_2&&&&a^2_n&\\
&\ldots&& \iddots && \iddots&& \iddots&&&& \iddots&&\ldots\\
&&a^k_n&& a^k_1&&a^k_2&& \ldots&& a^k_n&&\ldots\\[4pt]
&1&&1&&1&&1&&1&&1&&\ldots\\
\end{array}
$$
where $f^\Gc_{i,j}=a_{i-1}^{k-j+i}$.

\begin{thm}[\cite{MGOST}]\label{} 
The array $F^\Gc$ is a tame $\SL_{w+1}$-frieze of width $k$.
\end{thm}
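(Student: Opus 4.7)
The plan is to route the proof through the isomorphism $\F_{k+1,n}\simeq\E_{k+1,n}$ of Theorem \ref{REqThmGen} and exhibit $F^\Gc$ as the frieze corresponding, via the analogous isomorphism $\F_{w+1,n}\simeq\E_{w+1,n}$, to a natural Gale-dual $n$-superperiodic difference operator of order $w+1$ built from the one attached to $F$.

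Concretely, Theorem \ref{REqThmGen} attaches to $F$ an $n$-superperiodic operator $L$ of order $k+1$ whose coefficients $(a^j_i)_{1\le j\le k,\,i\in\Z}$ are exactly the interior entries of $F^\Gc$. Superperiodicity places the $(k+1)$-dimensional solution space $\mathcal V$ of $L$ inside the $n$-dimensional space $\mathcal A_n$ of $n$-antiperiodic sequences, so the annihilator $\mathcal V^\perp\subset\mathcal A_n$ (with respect to the pairing $\langle U,V\rangle=\sum_{i=1}^n U_iV_i$) has dimension $w+1$. I would show that $\mathcal V^\perp$ is the solution space of a unique monic $n$-superperiodic operator $L^\Gc$ of order $w+1$ with constant term $(-1)^w$: this is the discrete analogue of the classical adjoint construction, and the superperiodicity of $L^\Gc$ is automatic from that of $L$. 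By the $(w+1,n)$-version of Theorem \ref{REqThmGen}, $L^\Gc$ gives rise to a frieze $G\in\F_{w+1,n}$, and it remains to verify $G=F^\Gc$.

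Since a frieze is determined by its first row (Theorem \ref{REqThmGen}), the identification $G=F^\Gc$ reduces to checking that the coefficients of $L^\Gc$ appear in the interior rows of $F^\Gc$ in the pattern prescribed by $f^\Gc_{i,j}=a^{k-j+i}_{i-1}$. The cleanest route is via the Grassmannian duality already present in Theorem \ref{ThmTriality}: the matrix \eqref{grass} represents $F$ in $\Gr_{k+1,n}$, and its Gale dual is a $(w+1)\times n$ matrix representing $F^\Gc$ in $\Gr_{w+1,n}\simeq\Gr_{k+1,n}$, whose adjacent $(w+1)$-minors coincide, up to signs, with the complementary adjacent $(k+1)$-minors on the original side. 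The formula $f^\Gc_{i,j}=a^{k-j+i}_{i-1}$ then emerges once the dual matrix is normalized to carry the same boundary $1$'s as \eqref{grass}.

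The main obstacle is the sign and index bookkeeping in this matching: one has to check that the pairing used for Gale duality yields the correct sign conventions (so that the constant term of $L^\Gc$ is $(-1)^w$, matching the $1$'s that border $F^\Gc$ above and below) and that the index shifts line up so that $f^\Gc_{i,j}=a^{k-j+i}_{i-1}$ holds on the nose rather than up to a cyclic shift. Once these alignments are in hand, the defining conditions for $F^\Gc$ to be a tame $\SL_{w+1}$-frieze---adjacent $(w+1)$-minors equal to $1$ and adjacent $(w+2)$-minors equal to $0$---transfer directly from the corresponding conditions on $F$ through the Pl\"ucker duality.
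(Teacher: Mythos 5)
Your overall architecture is the right one, and it is the route the paper itself points to: the survey states this theorem without proof (it is quoted from \cite{MGOST}), but the surrounding material of \S\ref{Gale} --- the commutative diagram through $\Gr_{k+1,n}\simeq\Gr_{w+1,n}$ and the dual equation \eqref{TheDualEq} with coefficients \eqref{coeffG} --- is exactly the annihilator/Grassmannian-duality mechanism you describe. So you have correctly located where the proof lives.

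As written, however, the proposal contains one step that fails and two that are deferred rather than carried out. The step that fails: you assert that superperiodicity of $L^\Gc$ is ``automatic'' because $\mathcal V^\perp$ sits inside $\mathcal A_n$. But $\mathcal A_n$ consists of sequences with $U_{i+n}=(-1)^kU_i$, whereas Definition \ref{superdef} applied to an operator of order $w+1$ requires $U_{i+n}=(-1)^wU_i$; since $n=k+w+2$, these two characters agree only when $n$ is even. (Test case $k=1$, $n=5$: solutions of the Coxeter recurrence are $5$-antiperiodic, but the Gale-dual order-$3$ equation must have $5$-periodic solutions.) The cure is to twist the pairing by $(-1)^i$ --- equivalently, to pass from the signed coefficient vectors $\bigl((-1)^ja^j_i\bigr)_j$ of $L$ to the unsigned entries $a^j_i$ that actually populate $F^\Gc$ --- and this twist is not mere bookkeeping: it is what makes both the antiperiodicity character and the bordering rows of $1$'s come out correctly, so it must be built into the construction from the outset. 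The deferred steps: (i) that $\mathcal V^\perp$ is cut out by a single monic operator of order $w+1$ with constant term $(-1)^w$ (equivalently, that the coefficient vectors of $L$, reduced modulo antiperiodicity, span a space of dimension exactly $w+1$ with consecutive $(w+1)$-tuples independent --- this is where tameness of $F$ enters); and (ii) the identification of the coefficients of $L^\Gc$ with the determinants \eqref{coeffG}, hence with the placement $f^\Gc_{i,j}=a^{k-j+i}_{i-1}$, together with the normalization of the adjacent $(w+1)$-minors to $1$ via complementary-minor duality applied to \eqref{grass} (note that the complement of a cyclic interval wraps around, so the antiperiodicity of the extended frieze is needed again here). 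Items (i) and (ii) are the actual content of the theorem, so the proposal should be read as a correct plan rather than a proof.
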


\noindent
We call the frieze $F^\Gc$ the \textit{Gale dual}, of $F$.

Equivalently, this duality can be expressed in terms of superperiodic equations.
Let $V$ be an $n$-superperiodic equation of the form \eqref{REq}. Consider 
the equation $V^\Gc$ of the form
\begin{equation}
\label{TheDualEq}
V^\Gc_i=\alpha_{i}^1V^\Gc_{i-1}-\alpha_{i}^2V^\Gc_{i-2}+ 
\cdots+(-1)^{w-1}\alpha_{i}^wV^\Gc_{i-w}+(-1)^{w}V^\Gc_{i-w-1},
\end{equation}
where the coeffcients are $n$-periodic given by
\begin{equation}
\label{coeffG}
\alpha_{i}^{w-j}=
\left| 
\begin{array}{llllll}
a_{i+1}^1&1&\\[8pt]
a_{i+2}^2&a_{i+2}^1&1&\\[8pt]
\vdots&\ddots&\ddots&\;1\\[6pt]
a_{i+j+1}^{j+1}&\cdots&a_{i+j+1}^{2}&a_{i+j+1}^{1}
\end{array}
\right|.
\end{equation}
The above equation \eqref{TheDualEq} is $n$-superperiodic.
This is the  \textit{Gale dual}, of equation~\eqref{REq}.
Note that the formula~\eqref{coeffG} is a generalization 
of the determinantal formula \eqref{matdet} for the Coxeter's friezes.

A beautiful application of the combinatorial Gale transform
and projective duality was given by Krichever~\cite{Kri} in terms of
the old classical problem of commuting difference operators.
Associate a difference operator to every equation \eqref{REq}:
$$
L:=a_{i}^1\Td-a_{i}^{2}\Td^2+ \cdots+(-1)^{k-1}a_{i}^{k}\Td^k+(-1)^{k}\Td^{k+1},
$$
where $\Td$ is the shift operator acting on the space of infinite sequences $\Td V_i=V_{i-1}$.
The operator~$L$ is superperiodic if the eigenspace 
$\ker(L-\id)$ is contained in the eigenspace $\ker(\Td^n-(-1)^k\id)$.
The projective duality and combinatorial Gale transform then can be viewed
in terms of operators instead of equations.
In particular, one obtains the superperiodic operator $L^{*\Gc}$,
corresponding to the frieze~$F^{*\Gc}$.

\begin{thm}[\cite{Kri}]
If $k+1$ and $n$ are coprime, 
then the operators $L$ and $L^{*\Gc}$ commute.
\end{thm}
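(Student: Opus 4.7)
The plan is to convert $[L, L^{*\Gc}] = 0$ into a divisibility statement in the non-commutative ring $\Rc_n$ of difference operators with $n$-periodic coefficients, exploiting the centrality of $\Omega := \Td^n - (-1)^k \id$ in $\Rc_n$ (indeed, $\Td^n$ commutes with every operator $c_j(i)\Td^j$ whose coefficient is $n$-periodic). Superperiodicity of $L$ means $\ker(L - \id) \subset \ker \Omega$, a $(k+1)$-dimensional subspace of the $n$-dimensional space of $(-1)^k$-antiperiodic sequences.

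First I perform right Euclidean division of $\Omega$ by $L - \id$ in $\Rc_n$, obtaining $\Omega = M \cdot (L - \id) + R$ with $\ord R \leq k$. Evaluating on any $V \in \ker(L - \id)$, the left-hand side vanishes by superperiodicity and $M(L-\id)V$ vanishes trivially, so $RV = 0$ on a $(k+1)$-dimensional space; since a nonzero operator of order $\leq k$ has kernel of dimension at most $k$, this forces $R = 0$ and produces the clean factorization $\Omega = M(L-\id)$ with $\ord M = w+1$. Next, the centrality identity $L\Omega = \Omega L$, expanded via $\Omega = ML - M$ and simplified, collapses to $[L, M] \cdot (L - \id) = 0$. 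Because the recursion $(L - \id)V = g$ is solvable forward and backward from arbitrary initial data, $L - \id$ is surjective on bi-infinite sequences, so $[L, M] = 0$.

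The remaining task is to identify $M$ with $L^{*\Gc}$ (up to an overall sign reflecting the mismatch of leading coefficients $(-1)^k$ versus $(-1)^w$). Iterated non-commutative long division expresses the coefficients of $M$ as explicit minors in $\{a_i^j\}$; these are to be matched against the Gale-dual formula~\eqref{coeffG}, after incorporating the reflection and index-shift that define the projective dual $L^*$. The coprimality hypothesis $(k+1, n) = 1$ enters exactly here via Theorem~\ref{ThmTriality}: it guarantees that the correspondence frieze--operator--projective polygon is bijective, so that the abstract quotient $M$ coincides with the geometrically defined $L^{*\Gc}$.

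The main obstacle is this final combinatorial identification. Steps 1--2 are universal and produce, for any superperiodic $L$, \emph{some} order-$(w+1)$ operator $M$ commuting with $L$; matching $M$ explicitly with $L^{*\Gc}$ requires an intricate identity relating the minors arising from iterated right-division to the determinants of~\eqref{coeffG}, which is precisely the algebraic shadow of how projective duality and Gale duality compose. Without coprimality, the triality of Theorem~\ref{ThmTriality} loses its invertibility: the frieze-to-polygon map acquires a nontrivial kernel, the abstract factor $M$ may differ from the geometric $L^{*\Gc}$, and the commutator $[L, L^{*\Gc}]$ need no longer vanish even though $[L, M] = 0$ still holds.
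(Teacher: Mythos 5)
The survey does not actually prove this statement; it is quoted from Krichever, so your argument has to stand on its own. Your overall route --- factor the central element $\Omega=\Td^n-(-1)^k\id$ as $M\cdot(L-\id)$ using superperiodicity, deduce $[L,M]=0$ from $L\Omega=\Omega L$ together with the surjectivity of $L-\id$ on bi-infinite sequences, then identify the cofactor $M$ with $L^{*\Gc}$ --- is indeed the strategy behind the cited proof, and the first two steps are essentially sound. One local repair: the claim that ``a nonzero operator of order $\leq k$ has kernel of dimension at most $k$'' is false for difference operators whose coefficients are allowed to vanish (a multiplication operator vanishing on part of $\Z$ has an infinite-dimensional kernel on bi-infinite sequences). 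The correct argument is that for every $i$ the window map $V\mapsto(V_i,\dots,V_{i-k})$ sends $\ker(L-\id)$ \emph{onto} $\C^{k+1}$, because the extreme coefficients of $L-\id$ are $\pm 1$; hence $RV=0$ on $\ker(L-\id)$ forces every coefficient $r_j(i)$ of $R$ to vanish pointwise. With that fixed, you do obtain an operator $M$ of order $w+1$ with $n$-periodic coefficients commuting with $L$.

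The genuine gap is the last step, which you explicitly defer: identifying $M$ (up to sign) with $L^{*\Gc}$. This is not a technicality --- it is the entire content of the theorem. Your construction of $M$ never uses the hypothesis that $k+1$ and $n$ are coprime and never touches the determinantal formula \eqref{coeffG} or the reflection and shift defining $L^*$; what you have proved is only that \emph{some} order-$(w+1)$ operator commutes with $L$, which holds for every superperiodic $L$. To close the argument one must show that the coefficients produced by the iterated right division coincide with the minors $\alpha_{i}^{w-j}$ of \eqref{coeffG} after applying the projective-duality reflection, and exhibit precisely where coprimality enters; appealing to Theorem~\ref{ThmTriality} does not by itself accomplish this, since the triality is a statement about isomorphisms between varieties of friezes, equations and polygons, not an identity between the two candidate cofactors of $L-\id$. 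As written, the proposal is a well-chosen plan whose decisive step is missing.
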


\begin{rem}\label{SLbox}
As mentioned in Corollary \ref{corTsys}, an $\SL_{k+1}$-tiling and its derived arrays form a $T$-system.
Such system can be pictured in the 3D-space, inside a $k\times w \times n$ box (with $n=k+w+2$). 
We see $\a$ as the vertical coordinate and $(i,j)$ as coordinates in the horizontal plane. 
Given a frieze $F$, its derived arrays $\partial_2F$,  $\partial_3F, \ldots $ 
lie on horizontal planes $\a=1, 2, 3, \ldots$, respectively.
The top plane $\a=k$ contains the projective dual frieze $F^*$. Up to a shift of indices, the Gale dual frieze $F^\Gc$ is located on the vertical plane $i=j$, and its derived arrays, $\partial_2F^\Gc$,  $\partial_3F^\Gc, \ldots $ lie on vertical planes $j-i= 1, 2, \ldots$, respectively. The last plane, $i-j=w-1$,
consists of the array $(F^\Gc)^*$ which is also equal to $(F^*)^\Gc$ up to a shift of indices.\\

\setlength{\unitlength}{1544sp}%
\begingroup\makeatletter\ifx\SetFigFont\undefined%
\gdef\SetFigFont#1#2#3#4#5{%
  \reset@font\fontsize{#1}{#2pt}%
  \fontfamily{#3}\fontseries{#4}\fontshape{#5}%
  \selectfont}%
\fi\endgroup%
\begin{picture}(17300,6007)(857,-7486)
\put(5926,-7036){\makebox(0,0)[lb]{\smash{{\SetFigFont{9}{14.4}{\rmdefault}{\mddefault}{\updefault}{\color[rgb]{0,0,0}$F$}%
}}}}
\put(14026,-7036){\makebox(0,0)[lb]{\smash{{\SetFigFont{9}{14.4}{\rmdefault}{\mddefault}{\updefault}{\color{gray}$F$}%
}}}}
\thinlines
{\color{gray}\put(10463,-4223){\line(-1,-3){787.600}}
\put(9676,-6586){\line( 1, 0){4950}}
}%
{\color{gray}\put(3038,-4223){\line( 1, 1){1012.500}}
\put(4051,-3211){\line( 1, 0){4275}}
}%
{\color{gray}\put(1801,-5911){\line( 1, 1){2025}}
\put(3826,-3886){\line( 1, 0){4950}}
}%
{\color{gray}\put(1576,-6586){\line( 1, 1){2025}}
\put(3601,-4561){\line( 1, 0){4950}}
}%
{\color{gray}\multiput(9001,-7261)(283.84623,283.84623){10}{\line( 1, 1){145.384}}
}%
\thinlines
{\color[rgb]{0,0,0}\multiput(14401,-7261)(283.84623,283.84623){10}{\line( 1, 1){145.384}}
}%
{\color[rgb]{0,0,0}\multiput(15413,-4223)(283.84623,283.84623){10}{\line( 1, 1){145.384}}
}%
{\color[rgb]{0,0,0}\multiput(10013,-4223)(283.84623,283.84623){10}{\line( 1, 1){145.384}}
}%
\thinlines
{\color{gray}\put(11701,-4561){\line( 1, 0){4500}}
}%
\thicklines
{\color[rgb]{0,0,0}\put(16201,-4561){\line( 1, 0){900}}
\put(17101,-4561){\line( 1, 3){1012.600}}
\put(18113,-1523){\line(-1, 0){5400}}
\put(12713,-1523){\line(-1,-3){450}}
}%
\thinlines
{\color{gray}\put(12263,-2873){\line(-1,-3){562.600}}
}%
\thicklines
{\color[rgb]{0,0,0}\put(11138,-3548){\line( 1, 3){225}}
\put(11363,-2873){\line( 1, 0){5400}}
\put(16763,-2873){\line(-1,-3){1012.600}}
\put(15751,-5911){\line(-1, 0){450}}
}%
{\color[rgb]{0,0,0}\put(10463,-4223){\line( 1, 3){225}}
\put(10688,-3548){\line( 1, 0){5400}}
\put(16088,-3548){\line(-1,-3){1012.600}}
\put(15076,-6586){\line(-1, 0){450}}
}%
\thinlines
{\color{gray}\multiput(3601,-4561)(135.36890,406.10670){8}{\line( 1, 3){ 65.018}}
}%
\thinlines
{\color[rgb]{0,0,0}\multiput(9001,-4561)(135.36890,406.10670){8}{\line( 1, 3){ 65.018}}
}%
{\color[rgb]{0,0,0}\multiput(6301,-7261)(135.36890,406.10670){8}{\line( 1, 3){ 65.018}}
}%
{\color[rgb]{0,0,0}\multiput(901,-7261)(135.36890,406.10670){8}{\line( 1, 3){ 65.018}}
}%
\thicklines
{\color[rgb]{0,0,0}\put(3038,-4223){\line(-1,-1){1687.500}}
\put(1351,-5911){\line( 1, 0){5400}}
\put(6751,-5911){\line( 1, 1){2700}}
\put(9451,-3211){\line(-1, 0){1125}}
}%
{\color[rgb]{0,0,0}\put(9226,-3886){\line(-1, 0){450}}
}%
{\color[rgb]{0,0,0}\put(9001,-4561){\line(-1, 0){450}}
}
{\color[rgb]{0,0,0}\put(1801,-5911){\line(-1,-1){675}}
\put(1126,-6586){\line( 1, 0){5400}}
\put(6526,-6586){\line( 1, 1){2700}}
}%
{\color[rgb]{0,0,0}\put(1576,-6586){\line(-1,-1){675}}
\put(901,-7261){\line( 1, 0){5400}}
\put(6301,-7261){\line( 1, 1){2700}}
}%
{\color[rgb]{0,0,0}\put(4613,-1523){\line(-1,-1){2700}}
\put(1913,-4223){\line( 1, 0){5400}}
\put(7313,-4223){\line( 1, 1){2700}}
\put(10013,-1523){\line(-1, 0){5400}}
}%
{\color[rgb]{0,0,0}\put(9001,-7261){\line( 1, 3){1012.600}}
\put(10013,-4223){\line( 1, 0){5400}}
\put(15413,-4223){\line(-1,-3){1012.600}}
\put(14401,-7261){\line(-1, 0){5400}}
}%
\put(1026,-5461){\makebox(0,0)[lb]{\smash{{\SetFigFont{9}{14.4}{\rmdefault}{\mddefault}{\bfdefault}{\color[rgb]{0,0,0}$k$}%
}}}}
\put(7751,-6136){\makebox(0,0)[lb]{\smash{{\SetFigFont{9}{14.4}{\rmdefault}{\mddefault}{\updefault}{\color[rgb]{0,0,0}$w$}%
}}}}
\put(3376,-7586){\makebox(0,0)[lb]{\smash{{\SetFigFont{9}{14.4}{\rmdefault}{\mddefault}{\updefault}{\color[rgb]{0,0,0}$n$}%
}}}}
\put(15826,-1961){\makebox(0,0)[lb]{\smash{{\SetFigFont{9}{14.4}{\rmdefault}{\mddefault}{\updefault}{\color[rgb]{0,0,0}$F^{\Gc*}=F^{*\Gc}$}%
}}}}
\put(15351,-3311){\makebox(0,0)[lb]{\smash{{\SetFigFont{9}{14.4}{\rmdefault}{\mddefault}{\updefault}{\color[rgb]{0,0,0}$\partial_3F^\Gc$}%
}}}}
\put(14788,-3986){\makebox(0,0)[lb]{\smash{{\SetFigFont{9}{14.4}{\rmdefault}{\mddefault}{\updefault}{\color[rgb]{0,0,0}$\partial_2F^\Gc$}%
}}}}
\put(14596,-4661){\makebox(0,0)[lb]{\smash{{\SetFigFont{9}{14.4}{\rmdefault}{\mddefault}{\updefault}{\color[rgb]{0,0,0}$F^\Gc$}%
}}}}
\put(6788,-4000){\makebox(0,0)[lb]{\smash{{\SetFigFont{9}{14.4}{\rmdefault}{\mddefault}{\updefault}{\color[rgb]{0,0,0}$F^*$}%
}}}}
\put(6001,-5686){\makebox(0,0)[lb]{\smash{{\SetFigFont{9}{14.4}{\rmdefault}{\mddefault}{\updefault}{\color[rgb]{0,0,0}$\partial_3F$}%
}}}}
\put(5788,-6361){\makebox(0,0)[lb]{\smash{{\SetFigFont{9}{14.4}{\rmdefault}{\mddefault}{\updefault}{\color[rgb]{0,0,0}$\partial_2F$}%
}}}}
\thinlines
{\color{gray}\put(11138,-3548){\line(-1,-3){787.600}}
\put(10351,-5911){\line( 1, 0){4950}}
}%
\end{picture}%

\medskip

\noindent
The faces of size $k\times w$ located on the walls $i=\mathrm{const}$ 
give a system of coordinates on $\F_{k+1,n}$. 
For every $i=1, \ldots, n$ the values on the face $(\partial_{\a}F_{i,j})_{\a,j}$ are cluster variables 
of the cluster algebra of type $\mA_k\times \mA_w$ and each face forms a cluster. 
This cluster structure on  $\F_{k+1,n}$ is the one of the Grassmanniann $\Gr_{k+1, n}$, \cite{Sco}, in which the frozen cluster variables are evaluated to 1. This is a generalization of the results of \cite{MGOTaif} established in the case $k=2$.
\end{rem}

\section{Friezes of integers and enumerative combinatorics}\label{comb}

Important problems in the theory of friezes concern the friezes with positive integers.
\begin{itemize}
\item How to construct friezes containing only positive integers?
\item How many such friezes do exist?
\item What do the numbers appearing in the friezes count?
\end{itemize}

In the case of Coxeter's friezes, all these questions have been answered thank to a beautiful correspondence
between friezes and triangulations of polygons due to J.H.Conway.
We explain them in the next sections \S\ref{entieres} and \S\ref{interpret}.
In the more general case of multiplicative friezes over a repetition quiver or in the case of $\SL_k$-friezes some answers to the above questions are known.
We expose them in the sections \S\ref{ZQentiere}, \S\ref{SLenum} and
\S\ref{SLentiere}.

Finally, one can ask a converse question. Can one produce new type of friezes using combinatorial models?
In section \S\ref{diss} we present a variant of frieze obtained from arbitrary dissections of polygons.

\subsection{Coxeter's friezes of positive integers and triangulations of polygons}\label{entieres}
Coxeter formulates the problem of obtaining friezes with positive integers \cite{Cox}. 
He obtains a criterion to characterize such friezes (see Proposition \ref{corPolyLaurent} below)
and connects them to the theory of continued fractions.

From Coxeter's results (see Theorems \ref{thmPoly} and \ref{thmLaurentGen} above) one obtains the following immediate corollaries.

\begin{prop}\label{corLaurent}
(i)
If the first row of a given frieze consists of integers then all the entries of the frieze are integers.

(ii)
If a frieze pattern contains a zig-zag of 1's, then all the entries of the frieze are positive integers.
\end{prop}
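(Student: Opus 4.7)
The plan is to deduce both statements directly from the two expression formulas already established: the polynomial (continuant) expression of Theorem~\ref{thmPoly} and the Laurent expansion of Theorem~\ref{thmLaurentGen}. No new combinatorial work is needed; the main point is simply to observe that each entry of a frieze can be written as a polynomial with integer coefficients in suitable ``initial data,'' and then specialise that data.

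For part (i), I would invoke Theorem~\ref{thmPoly}, which gives the closed formula
\[
e_{i,j}=\left|\begin{array}{cccc}
a_{i}&1&&\\
1&a_{i+1}&\ddots&\\
&\ddots&\ddots&1\\
&&1&a_{j}
\end{array}\right|
\]
expressing each entry as the continuant determinant in the first-row entries $a_k=e_{k,k}$. Expanding this determinant shows that $e_{i,j}\in\mathbb{Z}[a_i,\ldots,a_j]$, so if every $a_k$ is an integer, then so is every $e_{i,j}$. This is a one-line consequence once Theorem~\ref{thmPoly} is in hand.

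For part (ii), the idea is to apply Theorem~\ref{thmLaurentGen} to the specific zig-zag along which the entries equal~$1$. The theorem guarantees that every entry of the frieze can be written as a Laurent polynomial with positive integer coefficients in the values placed along any zig-zag path from the top to the bottom of the pattern. Substituting $x_i=1$ for $i=1,\ldots,m$ in such a Laurent polynomial replaces each monomial by $1$, and the resulting expression is a sum of positive integers, hence a positive integer. Therefore every entry of the frieze lies in $\mathbb{Z}_{>0}$.

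There is no real obstacle to overcome: both parts follow by specialisation of formulas proved earlier. The only subtle point is making sure, for~(ii), that the 1's do occupy a genuine zig-zag in the sense of Remark~\ref{remLaurent} (piecewise-linear path from top row to bottom row with each successive entry placed immediately left or right below the previous one); once this is checked, Theorem~\ref{thmLaurentGen} applies verbatim and the positivity of the Laurent coefficients does the rest.
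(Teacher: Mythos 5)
Your proposal is correct and follows exactly the route the paper takes: the paper derives this proposition as an "immediate corollary" of Theorem~\ref{thmPoly} (integrality of the continuant expansion in the first-row entries) for part~(i) and of Theorem~\ref{thmLaurentGen} (Laurent polynomials with positive integer coefficients in any zig-zag, specialised at $x_i=1$) for part~(ii). Nothing is missing.
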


Let $x_{1}, \ldots, x_{m}$ be the entries on a diagonal of a frieze, with the convention $x_0=x_{m+1}=1$.
Theorems \ref{thmPoly} and \ref{thmLaurent} lead to the following criterion.

\begin{prop}[\cite{Cox}]\label{corPolyLaurent}
If the entries $x_{1}, \ldots, x_{m}$ are all positive integers satisfying the condition
$
x_{i} \text{ divides } x_{i-1}+x_{i+1},
$
for all $1\leq i \leq m$,
then all entries in the frieze are positive integers, and {\it vice versa}.
\end{prop}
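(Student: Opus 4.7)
The plan is to combine two already-established representations of the frieze entries. Theorem~\ref{thmPoly} writes each entry $e_{i,j}$ as a continuant, i.e., an integer polynomial in the first-row values $a_1,\ldots,a_n$; Theorem~\ref{thmLaurent} writes it as a Laurent polynomial with positive integer coefficients in the diagonal values $x_1,\ldots,x_m$. The first representation delivers integrality of the entries as soon as the $a_i$'s are known to be integers, while the second delivers positivity automatically whenever the $x_j$'s are positive.

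The bridge between the two representations is the identity
\[
a_i=\frac{x_{i-1}+x_{i+1}}{x_i},\qquad 1\le i\le m,
\]
already recorded in Remark~\ref{remLaurent}. It follows either by specialising Theorem~\ref{thmLaurent} to the position $e_{i,i}$ or by applying the diagonal recurrence of Proposition~\ref{propRec} with the boundary values $x_0=x_{m+1}=1$. Under the divisibility hypothesis this identity exhibits $a_1,\ldots,a_m$ as positive integers. I would then extend this to a full period of the first row: the three remaining entries $a_{m+1},a_{m+2},a_{m+3}$ are given as integer polynomials in $x_1,\ldots,x_m$ by applying the unimodular rule across the bordering rows (using the convention $e_{i,i-1}=1$, $e_{i,i-2}=0$), or equivalently by invoking the glide symmetry of Theorem~\ref{thmGlide}, which propagates the $x_j$'s to the twin portion of the fundamental domain.

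With all $n=m+3$ first-row entries of a period shown to be positive integers, Theorem~\ref{thmPoly} yields integrality of every $e_{i,j}$, while Theorem~\ref{thmLaurent} evaluated at positive integer $x_j$'s yields positivity. The converse direction is immediate from the same identity: if all frieze entries are positive integers, then so are the $x_i$'s and $a_i$'s, and the relation $a_i x_i = x_{i-1}+x_{i+1}$ forces the divisibility condition.

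The main obstacle, and the only point requiring genuine care, is the second step above: accounting for the three first-row entries of a period that do not lie adjacent to the chosen diagonal. Once this short bookkeeping is handled, the proof reduces to a transparent synthesis of the two representation theorems already at our disposal.
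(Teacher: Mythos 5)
Your proposal is correct and follows exactly the route the paper intends: the text offers no proof beyond the observation that Theorems \ref{thmPoly} and \ref{thmLaurent}, linked by the identity $a_i=(x_{i-1}+x_{i+1})/x_i$ of Remark \ref{remLaurent}, yield the criterion, and you have filled in precisely those details. Your one point of care --- the first-row entries of a period not facing the chosen diagonal --- is genuine, and of your two suggested fixes the glide-symmetry one is the clean one (the raw unimodular rule at the top border would require dividing by $a_{m+1}$), since it identifies the missing entry $a_{m+2}$ with $e_{1,m}$ inside the fundamental triangle, where integrality follows from the recurrence \eqref{recRel} with integer coefficients $a_1,\ldots,a_m$.
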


Proposition \ref{corLaurent} gives an easy way to generate a frieze of positive integers: it suffices to set 1's on a zig-zag shape in a frieze and to deduce the rest of the entries by applying the unimodular rule. For instance that is how one can get the frieze \eqref{exCoCox}. 
However, there exist friezes of positive integers that cannot be obtained this way. 
This is the case of the following frieze:
$$
 \begin{array}{ccccccccccccccccccc}
&&1&&1&& 1&&1&&1&&1&&\cdots \\[4pt]
&\cdots&&1&&3&&1&&3&&1&&3&&
 \\[4pt]
&&2&&2&&2&&2&&2&&2&&\cdots
 \\[4pt]
 &\cdots&&3&&1&&3&&1&&3&&1&&
 \\[4pt]
&&1&&1&&1&&1&&1&&1&&\cdots
\end{array}
$$

Classification of frieze patterns with positive integers is related to triangulations of polygons. 
The following theorem is credited to John Conway, cf. \cite{CoRi}.

\begin{thm}[\cite{CoCo}]\label{ThCocox}
Frieze patterns of width $m=n-3$ with positive integers are in one-to-one correspondence with the triangulations of a convex $n$-gon.
If $(a_{1}, a_{2}, \ldots, a_{n})$ is the cycle on the first row of the frieze, then $a_{i}$ is the number of triangles adjacent to the $i$-th vertex in the corresponding triangulated $n$-gon.
\end{thm}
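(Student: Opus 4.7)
The plan is to prove both directions of the correspondence by induction on $n$, pivoting on the notion of an \emph{ear}: an index $i$ with $a_i = 1$, which on the polygon side is the apex of a triangle having two boundary edges. The base case $n=3$ is the sequence $(1,1,1)$ corresponding to the single triangle and the trivial frieze of width $0$.

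\textbf{From triangulations to friezes.} A classical observation (via the dual tree of the triangulation) is that every triangulation of a convex $n$-gon with $n \geq 4$ has at least two ears. Removing an ear at vertex $i$ yields a triangulation of the $(n-1)$-gon whose vertex-count sequence is $(a_1, \ldots, a_{i-1}-1, a_{i+1}-1, \ldots, a_n)$, since the two neighbors of $i$ each lose one triangle while all other counts are unchanged. By induction this shorter sequence defines a valid frieze of width $n-4$ with positive integer entries. I would then show that inserting an ear $a_i = 1$ back into the sequence produces a valid width-$(n-3)$ frieze: the unimodular rule forces $e_{i-1,i} = a_{i-1}-1$ and $e_{i,i+1} = a_{i+1}-1$ (both positive integers by induction), and the diagonal recurrence of Proposition~\ref{propRec} along the two diagonals through $a_i$ reproduces, shifted by one row, the corresponding diagonals of the smaller frieze, so positivity and the bottom boundary of $1$'s are inherited.

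\textbf{From friezes to triangulations.} The key input here is the \textbf{Ear Existence Lemma}: in any closed frieze of positive integers of width $m = n-3 \geq 1$, some entry $a_i$ equals $1$. This is the main obstacle, and my approach exploits the diagonal recurrence directly. Fix $j$ and set $V_i := e_{j,i}$, so that the boundary conventions give $V_{j-1} = 1$, $V_{j-2} = 0$, and the closing condition reads $V_{j+m} = 1$. Suppose for contradiction that $a_i \geq 2$ for every $i$. Then $V_j = a_j \geq 2 > V_{j-1}$, and the recurrence $V_i = a_i V_{i-1} - V_{i-2}$ with $a_i \geq 2$ preserves the property $V_i > V_{i-1}$: indeed $V_i \geq 2 V_{i-1} - V_{i-2} = V_{i-1} + (V_{i-1} - V_{i-2}) > V_{i-1}$. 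Hence $V_{j-1} < V_j < \cdots < V_{j+m}$, contradicting $V_{j+m} = 1 = V_{j-1}$.

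With ear existence established, the reduction is immediate: choose $i$ with $a_i = 1$ and replace $(a_{i-1}, 1, a_{i+1})$ by $(a_{i-1}-1, a_{i+1}-1)$, whose positivity follows since $a_{i-1}-1 = e_{i-1,i}$ is a positive integer. The resulting sequence defines a valid width-$(n-4)$ frieze of positive integers (undoing the ear insertion of the previous direction), which by induction corresponds to a triangulation of an $(n-1)$-gon; attaching a triangle at the $i$-th vertex yields the required triangulation of the $n$-gon. Since the two ear operations on triangulations and on friezes are mutually inverse, the map is a bijection and the assignment $a_i = \#\{\text{triangles at vertex } i\}$ is automatic from the construction.
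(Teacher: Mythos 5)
The paper does not actually prove Theorem~\ref{ThCocox}: it states the result and refers to \cite{CoCo} (and \cite{CoRi}), so there is no internal argument to compare yours against. Your proposal is the classical Conway--Coxeter ear-induction proof, and its logical skeleton is sound. The strongest part is your Ear Existence Lemma: using Proposition~\ref{propRec} with $V_{j-2}=0$, $V_{j-1}=1$ and the hypothesis $a_i\geq 2$ to force $V_{j-1}<V_j<\cdots<V_{j+m}$, contradicting $V_{j+m}=1$, is exactly the right argument and is stated correctly (the induction $V_i\geq 2V_{i-1}-V_{i-2}>V_{i-1}$ needs $V_{i-1}>0$, which your increasing chain supplies). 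The identities $e_{i-1,i}=a_{i-1}a_i-1=a_{i-1}-1$ when $a_i=1$ are also correct and give the needed positivity of the contracted quiddity, ruling out adjacent $1$'s along the way.

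The one place where you are leaning on an assertion rather than a proof is the claim that inserting or deleting an ear transforms a closed positive-integer frieze of one width into a closed positive-integer frieze of the adjacent width. Saying that the diagonals through $a_i$ ``reproduce, shifted by one row, the corresponding diagonals of the smaller frieze'' is the right idea, but to make it rigorous you must exhibit the explicit relation between the two arrays: when $a_i=1$ the recurrence $V_i=V_{i-1}-V_{i-2}$ shows that each South-East diagonal of the larger frieze is obtained from a diagonal of the smaller one by repeating one entry (equivalently, $e'_{j,k}=e_{j,k}$ or $e_{j,k-1}$ or $e_{j-1,k-1}$ according to the position of $j,k$ relative to $i$), and one then checks that the bottom boundary condition $e'_{j,j+m}=1$ for the larger frieze is equivalent to $e_{j,j+m-1}=1$ for the smaller one. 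This interlacing computation is routine but is genuinely needed in \emph{both} directions of your induction (existence for the insertion, well-definedness for the contraction), so it should be written out rather than described as ``inherited'' or ``undoing the ear insertion.'' With that lemma supplied, the bijection and the identification of $a_i$ with the number of incident triangles follow as you say.
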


The term \textit{quiddity of order $n$} is introduced in \cite{CoCo} to refer to a sequence $(a_{1}, a_{2}, \ldots, a_{n})$ of positive integers defining the first row of a frieze of width $n-3$, or equivalently,
defining a triangulation of an $n$-gon by its numbers $a_{i }$ of triangles incident to each vertex.

\begin{ex}\label{exQuid}
The frieze \eqref{exCoCox} has quiddity $(4, 2, 1, 3, 2, 2, 1)$ and corresponds to the following triangulated heptagon. 
\begin{center}
\includegraphics[width=7cm]{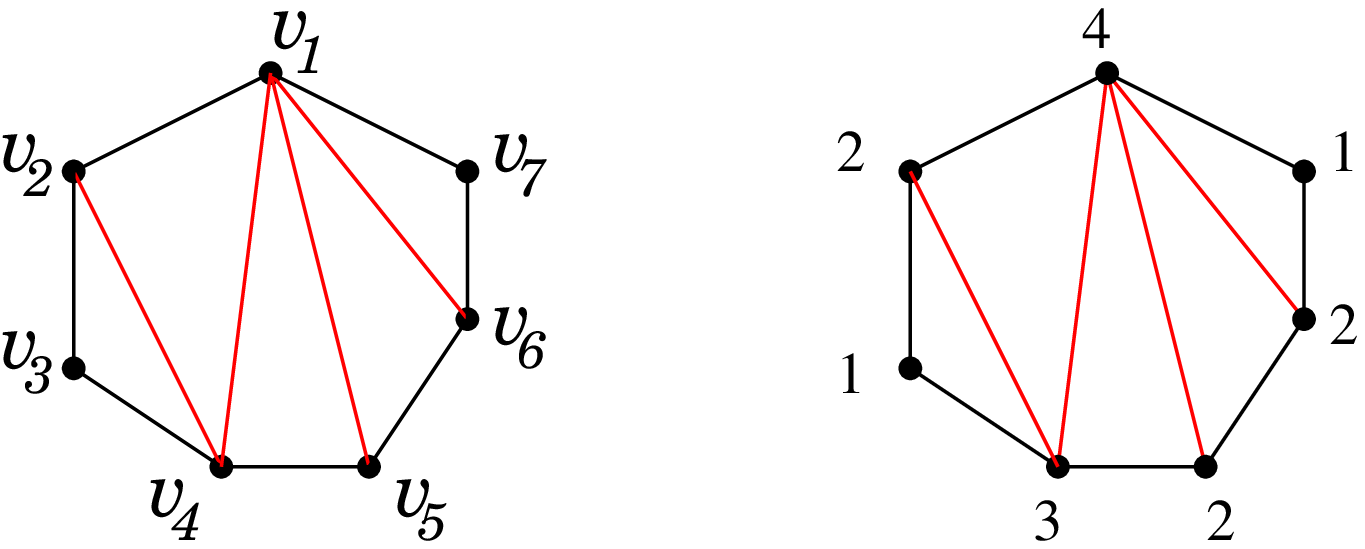}
\label{triang}
\end{center}
\end{ex}

\begin{cor}\label{corCoCox}
For a fixed width $m$, the number of friezes with positive integers is finite, given by the $(m+1)$-th Catalan number 
$$
C_{m+1}=\frac{1}{m+1} {{2(m+1)}\choose{m+1}}.
$$
\end{cor}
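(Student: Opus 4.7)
The plan is to deduce this corollary directly from Theorem \ref{ThCocox}, which sets up a bijection between friezes of positive integers of width $m = n-3$ and triangulations of a convex $n$-gon. Once we have this bijection, the problem reduces to a classical enumeration: counting triangulations of a convex polygon.

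More precisely, I would proceed as follows. First, invoke Theorem \ref{ThCocox} to identify the set of frieze patterns of width $m$ with positive integer entries with the set of triangulations of a convex $(m+3)$-gon. Second, recall the classical formula that the number of triangulations of a convex $N$-gon (by non-crossing diagonals into triangles) equals the Catalan number $C_{N-2} = \frac{1}{N-1}\binom{2(N-2)}{N-2}$. Setting $N = m+3$, we obtain $C_{m+1} = \frac{1}{m+1}\binom{2(m+1)}{m+1}$, which is the desired count. In particular, since $C_{m+1}$ is finite, so is the number of friezes of positive integers of width $m$.

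The only genuine content is the bijection from Theorem \ref{ThCocox}, which is already granted. The rest is the standard Catalan enumeration; if desired, one can recall a short justification (e.g.\ by noting that fixing a distinguished edge of the $(m+3)$-gon and choosing the third vertex of the triangle containing that edge yields the Catalan recurrence $C_{m+1} = \sum_{i=0}^{m} C_i C_{m-i}$, with initial condition $C_0 = 1$). I do not expect any real obstacle, as everything follows tautologically once the Conway--Coxeter correspondence has been established.
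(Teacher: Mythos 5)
Your proof is correct and is exactly the route the paper takes: the corollary is an immediate consequence of the Conway--Coxeter bijection of Theorem \ref{ThCocox} combined with the classical fact that a convex $N$-gon has $C_{N-2}$ triangulations, so there is nothing further to check. One small arithmetic remark: your own (correct) general formula $C_{N-2}=\frac{1}{N-1}\binom{2(N-2)}{N-2}$ gives, upon setting $N=m+3$, the denominator $m+2$ rather than $m+1$, i.e.\ $C_{m+1}=\frac{1}{m+2}\binom{2(m+1)}{m+1}$; the fraction you (and the displayed formula in the corollary itself) write with $\frac{1}{m+1}$ is an off-by-one slip, although the identification of the count with the $(m+1)$-st Catalan number is right.
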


Let us mention that relation between the Catalan numbers and matrices of the form~\eqref{matdet}
was found independently in the 80's, see~\cite{Sha} and references therein.

\subsection{Combinatorial interpretations of the entries in a Conway-Coxeter frieze}\label{interpret}

Coxeter's friezes with positive integers are often called \textit{Conway-Coxeter friezes}.

All the entries in a Conway-Coxeter frieze can be combinatorially interpreted using the corresponding triangulated $n$-gon. 
We denote cyclically by $v_1, \ldots, v_n$ the vertices of the triangulated $n$-gon (with convention $v_{i+n}=v_{i-n}=v_{i}$).
Recall that the entries in the frieze are denoted by $e_{i,j}$ as in \eqref{label}, and,
by Theorem~\ref{ThCocox}, the entry $e_{i,i}$ counts 
the number of triangles incident to $v_i$.

We give three elementary ways to compute the value $e_{i,j}$ in the frieze from the  corresponding triangulated $n$-gon. \\

1) Using the ``length'' of the diagonals, defined as follows.

Assume that the sides of the polygon and the diagonals in the initial triangulation are all of length~1. 
The length of all other diagonals can be computed recursively using the Ptolemy rule:
$$
   \xymatrix  @!0 @R=2em @C=2pc { 
& B \ar@{-}[rrd]\ar@{-}[rdd] & \\ 
A\ar@{-}[ru]\ar@{-}[rrr]\ar@{-}[rrd]&&&D&\Longrightarrow&&&&&|AD|\cdot |BC|=|AB| \cdot |CD|+|AC| \cdot |BD|. \\
&&C\ar@{-}[ru]\\
}
$$
(In Euclidean geometry the Ptolemy formula holds true for quadrilateral inscribed in a circle.)

In \cite{CoCo}, it is shown that
$$
e_{i,j}= \text{ the length of the diagonal } [v_{i-1} \; v_{j+1}].
$$

Note that the above formula gives $e_{i,j}=1$ if and only if $ [v_{i-1} \; v_{j+1}]$ is a diagonal in the initial triangulation.\\

2) Using the ``counting procedure''.
This procedure is given in \cite{CoCo}.
\begin{itemize}
\item Choose the vertex $v_i$ and assign the tag 0 at this vertex;

\item All the vertices that are joined to $v_i$ in the triangulation are assigned the tag 1; 

\item  Whenever a triangle has two vertices  tagged by $a$ and $b$, assign the tag $a+b$ to the third vertex;

\item When all the vertices are tagged, one has
\begin{equation}\label{entBCI}
e_{i+1,j-1}= \text{the tag at vertex } v_j.
\end{equation}
\end{itemize}
\begin{ex}
We apply the above procedure in the situation of Example \ref{exQuid} in order to obtain the values of the diagonal $(e_{2, \bullet})$ and
$(e_{3, \bullet})$ of the frieze \eqref{exCoCox}.
\begin{center}
\includegraphics[width=7cm]{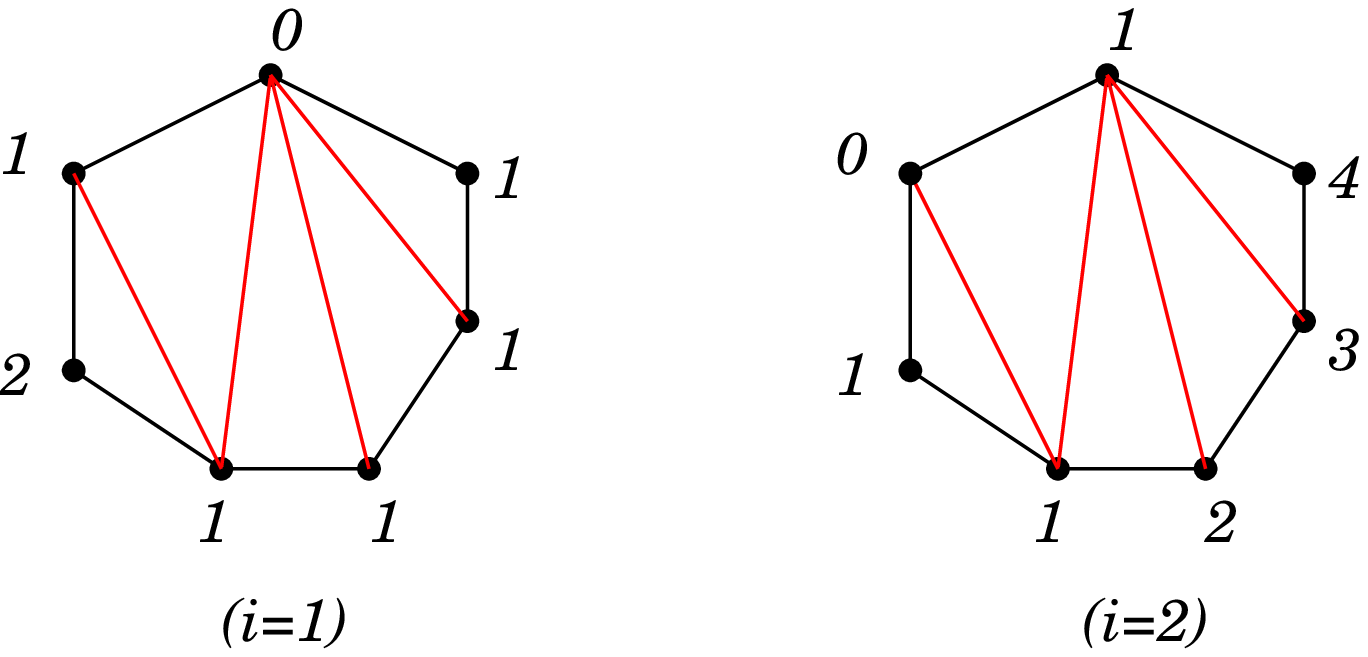}
\label{triang}
\end{center}
\end{ex}

3) Counting the ``admissible paths''. This combinatorial interpretation is given in \cite{BCI}.

An \textit{admissible path} from vertex $ v_{i-1}$ to vertex $v_{j+1}$ is an ordered sequence $(\tau_{i}, \tau_{i+1}, \ldots, \tau_{j})$,
of distinct triangles in the triangulation, such that the triangle $\tau_\ell$ is incident to vertex $v_\ell$. One has:
\begin{equation}\label{entPath}
e_{i,j}= \text{ the number of admissible paths from the vertex } v_{i-1} \text{ to  }v_{j+1}.
\end{equation}

Note that the value $e_{i,i-1}=1$ in the frieze is understood as the unique path $(\,)$, of length 0, between the vertices $v_{i-1}$ and $v_{i}$.

\begin{ex}
In the situation of Example \ref{exQuid}, let $\a, \beta, \gamma, \delta, \epsilon$ be the triangles in the polygon.
\begin{center}
\includegraphics[width=3cm]{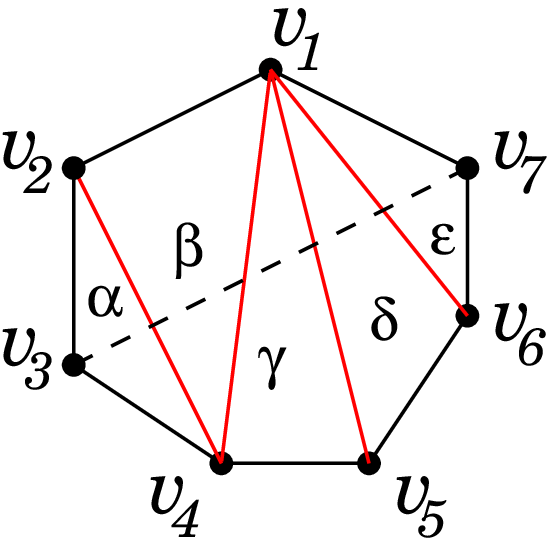}
\label{triang}
\end{center}
Admissibles paths $(\tau_1, \tau_2)$ between vertices $v_7=v_0$ and $v_3$ 
are
$$
(\epsilon, \beta), (\epsilon, \a), (\delta, \beta), (\delta, \a), (\gamma, \beta), (\gamma, \a), (\beta, \a).
$$
Which gives the value $e_{1,2}=7$ in the frieze \eqref{exCoCox}.

Note that the number of admissible paths between two vertices does not depend on the orientation one chooses to go from a vertex to the another.
Indeed, if  $(\tau_1, \tau_2)$ is a path from vertex $v_7$ to $v_3$ then the set of triangles $\{\a, \beta, \gamma, \delta, \epsilon\}\setminus \{\tau_1,\tau_2\}$ defines an admissible path from $v_3$ to $v_7$.
Here the admissible paths $(\tau_4,\tau_5, \tau_6)$ between $v_3$ and $v_7$ are 
$$
(\a, \gamma, \delta), (\beta, \gamma, \delta), (\a, \gamma, \epsilon), (\beta, \gamma, \epsilon), (\a, \delta, \epsilon), (\beta, \delta, \epsilon),
 (\gamma, \delta, \epsilon).
$$
This property gives $e_{i,j}=e_{j+2,i-2}$, which corresponds to the glide symmetry in the frieze.
\end{ex}

\begin{rem}
Other combinatorial models to calculate and interpret the entries of the frieze were suggested by several authors.

(a)
In \cite{Pro} the entries in a frieze are the numbers of perfect matchings on certain bipartite graphs associated to the corresponding triangulation.

(b)
In \cite{Cun} the entries of the friezes are interpreted as root coordinates in some Weyl groupoid.

(c)
Considering Coxeter friezes as friezes on a repetition quiver $\Z\Qc$, with $\Qc$ of type $\mA_{n-3}$, 
the Caldero-Chapoton formula gives the entry $e_{i,j}$ as the Euler characteristic of the total
quiver Grassmannian of the module $M_{i,j}$ of $\rep$ attached to the vertex $(i,j)$ in the AR quiver:
$$
e_{i,j}=\chi(\Gr M_{i,j})=\chi(\sqcup_e \Gr_e M_{i,j})=\sum_e \chi(\Gr_e M_{i,j}).
$$
This provides a geometric interpretation of Proposition \ref{corLaurent}.
See \cite{CaCh}, and \cite{AsDu}.

(d)
 Links between friezes and Farey series were already mentioned in \cite{Cox}.
In \cite{MGOTprep} Conway-Coxeter friezes are classified using cycles in the Farey graph.  
If the triangulated $n$-gon corresponding to the frieze is embedded in the Farey graph so that
$v_{1}=\frac01$ and $v_{n}=\frac10$, then the sequence of vertices $(v_i)$ are  rational points in the Farey graph,
such that the sequence of denominators gives the diagonal $e_{1,\bullet}$ and the sequence of numerators give the diagonal $e_{2,\bullet}$.
For Example \ref{exQuid}, one obtains
\begin{center}
\includegraphics[width=6cm]{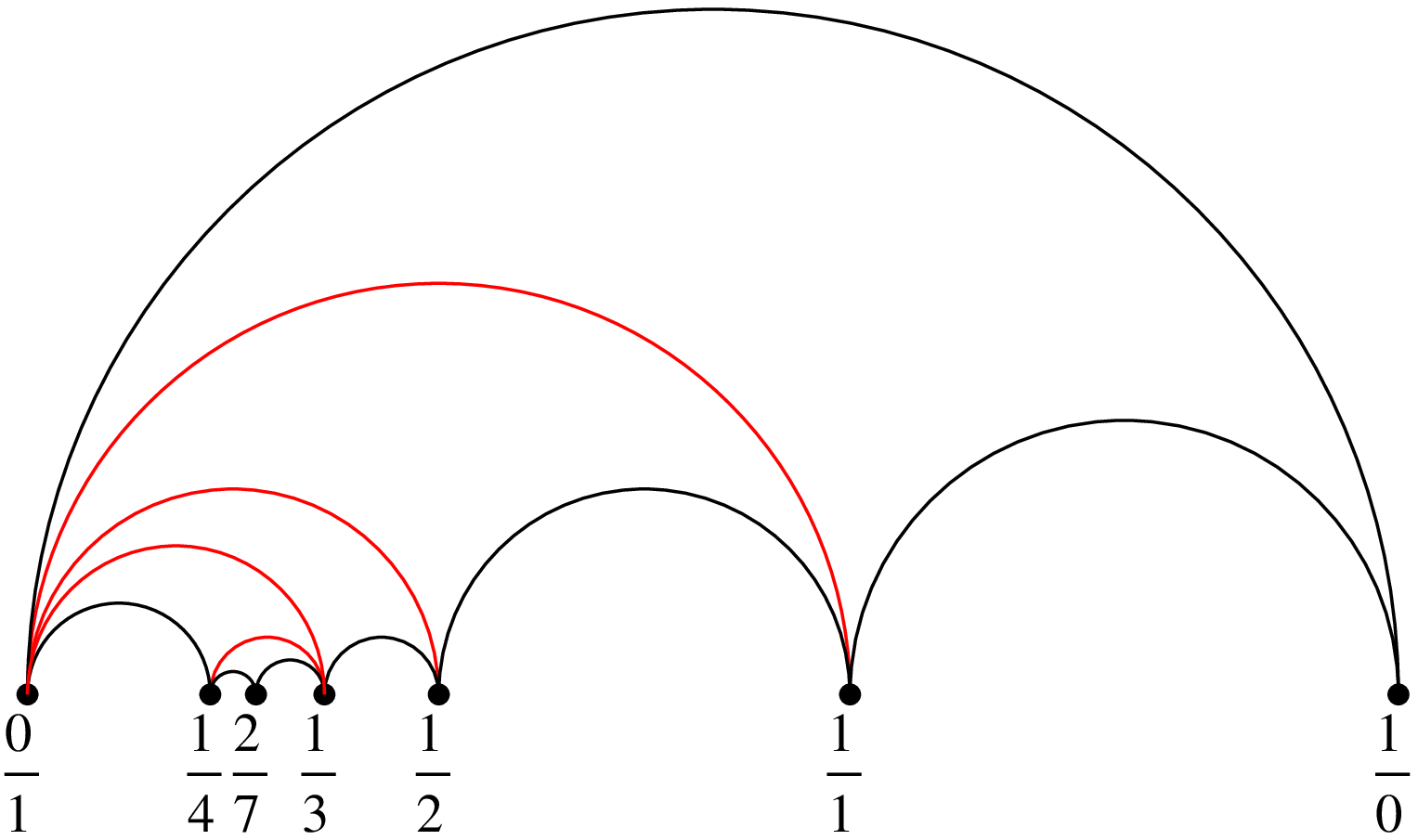}
\label{triangFarey}
\end{center}
which gives the quotient of the diagonal $e_{2,\bullet}=(0,1,2,1,1,1,1)$  by
 $e_{1,\bullet}=(1,4,7,3,2,1,0)$ of \eqref{exCoCox}.
 
 (e) 
The link between friezes, triangulations of polygons and the Farey graph is illustrated by the applet of \cite{SchwApp}.
\end{rem}

\subsection{Multiplicative friezes of type $\Qc$ with integer values}\label{ZQentiere}

Recall that in Section \ref{FrQuiv}, Coxeter's friezes were generalized to multiplicative friezes associated to an acyclic quiver $\Qc$.
Entries in the generic multiplicative frieze $f_{mu}$ over $\Z\Qc$
are cluster variables in the cluster algebra of type $\Qc$.
The general theory and the combinatorial models developed for the cluster algebras 
may be helpful to understand friezes of positive integers.

The result of Proposition \ref{corLaurent} can be generalized in the case of multiplicative friezes over $\Z\Qc$.
A \textit{section} $\Sc$ of $\Z\Qc$ is a full subquiver of $\Z\Qc$ such that $\Z\Sc\simeq \Z\Qc$. 
In other words, a section contains all the vertices of $\Qc$ but not necessarily in the same copy.
The set of variables of $f_{mu}$ lying on a section forms a cluster in the algebra $\A_\Qc$.
By Laurent phenomenon, cf Theorem \ref{LaurentFZ}, one obtains the following corollary, see also \cite{AsDu}.

\begin{cor}
Let $\Qc$ be any acyclic quiver and $f$ a multiplicative frieze over $\Z\Qc$. 
If $f$ takes the constant value $1$ over any given section of $\Z\Qc$, then all the values taken by $f$ are positive integers.
\end{cor}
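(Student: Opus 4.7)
The strategy is to recognize the values of $f$ on a section as an initial cluster in the cluster algebra $\A_\Qc$, and then apply the (positive) Laurent phenomenon. First I would fix a section $\Sc$ of $\Z\Qc$ on which $f$ takes the value $1$, and label its vertices $s_1,\ldots,s_n$ in bijection with $\Qc_0$. By the definition of a section one has $\Z\Sc\simeq \Z\Qc$, and one knows that the restriction of the generic multiplicative frieze $f_{mu}$ to $\Sc$ consists of free generators $x_1,\ldots,x_n$ that form a cluster in $\A_\Qc(x_1,\ldots,x_n)$, obtainable from the initial seed by a suitable finite sequence of mutations (this is exactly the content linking $\Z\Qc$-friezes to the combinatorics of seed mutation used in \S\ref{muSec}).

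Next, I would observe that for every vertex $v\in \Z\Qc$ the value $f_{mu}(v)$ is a cluster variable in $\A_\Qc(x_1,\ldots,x_n)$. By Theorem~\ref{LaurentFZ} (Laurent phenomenon), $f_{mu}(v)$ can be written as a Laurent polynomial in $x_1,\ldots,x_n$ with integer coefficients; moreover, by the positivity theorem for cluster algebras (valid in the acyclic setting considered here, and recalled right after Theorem~\ref{LaurentFZ}), these coefficients are in fact nonnegative integers. Since $f_{mu}(v)$ is a nontrivial such Laurent polynomial, its specialization at $x_1=\cdots=x_n=1$ produces a strictly positive integer.

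It remains to identify our frieze $f$ with this specialization. The frieze rule propagates values away from $\Sc$ by the relation $f(\tau w)f(w)=1+\prod_{u\to w}f(u)$, and since the specializations $f_{mu}(v)|_{x_i=1}$ are positive integers (in particular nonzero), no division-by-zero obstruction of the kind described in Remark~\ref{fri_nonper} can occur. Hence the propagation is unambiguous and coincides with the specialization of $f_{mu}$; equivalently, one checks inductively on the ``distance'' from $\Sc$ (using $\tau$ and $\tau^{-1}$) that $f(v)=f_{mu}(v)|_{x_1=\cdots=x_n=1}$ for every $v\in \Z\Qc$. Combined with the previous step, this yields $f(v)\in \Z_{>0}$ for all vertices.

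The main obstacle is the appeal to positivity of Laurent expansions in cluster variables: this is a nontrivial theorem (due to Lee--Schiffler, and in the quantum/acyclic cases to Kimura--Qin, as already cited in the paper). The identification of the initial data on $\Sc$ with a genuine cluster is the second delicate point, but it is standard once one knows that any section of $\Z\Qc$ is reachable from the standard slice by a finite sequence of source (or sink) mutations, which is a well-known fact in Auslander--Reiten theory.
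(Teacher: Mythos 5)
Your argument is correct and follows essentially the same route as the paper: the values on a section form a cluster in $\A_\Qc$, the Laurent phenomenon (Theorem~\ref{LaurentFZ}, together with positivity of the coefficients) expresses every entry of $f_{mu}$ as a Laurent polynomial with nonnegative integer coefficients in these section variables, and specializing them to $1$ gives positive integers. Your additional check that $f$ really coincides with this specialization (no division by zero in the propagation, ruling out the ambiguity of Remark~\ref{fri_nonper}) is a worthwhile precision the paper leaves implicit; note only that the positivity could be obtained more cheaply from the fact that $f_{mu}$ is subtraction-free in the section variables, so that the deep positivity theorem is needed only if one insists on positive Laurent coefficients rather than positive values.
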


The converse is not true. A frieze with positive integers does not necessarily have a section of~1's.
A way to construct a frieze of positive integers is to choose an arbitrary cluster in the corresponding cluster algebra and specify all its variables to 1. 
Then all the other cluster variables will evaluate to positive integers. 
Friezes is obtained this way are called \textit{unitary friezes} in \cite{MGjaco}.
However, there exist non-unitary friezes as noticed in the Appendix of \cite{BaMa} and in \cite{MGjaco}.
The number of integral friezes is not the number of clusters in the  corresponding cluster algebra, except for the 
type $\mA_n$ where it is given by a Catalan number, cf Corollary \ref{corCoCox}.

\begin{thm}[\cite{FoPl}]
The number of friezes with positive integers in type $\mD_n$ is
$$
\sum_{m=1}^nd(m) {{2n-m-1}\choose{n-m}},
$$
where $d(m)$ is the number of divisors of $m$.
\end{thm}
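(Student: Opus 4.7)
The strategy is to reduce the enumeration to a combinatorial problem, building on the cluster-algebraic description of multiplicative friezes recalled in \S\ref{muSec}--\ref{ZQentiere}. By Proposition \ref{CaChfri} and the subsequent discussion, every entry of the generic frieze $f_{mu}$ on $\Z\Qc$ with $\Qc$ a Dynkin quiver of type $\mD_n$ is a cluster variable of $\A_\Qc(u_1,\ldots, u_n)$ given by the Caldero--Chapoton formula \eqref{CCformula}; a positive integer frieze is therefore a positive integer specialization of the initial cluster for which every cluster variable evaluates to a positive integer. By Theorem \ref{LaurentFZ} together with the positivity of the Caldero--Chapoton coefficients, integrality is preserved under mutation, so it suffices to check the integrality condition cluster-by-cluster.

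The first step is to translate the problem via the Fomin--Shapiro--Thurston surface model for type $\mD_n$: cluster variables correspond to tagged arcs on the once-punctured $n$-gon, clusters to tagged triangulations, and exchange relations to generalized Ptolemy identities. A positive integer frieze then becomes an assignment of a positive integer to every tagged arc compatible with all Ptolemy relations, in direct analogy with Conway--Coxeter's Theorem \ref{ThCocox}. This turns the count of integral friezes into a purely combinatorial enumeration of admissible labelings.

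The second step is to organize these labelings by a discrete invariant $m \in \{1,\ldots,n\}$. The class $m=1$ consists of the \emph{unitary} friezes, obtained by specializing some cluster to the all-ones vector, and is in bijection with tagged triangulations of the once-punctured $n$-gon. For $m > 1$ one captures the \emph{non-unitary} integral friezes first observed in \cite{BaMa,MGjaco}; the first such exotic frieze appears at $n=4$, accounting for the discrepancy between the $\mD_4$-Catalan number $50$ and the formula value $51$. Within each class, $\binom{2n-m-1}{n-m}$ should arise as a Catalan-type count of admissible triangulations of a reduced polygon, while the multiplicity $d(m)$ should come from counting cyclic substructures of length dividing $m$ --- equivalently, an orbit count under the AR translation $\tau$, whose order in type $\mD_n$ is $2n$ by Theorem \ref{frADE}. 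Summing over $m$ would yield the claimed identity.

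The main obstacle is the second step, specifically the precise control of the non-unitary integral friezes. In type $\mA_{n-3}$ this step collapses --- by Conway--Coxeter every positive integer frieze is unitary, and the formula reduces to a single Catalan number --- so the difficulty is invisible from the classical case. In $\mD_n$ the genuinely new phenomenon is the periodic non-unitary piece; rigorously identifying the invariant $m$, establishing the bijection between non-unitary friezes of a given period and cyclic configurations weighted by $d(m)$, and verifying that the residual triangulation count is exactly $\binom{2n-m-1}{n-m}$ is where the bulk of the work of \cite{FoPl} lies.
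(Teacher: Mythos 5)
Your outline correctly points to the combinatorial model that \cite{FoPl} actually uses --- tagged arcs and triangulations of the once-punctured $n$-gon --- but the step that carries all the content, namely the decomposition of the set of integral friezes by the invariant $m$, is both unproved and, as you describe it, incorrect. You identify the class $m=1$ with the unitary friezes and claim it is ``in bijection with tagged triangulations of the once-punctured $n$-gon''; but the number of tagged triangulations is the type $\mD_n$ cluster number $\frac{3n-2}{n}\binom{2n-2}{n-1}$ (equal to $50$ for $n=4$), whereas the $m=1$ summand of the formula is $\binom{2n-2}{n-1}$ (equal to $20$ for $n=4$). Even granting your own premise that the discrepancy between $50$ and $51$ is explained by a single non-unitary frieze, your decomposition would require the terms $m\geq 2$ to contribute $1$, while they in fact contribute $2\cdot 10+2\cdot 4+3\cdot 1=31$. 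So the dichotomy ``$m=1$ unitary versus $m>1$ non-unitary'' cannot be the right stratification. Similarly, attributing the factor $d(m)$ to an orbit count under $\tau$, whose order is $2n$ by Theorem \ref{frADE}, is not substantiated and does not naturally produce divisors of an integer $m\leq n$.

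The stratification used in \cite{FoPl} is geometric rather than the one you propose: a frieze of type $\mD_n$ corresponds to a dissection of the punctured $n$-gon all of whose cells are triangles except for a single cell containing the puncture, which is an $m$-gon for some $1\leq m\leq n$ (a loop based at a vertex when $m=1$, a digon when $m=2$), together with a divisor of $m$ that governs the values on the arcs incident to the puncture. The binomial coefficient $\binom{2n-m-1}{n-m}$ is exactly the number of such dissections with central $m$-gon --- for $n=4$ one checks directly that these numbers are $20,10,4,1$ for $m=1,2,3,4$ --- and summing $d(m)\binom{2n-m-1}{n-m}$ gives the formula. Since your proposal explicitly defers ``the bulk of the work'' (identifying the invariant, establishing the bijection, and computing the binomial count) to the reference, and the invariant you do commit to is not the correct one, the argument as written has no proof content beyond the setup. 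I note also that the survey itself offers no proof of this statement: it records the theorem from \cite{FoPl} together with the one-sentence remark that the proof uses triangulations of punctured polygons, so there is no detailed argument in the text against which your sketch could be favorably compared.
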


The above result is obtained using the combinatorial model of
triangulations of punctured polygons associated with the cluster algebra of type $\mD$.

The exact numbers of friezes with positive integers in type $\mE$ have not been established. 
Several  independent computer programs have enumerated such friezes. The results are:

- 868 friezes found in type $\mE_6$, \cite{Pro}, \cite{MGOTaif}, \cite{FoPl}, \cite{Cun3};

- 4400 friezes found in type $\mE_7$ (under a hypothetical upper bound on the entries), \cite{FoPl};

- 26953 friezes found in type $\mE_8$, \cite{Cun3}; under a hypothetical upper bound on the entries 26592 friezes have beeen found in \cite{FoPl}.\\

Using the finite type classification of cluster algebras, cf Theorem \ref{classFZ}, one obtains the following.
\begin{prop}[\cite{MGjaco}]
If $\Qc$ is an acyclic quiver that is not Dynkin of type $\mA$, $\mD$, $\mE$, then there exist infinitely many multiplicative friezes on $\Z\Qc$ taking positive integer values.
 \end{prop}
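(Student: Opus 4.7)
The plan is to produce an infinite family of pairwise distinct multiplicative friezes with values in $\Z_{>0}$ by taking $\tau$-translates of a single \emph{unitary} frieze, whose positivity comes from Laurent positivity and whose non-periodicity comes from the structure of the cluster category.

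First I would define the unitary frieze $f_0:\Z\Qc\to\Z$ by assigning the value $1$ to every vertex of the slice $0\times\Qc$ and extending via the multiplicative mesh rule. Viewing $f_0$ as the specialization $f_{mu}\big|_{x_1=\cdots=x_n=1}$ of the generic frieze, Theorem~\ref{LaurentFZ} together with positivity of cluster variables over acyclic seeds guarantees that each entry of $f_{mu}$ is a Laurent polynomial in $x_1,\ldots,x_n$ with coefficients in $\Z_{\geq 0}$, so $f_0$ indeed takes values in $\Z_{>0}$. Next, for each $k\in\Z$ I set $f_k:=f_0\circ\tau^{-k}$; translation invariance of the mesh rule makes each $f_k$ a multiplicative frieze on $\Z\Qc$, and the $f_k$'s inherit positive integer values from $f_0$.

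The heart of the proof is then to show that $\{f_k:k\in\Z\}$ contains infinitely many distinct elements, equivalently that $f_0$ is not $\tau$-periodic. I would argue by contradiction: if $f_0\circ\tau^{-N}=f_0$ for some $N>0$, then along every $\tau$-orbit in $\Z\Qc$ the frieze $f_0$ takes only finitely many distinct values. I would then use the cluster-categorical framework recalled in Remark~\ref{endPermu}: for non-Dynkin acyclic $\Qc$ the transjective component of the AR-quiver of the cluster category $\cC_\Qc$ is isomorphic to $\Z\Qc$, and the entry $f_{mu}(v)$ equals $CC(M_v)$ for the rigid indecomposable $M_v$ attached to $v$, so that $f_0(v)=CC(M_v)\big|_{x=1}$ is exactly the Euler characteristic of the total quiver Grassmannian $\mathrm{Gr}(M_v)=\bigsqcup_e\mathrm{Gr}_e(M_v)$. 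By Kac's theorem (or by the infinite order of the Coxeter transformation $\Phi_\Qc$, cf.\ Remark~\ref{endPerad}), along any $\tau$-orbit the dimension vectors $\udim\tau^{-k}M_v=\Phi_\Qc^{-k}\udim M_v$ are unbounded.

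The remaining, and main, obstacle is to turn the unboundedness of dimension vectors into unboundedness of the integers $CC(M)\big|_{x=1}$. I would address this by combining Laurent positivity with the observation that for a rigid indecomposable $M$ of large dimension vector the Laurent polynomial $CC(M)$ has denominator $\prod u_i^{d_i}$ of strictly increasing total degree, while all of its numerator monomials contribute non-negatively at $x=1$; a lower bound of the form $CC(M)\big|_{x=1}\geq\prod_i(d_i+1)$ (coming from summing $\chi(\mathrm{Gr}_e(M))$ over sub-dimension vectors, or more robustly from reducing to a $\tau$-invariant generalized Kronecker sub-pattern where the relevant specializations satisfy a Markov-type recurrence producing Fibonacci-like growth) would suffice. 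With this growth estimate, $f_0$ cannot be $\tau$-periodic, the $f_k$ are pairwise distinct, and the theorem follows.
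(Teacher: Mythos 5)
Your overall strategy is essentially the one the paper sketches: the paper takes the infinitely many clusters of the infinite-type cluster algebra $\A_\Qc$ (Theorem~\ref{classFZ}) and evaluates each to $1$ to get unitary friezes, and your family $f_k=f_0\circ\tau^{-k}$ is precisely the sub-family of unitary friezes attached to the slices $k\times\Qc_0$. You are also right to insist that the real content is distinctness, which you correctly reduce to non-periodicity of $f_0$ and then to unboundedness of $CC(M)\big|_{x=1}=\sum_e\chi(\Gr_e(M))$ along $\tau$-orbits; the first two steps (positivity via Theorem~\ref{LaurentFZ} and $\tau$-equivariance of the mesh rule) are fine.

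The gap is the growth estimate. The inequality $CC(M)\big|_{x=1}\geq\prod_i(d_i+1)$ is false: for the Kronecker quiver and the preprojective module $M$ with $\udim M=(1,2)$, the sub-dimension vectors $(1,0)$ and $(1,1)$ are not realized by any submodule, so $\sum_e\chi(\Gr_e(M))=1+\chi(\pP^1)+1+1=5$, while $\prod_i(d_i+1)=6$; the value $5$ is visible in the paper's own Kronecker frieze $1,1,2,5,13,34,\dots$. Your fallback (reduction to a generalized Kronecker sub-pattern with Fibonacci-like growth) is too vague to carry the argument, and it does not apply to simply-laced non-Dynkin quivers, which have no multiple arrow. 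The step is repairable, but not for free: a composition series of $M$ yields submodules of every total dimension $0,1,\dots,\dim M$, so $\sum_e\chi(\Gr_e(M))\geq\dim M+1$ \emph{provided} one knows $\chi(\Gr_e(M))>0$ for every nonempty quiver Grassmannian of a rigid module --- a genuine positivity theorem, not a formal consequence of Theorem~\ref{LaurentFZ}, since positivity of the Laurent coefficients only controls sums of Euler characteristics over all $e$ giving the same monomial. (Alternatively, for wild quivers non-periodicity of $f_0$ follows from the result quoted in \S\ref{ZQentiere} that $(f(m,i))_m$ satisfies a linear recurrence only in the Dynkin and affine cases, leaving the affine case to be treated separately.) As written, the key inequality on which your proof rests is wrong.
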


\begin{rem} 
(a)
In \cite{BaMa}, the entries in the integral friezes of type $\mD$ are obtained as numbers of certain matchings in a triangulated 
punctured disc.

(b)
In \cite{Fon} the numbers of friezes of Dynkin type taking non-zero integer values are obtained.

(c)
One has the following interesting property of the entries in a multiplicative frieze $f$ over $\Z\Qc$, proved in \cite{ARS} and \cite{KeSc}.
For each $i\in \Qc_0$, the sequence $(f(m,i))_{m\in \Z}$ satisfies a linear recurrence relation if and only if $\Qc$ is a Dynkin or 
affine acyclic quiver.

(d) 
With a more general notion of multiplicative friezes using Cartan matrices
\cite{ARS}, one also gets finitely many integral friezes in the other Dynkin types 
$\mB, \mC$ and $\mG$, \cite{FoPl}.
\end{rem}

\subsection{$\SL_{k+1}$-friezes with positive integer values}\label{SLenum}

Recall that $\SL_{k+1}$-friezes of width $w$ are related to the cluster algebra of type $\mA_k\times \mA_w$, see Remark \ref{SLbox}. 
The problem of constructing $\SL_{k+1}$-friezes with positive integer values is related to the problem of having positive integer values for the cluster variables in type $\mA_k\times \mA_w$. 
The two problems are not \textit{a priori} equivalent as not all cluster variables appear in a $\SL_{k+1}$-friezes.
However, as in the case of multiplicative friezes over a repetition quiver, one has an elementary constuction of $\SL_{k+1}$-friezes with positive integer values by evaluating to 1 all the cluster variables in a chosen cluster.
When the corresponding cluster algebra is of infinite type this produces infinitely many friezes.

\begin{thm}[\cite{MGjaco}]
For $k,w>1$ there exist infinitely many $\SL_{k+1}$-friezes of width $w$ with positive integer values whenever $kw\geq 9$.
\end{thm}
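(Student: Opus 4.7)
The plan is to exploit the unitary frieze construction recalled in Remark~\ref{SLbox}: the space $\F_{k+1,n}$ (with $n=k+w+2$) carries the cluster structure of the Grassmannian $\Gr_{k+1,n}$, with frozen variables specialised to $1$. Therefore every cluster $C=(y_1,\dots,y_{kw})$ of this cluster algebra gives, by specialising $y_1=\cdots=y_{kw}=1$ and invoking the Laurent phenomenon with positivity (Theorem~\ref{LaurentFZ} combined with the known positivity result for Grassmannian cluster algebras), a frieze $F_C\in\F_{k+1,n}$ whose entries are positive integers. The statement thus reduces to producing infinitely many clusters whose unitary friezes $F_C$ are pairwise distinct.

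First I would verify that the cluster type of $\Gr_{k+1,n}$ is infinite under the hypothesis. By Scott's classification of finite-type Grassmannian cluster algebras, the cases with $k,w\geq 2$ are precisely $\Gr_{3,6},\Gr_{3,7},\Gr_{3,8},\Gr_{4,7},\Gr_{5,8}$, corresponding to the five pairs $(k,w)\in\{(2,2),(2,3),(2,4),(3,2),(4,2)\}$, each satisfying $kw\leq 8$. The assumption $kw\geq 9$ therefore forces infinite cluster type, and in particular one can find a mutation sequence $\mu$ whose action on the exchange graph has infinite order.

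Next I would build an infinite family of clusters $C_\ell:=\mu^\ell(C_0)$, $\ell\in\N$, starting from the initial face cluster $C_0$, and form the corresponding unitary friezes $F_\ell:=F_{C_\ell}$. To prove that infinitely many $F_\ell$ are pairwise distinct, I would track a single face entry $f^{(\ell)}_{i,j}$ of $F_\ell$: it coincides with the specialisation at $(1,\dots,1)$ of the Laurent expansion of the corresponding initial face variable in terms of $C_\ell$. Since $\mu$ has infinite order on the exchange graph, the denominator of this Laurent expansion grows without bound as $\ell\to\infty$; by positivity of the Laurent coefficients its specialisation at $(1,\dots,1)$ is bounded below by the number of Laurent monomials in the expansion, which in turn grows with the denominator. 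Hence $f^{(\ell)}_{i,j}\to\infty$ along a subsequence, so infinitely many $F_\ell$ take distinct values at this position and are therefore distinct friezes.

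The main obstacle is precisely this growth argument: a priori, distinct clusters can yield the same unitary frieze (non-unitary integer friezes also exist, as noted before the theorem statement), so the mere infinitude of the cluster set is not sufficient. A concrete alternative that sidesteps the difficulty is to dispatch the three ``minimal'' cases $(k,w)\in\{(3,3),(2,5),(5,2)\}$ by explicitly constructing a one-parameter infinite family of integer friezes, for instance by identifying an affine $\widetilde{\mA}$ sub-cluster-algebra inside $\Gr_{k+1,n}$ and inheriting its infinite orbit of integer points; the general case $kw\geq 9$ then reduces to one of these base cases by embedding the smaller $k'\times w'$ face into the $k\times w$ face and extending all remaining face entries by $1$, an operation that preserves both integrality (by the mesh rule) and pairwise distinctness.
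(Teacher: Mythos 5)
Your main line is exactly the paper's: the two sentences immediately preceding the theorem are, in fact, the survey's entire justification — specialise every variable of a chosen cluster to $1$, invoke the Laurent phenomenon with positive coefficients to get a unitary $\SL_{k+1}$-frieze with positive integer entries, and observe that for $k,w\geq 2$ the condition $kw\geq 9$ is precisely what rules out the five finite-type Grassmannians $\Gr_{3,6},\Gr_{3,7},\Gr_{3,8},\Gr_{4,7},\Gr_{5,8}$, so that infinitely many clusters are available. Your enumeration of those exceptional pairs $(k,w)$ is correct, and so is the reduction to infinite cluster type.

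Where you go beyond the survey is in flagging that distinct clusters could a priori yield the same unitary frieze; this is a genuine issue that the survey leaves implicit (deferring to \cite{MGjaco}). But your patch does not yet close it. The growth argument rests on two unproved claims: that the denominator of a fixed face variable with respect to $C_\ell=\mu^\ell(C_0)$ is unbounded in $\ell$ (not automatic — the variable could even lie in infinitely many of the $C_\ell$, where it evaluates to $1$), and that the number of Laurent monomials grows with the denominator (false for general Laurent polynomials, e.g.\ a single monomial $x^{-N}$; it would have to be extracted from specific structural properties of cluster expansions). The fallback via affine sub-cluster-algebras and extension of a smaller face by $1$'s is closer to how one would actually produce an explicit infinite family, but the assertion that padding a $k'\times w'$ face with $1$'s always completes to a valid tame $\SL_{k+1}$-frieze is itself a nontrivial claim requiring verification. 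So: same approach as the paper, with a correctly identified but not yet repaired gap in the distinctness step.
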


For small values of $k$ and $w$, one has the following known results.

- For  $k=1$: there are finitely many $\SL_{2}$-friezes of width $w$ (this case corresponds to Coxeter's friezes), the number of such friezes is given by a Catalan number, see Corollary \ref{corCoCox}.

- For $w=1$, there are finitely many $\SL_{k+1}$-frieze of width $1$ for every $k$.
This is obtained by Gale duality (see Section \ref{Gale}): an $\SL_{k+1}$-frieze of width $1$ corresponds to an $\SL_{2}$-frieze of width $k$. 

- For $k=2$: there exists exactly 51 $\SL_{3}$-friezes of width 2, conjectured in \cite{Pro} and proved in \cite{MGOTaif}; 868 $\SL_{3}$-friezes of width 3 have been found, \cite{Pro}, \cite{MGOTaif}, \cite{FoPl}, \cite{Cun3}; 26953 $\SL_{3}$-friezes of width 4 have been found \cite{Cun3}.

- For $k=3$: by Gale duality one obtains 868 $\SL_{4}$-friezes of width 2.

\begin{rem}
The quivers $\mA_2\times \mA_3$ and $\mA_2\times \mA_4$, corresponding to $\SL_{3}$-friezes of width 3 and of width 4 respectively, are mutation equivalent to the quivers 
$\mE_{6}$ and $\mE_{8}$ respectively. However the $\SL_{3}$-friezes do not contain all the cluster variables of the corresponding cluster algebras, unlike the multiplicative friezes over the repetition quivers. Therefore the number of $\SL_{3}$-friezes with positive integer values could be greater than the number of multiplicative friezes of the same cluster type.
According to \cite{Cun3}, 868 is the exact number of $\SL_{3}$-friezes of width 3, and by consequent is also the exact number of multiplicative frriezes of type $\mE_{6}$. The number 26953 is not established as the exact number of $\SL_{3}$-friezes of width 4. However all the known 26953
 $\SL_{3}$-friezes  correspond to 26953 multiplicative friezes of type $\mE_{8}$.
\end{rem}



\subsection{$\SL_2$-tilings and triangulations}\label{SLentiere}

Recall that $\SL_2$-tilings are generalizations of Coxeter's friezes by removing the condition of bordering rows of 1's.
They are viewed as bi-infinite matrices $(m_{i,j})_{i,j\in \Z}$ for which every adjacent $2\times 2$ minors are equal to 1.

In \cite{HoJo}, a classification result about $\SL_2$-tilings with positive integer entries is obtained using triangulations of strips.
The $\SL_2$-tilings are assumed to have \textit{enough ones} in the sense that for every couple of indices $(i_0,j_0)$ there exist 
 $i\leq i_0$, $j\geq j_0$ such that $m_{i,j}=1$, and $i'\geq i_0$, $j'\leq j_0$,  such that $m_{i,j}=1$. 
 
The strip can be viewed as an open polygon with infinitely many vertices.
It consists in two disjoint sets of ordered vertices $(v_i)_{i\in \Z}\sqcup(w_i)_{i\in \Z}$ lying on two parallel lines.
A triangulation of the strip is a maximal collection of non-crossing arcs joining either a vertex $v_i$ to a vertex $w_j$ or joining two non consecutive vertices of the same line. A triangulation is called a \textit{good} triangulation of the strip if for every couple of indices $(i_0,j_0)$ there exist an 
arc joining $v_i$ to $w_j$ with $i\geq i_0$ and $j\geq j_0$ and an arc  joining $v_{i'}$ to $w_{j'}$ with $i'\leq i_0$ and $j'\leq j_0$.

\begin{thm}[\cite{HoJo}]
The set of $\SL_2$-tilings of positive integers with enough ones is in bijection with the set of good triangulations of the strip.
\end{thm}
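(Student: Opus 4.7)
The plan is to mimic the Conway--Coxeter correspondence from Theorem~\ref{ThCocox}, using the admissible-paths/Ptolemy-length model as the dictionary between combinatorial and arithmetic sides, and then bootstrapping from finite polygons to the bi-infinite strip via the ``enough ones'' condition.

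For the forward direction, given a good triangulation $T$ of the strip, I would define $m_{i,j}$ to be the ``length'' of the (possibly virtual) arc from $v_i$ to $w_j$ in the Ptolemy model where every edge of $T$ and every boundary edge $v_iv_{i+1}$, $w_jw_{j+1}$ is assigned length $1$, and lengths of all other arcs are computed inductively from the rule $|AC|\,|BD|=|AB|\,|CD|+|AD|\,|BC|$ applied to quadrilaterals whose three other sides/diagonals are already known. Equivalently, $m_{i,j}$ is the number of admissible paths from $v_i$ to $w_j$ in $T$ in the sense of \eqref{entPath}, so positivity and integrality are automatic. The $\SL_2$-relation $m_{i,j}m_{i+1,j+1}-m_{i,j+1}m_{i+1,j}=1$ is then exactly the Ptolemy identity for the quadrilateral $v_iv_{i+1}w_{j+1}w_j$, whose two sides $v_iv_{i+1}$ and $w_jw_{j+1}$ have length $1$ by construction. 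The ``enough ones'' property follows verbatim from the good-triangulation axiom, since an arc of $T$ between $v_i$ and $w_j$ produces an entry $m_{i,j}=1$.

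For the inverse direction, starting from an $\SL_2$-tiling with enough ones, I would first declare an arc between $v_i$ and $w_j$ whenever $m_{i,j}=1$, and then recover the remaining arcs (joining two non-consecutive top vertices, or two non-consecutive bottom vertices) locally. The key observation is that any two ``crossing $1$'s'', together with the entries between them, cut out a finite rectangular sub-tiling bounded by $1$'s which, by the continuant formula of Theorem~\ref{thmPoly}, is an ordinary Coxeter frieze of positive integers; Theorem~\ref{ThCocox} then supplies a uniquely determined triangulation of the associated polygon. Gluing these local Conway--Coxeter triangulations along their common $1$-arcs produces a triangulation of the strip, and ``enough ones'' guarantees that every bounded region of the strip is eventually covered.

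The main obstacle will be the passage from compact polygons to the non-compact strip: I need to check that the recursive Ptolemy definition in the forward direction is unambiguous (independent of which quadrilateral of $T$ one expands through), and that the local Conway--Coxeter patches constructed in the inverse direction agree on overlaps. Both are reduced to a finite statement by the cofinality axiom in ``enough ones'': any fixed pair $(i_0,j_0)$ lies inside some finite rectangle whose boundary is a quadrilateral of $1$-entries, and inside this rectangle the entire argument collapses to the finite case already handled by Theorem~\ref{ThCocox}. Once this is established, verifying that the two constructions are mutually inverse amounts to checking that ``number of admissible paths'' and ``recovered Conway--Coxeter quiddity'' are the same combinatorial datum, which is immediate from the interpretation in~\eqref{entBCI}.
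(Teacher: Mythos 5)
First, note that the paper itself does not prove this statement: it is quoted from \cite{HoJo}, and the only hint given is the example following it, in which the forward map is realized by the Conway--Coxeter counting procedure of \S\ref{interpret}. Your forward direction is broadly consistent with that, but the admissible-path model of \eqref{entPath} does not transfer verbatim to the strip: an admissible path is indexed by the boundary vertices strictly between its endpoints, and between $v_i$ and $w_j$ there are infinitely many such vertices along either side of the strip. You must instead work with the counting procedure or the Ptolemy length, and the well-definedness you defer to the end requires an actual argument that only finitely many arcs of a good triangulation separate $v_i$ from $w_j$; this is precisely where the goodness axiom enters, and it is only gestured at in your write-up.

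The serious gap is in the inverse direction. Two entries equal to $1$ do not ``cut out a finite rectangular sub-tiling bounded by $1$'s'': only those two entries of the rectangle's boundary are known to equal $1$ (the displayed tiling right after the theorem already gives counterexamples, with rectangles spanned by two $1$'s whose boundaries contain $2$'s, $3$'s and $5$'s). Similarly, ``enough ones'' provides one $1$ weakly to the upper right and one weakly to the lower left of any position, not a surrounding quadrilateral of $1$-entries, so the localization step on which your whole argument rests fails as stated. Moreover, a rectangular block of an $\SL_2$-tiling is not a Coxeter frieze in the sense of Theorems \ref{thmPoly} and \ref{ThCocox}, which require a triangular fundamental domain bordered by full rows of $1$'s and $0$'s. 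What is missing is the structural lemma that actually drives the proof: the set $\{(i,j):m_{i,j}=1\}$ corresponds to a pairwise non-crossing family of transversal arcs, the complementary regions of the strip are finite polygons with controlled shape, and the restriction of the tiling to each such region is, after the appropriate change of coordinates, a genuine Conway--Coxeter frieze to which Theorem \ref{ThCocox} applies to recover the arcs lying within a single boundary line. You also need the converse of your observation that arcs produce $1$'s, namely that $m_{i,j}=1$ forces $v_iw_j$ to be an arc; without it the two constructions are not mutually inverse.
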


\begin{ex}
Consider the following piece of triangulation of the strip
\begin{center}
\includegraphics[width=10.5cm]{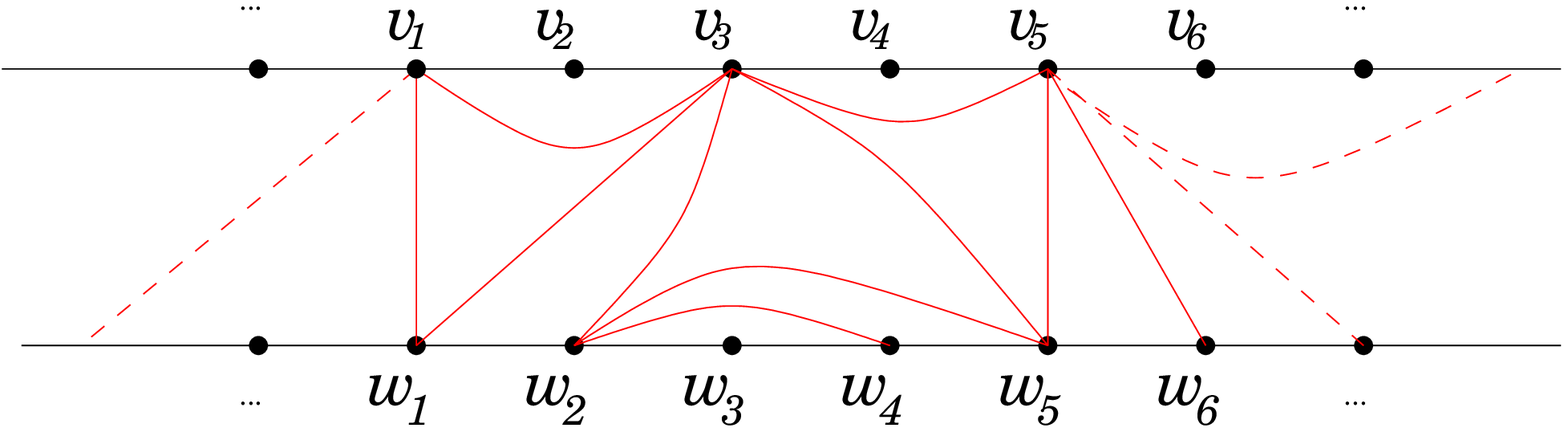}
\end{center}
One can associate an  $\SL_2$-tiling using the same recursive counting procedure as for Conway-Coxeter's friezes, cf \S \ref{interpret} Item 2).
The procedure starts at a vertex $v_i$  labelled by  0, and ends when all the vertices are labelled.
The label at the vertex $w_j$ gives the value $m_{i,j}$ of the tiling. 
For the above piece of strip one obtains the following piece of matrix (where the first row is at the bottom and the row indices increase from bottom to top):
$$
\begin{array}{rrrrrrrr}
&&&\vdots&&&\\[4pt]
&3&2&5&3&1&1&\\
&4&3&8&5&2&3&\\
\cdots&1&1&3&2&1&2&\cdots\\
&2&3&10&7&4&9&\\
&1&2&7&5&3&7\\
&&&\vdots&&&\\
\end{array}
$$
\end{ex}

\begin{rem}
In \cite{BHJ2} the combinatorial model is refined to triangulations of the disc with accumulation points.
It is announced that all $\SL_2$-tilings of positive integers are obtained from triangulations of the disc with four accumulation points.
\end{rem}

Note that  $\SL_2$-tilings containing only positive numbers are never periodic. Indeed, 
every $\SL_2$ matrix $\begin{pmatrix}
a&b\\
c&d
\end{pmatrix}$ of positive numbers satisfies $\frac{a}{c}>\frac{b}{d}$, so that in a  $\SL_2$-tiling 
the ratios of two consecutive rows form a strictly decreasing sequence.

In \cite{MGOTprep} one classifies antiperiodic $\SL_2$-tilings containing a rectangular domain of positive integers.
When writing the array as an infinite matrix, such tilings have the form
$$
 \begin{array}{c|ccc|ccc|ccc|c}
 &&\vdots&&&\vdots&&&\vdots&&\\
 \hline
  &&&&&&&&&&\\[-4pt]
  \cdots &&P&&&-P&&&P&& \cdots\\[4pt]
   \hline
  &&&&&&&&&&\\[-4pt]
  \cdots &&-P&&&P&&&-P&& \cdots\\[4pt]
   \hline
 &&\vdots&&&\vdots&&&\vdots&&
\end{array}
$$
where $P$ is an $m\times n$-matrix with entries that are positive integers.
\begin{thm}[\cite{MGOTprep}]
\label{thethm} 
The set of antiperiodic $\SL_2$-tilings containing a fundamental rectangular domain of positive integers of size $m\times n$
is in a one-to-one correspondence with the set of triples $(q,q', M)$, where
$$
q=(q_0,\ldots,q_{n-1}),
\qquad
q'=(q'_0,\ldots,q'_{m-1})
$$ 
are quiddities of order $n$ and $m$, respectively, and where
$M=\begin{pmatrix}
a&b\\
c&d
\end{pmatrix} \in \SL_{2}(\Z_{>0})$, such that
$$
q_0<\frac{b}{a},
\qquad
q'_0<\frac{c}{a}.
$$
\end{thm}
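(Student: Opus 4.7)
The plan is to construct explicit maps in both directions and show they are mutually inverse; the main technical hurdle will be the simultaneous positivity of all entries of $P$ from only the two corner inequalities. For the forward map, let $F=(m_{i,j})$ be such a tiling with $P$ at $0\leq i\leq m-1,\ 0\leq j\leq n-1$. Since all entries in and near $P$ are nonzero, subtracting two adjacent $\SL_2$-relations yields
$$(m_{i,j-1}+m_{i,j+1})/m_{i,j}=(m_{i+1,j-1}+m_{i+1,j+1})/m_{i+1,j},$$
so this common ratio $\alpha_j$ depends only on $j$, and each row satisfies $m_{i,j+1}=\alpha_j m_{i,j}-m_{i,j-1}$; symmetrically each column satisfies $m_{i+1,j}=\beta_i m_{i,j}-m_{i-1,j}$. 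The $n$-antiperiodicity of rows forces $(\alpha_j)$ to be $n$-periodic, and since every row-solution is $n$-antiperiodic, $q:=(\alpha_0,\dots,\alpha_{n-1})$ is a quiddity of order $n$ by Theorem~\ref{IsoThm1} combined with Theorem~\ref{ThCocox}. Similarly $q':=(\beta_0,\dots,\beta_{m-1})$ is a quiddity of order $m$. Set $M:=\bigl(\begin{smallmatrix}m_{0,0}&m_{0,1}\\m_{1,0}&m_{1,1}\end{smallmatrix}\bigr)\in\SL_2(\Z_{>0})$; then $m_{0,-1}=q_0 a-b=-m_{0,n-1}<0$ yields $q_0<b/a$, and symmetrically $q'_0<c/a$.

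Conversely, given a triple $(q,q',M)$ as in the statement, the two recurrences extend $M$ to a unique bi-infinite integer array, consistent (row vs. column propagation) thanks to the persistence of $\det=1$ on $2\times 2$ blocks starting from $\det M=1$, and antiperiodic in both directions by the quiddity properties. The heart of the argument is positivity of $P$. Introduce the two ``edge'' solutions $u^q,\tilde u^q$ of the row recurrence determined by $u^q_{-1}=0,\,u^q_0=1$ and $\tilde u^q_0=0,\,\tilde u^q_1=1$; as SE diagonals of the Conway–Coxeter frieze of width $n-3$ associated to $q$ starting on the two bordering rows of zeros, they take positive integer values in the interior by Theorem~\ref{ThCocox}, so within $0\leq j\leq n-1$, $u^q_j>0$ except at $j=n-1$ while $\tilde u^q_j>0$ except at $j=0$. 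Every row-solution decomposes as
$$m_{i,j}=m_{i,0}\,u^q_j+(m_{i,1}-q_0\,m_{i,0})\,\tilde u^q_j,$$
so row $i$ is positive on $[0,n-1]$ iff $m_{i,0}>0$ and $m_{i,1}-q_0 m_{i,0}>0$. Both $(m_{i,0})_i$ and $(m_{i,1}-q_0 m_{i,0})_i$ are column-recurrence solutions; by the analogous column decomposition each is positive on $0\leq i\leq m-1$ iff its two initial values are positive and their ratio (second over first) exceeds $q'_0$. For the first, $(a,c)$ with $c/a>q'_0$ is the assumption. For the second, $(b-q_0 a,\,d-q_0 c)$: the entries are positive since $q_0 a<b$ is assumed and $q_0 c<d$ follows from multiplying the former by $c>0$ and using $ad-bc=1$ ($q_0 ac<bc<1+bc=ad$). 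The required ratio inequality $q'_0(b-q_0 a)<d-q_0 c$, equivalent to $q_0(c-q'_0 a)<d-q'_0 b$, follows from the chain
$$q_0(c-q'_0 a)<\tfrac{b}{a}(c-q'_0 a)=\tfrac{bc-q'_0 ab}{a}<\tfrac{ad-q'_0 ab}{a}=d-q'_0 b,$$
using $q_0<b/a$, $c-q'_0 a>0$, and $bc<ad$. This propagates positivity to the entire block.

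The two maps are mutually inverse: applying the forward map to a tiling built from $(q,q',M)$ returns the same triple (the recurrences recover $q,q'$ and the top-left $2\times 2$ block recovers $M$), while starting from a tiling and rebuilding via the extracted triple reproduces the original by uniqueness of the recurrent extension. The principal difficulty is the positivity argument, resolved by invoking the edge-solution decomposition in both directions and exploiting $\det M=1$ to intertwine the two corner inequalities.
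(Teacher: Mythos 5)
Your strategy --- extract the row and column three-term recurrences from the tiling, and conversely rebuild the tiling from $(q,q',M)$ and prove positivity of the block via the two ``edge'' solutions --- is sound and genuinely different from the paper's route. The paper (following \cite{MGOTprep}) constructs the tiling geometrically: the two triangulated polygons with quiddities $q$ and $q'$ are embedded in the Farey graph, normalized according to $M$, and the entries are read off as cross-determinants $m_{i+1,j+1}=a_id_j-b_jc_i$, positivity coming from the relative position of the two polygons in the Farey tessellation. You obtain positivity instead from the decomposition $m_{i,j}=m_{i,0}u^q_j+(m_{i,1}-q_0 m_{i,0})\tilde u^q_j$, its column analogue, and an explicit chain of inequalities exploiting $ad-bc=1$; that chain, and the reduction of positivity of the whole $m\times n$ block to the two corner inequalities, are correct, so your backward direction is complete. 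This is more elementary and self-contained than the Farey-graph argument, at the price of losing the geometric picture.

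The forward direction, however, has a genuine gap where you assert that $q=(\alpha_0,\dots,\alpha_{n-1})$ is a quiddity ``by Theorem \ref{IsoThm1} combined with Theorem \ref{ThCocox}''. Theorem \ref{IsoThm1} only attaches a \emph{tame closed} frieze of width $n-3$ to the superperiodic row equation; it says nothing about positivity (Example \ref{wrong_per} exhibits a closed frieze with negative entries), and Theorem \ref{ThCocox} cannot be invoked until you know the frieze consists of positive integers. Two facts remain to be proved: (a) each $\alpha_j$ is an integer --- this follows because $\gcd(m_{i,j},m_{i+1,j})=1$ by the unimodular rule while $\alpha_j m_{i,j}$ and $\alpha_j m_{i+1,j}$ are both integers; and (b) every entry of the Coxeter frieze attached to $q$ is positive. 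For (b), observe that the diagonal $e_{i,\bullet}$ of that frieze is the row-recurrence solution vanishing at $j=i-2$ and equal to $1$ at $j=i-1$, whence $e_{i,j}=m_{0,i-2}m_{1,j}-m_{0,j}m_{1,i-2}$, a $2\times 2$ minor of two adjacent rows of $P$ over (generally non-adjacent) columns; such minors are positive because $j\mapsto m_{0,j}/m_{1,j}$ is strictly decreasing inside the positive block --- the very fact the paper records in \S\ref{SLentiere} to show that positive $\SL_2$-tilings are never periodic --- and when $i-2$ or $j$ leaves $[0,n-1]$ the sign introduced by antiperiodicity cancels against the reversal of the column order. Once (a) and (b), and their transposes for $q'$, are supplied, the rest of your argument goes through.
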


The explicit construction of the $\SL_2$-tiling is given using a pair of triangulated polygons of quiddities $q$ and $q'$ suitably embedded in the Farey graph according to the matrix $M$,
see \cite{MGOTprep} for details.
\begin{ex}
For the initial data
 $$
 q=(1,2,2,1,3), \qquad q'=(2,1,2,1), \qquad 
 M=\begin{pmatrix}2&5\\[4pt]
 7&18\end{pmatrix}.
 $$
one associates the following two polygons
\begin{center}
\includegraphics[width=12cm]{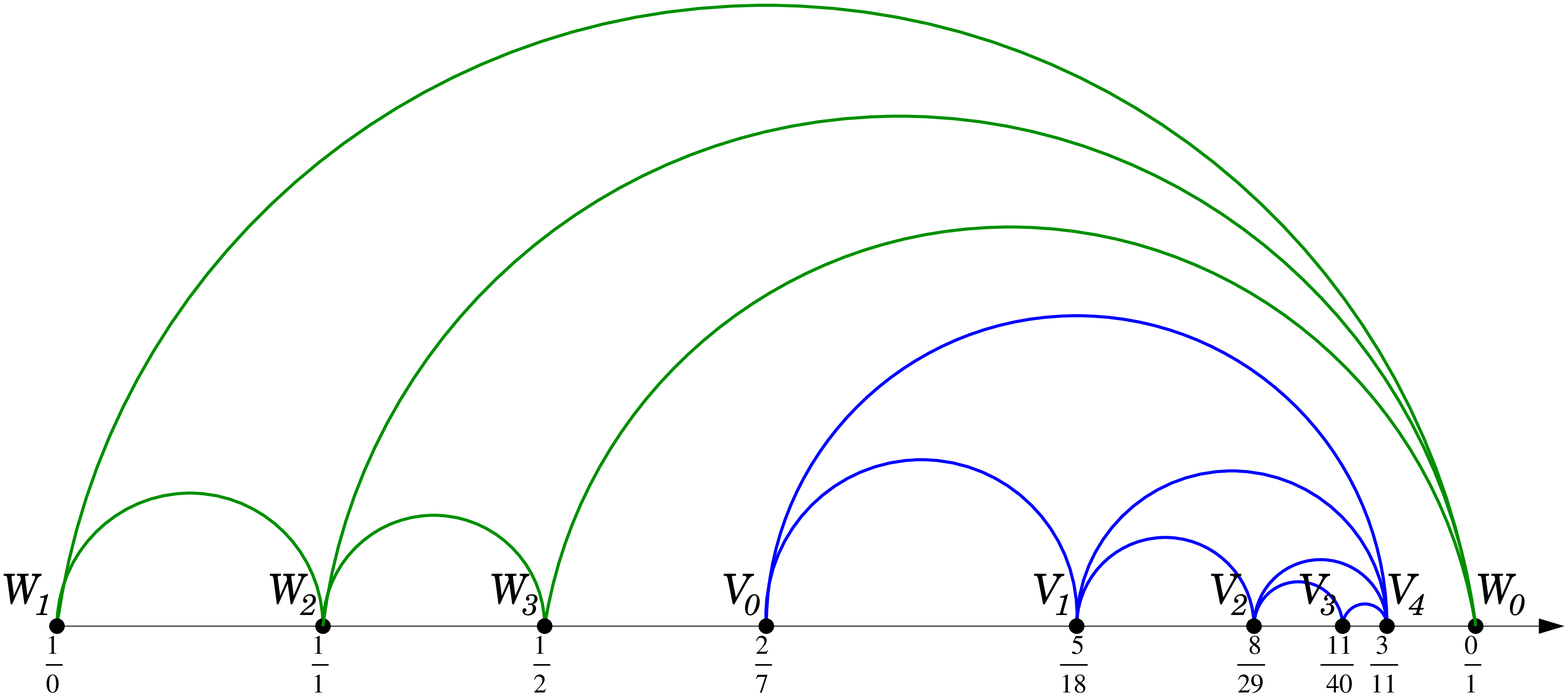}
\end{center}
The triangulated polygon $v$ and $w$ have quiddities $q$ and $q'$ respectively and they are normalized 
so that 
$(v_0, v_1)=(\frac27, \frac{5}{18})$ and $(w_0,w_1)=(\frac01, \frac10)$.
One constructs the corresponding $\SL_2$-tiling $(m_{i,j})$ by antiperiodicity using the formula
$$
m_{i+1,j+1}=a_id_j-b_jc_i, \text{ where } \frac{a_i}{c_i}=w_i, \;  \frac{b_j}{d_j}=v_j, \;0\leq i \leq m-1, \;0\leq j \leq n-1.
$$
One obtains here the following tiling
$$
 \begin{array}{rrrrrrrrrrrrrrrrrrrrrr}
 &\vdots&\vdots&\vdots&\vdots&\vdots&\vdots&\vdots&\vdots&\vdots&\vdots&\\[4pt]
\cdots&2&5&8&11&3&-2&-5&-8&-11&-3&\cdots\\[4pt]
\cdots&7&18&29&40&11&-7&-18&-29&-40&-11&\cdots\\[4pt]
\cdots&5&13&21&29&8&-5&-13&-21&-29&-8&\cdots\\[4pt]
\cdots&3&8&13&18&5&-3&-8&-13&-18&-5&\cdots\\[4pt]
\cdots&-2&-5&-8&-11&-3&2&5&8&11&3&\cdots\\[4pt]
\cdots&-7&-18&-29&-40&-11&7&18&29&40&11&\cdots\\[4pt]
\cdots&-5&-13&-21&-29&-8&5&13&21&29&8&\cdots\\[4pt]
\cdots&-3&-8&-13&-18&-5&3&8&13&18&5&\cdots\\
 &\vdots&\vdots&\vdots&\vdots&\vdots&\vdots&\vdots&\vdots&\vdots&\vdots&
\end{array}
$$
\end{ex}

\subsection{Friezes and dissections of polygons}\label{diss}

A natural way to define ``generalized Conway-Coxeter friezes'' is to use ``generalized triangulations''.
This direction has been investigated in \cite{BaMa}, \cite{BHJ} and \cite{Bes} using triangulations of punctured discs, and $d$-angulations, or more generally, dissections of polygons.

Let us  construct a symmetric matrix $M$
of size $n\times n$, from a frieze pattern of width $m=n-3$. 
The matrix $M$ is obtained by reflecting a fundamental triangular domain of the frieze
along a row of zeros. For instance, for the frieze \eqref{exCoCox} one would get the matrix
$$
\begin{pmatrix}
0&1&4&7&3&2&1\\
1&0&1&2&1&1&1\\
4&1&0&1&1&2&3\\
7&2&1&0&1&3&5\\
3&1&1&1&0&1&2\\
2&1&2&3&1&0&1\\
1&1&3&5&2&1&0
\end{pmatrix}
$$
(this matrix can also be found in the corresponding $\SL_2$-tiling by ignoring the signs, cf Example \ref{sl2CoCox}).

\begin{thm}[\cite{BCI}]\label{thmBCI}
The determinant of the matrix $M$ is independent of the choice of the triangular fundamental domain 
in the frieze. Moreover, one has
$$
\det(M)=-(-2)^{n-2}.
$$
\end{thm}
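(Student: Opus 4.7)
My plan is to induct on $n$, using a Ptolemy reduction at an ear of the triangulated polygon attached to the frieze. Throughout I use the identification recalled in Section~\ref{interpret}(1): for the Conway--Coxeter frieze associated to a triangulation of the convex $n$-gon $v_1,\ldots,v_n$, the entry $M_{i,j}$ equals the Ptolemy length $|v_i v_j|$ of the chord joining $v_i$ and $v_j$, where every side of the polygon and every diagonal of the triangulation is declared to have length~$1$ (Ptolemy's formula propagates the lengths to all remaining chords); the diagonal entries $M_{i,i}$ vanish. Since any two fundamental triangular domains of the frieze differ by a cyclic shift (by the periodicity and glide symmetry, Theorems~\ref{thmPerio}--\ref{thmGlide}), the resulting matrices are conjugate by a simultaneous row--column permutation, and so $\det M$ is indeed independent of the choice.

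The base case $n=3$ is the $3\times 3$ matrix with $0$'s on the diagonal and $1$'s off, of determinant $2 = -(-2)^{1}$.

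For the inductive step with $n\ge 4$, I pick an ear $v_k$ of the triangulation (guaranteed to exist, since every triangulation of an $n$-gon has at least two ears), with $a_k=1$; after a cyclic relabelling I may assume $1<k<n$. Since $v_k$ is an ear, the chord $[v_{k-1},v_{k+1}]$ is a diagonal of the triangulation and has length~$1$, alongside the two sides $[v_{k-1},v_k]$ and $[v_k,v_{k+1}]$. Applying Ptolemy to the quadrilateral $v_{k-1}v_k v_{k+1}v_j$ for any $j \ne k-1,k,k+1$ then collapses (because the three known lengths are all $1$) to
\[
M_{k,j} \;=\; M_{k-1,j} + M_{k+1,j},
\]
and the same relation persists at $j = k\pm 1$. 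At $j=k$, however, one has instead $M_{k,k}-M_{k-1,k}-M_{k+1,k} = 0-1-1 = -2$. Consequently the row operation $R_k \leftarrow R_k - R_{k-1} - R_{k+1}$ replaces row~$k$ by a row which is zero except for the entry $-2$ in column~$k$; expanding the resulting determinant along that row gives
\[
\det M \;=\; (-2)\cdot \det M^{(k)},
\]
where $M^{(k)}$ denotes the principal minor obtained by deleting row and column~$k$.

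It remains to recognize $M^{(k)}$ as the length matrix of the $(n-1)$-gon produced by deleting the ear $v_k$: the diagonal $[v_{k-1},v_{k+1}]$ becomes an edge of this new polygon (still of length $1$, matching $M_{k-1,k+1}=1$), and all other Ptolemy lengths $|v_i v_j|$ with $i,j\neq k$ are unchanged, since they depend only on the restriction of the triangulation to the surviving vertices. The induction hypothesis then yields $\det M^{(k)} = -(-2)^{n-3}$, whence $\det M = (-2)\cdot(-(-2)^{n-3}) = -(-2)^{n-2}$. The only point of the argument requiring care is the identification of $M^{(k)}$ with the reduced length matrix, which is immediate from the locality of Ptolemy's relations.
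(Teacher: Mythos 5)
Your proof is correct. The paper does not actually prove Theorem \ref{thmBCI} --- it only states the result with a citation to \cite{BCI} --- and your ear-removal induction (locate a vertex with $a_k=1$, use Ptolemy to show that row $k$ equals $R_{k-1}+R_{k+1}$ except for a $-2$ in the diagonal slot, extract the factor $-2$ by cofactor expansion, and identify the remaining principal minor with the length matrix of the $(n-1)$-gon obtained by cutting off the ear) is precisely the classical Broline--Crowe--Isaacs argument, so there is no divergence to report. The one step worth writing out more carefully is the identification of $M^{(k)}$ with the reduced polygon's length matrix: it holds because the restricted length function equals $1$ on every side and every triangulation diagonal of the reduced polygon and still satisfies all Ptolemy relations among the surviving vertices, and such a function is unique (this uniqueness is exactly the content of the Conway--Coxeter correspondence); your appeal to ``locality'' is acceptable but this is the sentence to expand if pressed. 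The harmless index shift between your convention $M_{i,j}=|v_iv_j|$ and the paper's $e_{i,j}=|v_{i-1}v_{j+1}|$ only changes $M$ by a simultaneous row--column permutation and so does not affect the determinant.
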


\begin{rem}
A generalization of Theorem \ref{thmBCI} for a matrix $M$ whose entries are general cluster variables of type $\mA$
can be found in \cite{BaMa2}.
\end{rem}

When considering the associated triangulated $n$-gon, the choice of a fundamental domain corresponds to a choice of cyclic labeling
of the vertices of the polygon. The entries $m_{i,j}$ of the matrix $M$ can be computed directly from the triangulation using 
the formulas \eqref{entBCI} or \eqref{entPath}.

The idea of  \cite{BHJ} and \cite{Bes} is to construct similar matrices $M$ from other types of ``triangulations'' given rise to pieces of more general frieze patterns.

A \textit{dissection} of an $n$-gon is an arbitrary collection of non-crossing diagonals,
or equivalently a collection of
smaller inner polygons $\pi_1, \ldots \pi_\ell$, with arbitrary numbers of edges (that are either sides or diagonals of the polygon),
say $d_1, \ldots, d_\ell$ respectively.
We refer to the dissection as a $(d_1, \ldots, d_\ell)$-\textit{dissection}.
In the particular case when all the inner polygons have same number of edges $d_i=d$, we call the dissection a $d$-\textit {angulation}.
Note that a $d$-angulation of an $n$-gon is possible if $n=d+(\ell-1)(d-2)$.

In what follows \cite{BHJ} is the reference for the case of $d$-angulation and \cite{Bes} the reference for the general case of dissection.

\begin{figure}
\begin{center}
\includegraphics[width=9cm]{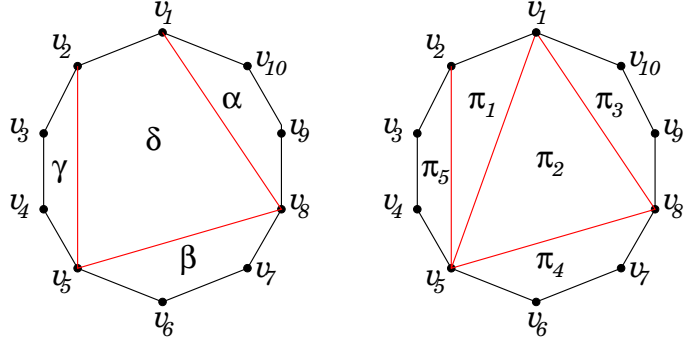}
\end{center}
\caption{A $4$-angulation \textit{(left)} and a $(3,3,4,4,4)$-dissection \textit{(right)} of a decagon}
\label{dissex}
\end{figure}

The notion of admissible path introduced in \S \ref{interpret}, Item 3, is generalized as follows.
\begin{defn}[\cite{BHJ},\cite{Bes}]
Given a dissection of a polygon $(v_i)$ (with convention on the vertices $v_{i+n}=v_{i-n}=v_i$), an \textit{admissible d-path} between vertices $ v_{i-1}$ and $v_{j+1}$ is an ordered sequence $(p_{i}, p_{i+1}, \ldots, p_{j})$,
of inner polygons such that the polygon $p_\ell$ is incident to vertex $\ell$, and for any $d$, a $d$-gon in the dissection does not appear more than $d-2$ times in the sequence.
\end{defn}

The construction of the matrix $M$ is generalized as follows.

\begin{defn}[\cite{BHJ},\cite{Bes}]
Given a dissection $\Dc$ of an $n$-gon $(v_i)$, define the matrix $M_\Dc=(m_{i,j})_{1\leq i,j\leq n}$
by $m_{i,i}=0$ and
$$
m_{i,j}=  \text{ the number of admissible d-paths from the vertex } v_{i-1} \text{ to  }v_{j-1},
$$
for all indices $1\leq i\not=j\leq n$.
\end{defn}
The matrix $M_\Dc=(m_{i,j})_{1\leq i,j\leq n}$ can be viewed as a piece of ``generalized Conway-Coxeter frieze pattern''.
It is shown in \cite{BHJ} that in the case of a $d$-angulation the adjacent $2\times 2$ minors of $M_\Dc$ are either $-1,0$ or $1$.

Theorem \ref{thmBCI} is generalized as follows.
\begin{thm}[\cite{BHJ},\cite{Bes}]
Let $\Dc$  be a  $(d_1, \ldots, d_\ell)$-dissection of an $n$-gon.
The matrix $M_\Dc$ is symmetric. The determinant does not depend on the choice of the labeling of the vertices of the polygon and
$$
\det(M_\Dc)=(-1)^{n-1}\prod_{i=1}^\ell(d_i-1).
$$
\end{thm}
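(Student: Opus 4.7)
My plan is to handle the three assertions separately. Symmetry $m_{i,j} = m_{j,i}$ would follow from a suitable complement bijection between admissible d-paths on the two arcs of the polygon between $v_{i-1}$ and $v_{j-1}$; the fact that $\sum_r (d_r - 2) = n - 2$ equals the total intermediate-vertex count along both arcs is the numerical match enabling such a bijection, generalizing the classical triangulation argument. Label-independence of $\det(M_\Dc)$ is then routine: cyclic relabeling conjugates $M_\Dc$ by a permutation matrix.

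For the determinant formula I would induct on the number $\ell$ of inner polygons. The base case $\ell = 1$ is explicit: with no diagonals, the unique admissible d-path between two distinct vertices assigns the ambient $n$-gon to each intermediate vertex, so $M_\Dc = J_n - I_n$ and $\det(M_\Dc) = (-1)^{n-1}(n-1)$. For $\ell \geq 2$, I would choose a leaf $\pi$ of the dual tree of $\Dc$, a $d$-gon attached via the diagonal $[v_a, v_{a+d-1}]$, with interior vertices $v_{a+1}, \ldots, v_{a+d-2}$ incident only to $\pi$. Let $\Dc'$ be the dissection of the contracted $(n-d+2)$-gon obtained by deleting these interior vertices. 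Reordering rows and columns so the interior-vertex indices come last produces a block form $M_\Dc = \bigl(\begin{smallmatrix} C & B^T \\ B & A \end{smallmatrix}\bigr)$, and the crux is to establish three combinatorial facts: (i) $C = M_{\Dc'}$, because any path in $\Dc$ between non-interior vertices crossing the leaf interior uses $\pi$ exactly $d-2$ times, exhausting its budget and forbidding $\pi$ at $v_a, v_{a+d-1}$ --- leaving precisely the $\Dc'$-polygons as choices; (ii) $A = J_{d-2} - I_{d-2}$, by forced-$\pi$ uniqueness along either arc between interior leaf vertices; (iii) every row of $B$ equals the sum of the rows of $C$ indexed by the positions of $v_a$ and $v_{a+d-1}$, since a path from an interior vertex decomposes as a forced interior segment, a polygon choice at $v_{a+d-1}$, and a $\Dc'$-continuation, with the $\pi$-choice case bijecting to $\Dc'$-paths issuing from $v_{a+d-1}$ and the $\Dc'$-polygon-choice case bijecting to $\Dc'$-paths issuing from $v_a$ (using that $v_a$ is adjacent to $v_{a+d-1}$ in $\Dc'$).

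Granted (i)--(iii), the linear algebra is short. Writing $b$ for the common row of $B$, $B = \mathbf{1}_{d-2} b$, so $B C^{-1} B^T = (b C^{-1} b^T) J_{d-2}$. By (iii), $b^T = C(e_\alpha + e_{\alpha+1})$ where $\alpha, \alpha+1$ are the positions of $v_a, v_{a+d-1}$ in $\Dc'$, hence $b C^{-1} b^T = b_\alpha + b_{\alpha+1} = 1 + 1 = 2$ (since $v_a, v_{a+d-1}$ are adjacent in $\Dc'$). Therefore $A - B C^{-1} B^T = -J_{d-2} - I_{d-2}$, whose eigenvalues $-(d-1)$ (simple) and $-1$ (multiplicity $d-3$) yield determinant $(-1)^{d-2}(d-1)$. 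Combined with the Schur complement identity $\det(M_\Dc) = \det(C)\det(A - BC^{-1}B^T)$ and the sign relation $(-1)^{n-1} = (-1)^{(n-d+2)-1}(-1)^{d-2}$, the induction closes. The hard part will be the rigorous verification of (i)--(iii); conceptually, all three rest on the observation that the leaf's appearance budget $d-2$ exactly matches its number of interior vertices, rigidly constraining path configurations near the leaf.
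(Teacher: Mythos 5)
The paper only states this theorem with citations and gives no proof of its own, so there is nothing internal to compare against; I am assessing your plan on its merits. Your leaf-contraction induction is sound: I checked that claims (i)--(iii) are all true, that the Schur-complement computation and sign bookkeeping $(-1)^{n-1}=(-1)^{(n-d+2)-1}(-1)^{d-2}$ are correct, that $C=M_{\Dc'}$ is invertible by the induction hypothesis (every $d_r\geq 3$), and that the base case $M_\Dc=J_n-I_n$ is right. For (i), the key observation you need to make explicit is that an arc between two non-interior vertices either contains all $d-2$ interior vertices of the leaf (forcing $\pi$ there, exhausting its budget, and leaving a $\Dc'$-path on the remaining intermediates) or contains none of them, in which case neither $v_a$ nor $v_{a+d-1}$ can be an intermediate vertex, so $\pi$ never appears; both cases biject cleanly with $\Dc'$-paths. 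Claim (iii) works exactly as you describe.

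Two points need more care than your sketch suggests. First, your justification of (ii), ``forced-$\pi$ uniqueness along either arc,'' is only accurate for the short arc. Along the long arc from $v_{a+s}$ to $v_{a+t}$ ($s>t$) the intermediates include $v_{a+d-1}$, all $n-d$ external vertices, and $v_a$; these are not assigned $\pi$ by force. The correct argument is a budget count: the forced interior assignments use $d-3-(s-t)$ of the $d-2$ copies of $\pi$, and the remaining $n-d+2$ intermediate vertices must absorb a total residual budget of exactly $n-d+2$ (two leftover uses of $\pi$, placeable only at $v_a$ and $v_{a+d-1}$, plus $n-d=\sum_{r\neq\pi}(d_r-2)$ for the other polygons), so every budget is exhausted; uniqueness of the exhausting assignment on the external vertices is precisely the statement $M_{\Dc',(a+d-1),a}=M_{\Dc',a,(a+d-1)}=1$, i.e.\ it invokes the symmetry of $M_{\Dc'}$. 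Second, the complement bijection you invoke for symmetry is exactly the nontrivial point for general dissections (one must show the complementary multiplicities $d_r-2-k_r$ assemble, bijectively, into an admissible path on the other arc). A cleaner route, consistent with your setup, is to fold symmetry into the same induction: the two off-diagonal blocks are transposes of one another precisely because both reduce to $C_{a,\cdot}+C_{a+d-1,\cdot}$ when $C$ is symmetric, and $A=J_{d-2}-I_{d-2}$ is symmetric, so symmetry, label-independence and the determinant all follow from one induction with base case $J_n-I_n$. With these repairs the argument is complete.
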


\begin{ex}
For the dissections of Figure \ref{dissex} one obtains the following matrices, respectively
$$
\begin{pmatrix}
0&1&2&2&1&2&2&1&1&1\\
1&0&1&1&1&2&2&1&2&2\\
2&1&0&1&1&3&3&2&4&4\\
2&1&1&0&1&3&3&2&4&4\\
1&1&1&1&0&1&1&1&2&2\\
2&2&3&3&1&0&1&1&3&3\\
2&2&3&3&1&1&0&1&3&3\\
1&1&2&2&1&1&1&0&1&1\\
1&2&4&4&2&3&3&1&0&1\\
1&2&4&4&2&3&3&1&1&0
\end{pmatrix},
\qquad
\begin{pmatrix}
0&1&2&2&1&2&2&1&1&1\\
1&0&1&1&1&3&3&2&3&3\\
2&1&0&1&1&4&4&3&4&4\\
2&1&1&0&1&4&4&3&5&5\\
1&1&1&1&0&1&1&1&2&2\\
2&3&4&4&1&0&1&1&3&3\\
2&3&4&4&1&1&0&1&3&3\\
1&2&3&3&1&1&1&0&1&1\\
1&3&5&5&2&3&3&1&0&1\\
1&3&5&5&2&3&3&1&1&0
\end{pmatrix}.
$$
\end{ex}

\begin{rem}
The generalized frieze patterns coming from dissections of polygons can be obtained by a categorical approach using an analogue of the Caldero-Chapoton formula, \cite{HoJo1}, \cite{HoJo2}.
\end{rem}

\section{Friezes in the literature and open questions}

The notion of friezes on a repetition quiver and the one of $\SL_k$-tilings are ``direct'' generalizations and include 
Coxeter's frieze patterns. Many other generalizations or variants of Coxeter's frieze patterns have been studied recently.

The variations around Coxeter's definition can be made at different levels; one may change  the 
rule or the arithmetic used for the recurrence rule  on the diamond, $
 \begin{array}{c}
b\\[-4pt]
 a\quad d\\[-4pt]
c
\end{array}$,
and/or change the ``shape of the diamond'' 
or one may define an array from a combinatorial model... 

We list below some references where different types of friezes appear

\begin{itemize}
\item {friezes from matrix multiplication}: \cite{She}, \cite{Bro};
\item {additive friezes}: $a+d=b+c$ or $a+d=b+c+1$, \cite{She}, \cite{CoRi}, \cite{Marc}, \cite{Lau};
\item {tropical friezes}: $a+d=\max(b+c,0)$,  \cite{Pro}, \cite{Guo}, \cite{Pec}, \cite{Gra}, \cite{AsDu2}; \\ or $a+d=\max(b,0)+\max(c,0)$, \cite{Rin};
\item {quantum friezes}: $ad-q^\frac12bc=1$, \cite{BuDu};
\item {NIM friezes}: $a\boxplus d= b\boxplus c +1$, \cite{She}, \cite{Lau};
\item {cross-ratio friezes}: $\frac{(b-a)(c-d)}{(a-c)(d-b)}=-1$, \cite{Yao};
\item {continuous friezes} $F(x,y)$: $F\frac{\partial^2}{\partial x \partial y}F-\frac{\partial}{\partial x}F\frac{\partial}{ \partial y}F=1$, \cite{OvTaFrieze};
\item $2$-friezes:  $\begin{array}{c}
b\\[-4pt]
 a\; e\; d\\[-4pt]
c
\end{array} \Rightarrow ad-bc=e$, \cite{Pro}, \cite{MGOTaif}, \cite{MGjaco};
\item super-friezes or $\rm{Osp}(1|2)$-tilings, \cite{MGOTsuper}
\item $\SL_k$-friezes or $\SL_k$-tilings, \cite{CoRo}, \cite{ARS}, \cite{BeRe}, \cite{MGOTaif},\cite{MGOST}, \cite{MGOTprep}, \cite{Cun2};
\item 3D-friezes: $T$-systems, \cite{DiKe}, \cite{DiF} \cite{KeVi};
\item multiplicative friezes on repetition quivers or from Cartan matrices, \cite{CaCh}, \cite{BaMa}, \cite{ARS}, \cite{AsDu}, \cite{ADSS}, \cite{Ess}, \cite{FoPl}, \cite{Fon};
\item friezes and combinatorial models, \cite{BCI} \cite{BaMa}, \cite{BHJ}, \cite{HoJo}, \cite{Bes}, \cite{Cun}, \cite{Pec}, \cite{FoPl}, \cite{Fon}, \cite{Tsc}, \cite{BPT};
\item $\cdots$

\end{itemize}

\bigskip
\noindent
\textbf{Open questions}\\

\noindent
The study of friezes and of variants of friezes may be undertaken in many different directions. 
We suggest below some open questions and open direction of investigation.

The first five questions are related to friezes with positive integer values, 
see \S\ref{ZQentiere} and \S\ref{SLenum}.
The last three questions concern relations between friezes and other fields of mathematics.

\begin{quest}
How many multiplicative friezes with positive integer values are there in type $\mE_{7}$, 
$\mE_{8}$?
\end{quest}

\begin{quest}
How many $\SL_{3}$-friezes of width 4 with positive integer values are there?
\end{quest}

\begin{quest}
How many non-unitary friezes are there for a given type? See \cite{MGjaco}.
\end{quest}

\begin{quest}
Do there exist $\SL_{k}$-friezes with positive integer values for which the cluster variables in the associated cluster algebra that do not appear in the frieze are non-integer rational numbers?
\end{quest}

\begin{quest}
Find combinatorial models, as generalized triangulations, to enumerate $\SL_{k}$-friezes with positive integer entries.
\end{quest}

\begin{quest}
Are the friezes using NIM addition always periodic? See \cite{She}, \cite{Lau}.
\end{quest}

\begin{quest}
Find an analogue of the triality Theorem \ref{ThmTriality} in the supersymmetric case. See \cite{MGOTsuper}.
\end{quest}

\begin{quest}
Superperiodic equations are related to Sturm-Liouville's discrete oscillation theory \cite{ABGO}
\cite[\S4.5]{OvTa}.
Interpret the main results of this theory in terms of friezes, in particular,
 Sturm's separation and comparison theorems.
\end{quest}

\subsection*{Acknowledgement}

I would like to thank J. Conway, V. Ovsienko, S. Tabachnikov, 
P. Le Meur, Y. Palu, C. Riedtmann for enlightening discussions, comments and references.

\bibliographystyle{alpha}
\bibliography{BibFrisesCluster,BiblioMoy3}

\end{document}